\documentclass[11pt]{article}
\usepackage[a4paper, margin=0.99in]{geometry}
\usepackage{amsmath}
\usepackage{amsthm}
\usepackage{bbm}
\usepackage{amssymb}
\usepackage{graphicx}
\usepackage{amsfonts}
\usepackage{xcolor}
\usepackage{pdfpages}
\usepackage{enumerate}
\usepackage{subcaption}
\usepackage[hidelinks]{hyperref}
\usepackage[justification=centering]{caption}
\usepackage{pdfpages}
\usepackage{tikz}
\usetikzlibrary{angles,quotes,calc,intersections}

\usepackage{thmtools}

\begin{document}

\newtheorem{theorem}{Theorem}
\numberwithin{theorem}{section}
\theoremstyle{definition}
\newtheorem{definition}[theorem]{Definition}
\newtheorem{proposition}[theorem]{Proposition}
\newtheorem{example}[theorem]{Example}
\newtheorem{lemma}[theorem]{Lemma}
\newtheorem{corollary}[theorem]{Corollary}
\newtheorem{remark}[theorem]{Remark}
\newtheorem{conjecture}[theorem]{Conjecture}
\newcommand{\uw}{\mathcal{U}(W,X)}
\newcommand{\W}{$(W,S)$}
\newcommand{\ix}{\textbf}
\newcommand{\tr}{\textcolor{red}}
\newcommand{\sg}{$\Sigma$}
\newcommand{\x}{\mathcal{X}}
\newcommand{\C}{\mathcal{C}}
\newcommand{\tw}{T_{w^{-1}}^{-1}}
\newcommand{\mch}{\mathcal{H}}
\newcommand{\aw}{\tilde{A}_2}
\newcommand{\Sh}[1]{\textnormal{Sh}(#1)}
\newcommand{\nSh}[1]{\textnormal{Sh}^\textnormal{c}(#1)}
\newcommand{\PSh}[1]{\textnormal{PSh}(#1)}
\newcommand{\nPSh}[1]{\textnormal{Annex}(#1)}
\newcommand{\wk}{w_k}
\newcommand{\wj}{w_j}
\newcommand{\wi}{w_i}
\newcommand{\B}[1]{B(#1)}
\newcommand{\Hid}{H^{\mathbf{1}}}
\newcommand{\Hinf}{H^\infty}
\newcommand{\cinf}{\mathcal{C}_\infty}

\title{Annexes in affine Coxeter complexes}
\author{Megan Masters}
\maketitle
\begin{abstract}

We introduce the annex of an element $x$ in a Coxeter group as the set of elements $y$ such that $x \nleq y$ with respect to Bruhat order. This notion provides a complementary perspective to the study of Bruhat intervals and their interpretation via folded galleries. We establish general properties of annexes and show that in affine Coxeter groups the annex of any fixed element is finite. In rank-two affine Coxeter complexes, we further describe the geometric structure of annex boundaries using descent sets and configurations of parallel reflections. These results offer a new geometric viewpoint on the structure of the Bruhat order.

\end{abstract}

\section{Introduction}

The theory of buildings, introduced by Jacques Tits in the 1950s \cite{TITS1}, provides a geometric framework for understanding algebraic and Lie-theoretic structures. Originally developed to give descriptions of semisimple algebraic groups and Lie groups \cite{TITS}, buildings have since evolved into a central object of study in geometric group theory, representation theory, and combinatorics \cite{ABRAMENKOBROWN,RON}. Among the most fundamental examples of buildings are Coxeter complexes. These combinatorial and geometric structures encode the reflection symmetries of Coxeter groups and serve as the local objects from which more complicated buildings are assembled.

Given a Coxeter system $(W,S)$, the associated Coxeter complex can be realised as a simplicial complex whose maximal simplices, called alcoves, are in bijection with the elements of $W$ \cite{HUMPHREYS2}. Adjacency of alcoves corresponds to multiplication by simple reflections. In the affine case, this complex admits a geometric embedding in Euclidean space in which reflections in the group correspond to reflections across affine hyperplanes \cite{DAVIS}. This paper focuses on affine Coxeter complexes, particularly those of rank 2.

Within our Coxeter complex, we explore walks, which we call galleries, through the alcoves \cite{RAM}. In this paper, we focus on alcove-to-alcove galleries. Other papers, including Schwer \cite{WILD}, discuss galleries that start and end at vertices. Galleries, and their positive foldings, have applications to a wide range of mathematics. Positive foldings of galleries were first introduced in 2005 by Gaussent and Littelmann \cite{LSGAL}. They can be used to compute Hall--Littlewood polynomials \cite{HL}, and have been used to study Mirkovic--Vilonen polytopes \cite{MVPOLY}. There is also a link between folded galleries and affine Deligne--Lusztig varieties \cite{DEL}. 

Within a Coxeter group, we can define a partial ordering called the Bruhat order. This order was first introduced as an order on Schubert varieties of flag manifolds \cite{EHRESMANN}, and was then extended to general semisimple algebraic groups \cite{CHEVALLEY}. The study of Bruhat order in Weyl groups began in the 1960s \cite{VERMA}. Given a Coxeter group $W$, and elements $x,y\in W$, we say that $x \leq y$ in Bruhat order if, whenever $y$ is written as a reduced expression
\[
y = s_{i_1}s_{i_2}\cdots s_{i_k},
\]
there exists a subword $s_{i_{j_1}} s_{i_{j_2}} \cdots s_{i_{j_m}}$, with $1 \le j_1 < j_2 < \cdots < j_m \le k$, that is a reduced expression for $x$ \cite{COMB}. 

Associated to an element $w\in W$ is its \ix{shadow}, defined as the set of all group elements that can arise as the endpoint of a positively folded gallery of type 
$w$. In the case of the trivial orientation (which is the setting of this paper), the shadow of $w$ coincides precisely with the Bruhat interval $[1,w]$. Thus the geometry of folded galleries provides a visual and combinatorial model for Bruhat order. See \cite{WILD} for a good overview of the applications of shadows.

While the lower Bruhat interval $[1,w]$ admits this simple description via shadows \cite{SHA}, more general Bruhat intervals $[y,z]$ are less directly accessible from the gallery perspective. Understanding such intervals is a classical problem with broad implications. For example, Bruhat intervals play a fundamental role in the geometry of Schubert varieties \cite{SCHUBERT} and in the theory of Kazhdan–Lusztig polynomials \cite{KL}.

This paper investigates the geometric properties of the upper Bruhat interval $[x,\infty)$. Here, we are asking for which $y$ does $x$ lie in the shadow of $y$? Equivalently, in terms of the Bruhat order, for which $y$ is $x \leq y$? This leads naturally to the notion of the \ix{preshadow} of $x$, the set of all elements whose shadows contain $x$. In infinite Coxeter groups, and in particular in affine Coxeter groups, preshadows are infinite sets. Consequently, it is often easier to study their complements. This motivates the central definition of the paper. 

\begin{definition}
    Given an element $x \in W$, we define the \ix{annex of $x$}, $\nPSh{x}$, to be the set of all alcoves $y$ such that $x$ does not lie in the shadow of $y$.
\end{definition}

In the case that our Coxeter group is affine, the following result holds. 

\begin{theorem}\label{finite}
    Consider an affine Coxeter group $W$. Let $w\in W$. Then the annex of $w$ is a finite set.
\end{theorem}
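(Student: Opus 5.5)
Since the annex of $w$ is exactly the set of $y\in W$ with $w\nleq y$ in Bruhat order, it suffices to find a constant $N=N(w)$ such that every $y$ with $\ell(y)\ge N$ satisfies $w\le y$. The annex is then contained in the ball of radius $N$ about the identity, and this ball is finite because $S$ is finite. The plan is to reduce first to the case where $w$ is a translation (or a high power of one), using the finite Weyl group $W_0$ and the lattice structure of the affine group $W=W_0\ltimes Q^\vee$ (the Coxeter complex being the Euclidean space tiled by alcoves).

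\textbf{Step 1: reduction to a dominant translation.} Choose a dominant, sufficiently regular coweight $\lambda$ in the coroot lattice such that $w\le t_\lambda$. This is possible because $\ell(t_\lambda)$ grows and, for $\lambda$ deep in the fundamental chamber, $t_\lambda$ has reduced words containing any given reduced word of bounded length as a subword. Concretely, $t_{\lambda}=t_{\mu}t_{\lambda-\mu}$ is length-additive for dominant $\mu,\lambda-\mu$, and $w\le t_\mu w_0'$ for a suitable $\mu$ via the standard fact $w\le t_\mu$ once $\mu$ dominates the translation part of $w$. By transitivity of Bruhat order, it then suffices to show that $\{y: t_\lambda\nleq y\}$ is finite for dominant regular $\lambda$. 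Replacing $t_\lambda$ by the longest element $m_\lambda:=w_0t_\lambda$ of its double coset $W_0t_\lambda W_0$ is also allowed, since $t_\lambda\le m_\lambda$.

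\textbf{Step 2: geometric criterion.} Every $y\in W$ can be written as $y=u\,t_\nu\,v$ with $u,v\in W_0$ and $\nu$ dominant. Here $\nu$ is the dominant representative of the translation part of $y$, and $y$ is the longest element in its double coset up to bounded length, so $\ell(y)\ge \ell(t_\nu)-2\ell(w_0)$. I would then use the classical description of Bruhat order on $W_0\backslash W/W_0$: the double coset of $t_\mu$ lies below that of $t_\nu$ exactly when $\nu-\mu$ is a nonnegative combination of positive coroots, for dominant $\mu,\nu$ in the same lattice coset. Combining this with the fact that the maximal element of the $(W_0,W_0)$ double coset of $t_\nu$ lies below $y$ after padding, we get $m_\lambda\le y$ whenever $\nu-\lambda$ is dominant-positive and $y$ is long in its coset. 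To avoid the "long in its coset" requirement, I would instead use $w\le t_\lambda$ together with a bound: any element $u t_\nu v$ dominates $t_{\nu'}$ for $\nu'$ within bounded distance of $\nu$, which follows by deleting at most $2\ell(w_0)$ letters from a reduced word.

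\textbf{Step 3: finiteness.} If $y$ lies in the annex, then $\nu$ must avoid the cone $\lambda+C$, up to the bounded correction, where $C$ is the cone of nonnegative coroot combinations intersected with the dominant chamber. The dominant elements outside $\lambda'+(\text{dominant}\cap\text{positive cone})$ form a finite set when $\lambda'$ is fixed, because the dominant chamber lies inside the positive coroot cone (for irreducible $W_0$) and has strictly positive coordinates in the interior. So there are finitely many $\nu$ and finitely many $u,v$, hence finitely many $y$.

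\textbf{Main obstacle.} The hardest step is Step 2: controlling Bruhat comparisons between arbitrary $y$ and a translation through the double-coset representatives, with only bounded loss. If the double-coset order result cannot be cited cleanly, I would fall back on a folded-gallery argument in the Coxeter complex. A minimal gallery from the base alcove to a far-away alcove $y$ crosses every hyperplane separating them. When $y$ is far enough, it crosses, in order, enough parallel hyperplanes in every direction that one can positively fold it so that the gallery type of $w$ is realized as a subword. By the shadow characterization in the introduction, this shows that $w$ lies in the shadow of $y$.
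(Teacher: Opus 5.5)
\textbf{Verdict: genuine gap.} Your reduction is sound: the annex is finite iff every sufficiently long $y$ satisfies $w\le y$. The conclusion of Step 1 is also correct. If $w\in W_0t_\mu W_0$ with $\mu$ dominant, then $w\le m_\mu=w_0t_\mu$. For a regular dominant $\rho'\in Q^\vee$, the factorisation $t_{\mu+\rho'}=(t_{\rho'}w_0)(w_0t_\mu)$ is length-additive, so $w\le t_{\mu+\rho'}$.

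The gap is in Step 2, which is where the whole content of the theorem lies. It fails in three places.
\begin{enumerate}
\item The comparison via maximal representatives runs the wrong way. An arbitrary $y\in W_0t_\nu W_0$ lies \emph{below} $m_\nu$, so $m_\lambda\le m_\nu$ tells you nothing about $y$ unless $y=m_\nu$.
\item The patch is not justified. You claim that $y=ut_\nu v$ dominates some $t_{\nu'}$ with $\nu'$ at bounded distance from $\nu$ ``by deleting at most $2\ell(w_0)$ letters''. But concatenating reduced words of $u,t_\nu,v$ does not give a reduced word for $y$, and $t_\nu$ is in general not a subword of $y$. For instance, take $\nu$ regular and $y=n_\nu=t_\nu w_0$, the minimal element of the double coset; then $\ell(y)<\ell(t_\nu)$. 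Length-additivity does give $t_{\nu-\rho'}\le n_\nu$ when $\nu-\rho'$ is dominant. However, the dominant $\nu$ within bounded distance of a wall of the chamber form an infinite family that this does not reach.
\item Step 3 inherits the problem. As written, $C$ (positive coroot cone intersected with the dominant chamber) is just the dominant cone. The dominant $\nu$ outside $\lambda'+C$ then contain entire rays along the walls, so that set is infinite. Finiteness does hold for $\{\nu\ \textrm{dominant}:\nu-\lambda'\notin Q^\vee_+\}$, but Step 2 never gives a criterion of this dominance-order form valid for all $y$.
\end{enumerate}
The fallback gallery paragraph restates the goal rather than proving it.

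Your framework can be repaired by using \emph{minimal} double coset representatives. Every $y\in W_0t_\nu W_0$ satisfies $n_\nu\le y$. The classical comparison of $(W_0,W_0)$-double cosets gives $n_\lambda\le n_\nu$ for dominant $\lambda,\nu$ with $\nu-\lambda\in Q^\vee_+$. Length-additivity gives $t_{\mu+\rho'}\le n_{\mu+2\rho'}=t_{\mu+\rho'}\cdot(t_{\rho'}w_0)$. Together with Step 1 this yields $w\le y$ whenever $y\in W_0t_\nu W_0$ with $\nu-(\mu+2\rho')\in Q^\vee_+$. Positivity of the inverse Cartan matrix (irreducibility) then leaves only finitely many other dominant $\nu$, each with a finite double coset.

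The paper's route needs none of this machinery. It inducts on $\ell(w)$ using $\nPSh{ws_k}\subseteq\nPSh{w}\cdot W_{I-\{k\}}$ for $k\notin D_R(w)$, which follows directly from the subword property. It then uses only the finiteness of proper parabolic subgroups of an affine Coxeter group.
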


We then explore the geometry of this object, restricting to rank 2 Coxeter complexes. These form tilings of the plane. A fundamental component of this exploration is the following theorem, describing the effect of adjacent parallel reflections on the length of an element.  

\begin{theorem}\label{pm1}
    Suppose we have three adjacent parallel reflections $H_{r_0}$, $H_{r_1}$ and $H_{r_2}$, such that 
    \[\ell(r_2r_1x)=\ell(r_1x)+1=\ell(x)+2.\] Then $\ell(r_0x)=\ell(x)\pm 1$. 
\end{theorem}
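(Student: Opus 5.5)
The plan is to work entirely in the Euclidean realisation of the rank-2 affine Coxeter complex. Here $\ell(y)$ is the number of hyperplanes separating the alcove $y$ from the fundamental alcove $A_0$, and for a reflection $r$ we have $\ell(ry)>\ell(y)$ exactly when $H_r$ does not separate $A_0$ from $y$. I will fix a unit normal to the common direction of $H_{r_0},H_{r_1},H_{r_2}$ and choose coordinates so that these three hyperplanes sit at heights $0,1,2$. Let $a$ be the height of (the interior of) $A_0$ and $p$ the height of $x$. Then $r_1x$ has height $2-p$, $r_2r_1x$ has height $p+2$ (so $r_2r_1$ is a translation), and, since reflecting $H_{r_2}$ in $H_{r_1}$ gives $H_{r_0}$, we have $r_0=r_1r_2r_1$ and $r_0x$ has height $-p$.

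The first step uses only the sign information in the hypothesis. The two increases say that $H_{r_1}$ does not separate $A_0$ from $x$, and $H_{r_2}$ does not separate $A_0$ from $r_1x$. A short case check on the side of $H_{r_1}$ containing $A_0$ leaves two configurations:
\begin{itemize}
\item[(A)] $a<1$ and $0<p<1$, so $x$ lies in the strip between $H_{r_0}$ and $H_{r_1}$ and on the same side of $H_{r_1}$ as $A_0$;
\item[(B)] $1<a<2$ and $p>1$.
\end{itemize}

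The second step uses the exact equalities $\ell(r_1x)=\ell(x)+1$ and $\ell(r_2r_1x)=\ell(r_1x)+1$. I split $\ell$ into contributions from the three parallel classes of hyperplanes, since in rank 2 there are only three directions. For a reflection $r$, the change $\ell(ry)-\ell(y)$ is then a sum of three signed counts. Each count records how many hyperplanes of a given class are crossed, and how many cease to separate, when $y$ is reflected in $H_r$; these counts grow linearly with the distance from $y$ to $H_r$.

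A change of exactly $+1$ forces these counts to cancel except for $H_r$ itself. I expect this to mean that $y$ is adjacent to $H_r$ (shares a panel with it), or else lies in a thin region near a wall of the Weyl chamber through $A_0$. Applying this to $y=x$, $r=r_1$ and to $y=r_1x$, $r=r_2$ should rule out configuration (B), or reduce it to alcoves touching $H_{r_1}$. In configuration (A), it should pin $x$ down to the alcoves in the strip between $H_{r_0}$ and $H_{r_1}$ that touch $H_{r_1}$.

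The final step computes $\ell(r_0x)$ from the same class-by-class count. In case (A), reflecting $x$ across $H_{r_0}$ moves it only within the two strips adjacent to $H_{r_0}$. The only class whose count changes by more than the other classes compensate is the $H_{r_0}$ class, which changes by exactly one. This gives $\ell(r_0x)=\ell(x)\pm1$, with the sign determined by which side of $H_{r_0}$ the alcove $A_0$ lies on.

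I expect the main obstacle to be the second step: turning the exact equalities into a sharp location for $x$. The bookkeeping of how the two non-parallel classes interact near the walls of the chamber containing $A_0$ is delicate. I would first attempt it by treating separately the cases $\tilde A_2$, $\tilde C_2$ and $\tilde G_2$, with explicit hyperplane angles. If that becomes unwieldy, I would instead use that $r_1x$ and $x$ differ by a single simple-reflection-like step along a minimal gallery, and argue via galleries crossing $H_{r_1}$ once.
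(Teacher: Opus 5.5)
Your first step is correct and isolates exactly where the plan breaks. Configuration (B) is not excluded by the exact equalities, and the conclusion is false there, so your second step cannot succeed.

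Here is a counterexample in $\tilde A_2$. Take $r_1=s_1$ and $r_2=s_2s_0s_2$, the reflection in the line parallel to $H_{s_1}$ through the remaining vertex of the fundamental alcove. Take $r_0=r_1r_2r_1=s_1s_2s_0s_2s_1$, so that $H_{r_0},H_{r_1},H_{r_2}$ are adjacent parallel walls, and let $x=s_0s_1$. Then $r_1x=s_1s_0s_1$, $r_2r_1x=s_0s_2s_1s_0$ and $r_0x=s_1s_0s_2s_1s_0$, all reduced. So $\ell(x)=2$, $\ell(r_1x)=3$ and $\ell(r_2r_1x)=4$, but $\ell(r_0x)=5$.

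In this example the fundamental alcove lies between $H_{r_1}$ and $H_{r_2}$, and $x$ shares a panel with $H_{r_1}$ (indeed $r_1x=xs_0$). So your fallback of reducing (B) to alcoves touching $H_{r_1}$ does not help. The printed statement is missing the hypothesis that $x$ lies between $H_{r_0}$ and $H_{r_1}$. The paper's proof assumes this throughout: it is announced at the start of that subsection, repeated in the key lemmas, and supplied by condition 2 of $(\dagger)$ where the theorem is applied. Under that hypothesis only your configuration (A) survives.

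Within (A) your plan still has no mechanism. Steps 2 and 3 assert precisely the cancellation that is the content of the theorem. The premise of three parallel classes is also false: $\tilde C_2$ has four and $\tilde G_2$ has six.

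The paper splits (A) in two. If the fundamental alcove lies in the strip, then $r_0x$ lies on its side of $H_{r_1}$, and $r_1r_0=r_2r_1$, so $\ell(x)<\ell(r_0x)<\ell(r_1r_0x)=\ell(x)+2$. If it lies beyond $H_{r_0}$, the paper peels off common right descents of $r_0x,x,r_1x,r_2r_1x$ until $x$ meets $H_{r_0}$, then inducts inside a $W_0$-coset.

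Your class-by-class idea can replace that long induction, but only with an ingredient you lack.
\begin{itemize}
\item Write $H_{r_j}=H_{\alpha,m+j}$ and $t=r_1r_0=r_2r_1$, the translation by $\alpha^\vee$. Then $r_1x=t(r_0x)$ and $r_2r_1x=tx$.
\item For any alcove $y$ with interior point $c$, one has $\ell(ty)-\ell(y)=\sum_{\beta>0}\bigl(N(u_\beta+\langle\alpha^\vee,\beta\rangle)-N(u_\beta)\bigr)$, where $u_\beta=\langle c,\beta\rangle$ and $N(u)=|\lfloor u\rfloor|$.
\item Since $N(s+1)-N(s)$ equals $-1$ for $s<0$ and $+1$ for $s>0$, each summand is nondecreasing in $u_\beta$ when $\langle\alpha^\vee,\beta\rangle>0$ and nonincreasing when it is negative.
\item Passing from $x$ to $r_0x$ shifts $u_\beta$ by $-h\langle\alpha^\vee,\beta\rangle$ with $h=\langle c,\alpha\rangle-m>0$, so every summand weakly drops.
\item Hence $\ell(r_1x)-\ell(r_0x)\le\ell(tx)-\ell(x)=2$, i.e.\ $\ell(r_0x)\ge\ell(x)-1$. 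Together with $\ell(r_0x)<\ell(x)$, this finishes the argument.
\end{itemize}
In (B) this lower bound still holds; what fails there is the sandwich upper bound.
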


This theorem allows us to form an inductive argument on the boundary of the annex, leading to the following theorem.

\begin{theorem} \label{theorem}
        Let $W$ be a rank 2 affine Coxeter complex. Let $w\in W$ and $\ell(ws_i)<\ell(w)$. Let $r_1,\hdots,r_n\in W$ be a sequence of adjacent parallel reflections such that \[\ell(r_t\cdots r_1ws_i)=\ell(r_{t-1}\cdots r_1 ws_i)+1,\] 
    and $r_{t}\cdots r_1ws_i\neq r_{t-1}\cdots r_1w$, for all $1\leq t\leq n$. Suppose there are no hyperplanes parallel to $H_{r_1}$ between $w$ and $H_{r_1}$. Then $r_n\cdots r_1ws_i\in \nPSh{w}$. 
\end{theorem}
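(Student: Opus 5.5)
Set $y_t=r_t\cdots r_1ws_i$ for $0\le t\le n$, so $y_0=ws_i$. We must show $w\not\le y_n$. I would argue by induction on $t$ that $w\not\le y_t$ for every $t$. To keep the induction going I would carry a geometric invariant alongside it: $y_t$ lies strictly on the far side of $H_{r_t}$ from $ws_i$, and $\ell(y_t)=\ell(ws_i)+t$.

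\textbf{Base case.} Since $\ell(ws_i)<\ell(w)$, we have $ws_i<w$. Hence $w\not\le ws_i$, and so $y_0\in\nPSh{w}$.

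\textbf{Inductive step.} Assume $w\not\le y_{t-1}$. We know $y_t=r_ty_{t-1}$ with $\ell(y_t)=\ell(y_{t-1})+1$, so $y_{t-1}\lessdot y_t$ is a cover in Bruhat order. Suppose for contradiction that $w\le y_t$. I would use the lifting property (Z-property) of Bruhat order with respect to the reflection $r_t$, in its geometric form. The shadow of $y_t$ is obtained from the shadow of $y_{t-1}$ together with the images under $r_t$ of its elements on the near side of $H_{r_t}$. In gallery terms, a positively folded gallery of type $y_t$ ending at $w$ either avoids folding at the last crossing of $H_{r_t}$, or reflects a gallery of type $y_{t-1}$.

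So $w\le y_t$ together with $w\not\le y_{t-1}$ forces $r_tw\le y_{t-1}$ and $\ell(r_tw)<\ell(w)$. In other words, $w$ and $y_t$ lie on the same side of $H_{r_t}$, away from the identity.

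I then want to contradict this using the hypothesis that no hyperplane parallel to $H_{r_1}$ lies between $w$ and $H_{r_1}$. Since the $H_{r_t}$ are adjacent and parallel, they march away from $w$ in one direction. Concretely, $H_{r_1}$ is the first hyperplane of this parallel class beyond $w$, and $H_{r_t}$ is the $t$-th. Thus $w$ lies on the opposite side of $H_{r_t}$ from $y_t$, which contradicts the previous paragraph. The condition $y_t\ne y_{t-1}s_i$-type, i.e. $r_t\cdots r_1ws_i\neq r_{t-1}\cdots r_1w$, excludes the degenerate case where $H_{r_t}$ is the wall between $y_{t-1}$ and the alcove $r_{t-1}\cdots r_1w$. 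In that case $w$ itself gets carried across rather than staying on one side.

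\textbf{Role of Theorem \ref{pm1}.} I would use Theorem~\ref{pm1} to establish that each $r_t$ acts on the side of $H_{r_t}$ consistently. The length increase along the chain forces the hyperplanes to be met in order moving away from the base alcove, so the sides of $w$ and $y_t$ relative to each $H_{r_t}$ are as claimed.

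\textbf{Main obstacle.} The hardest step is the side comparison: showing that $w$ lies on the identity side of $H_{r_t}$ while $y_t$ lies on the far side. This needs a careful rank 2 geometric argument. It combines the adjacency of the parallel hyperplanes, the hypothesis on $H_{r_1}$, and the excluded equality. I would first treat $t=1$ directly from the fact that $ws_i$ and $w$ are adjacent across the wall $s_i$, and then propagate to larger $t$ using the length conditions.
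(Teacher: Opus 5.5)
Your induction on $t$ relies on a lifting property for an arbitrary reflection, and that property is false. The Z-property (Lemma~\ref{Bruhatcosets}) holds only for a simple reflection acting on one side. For a general reflection $r$ with $\ell(ry)=\ell(y)+1$ there is no dichotomy of the form ``$x\le ry$ and $x\not\le y$ imply $rx\le y$ and $\ell(rx)<\ell(x)$''.

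Here is a concrete failure inside the parabolic subgroup $W_{\{1,2\}}\cong S_3$ of $\aw$. Take $y=s_1$ and $r=s_1s_2s_1$, so $ry=s_1s_2$, and take $x=s_2$. Then $x\le ry$ and $x\not\le y$. But $rx=s_2s_1$, so $\ell(rx)=2>\ell(x)$ and $rx\not\le y$.

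Your description of $\Sh{y_t}$ fails on the same example. ``$\Sh{y_{t-1}}$ plus the $r_t$-images of its near-side elements'' gives $\{1,s_1,s_1s_2,s_1s_2s_1\}$, whereas $\Sh{s_1s_2}=\{1,s_1,s_2,s_1s_2\}$. The correct gallery statement runs the other way. Foldings of a minimal gallery for $y_t$ that fold at its unique crossing of $H_{r_t}$ end in $\Sh{y_{t-1}}$. Foldings that do not fold there are not controlled by $y_{t-1}$ or by $r_t$.

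In this example $x$ lies on the identity side of $H_r$. So the side comparison you call the main obstacle cannot by itself yield $w\not\le y_t$; the paper proves facts of that kind, e.g.\ $w\in\Hid_{r_n}$ for $n\ge 3$, and they are the easy part. If your step were valid, it would prove the false general statement ``$x\not\le y$, $\ell(ry)=\ell(y)+1$, $\ell(rx)>\ell(x)$ imply $x\not\le ry$''.

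What is missing is the actual mechanism. In the paper this is an induction on $\ell(w)$, not on $t$, driven by right descent sets and the simple-reflection lifting property.
\begin{enumerate}[(1)]
\item Theorem~\ref{pm1} gives $\ell(r_0w)=\ell(w)\pm 1$. If the length drops, one prepends $r_0$ and applies induction to the shorter $r_0w$, using $\nPSh{r_0w}\subseteq\nPSh{w}$. Otherwise the identity lies between $H_{r_0}$ and $H_{r_1}$.
\item In that case, $ws_i, r_1ws_i,\dots$ share a common right descent $s_k\ne s_i$ (Corollary~\ref{nonemptyor2}). A possible exceptional last step is handled by Corollary~\ref{assume}.
\item The tuple $((r_1,\dots,r_n),ws_i,s_k)$ again satisfies $(\dagger)$ (Lemma~\ref{daggerholds}).
\item By Lemma~\ref{Bruhatcosets}, $w\le r_n\cdots r_1ws_i$ would force $ws_i\le w\le r_n\cdots r_1ws_is_k$. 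This contradicts the induction hypothesis for the shorter element $ws_i$.
\item When $|D_R(w)|=2$, a further dihedral case analysis on the order of $s_0s_i$ is needed; order $4$ is excluded by Lemma~\ref{notorder4}.
\end{enumerate}
Without an argument of this kind, your proof has no valid step from $w\not\le y_{t-1}$ to $w\not\le y_t$.
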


We also prove that $\ell(r_{t-1}\cdots r_1 w)<\ell(r_t\cdots r_1w)$ for each $t$. It is then easy to conclude that $r_n\cdots r_1ws_i$ must lie on the boundary of the annex. 

In this paper, in Section \ref{1}, we give the background of Coxeter complexes and galleries, defining folds and trivial shadows. We see how Coxeter complexes are linked to root systems, and how shadows are related to the Bruhat order on the group. Then, in Section \ref{2}, we define the new notation of annexes. 
We show some results of annexes, including that, within affine complexes, these sets are finite. Lastly, in Section \ref{3}, we prove the theorem that completes our description of the boundaries of the annexes in rank 2.

\section{Background}\label{1}
We start with a Coxeter group $(W,S)$ with presentation
\[W=\langle s_1,\hdots , s_n \mid (s_is_j)^{m_{ij}}=1\rangle,\]
where $m_{ii}=1$ for all $i$ and $m_{ij}\in \mathbb{Z}_{\geq 2}\cup\{\infty\}$ for all $i\neq j$. Let $I$ be an indexing set for $S$. The \ix{rank} of the Coxeter group with presentation above is $n$, the number of generators in the presentation. We can construct the Coxeter complex of this group. The largest-dimensional simplices are called \ix{alcoves}. They have dimension $n-1$, and they are in bijection with elements of $W$. Then any two alcoves $x,y$ are adjacent if and only if $x=ys_i$, for some $s_i$ in the generators. We call the simplices between alcoves \ix{panels}. These have dimension $n-2$. If a panel lies in $x$ and $y=xs_i$, we say that the panel has \ix{type} $i$.   See \cite {RON} for a detailed description of the construction.

\subsection{Affine Coxeter groups}

We now fix the following notation for affine Coxeter groups. We will see later that restricting to affine Coxeter groups allows us to prove nice results on the annexes. In particular, these sets are finite. 

\begin{definition}
 An \ix{affine Coxeter group} is an infinite Coxeter group that has a normal abelian subgroup with finite quotient. 
\end{definition}

\begin{example}
    Consider the example of the affine Coxeter group $\tilde{A}_2$. This has presentation
\[\tilde{A}_2=\langle s_0,s_1,s_2\mid s_i^2=1, (s_0s_1)^3=(s_0s_2)^3=(s_1s_2)^3=1\rangle.\]
The Coxeter complex associated with this Coxeter system is the tiling of the plane by equilateral triangles. 
\end{example}
\begin{example}
    Affine Coxeter groups often arise by looking at the semidirect product of a finite Coxeter group and a translation group. 
\end{example}

The next proposition follows from the classification of Dynkin diagrams for finite and affine Coxeter groups. See \cite[Chapter 4]{HUMPHREYS2} for details. 

\begin{proposition}\label{subgroupsfinite}
    Let $W$ be an affine Coxeter group generated by some set of reflections $S$. Let $S'\subsetneq S$ be any proper subset of $S$, and let $W_{S'}$ be the subgroup of $W$ generated by $S'$. Then $W_{S'}$ is finite. 
\end{proposition}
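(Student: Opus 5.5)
The plan is to use the classification of connected Coxeter graphs of affine type, as in \cite[Chapter 4]{HUMPHREYS2}. I would first reduce to the irreducible case. If $W$ is affine, it decomposes as a direct product $W=W_1\times\cdots\times W_k$ of irreducible components, corresponding to the connected components of its Coxeter graph $\Gamma$. The standard parabolic subgroup $W_{S'}$ is then the product of the parabolic subgroups $(W_j)_{S'\cap S_j}$. In the setting of this paper an affine Coxeter group has a connected diagram of affine type (the $\tilde{A}_n,\tilde{B}_n,\ldots,\tilde{G}_2$ families). I would state that convention explicitly, since with finite factors allowed, or with several affine factors, the claim could fail: removing a generator from one affine factor leaves the other factor infinite. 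So I assume $\Gamma$ is a connected affine diagram.

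Next I would use the bilinear form. For a Coxeter system, define the symmetric form $B(\alpha_s,\alpha_t)=-\cos(\pi/m_{st})$ on $V=\bigoplus_{s\in S}\mathbb{R}\alpha_s$. The key facts are these:
\begin{itemize}
\item $W$ is finite if and only if $B$ is positive definite.
\item $W$ is irreducible affine if and only if $\Gamma$ is connected and $B$ is positive semidefinite but not definite. In that case the radical of $B$ is one-dimensional and is spanned by a vector $c=\sum_s c_s\alpha_s$ with all $c_s>0$.
\end{itemize}
The strict positivity of $c$ comes from a Perron–Frobenius argument applied to the irreducible matrix $2I-2B$, whose off-diagonal entries are nonnegative.

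The central step is to show that the restriction $B|_{V_{S'}}$, where $V_{S'}=\bigoplus_{s\in S'}\mathbb{R}\alpha_s$, is positive definite. Since $B$ is positive semidefinite on $V$, its restriction to $V_{S'}$ is too. Suppose $v\in V_{S'}$ satisfies $B(v,v)=0$. Because $B$ is semidefinite, the Cauchy–Schwarz inequality gives $B(v,V)=0$, so $v$ lies in the radical of $B$. Hence $v=\lambda c$ for some scalar $\lambda$. But $c$ has nonzero coefficient on every $\alpha_s$, including those with $s\notin S'$, which is nonempty because $S'\subsetneq S$. Since $v$ has zero coefficient there, $\lambda=0$ and so $v=0$. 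Thus $B|_{V_{S'}}$ is positive definite, and it is exactly the form attached to the Coxeter system $(W_{S'},S')$. By the criterion above, $W_{S'}$ is finite.

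As an alternative, one can argue by inspecting the diagrams directly. Deleting any vertex of a connected affine diagram leaves a disjoint union of finite-type diagrams, which one checks family by family against the lists in \cite[Sections 2.4, 2.5]{HUMPHREYS2}. This is essentially how the paper's remark intends it. I expect the main obstacle to be justifying that the radical vector $c$ has all coefficients strictly positive. This is the one non-routine input, and I would cite it from \cite[Proposition 2.6]{HUMPHREYS2}, or verify it with the Perron–Frobenius argument above, rather than reprove it in full.
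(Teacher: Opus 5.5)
Your proof is correct, but it takes a different route from the paper. The paper gives no argument beyond appealing to the classification: one checks, diagram by diagram, that deleting any vertex of a connected affine diagram $\tilde{A}_n,\ldots,\tilde{G}_2$ leaves a union of finite-type diagrams. That is your ``alternative''.

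Your main argument avoids any case check. It needs only two inputs:
\begin{enumerate}[(i)]
\item the criterion that $W$ is finite $\iff$ $B$ is positive definite;
\item the Perron--Frobenius fact that, for a connected diagram whose form is positive semidefinite but degenerate, the radical of $B$ is one-dimensional and spanned by a vector with every coefficient strictly positive.
\end{enumerate}
Cauchy--Schwarz then pushes any isotropic vector of $V_{S'}$ into the radical, and the full support of $c$ forces it to be zero. This is essentially the lemma the classification itself uses to show that proper subgraphs of connected positive-type graphs are positive definite. So your route is uniform and shows exactly why ``proper'' and ``connected'' are the hypotheses that matter. The paper's route is shorter only because it outsources everything to the tables.

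You should also state the standard fact that $(W_{S'},S')$ is a Coxeter system with the restricted Coxeter matrix, so that its form really is $B|_{V_{S'}}$.

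Your caveat about conventions is a genuine correction to the statement. Under the paper's literal definition (an infinite Coxeter group with a normal abelian subgroup of finite index), $\tilde{A}_1\times A_1$ qualifies, yet deleting the $A_1$ generator leaves the infinite dihedral group. Even for irreducible groups, matching that definition with the semidefinite criterion needs a nontrivial input, e.g.\ that infinite irreducible non-affine Coxeter groups contain non-abelian free subgroups. Both you and the paper sidestep this by taking ``affine'' to mean a connected diagram of type $\tilde{X}_n$.
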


\subsection{Reflections and walls}\label{hyperplanes}
We wish to examine how the structure of the underlying Coxeter group appears in the Coxeter complex. Here we will formalise the concept of walls in the Coxeter complex, and we will fix the set of elements of our Coxeter group which are in bijection to reflections in the geometric realisation of the Coxeter complex. 

\begin{definition}\label{walls}
    An element $r\in W$ is called a \ix{reflection} if it is a conjugate of an element $s_i\in S$. The \ix{wall} $H_r$ of a reflection $r$ is the set of simplices in the Coxeter complex that are fixed by $r$ when $r$ acts on the complex by left multiplication. Then $H_r$ is a subcomplex of codimension 1.
\end{definition}

By this definition, a panel lies in a wall $H_r$ if and only if it is fixed by the reflection $r$. 

\begin{example}
    In the case that the Coxeter complex is affine, the walls are in bijection to hyperplanes of the geometric realisation. 
\end{example}

\begin{proposition}\textnormal{\cite[Proposition 2.6]{RON}}
    There is a bijection between the set of reflections of a Coxeter group, and the set of walls in the corresponding Coxeter complex.
\end{proposition}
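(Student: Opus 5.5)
The statement just above is the bijection between the reflections of $W$ and the walls of the Coxeter complex (\cite[Proposition 2.6]{RON}). I would prove it by showing that the map $r \mapsto H_r$ of Definition~\ref{walls} is well defined, surjective and injective. The key local fact is this: for any panel $P$, its stabiliser in $W$ is a group of order two generated by a reflection. To see it, suppose $P$ is the panel of type $i$ shared by the alcoves $x$ and $xs_i$. Left multiplication by $r_P := x s_i x^{-1}$ sends $x$ to $xs_i$ and $xs_i$ to $x$. It preserves types, since the action is on the left while types are recorded on the right, so it fixes $P$. Conversely, any $g$ fixing $P$ must permute the two alcoves containing $P$. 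Because the action on alcoves is simply transitive, $g$ is either $1$ or $r_P$.

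The first step is to check that $H_r$ is a genuine nonempty wall. If $r = x s_i x^{-1}$, then by the computation above $r$ fixes the type-$i$ panel of $x$, so $H_r$ contains at least one panel. I would also note that $r$ has order two and is not the identity, so it fixes no alcove. Together with the type-preserving property, this means the fixed subcomplex consists of panels and their faces, which matches the codimension-one claim in the definition. Surjectivity then follows directly from the definitions: a wall is by definition the fixed set of some reflection.

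The second step, injectivity, is where the real content lies. Suppose $H_r = H_{r'}$ for reflections $r, r'$. Since $H_r$ is nonempty, it contains some panel $P$, and both $r$ and $r'$ fix $P$. By the local fact, each of them equals either $1$ or $r_P$. Neither is the identity, so $r = r_P = r'$. The main obstacle is making the local fact airtight: the stabiliser of $P$ must act faithfully on the two alcoves through $P$. This requires knowing that exactly two alcoves contain each panel (thinness) and that $W$ acts simply transitively on alcoves. I would quote both from the construction in \cite{RON} rather than reprove them. For affine $W$ the argument can also be checked geometrically: $H_r$ is the fixed hyperplane of an affine reflection, and an affine reflection is determined by its fixed hyperplane.
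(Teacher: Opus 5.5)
Your proof is correct and takes the same approach as the paper. The paper's one-line argument rests on exactly your local fact, that $xs_ix^{-1}$ is the unique non-trivial element fixing the $i$-panel of $x$, and from it concludes bijectivity. You make explicit what the paper leaves implicit: every reflection fixes some panel, surjectivity is tautological under Definition~\ref{walls}, and injectivity follows by picking a panel in the common wall and using thinness together with the free action on alcoves.
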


\begin{proof}
    Let $p$ be an $i$-panel of a chamber $x$. Then the unique reflection that fixes $p$ and interchanges $x$ and $xs_i$ is the map $r=xs_ix^{-1}$. So the map from reflections to walls is a bijection.
\end{proof}

\begin{definition}
    Let $H$ be a hyperplane of a Coxeter complex. This hyperplane divides the set of alcoves into two sets. Define the \ix{identity half} $\Hid$ to be the set containing the identity alcove, and the \ix{infinity half} $\Hinf$ to be the set not containing the identity alcove.
\end{definition}

\begin{remark}
    Note that, if $H=H_r$, 
    \[\Hid=\{x\in W\mid \ell(rx)>\ell(x)\} \textnormal{ and } \Hinf=\{x\in W\mid \ell(rx)<\ell(x)\}.\]
\end{remark}

\subsection{Root systems}\label{roots}

Next, we recall the fact that affine Coxeter complexes arise naturally from root systems. 
We fix the following notation, following the conventions of Humphreys \cite[Chapter 3]{HUMPHREYS}.  

\begin{definition}
    Consider an $n$-dimensional vector space $E$ over $\mathbb{R}$, with inner product $\langle\cdot,\cdot\rangle$. For $\alpha\in E\backslash\{0\}$, define
    \[\alpha^\vee=\dfrac{2\alpha}{\langle\alpha,\alpha\rangle}.\]
    Also, define the set
    \[H_\alpha = \{x\in E\mid \langle x,\alpha\rangle=0\}.\]
    The \ix{orthogonal reflection} in $H_\alpha$ is the map $s_\alpha :E\to E$ with
    \[s_\alpha(x)=x-\langle x,\alpha\rangle\alpha^\vee.\]
\end{definition}

Now let us fix a root system $\Phi$ of $E$. 
We wish to consider both the finite and affine Weyl groups connected to this root system. 
\begin{definition}
    The \ix{(finite) Weyl group} $W_0$ of a root system $\Phi$ is the group generated by the reflections $s_\alpha$ for $\alpha\in R$. 
\end{definition}

The hyperplanes $H_\alpha$ for $\alpha\in R$, which are the hyperplanes orthogonal to the roots, partition $E$ into finitely many open regions, called \ix{Weyl chambers}.

We now extend the definition of our reflections to create the corresponding infinite group. For each $k\in\mathbb{Z}$, we fix the reflection $s_{\alpha;k}:E\to E$ as
\[s_{\alpha;k}(x)=x-(\langle x,\alpha\rangle-k)\alpha^\vee.\]
Now for each root $\gamma$ in our root system, and for each $k\in\mathbb{Z}$, we have the hyperplane
\[H_{\gamma,k}=\{x\in E\mid \langle x, \gamma\rangle = k\}.\]

\begin{definition}
    The \ix{affine Weyl group} $W$ of a root system $\Phi$ is the group generated by the reflections $s_{\alpha;k}$ for $\alpha\in R$, $k\in \mathbb{Z}$. 
\end{definition}

\begin{remark} All affine Coxeter complexes can be obtained from this root system construction. The hyperplanes correspond to the walls of the complex, as described above. See \cite[Chapter 4]{HUMPHREYS2} for details. 
\end{remark}

\subsection{Galleries}

Now we will take walks around our Coxeter complex, moving from alcove to alcove. We will fix how we can fold these walks, and recall the links to Bruhat order in the group. In this section, we largely follow \cite[Chapter 4]{SHA}.

\begin{definition}\label{comb.gallery}
    Given a Coxeter complex \sg, a \ix{combinatorial gallery} is a sequence
    \[\gamma = (c_0,p_1,c_1,p_2,\hdots  ,p_n,c_n),\]
    where the $c_i$ are alcoves (or chambers if our Coxeter complex if finite) and the $p_i$ are panels of \sg, such that $p_i$ is contained in $c_{i-1}$ and $c_{i}$ for all $i=1,\hdots ,n$. The length of the combinatorial gallery $\gamma$ is $n$. This counts how many alcoves there are in the sequence. Then $\gamma$ is \ix{minimal} if there does not exist a shorter gallery starting at $c_0$ and ending at $c_n$. 
\end{definition}

\begin{definition}\label{gallerytype}
    Let $\gamma=(c_0,p_1,c_1,p_2,\hdots  ,p_n,c_n)$ be a combinatorial gallery. The \ix{type} of $\gamma$ is the word $f=j_1\hdots j_n$ in $I$, where panel $p_i$ has type $j_i$. 
\end{definition}

The following proposition classifies the walls a minimal gallery crosses. See \cite[Chapter 2]{RON} for details. 

\begin{proposition}\label{gallerycrossing}
    Let $W$ be a Coxeter group. Let $H_r$ be a wall, $x\in W$ an alcove, and $\gamma$ a minimal gallery from 1 to $x$. If $\ell(rx)<\ell(x)$, $\gamma$ crosses $H_r$ exactly once. If $\ell(rx)>\ell(x)$, $\gamma$ does not cross $H_r$. 
\end{proposition}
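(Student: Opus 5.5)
We argue through the standard correspondence between walls and reflections in the Coxeter complex. The key tool is the exchange/deletion property of reduced words: a reduced word for $x=s_{i_1}\cdots s_{i_k}$ determines a minimal gallery $(c_0,p_1,c_1,\hdots,p_k,c_k)$ with $c_0=1$, $c_j=s_{i_1}\cdots s_{i_j}$ and $c_k=x$. The panel $p_j$ lies in the wall of the reflection
\[
r_j=s_{i_1}\cdots s_{i_{j-1}}s_{i_j}s_{i_{j-1}}\cdots s_{i_1},
\]
by the computation in the proof of the bijection between reflections and walls, since $c_{j-1}s_{i_j}c_{j-1}^{-1}=r_j$. So the walls crossed by $\gamma$ are exactly $H_{r_1},\hdots,H_{r_k}$.

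The first step is to show that these $k$ reflections are pairwise distinct. If $r_a=r_b$ with $a<b$, then $s_{i_a}\cdots s_{i_{b-1}}=s_{i_{a+1}}\cdots s_{i_b}$. This lets us delete the letters in positions $a$ and $b$ from the word for $x$, giving a word of length $k-2$ and contradicting reducedness. Hence $\gamma$ crosses $k=\ell(x)$ distinct walls, each exactly once. Moreover, the reflections $r_1,\hdots,r_k$ are precisely the left inversions of $x$, that is, the reflections $r$ with $\ell(rx)<\ell(x)$. One direction follows from $r_jx=s_{i_1}\cdots\widehat{s_{i_j}}\cdots s_{i_k}$, which gives $\ell(r_jx)<\ell(x)$. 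The other direction is the strong exchange condition: if $\ell(rx)<\ell(x)$, then $rx$ is obtained from the reduced word by deleting some single letter $s_{i_j}$, and then $r=r_j$.

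With this characterization both claims follow quickly. If $\ell(rx)<\ell(x)$, then $r=r_j$ for exactly one $j$, so $\gamma$ crosses $H_r$ exactly once, through $p_j$. If $\ell(rx)>\ell(x)$, then $r$ is none of the $r_j$, so no panel of $\gamma$ lies in $H_r$ and $\gamma$ does not cross it. Since $r$ is a reflection, $rx\neq x$ and the lengths differ by parity, so these two cases are exhaustive. This is consistent with the description of $\Hid$ and $\Hinf$ given in the Remark above.

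The main obstacle is the strong exchange condition itself, which is what shows that every inversion $r$ actually appears among the $r_j$. We take it from \cite{RON} (or \cite{HUMPHREYS2}) as standard. If a self-contained argument is wanted, we use the geometric fact that $H_r$ separates the alcoves $1$ and $x$ exactly when $\ell(rx)<\ell(x)$. Any gallery between alcoves on opposite sides of a wall must cross that wall at least once. Combining this with the count of exactly $k$ crossings of distinct walls gives the result.
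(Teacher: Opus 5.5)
Your proof is correct, and it takes a different route because the paper gives no argument at all: it only points to \cite[Chapter 2]{RON}, where such statements come from the geometry of walls and roots. In that approach a wall splits the chambers into two halves, and every gallery between chambers in different halves must pass through the wall. A gallery crossing the same wall twice can then be shortened by two, by reflecting the stretch between the two crossings. Your argument is the algebraic version of this. Your deletion argument for the distinctness of $r_1,\dots,r_k$ is exactly that reflection trick written in the group. The strong exchange condition replaces the separation property, i.e.\ the fact that every wall separating $1$ from $x$ has to be crossed. What you gain is that everything reduces to standard Coxeter-group combinatorics, valid for arbitrary $(W,S)$, without building a theory of roots. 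As a by-product you identify the crossed walls explicitly with the left inversions of $x$, and the paper's earlier, unproved Remark describing $\Hid$ and $\Hinf$ follows from this. The geometric route instead starts from the two halves of a wall, which makes the ``crosses at least once'' part immediate.

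Two small points. First, say why an arbitrary minimal gallery $\gamma$ from $1$ to $x$ has the form you analyse: it is unfolded and its type is a reduced word for $x$, by Lemma \ref{type} and \cite[Lemma 4.9]{SHA}. Second, your closing ``self-contained'' fallback is not independent. The fact that $H_r$ separates $1$ and $x$ exactly when $\ell(rx)<\ell(x)$ is that same Remark, and it carries the same content as your strong exchange step; your main argument does not need it.
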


\subsubsection{Folded galleries}\label{folded}
Within our definition of a gallery, it is possible that two consecutive alcoves are actually the same alcove. We call galleries with this property folded galleries. To classify these galleries, we will introduce the $(W,S)$-type of a gallery. This encodes the types of all the panels we crossed over or touched in our gallery. We will also define the decorated $(W,S)$-type. This also encodes the location of folds in our gallery. The decorated $(W,S)$-type, along with the footprint function and starting alcove, can be used to calculate the end alcove of our gallery.

\begin{definition}
    Given a combinatorial gallery $\gamma=(c_0,p_1,c_1,p_2,\hdots  ,p_n,c_n)$ of \sg, as in Definition \ref{comb.gallery}, we say that $\gamma$ is \ix{folded} (or \ix{stammering}) if, within $\gamma$, we can find an index $i$ such that $c_i=c_{i-1}$. Then we say that $\gamma$ has a \ix{fold} at panel $p_i$. If there are no folds in $\gamma$, we say that $\gamma$ is \ix{unfolded} (or \ix{non-stammering}).  
\end{definition}

\begin{definition}
    Given a gallery $\gamma=(c_0,p_1,c_1,p_2,\hdots  ,p_n,c_n)$ of length $n+1$, define the \ix{fold set} F$(\gamma)$ to be the subset of $\{1,\hdots ,n\}$ such that $i\in$ F$(\gamma)$ if and only if $\gamma$ has a fold at panel $p_i$. 
\end{definition}

To represent a gallery, we draw a path that passes through every chamber and panel in the gallery in the geometric representation of our Coxeter complex. We draw an arrow towards the end alcove of our gallery. 

\begin{figure}[!htbp]
    \begin{center}
    \includegraphics[scale=0.2]{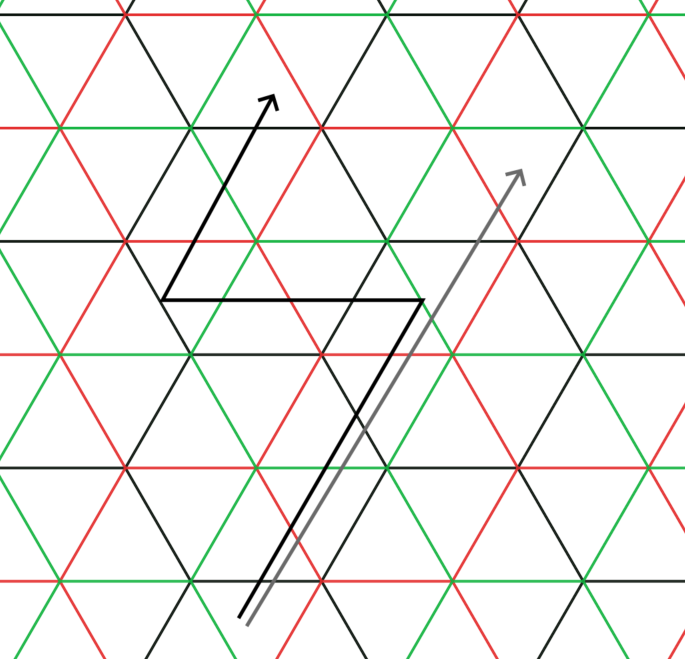}
    \end{center}
    \caption{Two galleries in $\tilde{A}_2$. The grey gallery is unfolded, and the black gallery is folded at two panels.}
\end{figure}

We note that, as $W$ has a natural left action on \sg, $W$ also acts on the set of galleries in \sg. For instance, $x\in W$ sends $\gamma = (c_0,p_1,c_1,\hdots ,p_n,c_n)$ to the gallery $x\cdot \gamma = (xc_0,xp_1,xc_1,\hdots ,xp_n,xc_n)$. 

\subsubsection{Galleries and words}
We now want to map galleries in our Coxeter complex to words in our generating set. We do this by recalling the definition of the (decorated) $(W,S)$-type of a gallery. 

\begin{definition}
    Consider a gallery $\gamma = (c_0,p_1,c_1,\hdots ,p_n,c_n)$. Let panel $p_i$ of $\gamma$ have type ${j_i}\in I$. We define its \ix{$(W,S)$-type} $\tau(\gamma)$ as the word 
    \[\tau(\gamma):=s_{j_1}\hdots s_{j_n}.\]
 
\end{definition}

Note that, from Definition \ref{gallerytype}, if $\gamma$ has type $f$, then $\gamma$ has $(W,S)$-type $s_f$. 

\begin{definition}
    The \ix{decorated $(W,S)$-type} $\tilde\tau(\gamma)$ of a gallery $\gamma = (c_0,p_1,c_1,\hdots ,p_n,c_n)$ is the decorated word
    \[\tilde\tau(\gamma):= s_{j_1}\hdots \tilde{s}_{j_i}\hdots s_{j_n},\]
    where we place a tilde on the elements $s_{j_i}$ of the word that correspond to a fold $c_{i-1}=c_i$ of the gallery. 
    
\end{definition}
We now state some simple lemmas that start to show the link between folded galleries and Bruhat order. 

\begin{lemma}\cite[Lemma 4.8]{SHA}
    Let $c_0$ be a fixed alcove in our Coxeter complex \sg. 
    \begin{enumerate}[(i)]
        \item There is a bijection between words in $S$ and unfolded galleries starting at $c_0$.
        \item There is a bijection between decorated words in $S$ and galleries starting at $c_0$. 
    \end{enumerate}
\end{lemma}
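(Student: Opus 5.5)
The plan is to construct the two bijections directly by reading a gallery off step by step from the start alcove $c_0$. I will use the fact recalled at the start of the Background section that each alcove $c$ has, for each $i\in I$, exactly one panel of type $i$, and that this panel is shared with exactly one other alcove, namely $cs_i$.

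For (i), send a word $s_{j_1}\cdots s_{j_n}$ to the gallery defined recursively by
\[
c_k = c_{k-1}s_{j_k}, \qquad p_k = \text{the type-}j_k \text{ panel of } c_{k-1}.
\]
By the adjacency description, $p_k$ lies in both $c_{k-1}$ and $c_k$, and $c_k\neq c_{k-1}$ since $s_{j_k}\neq 1$. So this is an unfolded gallery starting at $c_0$, and its type is the word we started from. Conversely, let $\gamma$ be an unfolded gallery from $c_0$. Send it to its $(W,S)$-type $\tau(\gamma)$. I will check by induction on $k$ that $\gamma$ is recovered from this word. Given $c_{k-1}$ and the type $j_k$ of $p_k$, the panel $p_k$ is forced to be the unique type-$j_k$ panel of $c_{k-1}$. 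Then $c_k$ is an alcove containing $p_k$ and different from $c_{k-1}$, so it must be $c_{k-1}s_{j_k}$. Hence the two maps are mutually inverse.

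For (ii), I use the same construction, now with two cases at each letter. The panel $p_k$ is still the type-$j_k$ panel of $c_{k-1}$ in both cases. If the letter $s_{j_k}$ is undecorated, set $c_k=c_{k-1}s_{j_k}$. If it carries a tilde, set $c_k=c_{k-1}$, which is a fold at $p_k$. In both cases $p_k\subseteq c_{k-1}\cap c_k$, so we get a gallery from $c_0$ whose decorated type $\tilde\tau$ is the decorated word we started from. Conversely, map a gallery to $\tilde\tau(\gamma)$. The same induction as in (i) recovers the gallery uniquely. The panel $p_k$ is determined by $c_{k-1}$ and its type. The alcove $c_k$ is one of the only two alcoves containing $p_k$, namely $c_{k-1}$ or $c_{k-1}s_{j_k}$. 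The tilde records exactly which of the two it is.

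The only delicate point is the uniqueness claim the whole argument rests on: each alcove has exactly one panel of each type, and each panel lies in exactly two alcoves, $c$ and $cs_i$. This is the thin-chamber-complex property of Coxeter complexes. I will cite it from the construction in \cite{RON} rather than reprove it. Once it is in hand, both bijections follow from a routine induction on the length of the gallery.
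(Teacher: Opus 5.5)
Your argument is correct. You read the gallery off letter by letter from $c_0$: the panel $p_k$ is forced to be the unique type-$j_k$ panel of $c_{k-1}$, and $c_k$ is forced to be $c_{k-1}s_{j_k}$ or $c_{k-1}$, with the tilde recording which. This gives mutually inverse maps in both (i) and (ii). The paper gives no proof of this lemma and simply imports it from \cite{SHA}; your reconstruction from the thinness of the Coxeter complex is the standard argument behind that citation.
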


\begin{lemma}\cite[Lemma 4.9]{SHA}
    Let $\gamma$ be a gallery. Then
    \begin{enumerate}
        \item $F(\gamma)=\varnothing$ if and only if $\tau(\gamma)=\tilde{\tau}(\gamma).$
        \item $\gamma$ is minimal if and only if $F(\gamma)=\varnothing$ and $\tau(\gamma)$ is reduced.
    \end{enumerate}
\end{lemma}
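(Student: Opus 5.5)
The statement to prove is Lemma 4.9 of \cite{SHA}: (1) $F(\gamma)=\varnothing$ if and only if $\tau(\gamma)=\tilde\tau(\gamma)$, and (2) $\gamma$ is minimal if and only if $F(\gamma)=\varnothing$ and $\tau(\gamma)$ is reduced. The plan is to unwind the definitions for (1), and for (2) to use the correspondence between unfolded galleries from a fixed alcove and words in $S$ (Lemma 4.8 of \cite{SHA}).

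For (1), recall that $\tilde\tau(\gamma)$ is obtained from $\tau(\gamma)$ by placing a tilde on exactly those letters $s_{j_i}$ with $i\in F(\gamma)$. So the two decorated words coincide precisely when no letter carries a tilde, which happens precisely when $F(\gamma)=\varnothing$. Nothing more is needed.

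For (2), I would first record the basic step. If $c_{i-1}\neq c_i$ share the panel $p_i$ of type $j_i$, then $c_i=c_{i-1}s_{j_i}$, since adjacency is right multiplication by the generator of the panel's type. Inductively, an unfolded gallery satisfies
\[c_n=c_0 s_{j_1}\cdots s_{j_n}=c_0\,\tau(\gamma).\]
Moreover, by Lemma 4.8 of \cite{SHA}, unfolded galleries from $c_0$ correspond bijectively to words, and their length equals the word length.

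\emph{Forward direction.} Suppose $\gamma$ is minimal.
\begin{enumerate}[(a)]
\item If $\gamma$ had a fold at $p_i$, then $c_{i-1}=c_i$. Deleting $p_i,c_i$ leaves a valid gallery, because $p_{i+1}\subset c_i=c_{i-1}$. This gallery is shorter and has the same endpoints, contradicting minimality. Hence $F(\gamma)=\varnothing$.
\item Now $c_n=c_0\tau(\gamma)$. If $\tau(\gamma)$ were not reduced, choose a reduced word $w$ for the same element, so $|w|<n$. The unfolded gallery of type $w$ from $c_0$ ends at $c_0w=c_n$ and is shorter, again a contradiction.
\end{enumerate}

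\emph{Converse.} Suppose $F(\gamma)=\varnothing$ and $\tau(\gamma)$ is reduced. Then $c_n=c_0x$ with $\ell(x)=n$. Take any gallery $\gamma'$ from $c_0$ to $c_n$ of length $m$. Remove all its folds as in step (a); this gives an unfolded gallery of length $m'\le m$ whose type $w'$ satisfies $c_0w'=c_n$, so $w'$ is a word for $x$. Therefore $n=\ell(x)\le m'\le m$, and $\gamma$ is minimal.

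The only point needing care is that removing a fold yields a legitimate gallery with the same endpoints, which is the panel-containment check done in step (a). I expect no real obstacle here.
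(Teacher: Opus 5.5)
The paper gives no proof of this lemma: it is quoted from \cite[Lemma 4.9]{SHA}, so there is no in-paper argument to compare against. Your proof is correct and complete on its own.

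Part (1) is immediate from the definition of $\tilde\tau$. For part (2), the two ingredients you use are exactly what is needed:
\begin{enumerate}
\item deleting a stammering step $(p_i,c_i)$ with $c_{i-1}=c_i$ gives a shorter gallery with the same endpoints;
\item an unfolded gallery satisfies $c_n=c_0\tau(\gamma)$, which is the paper's Lemma~\ref{type}.
\end{enumerate}
In the converse, left-cancelling $c_0$ then yields $\ell(x)\le m'\le m$, as you wrote.
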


\begin{definition}
    Consider a gallery $\gamma = (c_0,p_1,c_1,\hdots ,p_n,c_n)$ in \sg. We create a new gallery, called the \ix{footprint} ft$(\gamma)$ of $\gamma$, by deleting all pairs $(p_i,c_i)$ such that the letter $s_{j_i}$ has a tilde in $\tilde{\tau}(\gamma)$. 
\end{definition}

The following lemma follows from the definition of type of a gallery. 

\begin{lemma}\label{type}
    Let $\gamma=(c_0,p_1,c_1,p_2,\hdots ,p_n,c_n)$ be an unfolded combinatorial gallery with type $w=s_{i_1}\hdots s_{i_n}$. Then $c_n=c_0\cdot w$. Furthermore, if there is another unfolded gallery between $c_0$ and $c_n$ with type $v$, then $w=v$ as group elements. 
\end{lemma}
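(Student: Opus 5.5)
Lemma \ref{type} follows by induction on the length $n$ of the gallery. The first claim is a direct unwinding of the definition of adjacency. The second comes from the fact that the left action of $W$ on alcoves is simply transitive, since alcoves are in bijection with elements of $W$.

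For the first claim, we argue inductively. When $n=0$ the gallery is just $(c_0)$, its type is the empty word, and $c_0=c_0\cdot 1$. For the inductive step, suppose $c_{n-1}=c_0 s_{i_1}\cdots s_{i_{n-1}}$. Since $\gamma$ is unfolded, $c_n\neq c_{n-1}$. The panel $p_n$ is shared by $c_{n-1}$ and $c_n$ and has type $i_n$. By the description of adjacency recalled at the start of Section \ref{1}, the unique alcove other than $c_{n-1}$ containing the $i_n$-panel of $c_{n-1}$ is $c_{n-1}s_{i_n}$. Hence $c_n=c_{n-1}s_{i_n}=c_0 s_{i_1}\cdots s_{i_n}=c_0\cdot w$.

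The one point needing care here is the uniqueness of that neighbour: each panel of type $i$ lies in exactly two alcoves, $x$ and $xs_i$. This is part of the standard construction of the Coxeter complex, where alcoves are the cosets $x\langle\,\rangle$ and $i$-panels are the cosets $x\langle s_i\rangle=\{x,xs_i\}$. I would simply cite \cite{RON} for it rather than reprove it.

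For the second claim, let $\gamma'$ be another unfolded gallery from $c_0$ to $c_n$, with type $v$. Applying the first claim to $\gamma'$ gives $c_n=c_0\cdot v$, so $c_0\cdot w=c_0\cdot v$. Identifying alcoves with group elements, this is an equation in $W$, and left-cancelling $c_0$ gives $w=v$ as elements of $W$. The words themselves may of course differ.

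I expect no real obstacle. The only subtle step is making precise that multiplication by $s_i$ on the right realises crossing an $i$-panel. This rests entirely on the definition of panel types and on alcoves being in bijection with $W$, both of which the paper takes as given.
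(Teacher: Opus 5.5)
Your proof is correct and takes essentially the paper's approach. The paper gives no argument beyond remarking that the lemma follows from the definition of the type of a gallery; your induction on $n$, using that the $i$-panel of $x$ lies in exactly $x$ and $xs_i$, followed by left cancellation of $c_0$ in $W$, is exactly the unwinding of that remark.
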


\subsubsection{Folding and unfolding galleries}\label{foldingsandunfolding}
Now, we want to take a folded gallery, and form an unfolded gallery of the same $(W,S)$-type. Similarly, if we have an unfolded gallery, we want to form a collection of folded galleries all of the same $(W,S)$-type. The underlying idea is that we reflect sections of our gallery over hyperplanes in the Coxeter complex. Here we will formalise this. 

Figure \ref{folds} provides an example. If we wish to fold or unfold at a panel, we must fix the gallery up to that panel, and then reflect the rest of the gallery along the hyperplane containing that panel.

\begin{figure}[!htbp]
    \begin{center}
    \includegraphics[scale=0.25]{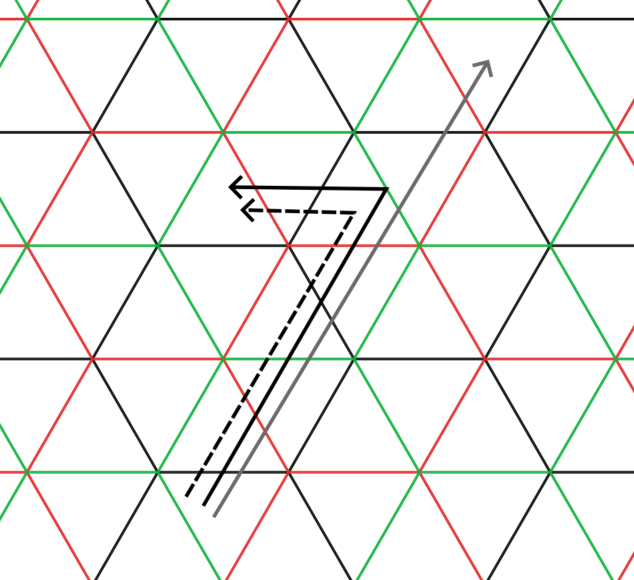}\\
    \end{center}
    \caption{Here we have a gallery shown in black. The grey line shows the corresponding unfolded gallery and the dashed line shows its footprint. }
    \label{folds}
\end{figure}

\begin{definition}
    Consider a gallery $\gamma = (c_0,p_1,c_1,\hdots ,p_n,c_n)$. Let $H_i$ be the wall containing the panel $p_i$, and let $r_i$ be the reflection across $H_i$. For $i=1,\hdots ,n$, let
    \[\gamma^i:=(c_0,p_1,\hdots ,p_i,r_ic_i,r_ip_{i+1},r_ic_{i+1},\hdots ,r_ip_n,r_ic_n).\]
    If $\gamma$ was folded at panel $p_i$, we call $\gamma^i$ an \ix{unfolding of }$\gamma$ at $p_i$. Otherwise, we call it a \ix{folding}.
\end{definition}

\begin{lemma}\cite[Lemma 4.16]{SHA}
    For all $i=1,\hdots ,n$, $\tau(\gamma)=\tau(\gamma^i)$. So folding and unfolding does not change the gallery $(W,S)$-type. Also, $(\gamma^i)^i=\gamma$.
\end{lemma}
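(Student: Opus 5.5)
The statement to prove is that $\tau(\gamma)=\tau(\gamma^i)$ and that $(\gamma^i)^i=\gamma$. The plan is to compare the panels of $\gamma$ and $\gamma^i$ one index at a time. The whole argument rests on one fact: the type of a panel is preserved by the left action of $W$ on \sg. A panel of type $j$ in alcove $x$ is the panel shared by $x$ and $xs_j$. Its image under left multiplication by any $w\in W$ is the panel shared by $wx$ and $wxs_j$, which is again of type $j$. I would state this as a preliminary observation, since it follows directly from adjacency being given by right multiplication.

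With that in hand, I first check that $\gamma^i$ is a valid combinatorial gallery.

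1. For indices $k\le i$, nothing has changed.
2. At the junction, $p_i$ lies in $c_{i-1}$ by hypothesis. Since $p_i\subset H_i$, it is fixed by $r_i$, so $p_i=r_ip_i\subset r_ic_i$.
3. For $k>i$, the containments $p_k\subset c_{k-1},c_k$ are preserved by applying $r_i$.

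Next, $\tau(\gamma)=\tau(\gamma^i)$. Panels $p_1,\dots,p_i$ are unchanged, so their types agree. Each later panel $r_ip_k$ has the same type as $p_k$ by the preliminary observation. Hence the type words coincide letter by letter.

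For the involution claim, I would first note that the $i$-th panel of $\gamma^i$ is still $p_i$. So the wall used to form $(\gamma^i)^i$ is again $H_i$, with reflection $r_i$. Applying the construction again fixes everything up to $p_i$ and sends each $r_ic_k$, $r_ip_k$ to $r_i^2c_k=c_k$ and $r_i^2p_k=p_k$, since $r_i$ is a reflection, hence an involution. This returns $\gamma$.

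It is also worth remarking that the fold status at $p_i$ swaps: $c_i=c_{i-1}$ holds iff $r_ic_i\neq c_{i-1}$. This is because $r_i$ exchanges the two alcoves containing $p_i$. This justifies the folding/unfolding terminology, though it is not needed for the statement.

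The main obstacle is only the bookkeeping at the junction index: making sure that $r_i$ is the reflection fixing $p_i$ (as in the proof of the bijection between reflections and walls, $r_i=c s_j c^{-1}$ for $c$ either alcove at $p_i$), so that $p_i$ is genuinely shared. Everything else is immediate from equivariance.
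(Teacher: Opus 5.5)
Your proof is correct. It rests on three facts, which together give both claims: the left $W$-action on $\Sigma$ preserves panel types; $r_i$ fixes $p_i$, so $\gamma^i$ is a genuine gallery whose $i$-th panel is still $p_i$ (a check the statement leaves implicit); and $r_i^2=1$. The paper gives no proof of its own, only citing \cite[Lemma 4.16]{SHA}, and your argument is the standard direct verification from the definitions, so there is no different route to compare against.
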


\begin{lemma}\cite[Lemma 4.17]{SHA}
    For all $i,j\in\{1,\hdots,n\}$, $(\gamma^i)^j=(\gamma^j)^i$, i.e.\ foldings commute.
\end{lemma}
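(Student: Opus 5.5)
The statement to prove is Lemma 4.17 of \cite{SHA}: for a gallery $\gamma=(c_0,p_1,c_1,\hdots,p_n,c_n)$ and indices $i,j$, we have $(\gamma^i)^j=(\gamma^j)^i$. The plan is to compute both sides explicitly, entry by entry, and compare. By symmetry and the trivial case $i=j$, we may assume $i<j$.

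\textbf{Computing $(\gamma^i)^j$.} Let $r_k$ denote the reflection in the wall $H_k$ containing $p_k$ in $\gamma$.
\begin{enumerate}[(i)]
    \item Positions $0,\hdots,i$ of $\gamma^i$ agree with $\gamma$.
    \item From position $i$ onwards, every alcove and panel of $\gamma^i$ is the image under $r_i$ of the corresponding object of $\gamma$.
    \item In particular, the $j$-th panel of $\gamma^i$ is $r_ip_j$. It lies in the wall $r_iH_j$, whose reflection is the conjugate $r_ir_jr_i$, since $r_i$ acts on the complex by left multiplication.
    \item So $(\gamma^i)^j$ agrees with $\gamma^i$ up to position $j$. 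Every later entry $r_ix$ (with $x$ the corresponding entry of $\gamma$) is replaced by $(r_ir_jr_i)r_ix=r_ir_jx$.
\end{enumerate}
Hence $(\gamma^i)^j$ has entries
\[x \ (\text{up to } p_i),\qquad r_ix \ (\text{from } c_i \text{ through } p_j),\qquad r_ir_jx \ (\text{from } c_j \text{ on}).\]

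\textbf{Computing $(\gamma^j)^i$.}
\begin{enumerate}[(i)]
    \item Since $i<j$, the gallery $\gamma^j$ agrees with $\gamma$ through $p_j$, and in particular at $p_i$.
    \item So the relevant wall for the second operation is still $H_i$, with reflection $r_i$.
    \item Applying $r_i$ to all entries after $p_i$ in $\gamma^j$ gives: entries from $c_i$ through $p_j$ become $r_ix$, and entries from $c_j$ on become $r_ir_jx$.
\end{enumerate}
This matches $(\gamma^i)^j$ entrywise, which proves the lemma.

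\textbf{Main obstacle.} The only delicate point is step (iii) of the first computation: identifying the wall through $r_ip_j$ and its reflection. I would justify it as follows. If $r_j$ fixes $p_j$, then $r_ir_jr_i$ fixes $r_ip_j$. Since $r_ir_jr_i$ is a conjugate of a reflection, it is itself a reflection. By the bijection between reflections and walls (\cite[Proposition 2.6]{RON}), and the fact that each panel lies in a unique wall, $r_ir_jr_i$ is exactly the reflection in the wall containing $r_ip_j$.

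I would also check that the fold or non-fold status plays no role, since the operation $\gamma\mapsto\gamma^k$ is defined identically in both cases. The boundary index conventions (whether $c_i$ itself is reflected) are consistent with the definition of $\gamma^i$ as stated.
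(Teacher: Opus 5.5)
The paper gives no proof of this lemma; it quotes it from \cite[Lemma 4.17]{SHA}, so there is no internal argument to match. Your direct computation is correct and complete.

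For $i<j$, both $(\gamma^i)^j$ and $(\gamma^j)^i$ consist of:
\begin{itemize}
\item the entries $x$ of $\gamma$ up to $p_i$;
\item the entries $r_ix$ from $c_i$ through $p_j$;
\item the entries $r_ir_jx$ from $c_j$ onwards.
\end{itemize}
You also isolate the one step that needs care. The wall through the $j$-th panel $r_ip_j$ of $\gamma^i$ is $r_iH_j$, with reflection $r_ir_jr_i$. This holds because left multiplication by $r_i$ is a type-preserving automorphism of the complex carrying the fixed set of $r_j$ to that of $r_ir_jr_i$, and a panel lies in exactly one wall.

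For comparison, there is a shorter route using only tools the paper quotes. The decorated type $\tilde\tau(\gamma^i)$ is $\tilde\tau(\gamma)$ with the tilde in position $i$ toggled:
\begin{itemize}
\item the type is unchanged by \cite[Lemma 4.16]{SHA};
\item the fold status at each position $k\neq i$ is preserved, because $r_i$ is a bijection;
\item the fold status at position $i$ flips, because $r_i$ interchanges the two alcoves containing $p_i$.
\end{itemize}
Toggles at different positions commute. A gallery starting at $c_0$ is determined by its decorated type \cite[Lemma 4.8]{SHA}, so the two iterated foldings coincide.

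That version never identifies conjugated reflections, but it rests on the bijection of Lemma 4.8 of \cite{SHA}. Yours is self-contained, and it gives the explicit form of a multifolding as a by-product. For $j_1<\cdots<j_m$, the entries of $\gamma^{\{j_1,\hdots,j_m\}}$ from $c_{j_m}$ onwards are $r_{j_1}\cdots r_{j_m}x$, where the $r_{j_k}$ are the reflections of the original gallery $\gamma$. This is useful whenever one needs to locate the end alcove of a multifolded gallery.
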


Because of this property, we are able to define a \ix{multifolding} with respect to a subset $J$ of $\{1,\hdots ,n\}$ as the (un-)foldings $\gamma^J$. Now multifolding does not affect the $(W,S)$-type. Then the set of folds of $\gamma^J$ will be the symmetric difference between $F(\gamma)$ and $J$. So, if $J$ and $K$ are subsets of $\{1,\hdots ,n\}$, $(\gamma^J)^K=\gamma^{J\Delta K}$. The following corollary now follows.

\begin{corollary}
    Given any gallery $\gamma$, there is a subset $J\subset \{1,\hdots ,n\}$ such that $\gamma^J$ is unfolded, and $\gamma$ and $\gamma^J$ have the same $(W,S)$-type.
\end{corollary}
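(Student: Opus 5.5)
The plan is a simple induction on the number of folds. Let $\gamma$ be a gallery with $n$ panels and fold set $F(\gamma)$, and take $J = F(\gamma)$. By the remark after the commuting-foldings lemma \cite[Lemma 4.17]{SHA}, the multifolding $\gamma^J$ is well defined, so we need to check two things: that $\gamma^{F(\gamma)}$ is unfolded, and that its $(W,S)$-type equals that of $\gamma$.

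For the type, I would argue by induction on $|J|$. A single (un)folding preserves the $(W,S)$-type by \cite[Lemma 4.16]{SHA}. Since foldings commute, $\gamma^J$ is obtained by applying the single operations $\gamma \mapsto \gamma^{j}$ for $j \in J$ one after another, in any order. Applying the lemma at each stage gives $\tau(\gamma^J) = \tau(\gamma)$.

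For the absence of folds, I would use the statement that the fold set of $\gamma^J$ is $F(\gamma) \,\Delta\, J$. This reduces to the single-step claim $F(\gamma^i) = F(\gamma) \,\Delta\, \{i\}$, proved as follows.
\begin{itemize}
\item At positions $k < i$, the gallery $\gamma^i$ agrees with $\gamma$.
\item At positions $k > i$, both $c_{k-1}$ and $c_k$ are replaced by their images under the same reflection $r_i$, so the equality $c_{k-1} = c_k$ holds in $\gamma^i$ exactly when it holds in $\gamma$.
\item At position $i$ itself, the reflection $r_i$ fixes the panel $p_i$ and swaps the two alcoves containing it. Hence $r_i c_i = c_{i-1}$ holds exactly when $c_i \neq c_{i-1}$. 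This is because $c_i$ contains $p_i$, so it is either $c_{i-1}$ or the other alcove across $p_i$.
\end{itemize}
Thus the fold status toggles at position $i$ only. Iterating this, $F(\gamma^{F(\gamma)}) = F(\gamma) \,\Delta\, F(\gamma) = \varnothing$, so $\gamma^{F(\gamma)}$ is unfolded.

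The one step needing genuine care is the position-$i$ toggle. It relies on each panel lying in exactly two alcoves, and on the reflection $r_i$ fixing $p_i$ and interchanging those two alcoves. Both facts come from the correspondence between walls and reflections recalled above (\cite[Proposition 2.6]{RON}). Everything else is bookkeeping.
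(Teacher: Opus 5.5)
Your proposal is correct and follows the paper's argument: take $J=F(\gamma)$, use that the fold set of $\gamma^J$ is $F(\gamma)\,\Delta\,J=\varnothing$, and use that (un)folding preserves the $(W,S)$-type by \cite[Lemma 4.16]{SHA}. The only difference is that you prove the single-step toggle $F(\gamma^i)=F(\gamma)\,\Delta\,\{i\}$, using that $r_i$ fixes $p_i$ and swaps the two alcoves containing it, whereas the paper only asserts the symmetric-difference fact before stating the corollary.
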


Now we fix a bijection between $W$ and the alcoves of our Coxeter complex. We denote the alcove associated to the identity element by 1.

\begin{definition}
    Let $w,x\in W$. Let $\gamma$ be any gallery starting in 1 and ending in $w$. We write $w\rightharpoonup x$ if there is a folding $\gamma^I$ of $\gamma$ such that the the end alcove of $\gamma^I$ is $x$.
\end{definition}

\begin{remark}
    This definition does not depend on the choice of gallery. This is because the trivial orientation, where all folds are allowed, is braid invariant. See \cite[Chapter 5]{SHA} for details. 
\end{remark}

\begin{definition}
    Consider a Coxeter system $(W,S)$. Let $w\in W$. The \ix{(trivial) shadow} of $w$ is the set 
    \[\textnormal{Sh}(w):=\{x\in W\mid w\rightharpoonup x\}.\] 
\end{definition}

\begin{remark}
    In this paper, we are restricting our discussion to the trivial shadow. We can construct more complicated shadows by placing an orientation on our Coxeter complex. This will restrict our choice of foldings. See \cite{SHA} for more details. 
\end{remark}

\begin{definition}
    Let $x=s_1\hdots s_n$ be a reduced expression for $x\in W$. Let $y\in W$. We say that $y\leq x$ if there exists a reduced expression for $y$ of the form $s_{i_1}\hdots s_{i_k}$ with $1\leq i_1\leq\hdots \leq i_j\leq n$. This ordering is called the \ix{Bruhat order}.
\end{definition}

The following proposition recalls the link between Bruhat order and the shadow of an element.

\begin{proposition}\cite{SHA}
   For $x,y\in W$, $y\leq x$ if and only if $y\in \Sh{x}$.
\end{proposition}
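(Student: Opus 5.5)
This proposition is quoted from \cite{SHA}, but a self-contained argument is short. I would connect the two notions through the subword property, using the correspondence between galleries and decorated words in \cite[Lemma 4.8]{SHA}. Fix a reduced expression $x=s_{j_1}\cdots s_{j_n}$, and let $\gamma$ be the unfolded gallery of this type starting at $1$. By Lemma \ref{type}, $\gamma$ ends at $x$, and it is minimal because the word is reduced. Foldings $\gamma^J$ correspond to decorated versions of the same word with tildes on the letters indexed by $J$. Since foldings commute and $(\gamma^J)^K=\gamma^{J\Delta K}$, every subset $J\subseteq\{1,\dots,n\}$ arises this way.

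The key computation is the end alcove of $\gamma^J$. Folding at $p_i$ fixes the gallery up to $c_{i-1}$ and then stays there: the next step crosses no panel and instead stammers. I would check by induction on $i$ that the alcoves of $\gamma^J$ are obtained by right-multiplying by $s_{j_i}$ exactly when $i\notin J$. So the end alcove of $\gamma^J$ is the subword product $\prod_{i\notin J} s_{j_i}$, taken in order. Equivalently, the end of $\gamma^J$ is the end of its footprint $\mathrm{ft}(\gamma^J)$, which is an unfolded gallery of type the complementary subword. Lemma \ref{type} then identifies that end alcove with the product. This gives
\[
\Sh{x}=\Bigl\{\textstyle\prod_{i\notin J}s_{j_i} : J\subseteq\{1,\dots,n\}\Bigr\},
\]
the set of all subword products of a fixed reduced word for $x$. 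By the preceding remark (braid invariance), this set does not depend on the chosen gallery.

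Both directions now follow from the subword property.

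\begin{itemize}
\item If $y\le x$, then $y$ has a reduced expression that is a subword of the reduced word for $x$. Folding at the complementary positions gives $x\rightharpoonup y$.
\item Conversely, if $y\in\Sh{x}$, then $y$ is a product of some subword, which need not be reduced. I would invoke the deletion (exchange) condition: any subword expression can be shortened to a reduced subword representing the same element, by deleting pairs of letters. That produces a reduced subword for $y$, so $y\le x$.
\end{itemize}

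The main obstacle is the independence of the reduced word in the definition of Bruhat order. That definition is phrased with respect to one (or every) reduced expression of $x$, and showing the subword set is the same for all reduced expressions is the genuine content. It is exactly the braid invariance of the trivial orientation recorded in the preceding remark, or equivalently the classical subword property \cite{COMB}. I would cite that rather than reprove it.
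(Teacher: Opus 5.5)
Your argument is correct and is essentially the paper's proof. Fold the gallery of a fixed reduced word for $x$ at the positions complementary to a subword, and read off the end alcove of a folding as the ordered product of the unfolded letters. Your use of the deletion condition in the converse is a genuine improvement: the paper concludes $y\le x$ directly from $y=s_{i_1}\cdots s_{i_k}$ without checking that this subword expression is reduced, which its definition of Bruhat order requires. Your remark that braid invariance makes the set of subword products independent of the chosen reduced word also makes explicit a point the paper leaves implicit.
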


\begin{proof}
    Assume $y\leq x$. Let $x=s_1\hdots s_n$ and $y=s_{i_1}\hdots s_{i_k}$. Let $\gamma$ be the gallery of type $s_1\hdots s_n$ starting at 1. Then $\gamma$ ends in $x$. Let $I=\{1,\hdots,n\}-\{i_1,\hdots,i_k\}$. Then $\gamma^I$ ends in $y$. 

    Now assume that $y\in \Sh{x}$. Let $\gamma$ be a gallery of type $s_1\hdots s_n$ starting at 1 and ending in $x$. Then, for some $I$, the footprint of $\gamma^I$ ends in $y$. Define $i_1,\hdots,i_k$ such that $I=\{1,\hdots,n\}-\{i_1,\hdots,i_k\}$ and $1\leq i_1\leq\hdots \leq i_j\leq n$. Then $y=s_{i_1}\hdots s_{i_k}$, and so $y\leq x$. 
\end{proof}

Therefore, we have that the trivial Bruhat interval $[1,x]$ is equal to the shadow of $x$.

\section{The annex of an alcove}\label{2}
We have seen that the trivial Bruhat interval has a nice description in terms of foldings of galleries, and therefore shadows. We would like to have a similar description for any Bruhat interval $[y,z]$. These intervals arise in many different areas of mathematics. For example, they play an important role in the exploration of Schubert varieties \cite{SCHUBERT}, and are fundamental to Kazhdan--Lusztig polynomials \cite{KL}. See \cite{COMB} for details of the applications of Bruhat order. If $x\in [y,z]$, we have that $x\in [1,z]$, and $y\in [1,x]$. The first statement, that $x\in [1,z]$, is the trivial shadow. But the second statement, that $y\in [1,x]$, is asking us to find all $x$ such that $y$ is in the trivial shadow of $x$. We explore the geometry of this set in this paper.

\begin{definition}
    Let $x$ be an alcove in a Coxeter complex \sg. Define the \ix{preshadow} of $x$ to be
    \[\textnormal{PSh}(x)=\{y\in\Sigma\mid x\in \textnormal{Sh}(y)\}.\]
\end{definition}

\begin{figure}[htbp!]
    \centering
    \includegraphics[width=0.3\linewidth]{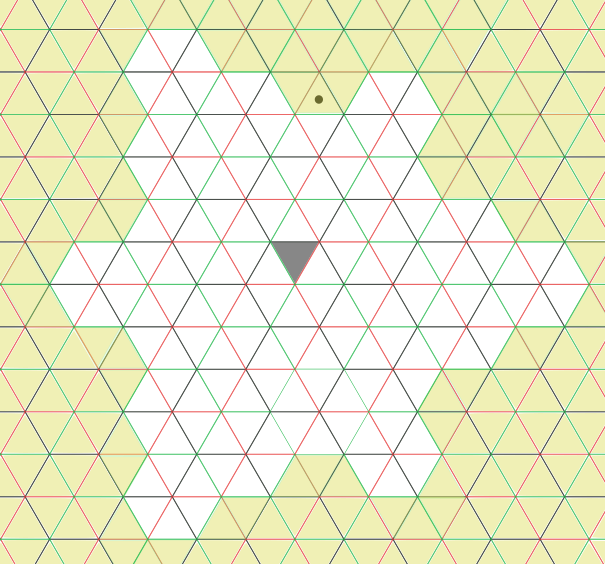}
    \caption{The alcoves shaded yellow are in the preshadow of $x=s_0s_1s_2s_0s_1s_0s_2$.}
\end{figure}

In the case of infinite Coxeter groups, these sets are infinite. So we instead want to classify every alcove that is not in the preshadow. We call this set the annex, and we will see in Theorem \ref{finite} below that this is a finite set. 
\begin{definition}
    Define the \ix{annex} of $x$, $\nPSh{x}$, to be the complement of the preshadow of $x$, i.e.\
    \[\nPSh{x}=\{y\in\Sigma\mid x\notin \textnormal{Sh}(y)\}.\]
\end{definition}
\begin{figure}[htbp!]
    \centering
    \includegraphics[width=0.3\linewidth]{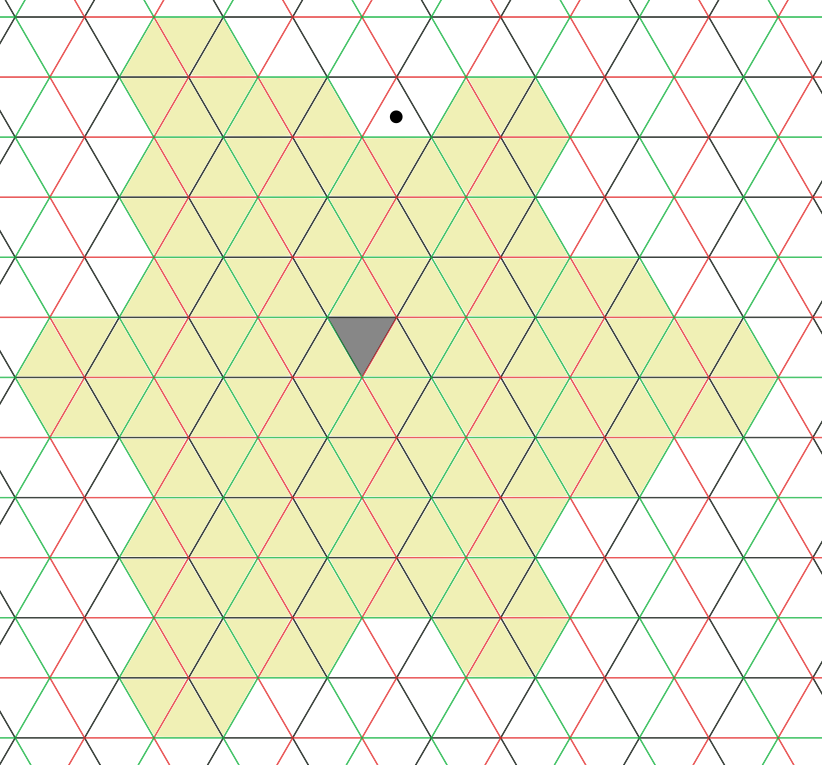}
    \caption{The annex of $x=s_0s_1s_2s_0s_1s_0s_2$ is the set of alcoves shaded yellow.}
    \label{alcovepic}
\end{figure}

\begin{remark}
    Note that $y\in \nPSh{x}$ if and only if $x\nleq y$ as group elements, with respect to the Bruhat order.
\end{remark}

\begin{remark}
    We want to calculate the Bruhat interval $[x,y]$. Using our notation, we have
    \[[x,y]=\Sh{y}\cap\PSh{x}=\Sh{y}-\nPSh{x}. \]
\end{remark}
\subsection{Fundamental properties of the annex}
We start by proving some fundamental properties of the annex for all Coxeter groups. 

\begin{lemma}\label{xinw}
    If $x\in \PSh{w}$, then $\PSh{x}\subseteq \PSh{w}$, and hence $\nPSh{w}\subseteq \nPSh{x}$.
\end{lemma}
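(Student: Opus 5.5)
The plan is to unwind the definitions and use transitivity of the Bruhat order, through the Proposition identifying $y\leq x$ with $y\in\Sh{x}$.

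First I translate the hypothesis. By the definition of the preshadow, $x\in\PSh{w}$ means $w\in\Sh{x}$, which by that Proposition means $w\leq x$ in Bruhat order. Next I take an arbitrary $y\in\PSh{x}$. By the same reasoning this gives $x\in\Sh{y}$, that is, $x\leq y$.

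The one substantive step is transitivity: from $w\leq x$ and $x\leq y$ I want $w\leq y$. With the subword definition used in the paper, I would argue as follows.

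\begin{enumerate}[(i)]
\item Fix a reduced expression $s_{1}\cdots s_{n}$ for $y$.
\item Since $x\leq y$, it contains a subword $s_{i_1}\cdots s_{i_k}$ that is a reduced expression for $x$.
\item Since $w\leq x$, with the definition read as ``for every reduced expression'', the reduced expression $s_{i_1}\cdots s_{i_k}$ of $x$ contains a subword that is reduced for $w$.
\item That subword is also a subword of $s_1\cdots s_n$, so $w\leq y$.
\end{enumerate}

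The main obstacle is that the background definition is phrased with ``there exists a reduced expression''. Under that reading, step (iii) needs the standard subword property: if $w\leq x$ then every reduced expression of $x$ contains a reduced subword for $w$. I would cite this from \cite{COMB}, as the introduction's ``whenever $y$ is written'' formulation already presupposes.

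Alternatively, I can argue transitivity with galleries. The shadow is independent of the chosen gallery of type $y$ by braid invariance. Fold a minimal gallery of type $y$ to obtain a gallery ending at $x$, and take its footprint, which is a minimal (reduced) gallery to $x$. Folding that footprint further reaches $w$, and composing the folds gives a multifolding of the original gallery ending at $w$. This route also relies on the independence from the choice of reduced word.

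Having $w\leq y$, the Proposition gives $w\in\Sh{y}$, so $y\in\PSh{w}$. Since $y$ was arbitrary, $\PSh{x}\subseteq\PSh{w}$. Finally, taking complements in $\Sigma$ reverses the inclusion and yields $\nPSh{w}\subseteq\nPSh{x}$.
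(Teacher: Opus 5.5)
Your proof is correct and follows the paper's approach: both rewrite the hypotheses as $w\leq x$ and $x\leq y$ and conclude $w\leq y$ by transitivity. The paper simply cites that Bruhat order is a partial order, whereas you unpack transitivity via the subword property. One small point on your optional gallery route: the footprint of an arbitrary folding need not be a reduced gallery, so you would have to pass to a reduced subword for $x$ (possible by the deletion property) before folding further.
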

\begin{proof}
    Let $y\in \PSh{x}$. Then $x\leq y$. Now $w\leq x$ as $x\in \PSh{w}$, so $w\leq y$, as the Bruhat order is a partial order. Thus, $y\in\PSh{w}$. 
\end{proof}

\begin{proposition}\label{rxinx}
    Let $W$ be a Coxeter group, and let $r\in W$ be a reflection. Let $x\in W$ such that $\ell(rx)>\ell(x)$. Then $rx\in \PSh{x}$. 
\end{proposition}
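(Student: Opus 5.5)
We need to show $x \le rx$ in Bruhat order whenever $\ell(rx)>\ell(x)$; this is the standard fact that multiplying by a reflection which increases length moves up in Bruhat order. By the proposition identifying the Bruhat order with shadows, it suffices to exhibit a folding of a gallery of type a reduced word for $rx$ that ends at $x$. Equivalently, we must find a reduced expression for $rx$ from which deleting a single letter leaves an expression for $x$.

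The plan is to use the geometry of walls and Proposition~\ref{gallerycrossing}. Since $\ell(r\cdot rx)=\ell(x)<\ell(rx)$, the remark describing $\Hinf$ places $rx$ in the infinity half of $H_r$. Proposition~\ref{gallerycrossing} then says that any minimal gallery $\gamma$ from $1$ to $rx$ crosses $H_r$ exactly once. Fix a reduced word $s_{j_1}\cdots s_{j_m}$ for $rx$ and let $\gamma=(c_0,p_1,c_1,\dots,p_m,c_m)$ be the corresponding unfolded gallery from $1$. Let $i$ be the unique index for which the panel $p_i$ lies in $H_r$. Then $r$ is the reflection across the wall containing $p_i$, and the fold $\gamma^i$ keeps $c_0,\dots,c_{i-1}$ and replaces the remaining alcoves by their images under $r$. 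Its end alcove is therefore $r c_m = r(rx)=x$. Since $\gamma^i$ is a folding of a gallery from $1$ to $rx$, we get $rx\rightharpoonup x$, i.e.\ $x\in\Sh{rx}$, which means $rx\in\PSh{x}$.

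To make the identification concrete, I would check that $r=c_{i-1}s_{j_i}c_{i-1}^{-1}$, using the proof of the wall--reflection bijection (the reflection fixing the $j_i$-panel of $c_{i-1}$). From this, $r\,rx = x$ gives $x = s_{j_1}\cdots\widehat{s_{j_i}}\cdots s_{j_m}$, which is exactly the footprint word of $\gamma^i$. This is the subword characterisation of Bruhat order, and in particular there is no need for the resulting expression of $x$ to be reduced, since the shadow definition allows arbitrary foldings.

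The only delicate point is justifying that $\gamma$ actually crosses $H_r$, which is what locates the panel $p_i$. This follows from $\ell(r\cdot rx)<\ell(rx)$ together with Proposition~\ref{gallerycrossing}. Beyond that I expect no real obstacle; the rest is bookkeeping about how a fold at $p_i$ acts as left multiplication by $r$ on the tail of the gallery.
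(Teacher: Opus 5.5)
Your proof is correct and follows essentially the same route as the paper: take a minimal gallery from $1$ to $rx$, use Proposition~\ref{gallerycrossing} to find the unique panel where it crosses $H_r$, and fold there so that the gallery ends at $r(rx)=x$. Your extra check that $r=c_{i-1}s_{j_i}c_{i-1}^{-1}$, which gives $x=s_{j_1}\cdots\widehat{s_{j_i}}\cdots s_{j_m}$, is a correct explicit form of the deletion (strong exchange) step that the paper leaves implicit.
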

\begin{proof}
    Let $H$ be the wall fixed by $r$. Consider any minimal gallery $\gamma=(1,p_1,c_1,\hdots,p_n,c_n)$ from the identity to $c_n=rx$. As $r$ reduces the length of $rx$, $\gamma$ must pass over $H$ exactly once by Proposition \ref{gallerycrossing}, say at panel $p_i$. Fold the gallery $\gamma$ at $p_i$. This gives the gallery 
    \[(1,p_1,c_1,\hdots,p_i,rc_i,rp_{i+1},rc_{i+1},\hdots,rp_n,rc_n)\]
    that starts at the identity and ends at $r(rx)=x$. Therefore, $rx\in \PSh{x}$.
\end{proof}

\begin{corollary}\label{rxinxcor}
     Let $W$ be a Coxeter group, and let $r\in W$ be a reflection. Let $x,w\in W$ be such that $x\in \PSh{w}$ and $\ell(rx)>\ell(x)$. Then $rx\in \PSh{w}$.
\end{corollary}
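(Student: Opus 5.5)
This follows by combining Proposition \ref{rxinx} with Lemma \ref{xinw}. No new gallery argument is needed; we only use transitivity of the preshadow relation.

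First, apply Proposition \ref{rxinx} to the reflection $r$ and the element $x$. Since $\ell(rx)>\ell(x)$, the proposition gives $rx\in \PSh{x}$; in Bruhat order, $x\leq rx$. Concretely, a minimal gallery from $1$ to $rx$ crosses $H_r$ exactly once by Proposition \ref{gallerycrossing}, and folding it at that crossing produces a gallery ending at $x$.

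Second, use the hypothesis $x\in\PSh{w}$ in Lemma \ref{xinw}. This gives $\PSh{x}\subseteq\PSh{w}$. Since the first step showed $rx\in\PSh{x}$, we conclude $rx\in\PSh{w}$.

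Equivalently, in Bruhat-order language we have $w\leq x$ and $x\leq rx$, so $w\leq rx$ by transitivity. No step presents a real obstacle. The only point to check is that the length hypothesis is applied to $x$ and not to $w$, and that is exactly the hypothesis Proposition \ref{rxinx} requires.
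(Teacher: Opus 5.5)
Your proposal is correct and matches the paper's proof exactly: Proposition \ref{rxinx} gives $rx\in\PSh{x}$, and Lemma \ref{xinw} with $x\in\PSh{w}$ gives $\PSh{x}\subseteq\PSh{w}$. Your remark that this is just transitivity $w\leq x\leq rx$ in Bruhat order is the same argument stated in order-theoretic language.
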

\begin{proof}
    By Proposition \ref{rxinx}, $rx\in\PSh{x}$. Now, as $x\in\PSh{w}$, by Lemma \ref{xinw}, $\PSh{x}\subseteq \PSh{w}$. So $rx\in \PSh{w}$. 
\end{proof}

\begin{corollary}
     Let $W$ be a Coxeter group, and let $r\in W$ be a reflection. Let $x,w\in W$ such that $x\in \nPSh{w}$ and $\ell(rx)<\ell(x)$. Then $rx\in \nPSh{w}$.
\end{corollary}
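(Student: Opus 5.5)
The plan is to argue by contraposition and then apply Corollary \ref{rxinxcor} to the reflected element. So assume, contrary to the statement, that $rx\notin \nPSh{w}$. Since the annex is the complement of the preshadow, this means $rx\in \PSh{w}$.

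Next, transfer the length hypothesis to $rx$. Reflections are involutions, so $r(rx)=x$. The hypothesis $\ell(rx)<\ell(x)$ then reads $\ell(r\cdot rx)>\ell(rx)$.

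Now apply Corollary \ref{rxinxcor} with $rx$ in place of $x$. Its two hypotheses are $rx\in\PSh{w}$ and $\ell(r(rx))>\ell(rx)$, and both are available. It gives $r(rx)=x\in\PSh{w}$.

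This contradicts the assumption $x\in\nPSh{w}$, since the preshadow and the annex are complementary. Hence $rx\in\nPSh{w}$.

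The only point needing care is matching the roles of the two elements when invoking Corollary \ref{rxinxcor}, namely using $r^2=1$ to rewrite the length inequality. No geometric input beyond that corollary is required; equivalently, the statement is just the downward-closure of $\nPSh{w}$ along length-decreasing reflections, which is the order-ideal property of the complement of the up-set $[w,\infty)$.
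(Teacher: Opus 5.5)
Your proof is correct and is the intended argument: the paper gives no separate proof, presenting this as the immediate contrapositive of Corollary \ref{rxinxcor} applied to $rx$, with $r^2=1$ converting $\ell(rx)<\ell(x)$ into $\ell(r\cdot rx)>\ell(rx)$. Your closing remark is also accurate: the statement just says the annex, as the complement of the up-set $[w,\infty)$, is closed downward under length-decreasing reflections.
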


We state the following lemma, which will be useful in forming an inductive argument to classify the annex . 

\begin{lemma}\textnormal{\cite[Lemma 2.1]{BRUHATCOSETS}}\label{Bruhatcosets}
   Let $(W,S)$ be a Coxeter group, $s\in S$ and $u,w\in W$ such that $w\leq u$. If $\ell(w)<\ell(ws)$ and $\ell(us)<\ell(u)$, then $ws\leq u$ and $w\leq us$.  
\end{lemma}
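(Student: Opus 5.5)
This is the standard "lifting property" of Bruhat order (Deodhar's property Z). It is cited from \cite{BRUHATCOSETS}, but I would prove it directly from the subword characterisation, using the preceding Proposition identifying $\le$ with membership in the shadow. The plan is to work with a reduced expression of $u$ ending in $s$. Since $\ell(us)<\ell(u)$, we can write $u=s_{1}\cdots s_{n-1}s$ with $us=s_1\cdots s_{n-1}$ reduced. Because $w\le u$, some reduced subword of $s_1\cdots s_{n-1}s$ equals $w$. I would split into two cases according to whether this subword uses the final letter $s$.

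\emph{Case 1: the subword omits the last letter.} Then $w$ is a subword of $s_1\cdots s_{n-1}$, so $w\le us$ immediately. For $ws\le u$, append the final letter $s$ to this subword. This gives an expression for $ws$ of length $\ell(w)+1=\ell(ws)$, so it is reduced, and it is a subword of the reduced expression of $u$. Hence $ws\le u$.

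\emph{Case 2: the subword uses the last letter.} Then $w=vs$, where $v$ is a reduced subword of $s_1\cdots s_{n-1}$ with $\ell(w)=\ell(v)+1$. This forces $\ell(ws)=\ell(v)<\ell(w)$, contradicting the hypothesis $\ell(w)<\ell(ws)$. So this case cannot occur. The only point needing care is that the chosen subword is reduced for $w$; the definition of Bruhat order in the paper supplies exactly that.

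The main obstacle is conceptual rather than technical: checking that ``some reduced expression of $u$'' is enough. In the paper's definition the subword property is stated for a fixed reduced expression, and the Remark on braid invariance says that membership in $\Sh{u}$ does not depend on the gallery chosen. So I am free to pick a reduced expression of $u$ ending in $s$, and this choice is what makes the case split work. Having both $w\le us$ and $ws\le u$ then completes the proof.
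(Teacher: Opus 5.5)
Your proof is correct. The paper gives no argument of its own here: the lemma is imported verbatim from \cite{BRUHATCOSETS}. So the only comparison is between a citation and your direct proof, and your proof is the standard subword-property proof of the lifting property (as in Bj\"orner--Brenti).

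Your case split is sound.
\begin{itemize}
\item A reduced subword for $w$ that uses the final letter $s$ would give $\ell(ws)=\ell(w)-1$, contradicting $\ell(ws)>\ell(w)$.
\item Hence the subword lies inside the reduced word $s_1\cdots s_{n-1}$ for $us$, which gives $w\le us$.
\item Appending the final $s$ gives a subword of length $\ell(w)+1=\ell(ws)$. It is therefore reduced, which gives $ws\le u$.
\end{itemize}

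The one ingredient your version needs, and the citation hides, is that the subword criterion may be applied to an arbitrary reduced expression of $u$, specifically one ending in $s$. You correctly identify this. In the paper's framework it comes either from the introduction's formulation of Bruhat order (``whenever $y$ is written as a reduced expression'') or from the braid-invariance remark together with the Proposition identifying $\le$ with membership in $\Sh{u}$. Both of these are themselves cited rather than proved, so your argument is self-contained only relative to the standard independence of the subword property from the chosen reduced word.

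What your version buys is transparency about this dependence. That matters because the lemma is used in Proposition \ref{boundarypanels} and, implicitly, in every step of the form ``$w\le u$, $k\notin D_R(w)$, $k\in D_R(u)$, hence $w\le us_k$'' in the proof of Theorem \ref{daggermain}. The citation buys only brevity.
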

We can now classify the types of panels that can appear in the boundary of $\nPSh{w}$. 

\begin{proposition}\label{boundarypanels}
    Let $(W,S)$ be a Coxeter system. Consider an element $w\in W$. If $\ell(ws_i)>\ell(w)$ for some $s_i\in S$, then the boundary of $\nPSh{w}$ does not contain any panels of type $s_i$.  
\end{proposition}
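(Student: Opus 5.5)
We argue by contradiction. Suppose the boundary of $\nPSh{w}$ contains a panel of type $s_i$. A panel of type $s_i$ is shared by two alcoves $y$ and $ys_i$, so being a boundary panel means exactly one of them lies in $\nPSh{w}$ and the other lies in $\PSh{w}$. Relabelling if necessary, we may assume $\ell(y)<\ell(ys_i)$; set $u=ys_i$, so that $\ell(us_i)<\ell(u)$. We then split into two cases according to which of $y$, $u$ lies in the preshadow, and show each case is impossible.

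In the first case, $y\in\PSh{w}$ and $u\in\nPSh{w}$. Then $w\le y$, and since $y\le ys_i=u$ (because $ys_i$ is obtained from $y$ by appending a letter to a reduced word, or alternatively via Proposition \ref{rxinx} with $r=ys_iy^{-1}$), transitivity gives $w\le u$. This contradicts $u\in\nPSh{w}$. This case does not use the hypothesis on $w$ at all.

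In the second case, $u\in\PSh{w}$ and $y=us_i\in\nPSh{w}$. Now $w\le u$, and we have $\ell(w)<\ell(ws_i)$ by hypothesis and $\ell(us_i)<\ell(u)$ by our labelling. These are exactly the conditions of Lemma \ref{Bruhatcosets} with $s=s_i$, which yields $w\le us_i=y$. So $y\in\PSh{w}$, a contradiction. Hence no panel of type $s_i$ separates the annex from its complement.

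I do not expect a genuine obstacle here. The one point that needs care is the precise meaning of the boundary of $\nPSh{w}$, namely panels separating an alcove in the annex from one outside it, and the adjacency convention that the $s_i$-panel of $y$ joins $y$ to $ys_i$ (right multiplication). With those two points fixed, the whole argument reduces to Lemma \ref{Bruhatcosets}.
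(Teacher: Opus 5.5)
Your proof is correct and follows essentially the same route as the paper, which takes $y$ in the annex with $ys_i$ in the preshadow, rules out $\ell(ys_i)<\ell(y)$ by transitivity (since then $ys_i\le y$), and applies Lemma \ref{Bruhatcosets} in the remaining case. These are exactly your two cases, organised by length instead of by membership, and they use the same two ingredients: $y\le ys_i$ when $\ell(y)<\ell(ys_i)$, and Lemma \ref{Bruhatcosets} applied to $w\le ys_i$.
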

\begin{proof}
    Let us assume that $\ell(ws_i)>\ell(w)$ and the boundary of $\nPSh{w}$ contains a panel of type $s_i$. So there exists $y\in W$ such that $y\in \nPSh{w}$ but $ys_i\in \PSh{w}$. So $w\leq ys_i$, but $w\nleq y$. If $\ell(y)>\ell(ys_i)$, by the exchange property of Coxeter groups, $ys_i\leq y$, so $w\leq y$, which is a contradiction. So $\ell(y)<\ell(ys_i)$, and hence $\ell((ys_i)s_i)<\ell(ys_i)$. By assumption $\ell(w)<\ell(ws_i)$, so, applying Lemma \ref{Bruhatcosets}, we have that $w\leq ys_is_i=y$. This is again a contradiction to $y\in \nPSh{w}$. Therefore, $\nPSh{w}$ contains no panels of type $s_i$.
\end{proof}

\begin{corollary}\label{corollary}
    If $y\in\nPSh{w}$ and $\ell(ws_i)>\ell(w)$, then $ys_i\in\nPSh{w}$.
\end{corollary}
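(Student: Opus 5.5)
The statement is Corollary \ref{corollary}. I would derive it directly from Proposition \ref{boundarypanels} and its proof. The corollary says that when $s_i$ is an ascent of $w$, the annex $\nPSh{w}$ is closed under right multiplication by $s_i$. The plan is a contradiction argument that reproduces the case split of the proposition.

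Suppose $y\in\nPSh{w}$, so $w\nleq y$, and assume for contradiction that $ys_i\notin\nPSh{w}$, i.e.\ $w\leq ys_i$. I split on whether $s_i$ is a descent or an ascent of $y$.

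If $\ell(ys_i)<\ell(y)$, then $ys_i$ is a subword of a reduced expression for $y$ (delete the final letter of a reduced word for $y$ ending in $s_i$), so $ys_i\leq y$. Transitivity of the Bruhat order then gives $w\leq y$, a contradiction.

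If $\ell(ys_i)>\ell(y)$, set $u=ys_i$, so that $\ell(us_i)<\ell(u)$. By hypothesis $\ell(w)<\ell(ws_i)$, and we have assumed $w\leq u$. Lemma \ref{Bruhatcosets} then yields $w\leq us_i=y$, again a contradiction.

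Both cases are impossible, so $ys_i\in\nPSh{w}$.

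Alternatively, one could phrase this geometrically. The alcoves $y$ and $ys_i$ share a panel of type $s_i$, and if exactly one of them lay in $\nPSh{w}$ this panel would lie on the boundary of the annex, contradicting Proposition \ref{boundarypanels}. I would still write out the direct argument above, since "boundary" was not formally defined.

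There is no real obstacle. The only point needing care is the descent case, where we must justify $ys_i\leq y$ from the subword definition rather than from the exchange property.
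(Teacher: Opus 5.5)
Your proof is correct and is essentially the paper's. The paper proves the corollary in one line from Proposition \ref{boundarypanels}, and that proposition's proof is exactly your case split: if $s_i$ is a descent of $y$, use $ys_i\leq y$ and transitivity; otherwise apply Lemma \ref{Bruhatcosets} to $u=ys_i$. Writing the argument out directly, as you do, has the small advantage of not relying on the informally defined notion of ``boundary''.
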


\begin{proof}
    If $ys_i\notin \nPSh{w}$, then as $y\in \nPSh{w}$, the boundary of $\nPSh{w}$ contains a panel of type $s_i$. But $\ell(ws_i)>\ell(w)$, which contradicts Proposition \ref{boundarypanels}. 
\end{proof}

So we have now classified the possible panels on the boundary of the annex. Next, we introduce the concept of the right descent set, and prove that the annex is finite. 
\begin{definition}
    Let $w\in W$. Define the set 
    \[D_R(w)=\{i\in I\mid \ell(ws_i)<\ell(w)\}\]
    to be the \ix{right descent set} of $w$. 
\end{definition}

\begin{proposition}\label{inclusion}
    Let $w\in W-\{1\}$. Let $k\notin D_R(w)$. Let $W_{S-\{k\}}=\langle s_i\mid i\in I-\{k\}\rangle$. Then \[\nPSh{ws_k}\subseteq \nPSh{w}\cdot W_{I-\{k\}}=\{xy\mid x\in \nPSh{w}, y\in W_{I-\{k\}}\}.\]
   
\end{proposition}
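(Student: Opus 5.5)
We show the contrapositive: if $z\in W$ is not in $\nPSh{w}\cdot W_{I-\{k\}}$, then $z\notin\nPSh{ws_k}$, i.e.\ $ws_k\leq z$. The right coset $zW_{I-\{k\}}$ meets $\nPSh{w}$ exactly when $z$ lies in $\nPSh{w}\cdot W_{I-\{k\}}$. So the hypothesis says every element of $zW_{I-\{k\}}$ lies in $\PSh{w}$, that is, $w\leq zu$ for all $u\in W_{I-\{k\}}$. We must produce $ws_k\leq z$ from this.

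\emph{Step 1: choose the coset representative.} Take $z_0$ to be the minimal-length element of $zW_{I-\{k\}}$, which exists and is unique by standard parabolic coset theory. We have $z_0\leq z$ in Bruhat order, since $z=z_0u$ with $\ell(z)=\ell(z_0)+\ell(u)$. By transitivity it therefore suffices to prove $ws_k\leq z_0$. We also know $w\leq z_0$, because $z_0$ lies in the coset.

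\emph{Step 2: reduce to $z_0\neq 1$ and $z_0s_k<z_0$.} Since $w\neq 1$ and $w\leq z_0$, we get $z_0\neq 1$. As $z_0$ is minimal in its coset, $\ell(z_0s_i)>\ell(z_0)$ for all $i\neq k$, so the only possible right descent of $z_0$ is $k$. Since $z_0\neq 1$, its right descent set is nonempty, and hence $\ell(z_0s_k)<\ell(z_0)$.

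\emph{Step 3: apply Lemma \ref{Bruhatcosets}.} Take $s=s_k$ and $u=z_0$ there. We have $w\leq z_0$, $\ell(w)<\ell(ws_k)$ because $k\notin D_R(w)$, and $\ell(z_0s_k)<\ell(z_0)$. The lemma then gives $ws_k\leq z_0\leq z$, so $z\in\PSh{ws_k}$, which is what we wanted.

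The main obstacle is Step 2. One must justify that the minimal coset representative exists, lies below every element of its coset, and has right descents only in $\{k\}$. These are standard facts about parabolic quotients (for example Humphreys or Björner--Brenti), so I would cite them rather than reprove them. Note that the hypothesis $w\neq 1$ is used exactly to force $z_0\neq 1$.
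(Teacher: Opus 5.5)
Your argument is correct, and it takes a genuinely different route from the paper's.

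\textbf{The paper's route.} It works with words. It fixes a reduced expression for $x\in\nPSh{ws_k}$ with $w\leq x$ and takes the shortest prefix $y$ with $w\leq y$. It writes $y=zs_i$ and $x=yy'$, and then argues two things:
\begin{enumerate}
\item $i\neq k$. This silently uses the lifting property: if $y$ ended in $s_k$, Lemma \ref{Bruhatcosets} would give $w\leq ys_k$, contradicting minimality of the prefix.
\item $y'$ contains no $s_k$. Otherwise, appending that letter to a subword for $w$ inside $y$ would give $ws_k\leq x$.
\end{enumerate}
This yields $x=z\cdot(s_iy')$ with $z\in\nPSh{w}$.

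\textbf{Your route.} You instead pass to the minimal-length element $z_0$ of $zW_{I-\{k\}}$ and invoke Lemma \ref{Bruhatcosets} once, explicitly.

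\textbf{Comparison.} Your version is shorter and avoids the somewhat informal ``minimal words''/``contains a subword'' bookkeeping. It also proves slightly more: the witness in $\nPSh{w}$ can always be taken to be the canonical element $z_0$, independent of any choice of reduced expression.

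The paper's version, in exchange, needs only the subword and lifting properties, not the parabolic coset decomposition. Its witness $z$ satisfies $w\not\leq z$ but $w\leq zs_i$. So $z$ is an alcove of $\nPSh{w}$ sharing an $i$-panel with the preshadow, i.e.\ a boundary alcove of the annex. That fits the paper's later emphasis on annex boundaries.
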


\begin{proof}
    Consider $x\in \nPSh{ws_k}$. Now we can assume that $x \notin \nPSh{w}$, otherwise we can immediately conclude that $x\in\nPSh{w}\cdot W_{I-\{k\}}$, by taking $1\in W_{I-\{k\}}$. So $x$ must contain a subword equal to $w$, but cannot contain a subword equal to $ws_k$. 

    Let $x=yy'$, where $y$ and $y'$ are minimal words in the generators, and $y$ is minimal with respect to the property that $w\leq y$. Then $y$ must be nonempty as $w\neq e$, and cannot end in $s_k$ as $\ell(ws_k)>\ell(w)$. So $y=zs_i$ for some $i\in I-\{k\}$ with $\ell(z)<\ell(y)$. As $y$ was minimal with respect to the property that $w\leq y$, we have that $w\nleq z$. So $z\in \nPSh{w}$.

    Now, consider the word $y'$. If $y'$ contained $s_k$ then we can clearly see that $x$ would contain a subword equal to $ws_k$, which is a contradiction. So $y'\in W_{I-\{k\}}$. Hence,
    $x=zs_iy',$
    with $z \in \nPSh{w}$ and $s_iy'\in W_{I-\{k\}}$. Hence, $x\in\nPSh{w}\cdot W_{I-\{k\}}$.
\end{proof}

We are now able to prove Theorem \ref{finite}, which states that if we have an affine Coxeter group, then the annex of any element is finite. 

\begin{proof}(of Theorem \ref{finite})
    By induction on the length of $w$. If $\ell(w)=0,$ then $\nPSh{w}=\nPSh{1}=\varnothing$. If $\ell(w)=1$, then $w=s_k$ for some $k\in I$. Therefore, if a word contains $s_k$, it must be in the preshadow, and so 
    \[\nPSh{w}\subseteq W_{I-\{k\}},\]
    which, as $W$ is affine, is a finite set, by Proposition \ref{subgroupsfinite}. 

    Now let us assume the claim holds for any $w\in W$ with $\ell(w)<t$, with $t>0$. Consider an element $z\in W$, with $\ell(z)=t$. Then $z\neq e$, so choose $k\in D_R(z)$. Let $w=zs_k$, so $\ell(w)=t-1<t$. Now,
    \[\nPSh{z}=\nPSh{ws_k}\subseteq \nPSh{w}\cdot W_{I-\{k\}}.\]
    But $W_{I-\{k\}}$ is finite as $W$ is affine, and $\nPSh{w}$ is finite by induction, so $\nPSh{z}$ is finite. 
\end{proof}
Lastly, we show that in the specific case that $D_R(ws_k)=\{k\}$, this statement is not just an inclusion but an equality. 
\begin{proposition}\label{induction}
    Take $w\in W-\{e\}$. Let $k\notin D_R(w)$. If $D_R(ws_k)=\{k\}$, then $\nPSh{ws_k}=\nPSh{w}\cdot W_{I-\{k\}}$. 
\end{proposition}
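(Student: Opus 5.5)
The inclusion $\nPSh{ws_k}\subseteq \nPSh{w}\cdot W_{I-\{k\}}$ is already given by Proposition \ref{inclusion}, so I only need the reverse inclusion. I take $x\in\nPSh{w}$ and $y\in W_{I-\{k\}}$, and aim to show $ws_k\nleq xy$. I argue by contradiction: suppose $u:=ws_k\leq xy$. The hypothesis $D_R(ws_k)=\{k\}$ says that $\ell(us_j)>\ell(u)$ for every $j\in I-\{k\}$. This is exactly the condition needed to apply Lemma \ref{Bruhatcosets} with any generator of $W_{I-\{k\}}$.

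The plan is to peel the letters of $y$ off from the right, one at a time, while keeping $u$ below the current element in Bruhat order. Write $y=s_{j_1}\cdots s_{j_m}$ with all $j_t\neq k$, and set $v_t=xs_{j_1}\cdots s_{j_t}$, so that $v_0=x$, $v_m=xy$ and $v_{t-1}=v_ts_{j_t}$. I claim that $u\leq v_t$ implies $u\leq v_{t-1}$. There are two cases.

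\begin{enumerate}[(i)]
\item If $\ell(v_ts_{j_t})<\ell(v_t)$, then Lemma \ref{Bruhatcosets} applies, since $u\leq v_t$, $\ell(u)<\ell(us_{j_t})$ and $\ell(v_ts_{j_t})<\ell(v_t)$. It gives $u\leq v_ts_{j_t}=v_{t-1}$.
\item If $\ell(v_ts_{j_t})>\ell(v_t)$, then $v_t\leq v_{t-1}$, because $v_t$ is a subword of a reduced expression for $v_{t-1}=v_ts_{j_t}$. By transitivity of the Bruhat order, $u\leq v_{t-1}$.
\end{enumerate}

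Inducting downward from $t=m$ then gives $ws_k=u\leq v_0=x$. Since $\ell(ws_k)>\ell(w)$, we also have $w\leq ws_k$, and transitivity yields $w\leq x$. This contradicts $x\in\nPSh{w}$. Hence $xy\in\nPSh{ws_k}$, which gives the reverse inclusion and so the equality.

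The main point to check is that case (i) is the only place the descent hypothesis is used. There, $u=ws_k$ must be an ascent for $s_{j_t}$, which holds precisely because $j_t\neq k$ and $D_R(ws_k)=\{k\}$. The product $xy$ need not be length-additive, which is why the case split on whether $s_{j_t}$ is a descent of $v_t$ is needed. Neither case requires reducedness of the expression $x\cdot s_{j_1}\cdots s_{j_m}$, and case (ii) needs only the subword characterisation of Bruhat order.
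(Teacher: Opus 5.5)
Your proof is correct and follows essentially the paper's argument. The paper also takes $\subseteq$ from Proposition \ref{inclusion}, and proves $\supseteq$ by stripping generators of the $W_{I-\{k\}}$ factor one at a time, using that every $s_j$ with $j\neq k$ is an ascent of $ws_k$ via Corollary \ref{corollary}; that corollary's underlying proof (Proposition \ref{boundarypanels}) is exactly your two-case argument with Lemma \ref{Bruhatcosets}. The only differences are cosmetic: the paper inducts on $\ell(y')$ by removing a right descent and argues directly, while you argue by contradiction and allow an arbitrary (not necessarily reduced) word for $y$.
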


\begin{proof}
    By Proposition \ref{inclusion}, $\nPSh{ws_k}\subseteq \nPSh{w}\cdot W_{I-\{k\}}$. So let $x\in\nPSh{w}\cdot W_{I-\{k\}}$. Let $x=yy'$, for $y\in\nPSh{w}$ and $y'\in W_{I-\{k\}}$. Firstly we note that if $x\in\nPSh{w}$, and so $\ell(y')=0$, then $x\in\nPSh{ws_k}$, as $\ell(ws_k)>\ell(w)$. Now we make an induction argument on the length of $y'$. So assume that, if $\ell(y')=n-1$, then $x=yy'\in \nPSh{ws_k}$. 
    
    Now assume that $\ell(y')=n>0$. Then take $i\in {I-\{k\}}$ such that $\ell(y's_i)<\ell(y')$. By assumption, $yy's_i\in \nPSh{ws_k}$. Then, as $J_{ws_k}=I-\{k\}$, $\ell(ws_ks_i)>\ell(ws_k)$, and so $yy's_is_i\in \nPSh{ws_k}$ by Corollary \ref{corollary}. Therefore, $x\in \nPSh{ws_k}$. 
    
    So, by induction, we must have that $x\in \nPSh{ws_k}$ for any $x\in\nPSh{w}\cdot W_{I-\{k\}}$.
\end{proof}

Figure \ref{figgrid1} demonstrates the inductive application of this proposition. 

\begin{figure}[htbp]
     \centering
     \begin{subfigure}[b]{0.49\textwidth}
         \centering
         \includegraphics[width=\textwidth, height=5cm, keepaspectratio, trim={0 0 10 0}, clip]{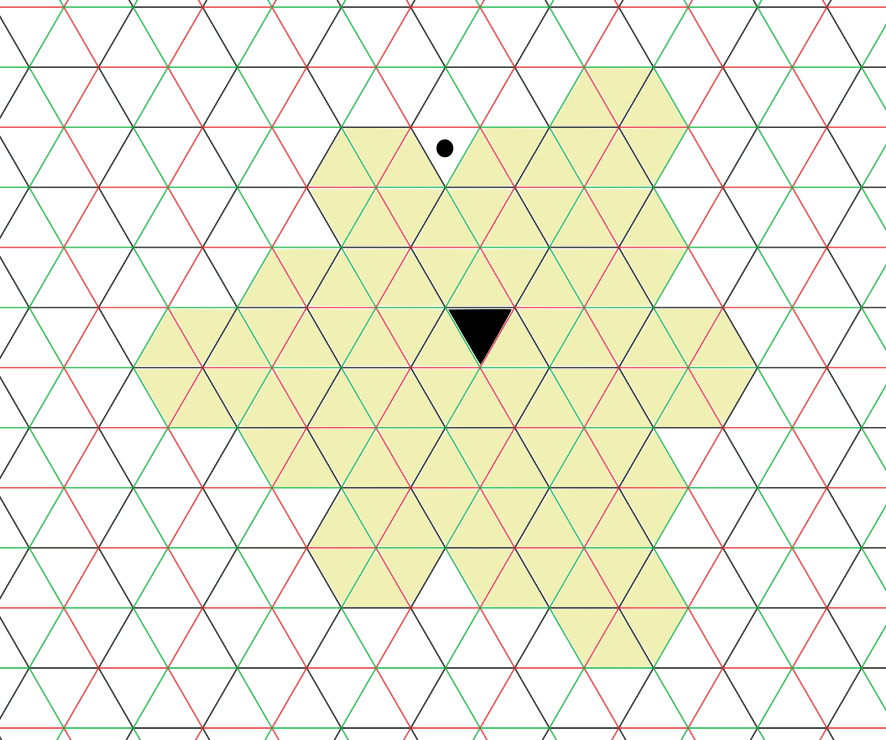}
         \caption{The annex of the alcove $s_0s_2s_1s_0s_2s_0$.}
         
     \end{subfigure}
     \hfill 
     \begin{subfigure}[b]{0.49\textwidth}
         \centering
         \includegraphics[width=\textwidth, height=5cm, keepaspectratio, trim={0 0 10 0}, clip]{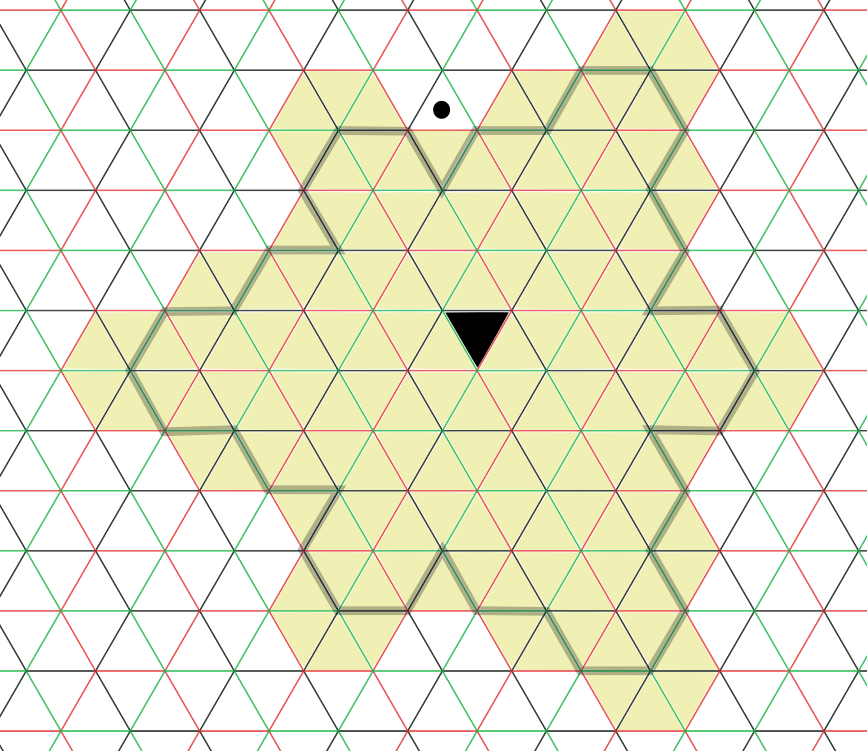}
         \caption{The annex of the alcove $s_0s_2s_1s_0s_2s_0s_1$.}
     \end{subfigure}
     
     \caption{In (a), we have the annex of the alcove $w=s_0s_2s_1s_0s_2s_0$, with $D_R(w)=\{1\}$. In (b), we have the annex of the alcove $ws_1$. We can obtain picture (b) from picture (a) by expanding the set to include every element of the form $y W_{\{0,2\}}$, where $y$ is a shaded alcove in (a). This will include all alcoves from (a), shown in (b) as the highlighted section. }
     \label{figgrid1}
\end{figure}

Lastly, we prove a symmetry of the annex if our alcove is in the fundamental chamber of the Coxeter complex. Recall that an alcove $x$ lies in the fundamental chamber with respect to $W_0$ if and only if $\ell(s_ix)>\ell(x)$ for all $i\neq 0$. 

\begin{proposition}
    \label{symmetry2}
    If $W$ is an affine Coxeter complex, and the alcove $x$ lies in the fundamental chamber, then $\nPSh{x}$ is stable under the left action of $W_{0}=W_{I-\{0\}}$. 
\end{proposition}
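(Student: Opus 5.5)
The plan is to reduce the statement to the simple reflections generating $W_0$, and then run a left-handed version of the argument in Proposition \ref{boundarypanels}. Since $W_0=W_{I-\{0\}}$ is generated by the $s_i$ with $i\neq 0$, and each $s_i$ is an involution, it suffices to show the following. For every $i\neq 0$ and every $y\in\nPSh{x}$, we have $s_iy\in\nPSh{x}$. Indeed, $s_i\nPSh{x}\subseteq\nPSh{x}$ applied twice gives equality, and stability under a generating set gives stability under $W_0$.

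The first step is a left version of Lemma \ref{Bruhatcosets}. The inverse map $v\mapsto v^{-1}$ preserves Bruhat order: reversing a reduced expression of $u$ gives a reduced expression of $u^{-1}$, and a subword that is a reduced expression for $v$ reverses to a subword that is a reduced expression for $v^{-1}$. It also preserves length. Suppose $w\le u$, $\ell(sw)>\ell(w)$ and $\ell(su)<\ell(u)$ for some $s\in S$. Then $w^{-1}\le u^{-1}$, $\ell(w^{-1}s)>\ell(w^{-1})$ and $\ell(u^{-1}s)<\ell(u^{-1})$. Lemma \ref{Bruhatcosets} gives $w^{-1}\le u^{-1}s$, and inverting yields $w\le su$.

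The second step is the main argument. Fix $i\neq 0$ and $y\in\nPSh{x}$, so $x\nleq y$, and suppose for contradiction that $x\le s_iy$.

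If $\ell(s_iy)<\ell(y)$, then $s_iy\le y$, since deleting the first letter of a reduced expression of $y$ beginning with $s_i$ gives a subword. Transitivity then gives $x\le y$, a contradiction.

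Otherwise $\ell(s_iy)>\ell(y)$. Put $u=s_iy$, so that $\ell(s_iu)<\ell(u)$. Since $x$ lies in the fundamental chamber, $\ell(s_ix)>\ell(x)$. The left lifting property with $w=x$ then gives $x\le s_iu=y$, again a contradiction. Hence $s_iy\in\nPSh{x}$.

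I expect no serious obstacle. The only point needing care is the justification that inversion preserves the Bruhat order, so that Lemma \ref{Bruhatcosets} may be used on the left. This is exactly where the hypothesis that $x$ is in the fundamental chamber enters: it supplies $\ell(s_ix)>\ell(x)$ for all $i\neq 0$.
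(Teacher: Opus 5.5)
Your proof is correct, but it takes a different route from the paper's.

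The paper does not reduce to generators. It first shows that for $z$ minimal in its coset $W_0z$ and any $w\in W_0$, one has $z\in\nPSh{x}$ if and only if $wz\in\nPSh{x}$. For this it uses that concatenating reduced words for $w$ and $z$ gives a reduced word for $wz$, and that every reduced word for $x$ begins with $s_0$. Hence a subword spelling $x$ cannot use any letter coming from $w$, which forces $x\le z$. The converse direction is $x\le z\le wz$. The paper then passes to an arbitrary $z$ via its minimal coset representative $z'=w'z$.

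You instead prove a left-handed analogue of Corollary \ref{corollary}: if $\ell(sx)>\ell(x)$, then $s\,\nPSh{x}\subseteq\nPSh{x}$. You get this by transporting Lemma \ref{Bruhatcosets} through inversion, which preserves length and Bruhat order, and then generating $W_0$.

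Your version is shorter and reuses a lemma already in the paper. It also makes clear that nothing affine is involved: for any Coxeter system, $\nPSh{x}$ is stable under left multiplication by $W_J$ whenever $J$ is disjoint from the left descent set of $x$. The paper's argument reaches the same generality, since it only uses that no reduced word for $x$ begins with $s_i$, $i\neq 0$. However, it treats whole cosets at once and relies only on the subword property and length-additivity of minimal coset representatives, rather than on the lifting property.
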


\begin{proof}
    This statement is equivalent to saying that, if $w\in W_{0}$ and $z\in \nPSh{x}$, then $wz\in \nPSh{x}$. Note that, as $x$ lies in the fundamental chamber, any reduced word for $x$ starts with $s_0$. 

    Let us first prove that, if $z\in W$ is in the fundamental chamber and $w\in W_0$, then $z\in \nPSh{x}$ if and only if $wz\in \nPSh{x}$. First suppose that $z\in \nPSh{x}$, and that $wz\notin \nPSh{x}$. Then $x\leq wz$. Now $wz$ is reduced as $z$ is minimal in $W_0\cdot z$. But there is no reduced word equal to $x$ beginning with $s_i$ for $i\neq 0$. As any reduced word equal to $w$ only contains $s_i$ for $i\neq 0$ we must have that $x\leq z$. This contradicts the fact that $z\in \nPSh{x}$. Now suppose that $wz\in \nPSh{x}$. It is obvious that $z\leq wz$, as $z$ is minimal in $W_0\cdot z$. So if $z\notin \nPSh{x}$, $x\leq z\leq wz$, contradicting the fact that $wz\in \nPSh{x}$. 

    Now for any $z\in W$, $w\in W_0$, let $z'=w'z$ be the element in $W_0\cdot z$ lying in the fundamental chamber, with $w'\in W_0$. Then we have shown that $z=w^{-1}z'\in \nPSh{x}$ if and only if $z'\in \nPSh{x}$, as $w^{-1}\in W_0$. Then $z'\in \nPSh{x}$ if and only if $wz=(ww')z'\in \nPSh{x}$, as $ww'\in W_0$. 
\end{proof}

\section{The boundary of the annex}\label{3}

Our aim is to now prove Theorem \ref{theorem}, which helps us to define the boundary of an annex.  
To prove this statement, we first need to classify the descent sets of the elements $r_t\cdots r_1ws_i$. We then prove that if there are two adjacent reflections $r_1$ and $r_2$, each increasing the length of $x$ by one each time, then $\ell(r_1r_2r_1x)=\ell(x)\pm 1$. Together, these statements form an inductive argument.  

\subsection{Descent sets}

In this section, we have a series of propositions that establish what is in the descent sets of the elements $r_t\cdots r_1ws_i$.  In the following propositions, we establish that, if the conditions of the theorem hold, then the reflection $r_n$ increases the length of $w$ and $r_{n-1}\cdots r_1 w$. From this, we can conclude that $\ell(r_n\cdots r_1ws_i)<\ell(r_n\cdots r_1w)$. Refer back to Chapter \ref{hyperplanes} for definitions on the halfspaces $H^1$ and $H^\infty$. 

 \begin{proposition}\label{subsets}
    Let $W$ be an affine Coxeter complex. Let $x\in W$. Let $r_1,\hdots,r_n\in W$ be a sequence of adjacent parallel reflections such that \[\ell(r_t\cdots r_1x)>\ell(r_{t-1}\cdots r_1x),\] for all $1\leq t\leq n$. Then $\Hid_{r_{t-1}}\subseteq \Hid_{r_{t}}$ for all $3\leq t\leq n$. 
 \end{proposition}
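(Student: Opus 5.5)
Throughout, write $x_t = r_t\cdots r_1x$, with $x_0=x$. The hypothesis says $\ell(r_tx_{t-1})>\ell(x_{t-1})$ for every $t$. By the Remark describing $\Hid$ and $\Hinf$, this means $x_{t-1}\in \Hid_{r_t}$ and $x_t=r_tx_{t-1}\in \Hinf_{r_t}$. So each $r_t$ moves the current alcove across the hyperplane $H_{r_t}$, from the side containing the identity to the other side.

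Since the reflections are parallel and adjacent, I will work in the root-system picture of Section \ref{roots}. There is a single root $\gamma$ with $H_{r_t}=H_{\gamma,k_t}$ for integers $k_t$, and consecutive hyperplanes satisfy $|k_t-k_{t-1}|=1$. Every half-space determined by such a hyperplane is of the form $\{\langle\cdot,\gamma\rangle<k\}$ or $\{\langle\cdot,\gamma\rangle>k\}$. The claim $\Hid_{r_{t-1}}\subseteq\Hid_{r_t}$ is then exactly the statement that $H_{r_t}$ lies further from the identity alcove than $H_{r_{t-1}}$, on the same side.

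Replacing $\gamma$ by $-\gamma$ if necessary, assume the identity alcove lies in the region $\langle\cdot,\gamma\rangle\in(0,1)$ (or, more generally, within some unit strip; the nonnegative/positive bookkeeping is the same). Then:
\begin{itemize}
\item if $k\ge1$, $\Hid_{\gamma,k}=\{\langle\cdot,\gamma\rangle<k\}$;
\item if $k\le0$, $\Hid_{\gamma,k}=\{\langle\cdot,\gamma\rangle>k\}$.
\end{itemize}
I will call the hyperplanes with $k\ge1$ the upper ones and those with $k\le 0$ the lower ones. Proving the inclusion amounts to showing that for $t\ge 3$ either $k_{t-1}\ge1$ and $k_t=k_{t-1}+1$, or $k_{t-1}\le0$ and $k_t=k_{t-1}-1$.

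The key step is to track the strip containing $x_{t}$. Since $x_{t-1}\in\Hid_{r_t}$ and reflection in $H_{\gamma,k_t}$ sends the strip $(k_t-1,k_t)$ to $(k_t,k_t+1)$ and vice versa, $x_{t-1}$ lies in the strip adjacent to $H_{r_t}$ on the identity side. Also, $x_t$ lies in the adjacent strip on the $\Hinf$ side.

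Suppose $H_{r_t}$ is upper, with $k_t\ge1$. Then $x_t$ lies in the strip $(k_t,k_t+1)$. The next hyperplane $H_{r_{t+1}}$ is adjacent, so $k_{t+1}=k_t\pm1$, and $x_t$ must lie on its identity side. If $k_{t+1}=k_t-1\ge1$, its identity side is $\{\langle\cdot,\gamma\rangle<k_t-1\}$, which does not contain $x_t$. So either $k_{t+1}=k_t+1$, or $k_{t+1}=k_t-1=0$. The second case is a lower hyperplane, whose identity side $\{>0\}$ does contain $x_t$.

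The main obstacle is exactly this exceptional crossover. When $k_t=1$ and $k_{t+1}=0$, the walk jumps from strip $(1,2)$ across to strip $(-1,0)$, and the hyperplanes switch from upper to lower. Once a hyperplane is upper with $k\ge2$, or lower with $k\le -1$, the argument above forces monotone continuation in the same direction. Indeed, after a crossover the new hyperplane $H_{\gamma,0}$ sends $x$ into strip $(-1,0)$, and the same reasoning shows the sequence continues downward thereafter. So the only possible failures of $\Hid_{r_{t-1}}\subseteq\Hid_{r_t}$ occur at a crossover between $H_{\gamma,1}$ and $H_{\gamma,0}$.

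The remaining task is to show that a crossover can happen at most as the step from $r_1$ to $r_2$, which is why the statement starts at $t=3$. A crossover from $H_{\gamma,1}$ at step $t$ requires $x_{t-1}$ to lie in strip $(0,1)$, the identity strip. But if $t\ge2$, then $x_{t-1}=r_{t-1}x_{t-2}$ lies in $\Hinf_{r_{t-1}}$, which is disjoint from the identity strip because every $\Hinf$ avoids the identity alcove's strip. This gives a contradiction. Hence any crossover occurs only between $r_1$ and $r_2$, and for $t\ge3$ the hyperplanes $H_{r_{t-1}}$ and $H_{r_t}$ are both upper (or both lower) with $|k_t|=|k_{t-1}|+1$. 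That is exactly $\Hid_{r_{t-1}}\subseteq\Hid_{r_t}$.
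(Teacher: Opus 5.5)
Your proof has a genuine gap, and under your reading of ``adjacent'' ($|k_t-k_{t-1}|=1$) the statement is actually false. The gap is the step ``$x_{t-1}$ lies in the strip adjacent to $H_{r_t}$ on the identity side''. It does not follow from the hypothesis: $\ell(r_tx_{t-1})>\ell(x_{t-1})$ only says $x_{t-1}\in\Hid_{r_t}$, and $x_{t-1}$ may be far from $H_{r_t}$. Your treatment of the crossover (``a crossover from $H_{\gamma,1}$ at step $t$ requires $x_{t-1}$ to lie in strip $(0,1)$'') rests entirely on this claim.

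Here is a counterexample. Take $x=1$, $r_1=r_3=s_{\gamma;0}$ and $r_2=s_{\gamma;1}$.
Then $x_1=s_{\gamma;0}(1)$ lies in the strip $(-1,0)$. This strip is in $\Hid_{r_2}$ but is not adjacent to $H_{\gamma,1}$.
Next, $x_2=s_{\gamma;1}s_{\gamma;0}(1)$ is the translate of the identity alcove by $\gamma^\vee$, and lies in the strip $(2,3)\subseteq\Hid_{r_3}$.
So all three steps increase length, yet $\Hid_{r_2}=\{\langle\cdot,\gamma\rangle<1\}\not\subseteq\{\langle\cdot,\gamma\rangle>0\}=\Hid_{r_3}$. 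In fact the walk can bounce between $H_{\gamma,0}$ and $H_{\gamma,1}$ forever, with length increasing at every step, which is exactly what your crossover argument claims to exclude.

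The paper means something stronger by a sequence of adjacent parallel reflections: the hyperplanes are consecutive in one fixed direction, $H_{r_t}=H_{\alpha,m+t}$ (or $H_{\alpha,m-t}$), as is explicit in Proposition~\ref{threeparallel}. With that reading, the argument can be repaired as follows.

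\begin{itemize}
\item Your exclusion of an inward step on one side survives. It only needs $x_{t-1}\in\Hinf_{r_{t-1}}\cap\Hid_{r_t}$, not the strip claim.
\item The crossover must be excluded by looking two steps back rather than at strips. If $\{k_{t-1},k_t\}=\{0,1\}$, monotonicity forces $H_{r_{t-2}}$ to be $H_{\gamma,-1}$ or $H_{\gamma,2}$. This hyperplane lies on the far side of $H_{r_{t-1}}$ from the identity, so $\Hinf_{r_{t-2}}\subseteq\Hinf_{r_{t-1}}$, which is disjoint from $\Hid_{r_{t-1}}$.
\item But $x_{t-2}=r_{t-2}x_{t-3}$ lies in $\Hinf_{r_{t-2}}\cap\Hid_{r_{t-1}}$, a contradiction.
\end{itemize}

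This last step is exactly where $t\geq 3$ is needed, and it is essentially the paper's proof.
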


\begin{proof}
    Let us assume that $\Hid_{r_{t-1}}\not\subseteq \Hid_{r_{t}}$ for some $3\leq t\leq n$. Then we cannot have that $\Hid_{r_{t-1}}\not\subseteq \Hinf_{r_{t}}$, otherwise $\Hid_{r_{t-1}}\cap \Hid_{r_{t}}=\varnothing$, but $1\in\Hid_{r_{t-1}}\cap \Hid_{r_{t}}$. We also cannot have that $\Hinf_{r_{t-1}}\subseteq \Hinf_{r_{t}}$, as then $\Hinf_{r_{t-1}}\cap \Hid_{r_{t}}=\varnothing$, but $r_{t-1}\cdots r_1 x\in \Hinf_{r_{t-1}}\cap \Hid_{r_{t}}$. Therefore, $\Hinf_{r_{t-1}}\subseteq \Hid_{r_{t}}$. 
    
    Now, as $H_{r_{t-2}},H_{r_{t-1}}$ and $H_{r_{t}}$ are adjacent and parallel, we must have that either $\Hinf_{r_{t-2}}\subseteq \Hinf_{r_{t-1}}$ or $\Hid_{r_{t-2}}\subseteq \Hinf_{r_{t-1}}$. If $\Hinf_{r_{t-2}}\subseteq \Hinf_{r_{t-1}}$, then $\Hinf_{r_{t-2}}\cap \Hid_{r_{t-1}}=\varnothing$, so this cannot occur because $r_{t-2}\cdots r_1 x\in \Hinf_{r_{t-2}}\cap \Hid_{r_{t-1}}$. If $\Hid_{r_{t-2}}\subseteq \Hinf_{r_{t-1}}$, then $\Hid_{r_{t-2}}\cap \Hid_{r_{t-1}}=\varnothing$, but this cannot occur as $1\in\Hid_{r_{t-2}}\cap \Hid_{r_{t-1}}$. Therefore, $\Hid_{r_{t-1}}\subseteq \Hid_{r_{t}}$ for $3\leq t\leq n$.
\end{proof}

\begin{proposition}\label{threeparallel}
    Let $W$ be an affine Coxeter complex. Let $x\in W$. Consider three adjacent parallel reflections $r_1, r_2$ and $r_3$ in $W$. Then the actions of $r_3r_2r_1$ and $r_2$ are the same action, and so $r_3r_2r_1=r_2$.
\end{proposition}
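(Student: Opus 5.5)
The plan is to work in the geometric realisation of Section \ref{roots}, where the affine Weyl group acts on $E$ by affine maps. By the Remark there, every wall is a hyperplane $H_{\gamma,k}$. Since $H_{r_1},H_{r_2},H_{r_3}$ are parallel, all three are orthogonal to a common root $\alpha$, so I can write $r_j=s_{\alpha;k_j}$ for integers $k_1,k_2,k_3$.

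The first step is to interpret ``adjacent''. The hyperplanes parallel to $H_{\alpha,0}$ that are walls are exactly $H_{\alpha,k}$ for $k\in\mathbb{Z}$; when $\alpha$ is taken primitive in the relevant direction, no other parallel walls occur. So three adjacent parallel walls correspond to three consecutive integers, and after relabelling the middle one is $H_{r_2}$. In the intended ordering $k_1=m-1$, $k_2=m$, $k_3=m+1$, or the reverse; in either case $k_1+k_3=2k_2$. The main point to check is precisely that adjacency means consecutive levels, with $r_2$ the middle wall, so that $k_1+k_3=2k_2$. This follows once one notes that the set of parallel walls in direction $\alpha$ is a single arithmetic progression.

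The second step is a direct computation with the formula $s_{\alpha;k}(v)=v-(\langle v,\alpha\rangle-k)\alpha^\vee$. Using $\langle\alpha^\vee,\alpha\rangle=2$, we get
\[
\langle s_{\alpha;k}(v),\alpha\rangle = 2k-\langle v,\alpha\rangle .
\]
Thus each $s_{\alpha;k}$ fixes the component of $v$ orthogonal to $\alpha$ and acts on the coordinate $t=\langle v,\alpha\rangle$ by $t\mapsto 2k-t$. Composing three such maps gives
\[
t\mapsto 2k_3-\bigl(2k_2-(2k_1-t)\bigr)=2(k_1-k_2+k_3)-t,
\]
while the orthogonal component is still fixed. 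Hence $r_3r_2r_1=s_{\alpha;k_1-k_2+k_3}$. Substituting $k_1+k_3=2k_2$ gives $k_1-k_2+k_3=k_2$, so $r_3r_2r_1=s_{\alpha;k_2}=r_2$ as maps on $E$.

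The final step is to pass from maps to group elements. The affine Weyl group acts faithfully on $E$: it is defined as a group of affine transformations of $E$, and in any case it acts simply transitively on alcoves. Two elements with the same action are therefore equal, so $r_3r_2r_1=r_2$ in $W$.

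For a combinatorial alternative that avoids coordinates, one can observe that $r_2r_1$ is the translation by $\alpha^\vee$ (up to sign) perpendicular to the walls. Conjugating by $r_2$ shows that $r_3r_2=r_2r_1$, because $r_2$ swaps $H_{r_1}$ and $H_{r_3}$, so $r_2r_1r_2=r_3$. From this, $r_3r_2r_1=r_2r_1r_2r_2r_1=r_2$ follows immediately. In fact $r_2r_1r_2=r_3$ is the cleanest route: the conjugate $r_2r_1r_2$ is the reflection in the wall $r_2(H_{r_1})$, and this wall is $H_{r_3}$ by adjacency.
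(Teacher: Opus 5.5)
Your proof is correct and is essentially the paper's. Both place the walls at consecutive levels $H_{\alpha,m},H_{\alpha,m+1},H_{\alpha,m+2}$ of the root-system realisation, compute $r_3r_2r_1$ from $s_{\alpha;k}(v)=v-(\langle v,\alpha\rangle-k)\alpha^\vee$, and conclude by faithfulness. Your reduction to the coordinate $t=\langle v,\alpha\rangle$, which gives $r_3r_2r_1=s_{\alpha;k_1-k_2+k_3}$ in general, is a cleaner packaging of the same calculation, and your closing argument via $r_2r_1r_2$ being the reflection in $r_2(H_{r_1})=H_{r_3}$ is a valid coordinate-free alternative. One small point: you cannot ``relabel'' freely, because the identity fails unless $H_{r_2}$ is the middle wall, so this must be read as part of the hypothesis (as the paper does implicitly).
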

\begin{proof}
    Consider the construction of the affine Coxeter complex via the root system. Then, for some positive root $\alpha$, the hyperplanes fixed by $r_1,r_2$ and $r_3$ are $H_{\alpha,m}$, $H_{\alpha,m+1}$ and $H_{\alpha,m+2}$ respectively, for some $m\in\mathbb{Z}$. Then we can explicitly calculate where any vector $x$ is sent to under $r_3r_2r_1$. Let $(\gamma,x)=k$. 
    \begin{align*}
        r_3r_2r_1(x)&= r_3r_2(x-(k-m)\gamma^\vee)\\
        &= r_3(x-(k-m)\gamma^\vee-((\gamma,x-(k-m)\gamma^\vee)-(m+1))\gamma^\vee)\\
        &=r_3(x-(k-m)\gamma^\vee -k\gamma^\vee+2(k-m)\gamma^\vee+(m+1)\gamma^\vee\\
        &=r_3(x+\gamma^\vee)\\
        &=x+\gamma^\vee-((\gamma,x+\gamma^\vee)-(m+2))\gamma^\vee\\
        &=x+\gamma^\vee-k\gamma^\vee-2\gamma^\vee+m\gamma^\vee+2\gamma^\vee\\
        &=x-(k-(m+1))\gamma^\vee\\
        &=r_2(x).
    \end{align*}
    Therefore, $r_3r_2r_1$ and $r_2$ are the same action. Hence, $r_3r_2r_1=r_2$ as $W$ acts faithfully on the complex. 
    \end{proof}

\begin{proposition}\label{n>2containsx}
    Let $W$ be an affine Coxeter complex. Let $x\in W$. Let $r_1,\hdots,r_n\in W$ be a sequence of adjacent parallel reflections such that \[\ell(r_t\cdots r_1x)>\ell(r_{t-1}\cdots r_1x),\] for all $1\leq t\leq n$, with $n\geq 3$. 
    Then $x\in \Hid_{r_t}$ for all $1\leq t\leq n$. 
\end{proposition}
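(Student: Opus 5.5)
We will work in the root-system realisation of \S\ref{roots}. Since $H_{r_1},\dots,H_{r_n}$ are adjacent and parallel, there is a positive root $\alpha$ with $H_{r_t}=H_{\alpha,m+\varepsilon(t-1)}$ for some $m\in\mathbb{Z}$ and a fixed sign $\varepsilon\in\{\pm1\}$. This is the same indexing used in the proof of Proposition \ref{threeparallel}. Let $f$ be the linear functional $\langle\cdot,\alpha\rangle$, evaluated at an interior point of an alcove, and let $b$ be the point of $x$ at which we evaluate $f$. Then $f$ takes a non-integer value on the interior of every alcove, and $H^{\mathbf 1}_{r_t}$ is the side of $H_{\alpha,m+\varepsilon(t-1)}$ that contains the identity alcove.

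Write $x_t=r_t\cdots r_1x$ and $x_0=x$. The hypothesis $\ell(r_tx_{t-1})>\ell(x_{t-1})$ says, by the Remark after the definition of $\Hid$, that $x_{t-1}\in\Hid_{r_t}$ and $x_t\in\Hinf_{r_t}$. For $t=1$ this gives $x\in\Hid_{r_1}$ directly. After replacing $\alpha$ by $-\alpha$ if necessary, we may assume $\Hid_{r_1}=\{f<m\}$, so that $f(x)<m$. Since each $r_t$ acts on $f$ by $f\mapsto 2(m+\varepsilon(t-1))-f$, the values $f(x_1)=2m-f(x)$ and $f(x_2)=2(m+\varepsilon)-2m+f(x)=f(x)+2\varepsilon$ follow immediately.

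We then split into two cases according to $\varepsilon$.

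\emph{Case $\varepsilon=+1$.} The walls lie at levels $m,m+1,\dots,m+n-1$, all weakly above level $m$. The identity alcove lies in $\{f<m\}$, so $\{f<m\}\subseteq\{f<m+t-1\}$ and hence $\Hid_{r_t}=\{f<m+t-1\}$ for every $t$. Since $f(x)<m$, we get $x\in\Hid_{r_t}$ for all $t$, which is the claim.

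\emph{Case $\varepsilon=-1$.} Here we aim for a contradiction, and this is the only place where $n\ge3$ is needed. We have $f(x_1)=2m-f(x)>m>m-1$. Since $x_1\in\Hid_{r_2}$, the identity alcove must lie on the side $\{f>m-1\}$ of $H_{r_2}$. Combined with $1\in\{f<m\}$, this places the identity in the strip $m-1<f<m$, so in particular $\Hid_{r_3}=\{f>m-2\}$. (Alternatively, this also follows from Proposition \ref{subsets} with $t=3$.) Now $f(x_2)=f(x)-2$, and the hypothesis $x_2\in\Hid_{r_3}$ forces $f(x)-2>m-2$, that is, $f(x)>m$. This contradicts $f(x)<m$, so this case cannot occur.

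The main obstacle is that Proposition \ref{subsets} only controls the nesting from $t=3$ onwards, so it says nothing about the relative position of $H_{r_1}$ and $H_{r_2}$. This is exactly what the explicit computation of $f(x_2)$ is meant to settle. Throughout, the care needed is in matching each half-space to the correct sign convention. We also need that the identity alcove never meets a wall, so that all the inequalities above are strict.
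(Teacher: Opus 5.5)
Your argument is correct, but it is organised differently from the paper's. The paper works with half-spaces and never splits into cases. Proposition \ref{subsets} at $t=3$ gives $\Hid_{r_2}\subseteq\Hid_{r_3}$, so $\Hinf_{r_3}\subseteq\Hinf_{r_2}$. Since $r_3r_2r_1x\in\Hinf_{r_3}$ and $r_3r_2r_1=r_2$ by Proposition \ref{threeparallel}, it follows that $r_2x\in\Hinf_{r_2}$, i.e.\ $x\in\Hid_{r_2}$. The chain $\Hid_{r_2}\subseteq\Hid_{r_3}\subseteq\cdots\subseteq\Hid_{r_n}$ then finishes.

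You instead normalise $f=\langle\cdot,\alpha\rangle$ so that $\Hid_{r_1}=\{f<m\}$, and split on the direction $\varepsilon$ in which the walls advance. When the walls move away from the identity, the nesting is automatic and $x\in\Hid_{r_1}$ already suffices. When they move towards it, you show that the hypotheses at $t=2$ and $t=3$ are incompatible.

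The ingredients correspond. Your observation that the identity lies in the strip $m-1<f<m$ is Proposition \ref{subsets} at $t=3$. Your formula $f(x_2)=f(x)+2\varepsilon$ is the translation $r_2r_1$ that underlies $r_3r_2r_1=r_2$.

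Your version buys a few things. It is self-contained. It shows exactly where $n\ge 3$ enters: the $\varepsilon=-1$ configuration does occur for $n=2$, and there the conclusion fails. For example, take $\tilde{A}_1$ realised on $\mathbb{R}$ with identity alcove $(0,1)$, walls $H_{r_1},H_{r_2}$ at $1,0$, and $x=(-1,0)$. It also gives a slightly stronger structural fact: all the walls lie beyond $H_{r_1}$ as seen from the identity, so $\Hid_{r_1}\subseteq\Hid_{r_2}$ as well. Proposition \ref{subsets} does not give that inclusion.

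The paper's route, in exchange, is shorter and reuses lemmas it needs elsewhere. Finally, your explicit indexing $m+\varepsilon(t-1)$ makes visible an assumption hidden in the phrase ``sequence of adjacent parallel reflections'': the walls form a monotone run. Both proofs need this; the paper uses it through Proposition \ref{threeparallel}.
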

\begin{proof}
    By assumption, $\ell(r_1x)>\ell(x)$, so $x\in \Hid_{r_1}$. Now, by 
    Proposition \ref{subsets}, $\Hid_{r_2}\subseteq \Hid_{r_3}$. Therefore, $\Hinf_{r_3}\subseteq \Hinf_{r_2}$. Now $\ell(r_3r_2r_1x)>\ell(r_2r_1x),$ so $r_3r_2r_1x\in \Hinf_{r_3}$. Then, by Proposition \ref{threeparallel}, $r_3r_2r_1=r_2$, and we have $r_2x\in \Hinf_{r_3}$. Therefore, $r_2x\in \Hinf_{r_2}$, and so $x\in \Hid_{r_2}$. Now, again by Proposition \ref{subsets}, $\Hid_{r_2}\subset \Hid_{r_t}$ for all $3\leq t\leq n$. Hence, $x\in \Hid_{r_t}$ for all $1\leq t\leq n$. 
\end{proof}
\begin{proposition}
    Let $W$ be an affine Coxeter complex. Let $w\in W$ and $i\in D_R(w)$. Let $r_1,\hdots,r_n\in W$ be a sequence of adjacent parallel reflections such that \[\ell(r_t\cdots r_1ws_i)>\ell(r_{t-1}\cdots r_1 ws_i),\] for all $1\leq t\leq n$,
    and $r_1ws_i\neq w$, with $n\geq 3$. Then  $w\in \Hid_{r_n}$. 
\end{proposition}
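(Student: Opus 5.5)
The plan is to reduce to Proposition \ref{n>2containsx} applied to $x=ws_i$, and then use the fact that $w$ and $ws_i$ are adjacent alcoves to pass from $ws_i$ to $w$. The hypotheses of the statement are exactly those of Proposition \ref{n>2containsx} with $x=ws_i$ (we have $n\geq 3$ and each $r_t$ increases length). So we immediately get $ws_i\in \Hid_{r_t}$ for every $1\leq t\leq n$; in particular $ws_i\in\Hid_{r_n}$ and $ws_i\in\Hid_{r_{n-1}}$.

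Next we use adjacency. The alcoves $w$ and $ws_i$ share an $i$-panel, which lies in the wall of the reflection $q=ws_iw^{-1}$. Two adjacent alcoves are separated by exactly one wall, namely $H_q$. Hence $w$ and $ws_i$ lie on the same side of every wall other than $H_q$. So if $H_{r_n}\neq H_q$, then $w\in\Hid_{r_n}$ follows at once from $ws_i\in\Hid_{r_n}$. We may therefore suppose, for a contradiction, that $r_n=q$. Then $ws_i$ is an alcove in $\Hid_{r_n}$ having a panel on $H_{r_n}$.

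The main step is to show this is impossible when $n\geq 3$, using the nesting from Proposition \ref{subsets}. Since $n\geq3$, that proposition gives $\Hid_{r_{n-1}}\subseteq\Hid_{r_n}$, and $H_{r_{n-1}}$, $H_{r_n}$ are distinct adjacent parallel walls. Thus the alcoves of $\Hid_{r_n}$ that have a panel on $H_{r_n}$ all lie in the strip between $H_{r_{n-1}}$ and $H_{r_n}$. That strip is contained in $\Hinf_{r_{n-1}}$, because $H_{r_{n-1}}$ separates it from $\Hid_{r_{n-1}}$, which by the nesting lies on the identity side. This contradicts $ws_i\in\Hid_{r_{n-1}}$. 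Hence $r_n\neq q$, and so $w\in\Hid_{r_n}$.

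I expect the only delicate point to be this last geometric claim: an alcove touching $H_{r_n}$ from the identity side lies strictly in $\Hinf_{r_{n-1}}$. I would justify it with the root system picture used in Proposition \ref{threeparallel}. There the walls are $H_{\alpha,m+n-2}$ and $H_{\alpha,m+n-1}$, no wall parallel to them lies strictly between, and an alcove adjacent to $H_{\alpha,k}$ lies in the open slab $k-1<\langle\cdot,\alpha\rangle<k$ or in $k<\langle\cdot,\alpha\rangle<k+1$. The hypothesis $r_1ws_i\neq w$ appears not to be needed in this argument. If it is needed, it will be to exclude the degenerate case $r_1=q$; that case is in any event handled by the same separation argument.
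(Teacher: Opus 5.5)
Your proof is correct. Its first step matches the paper's: Proposition~\ref{n>2containsx} applied to $x=ws_i$. The finish is different.

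The paper supposes $w\in\Hinf_{r_n}$ and pushes this to $w\in\Hinf_{r_1}$ via an inclusion $\Hinf_{r_n}\subseteq\Hinf_{r_1}$. It then concludes that the $i$-panel of $w$ lies on $H_{r_1}$, which contradicts the hypothesis $r_1ws_i\neq w$.

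You instead note that $H_{ws_iw^{-1}}$ is the only wall separating $w$ from $ws_i$. You then exclude $H_{ws_iw^{-1}}=H_{r_n}$ using only the single nesting $\Hid_{r_{n-1}}\subseteq\Hid_{r_n}$ together with $ws_i\in\Hid_{r_{n-1}}$. The point is that an alcove touching $H_{r_n}$ from the identity side lies in the strip between $H_{r_{n-1}}$ and $H_{r_n}$, hence in $\Hinf_{r_{n-1}}$.

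Your route has two advantages.
\begin{enumerate}
\item It never uses $r_1ws_i\neq w$, and you are right that this hypothesis is redundant. Even in the paper's argument, $H_{r_1}$ and $H_{r_n}$ would both separate the adjacent alcoves $w$ and $ws_i$, which is already impossible.
\item It invokes Proposition~\ref{subsets} only at $t=n\geq 3$, inside its stated range. The paper's inclusion $\Hinf_{r_n}\subseteq\Hinf_{r_1}$ also needs $\Hid_{r_1}\subseteq\Hid_{r_2}$, i.e.\ the case $t=2$, which Proposition~\ref{subsets} does not assert. That inclusion does hold under these hypotheses, but it needs a further argument, for instance via Proposition~\ref{threeparallel}.
\end{enumerate}

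The paper's version is a little shorter to write, since it needs no discussion of which strip an alcove touching $H_{r_n}$ lies in.
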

\begin{proof}
    By Proposition \ref{n>2containsx}, $ws_i\in \Hid_{r_n}$. Now if $w\in \Hinf_{r_n}$, as $\Hinf_{r_n}\subseteq \Hinf_{r_1}$ by Proposition \ref{subsets}, we have that $w\in \Hinf_{r_1}$. But $\ell(r_1ws_i)>\ell(ws_i)$, so $ws_i\in \Hid_{r_1}$. So the $s_i$ panel in $w$ is in the hyperplane $H_{r_1}$. Therefore, $r_1ws_i=w$, which is a contradiction. So $w\in \Hid_{r_n}$.
\end{proof}

\begin{proposition}\label{rn...r1winHid}
    Let $W$ be an affine Coxeter complex. Let $w\in W$ and $i\in D_R(w)$. Let $r_1,\hdots,r_n\in W$ be a sequence of adjacent parallel reflections such that \[\ell(r_t\cdots r_1ws_i)>\ell(r_{t-1}\cdots r_1 ws_i),\] and $r_{t}\cdots r_1ws_i\neq r_{t-1}\cdots r_1w$, for all $1\leq t\leq n$. Then $r_{n-1}\cdots r_1w\in \Hid_{r_n}$. 
\end{proposition}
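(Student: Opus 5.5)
The plan is to show directly that $\ell(r_n r_{n-1}\cdots r_1w)>\ell(r_{n-1}\cdots r_1w)$, by comparing the two neighbouring alcoves $y:=r_{n-1}\cdots r_1ws_i$ and $z:=r_{n-1}\cdots r_1w$. (For $n=1$ these are $y=ws_i$ and $z=w$.) The key observation is that $z=ys_i$, since left multiplication by the $r_t$ commutes with right multiplication by $s_i$. So $y$ and $z$ are adjacent alcoves sharing a panel of type $i$. This is the same mechanism as in the preceding proposition, but applied at the last step $t=n$ rather than via Proposition \ref{subsets}. It needs no case distinction on $n$, and in particular no requirement that $n\geq 3$.

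First, the hypothesis at $t=n$ gives $\ell(r_ny)>\ell(y)$, so by the Remark following the definition of $\Hid$, we have $y\in\Hid_{r_n}$.

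Second, suppose for contradiction that $z\in\Hinf_{r_n}$. Then the wall $H_{r_n}$ separates the adjacent alcoves $y$ and $z=ys_i$. A gallery of length one from $y$ to $ys_i$ crosses only the wall containing their common panel. By the proof of the bijection between reflections and walls (\cite[Proposition 2.6]{RON}), this is the wall of the reflection $ys_iy^{-1}$. Equivalently, one can apply Proposition \ref{gallerycrossing} after translating by $y^{-1}$: the only wall separating $1$ from $s_i$ is $H_{s_i}$. Hence $r_n=ys_iy^{-1}$, and therefore $r_ny=ys_i=z$. That is, $r_n\cdots r_1ws_i=r_{n-1}\cdots r_1w$, which contradicts the assumption $r_t\cdots r_1ws_i\neq r_{t-1}\cdots r_1w$ at $t=n$. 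So $z\in\Hid_{r_n}$, as claimed.

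The only step needing care is the second one: justifying that two adjacent alcoves lying on opposite sides of $H_{r}$ forces $r$ to be exactly the reflection swapping them. I would argue this via Proposition \ref{gallerycrossing} applied to the translated pair $1,s_i$ and then transporting back by left multiplication by $y$, which preserves walls and halfspace separation. Everything else is immediate from the definitions of $\Hid$ and $\Hinf$.
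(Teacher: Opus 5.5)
Your proof is correct and is essentially the paper's argument. Suppose $r_{n-1}\cdots r_1w\in\Hinf_{r_n}$; the adjacent alcove $r_{n-1}\cdots r_1ws_i$ lies in $\Hid_{r_n}$ by the length hypothesis at $t=n$, so their common $i$-panel must lie in $H_{r_n}$, which forces $r_n\cdots r_1ws_i=r_{n-1}\cdots r_1w$ and contradicts the hypothesis. You make explicit two steps the paper leaves implicit (that $r_{n-1}\cdots r_1ws_i\in\Hid_{r_n}$, and why the only separating wall is the one of $ys_iy^{-1}$), and you avoid the paper's typo of writing $\Hid_{r_n}$ where $\Hinf_{r_n}$ is meant in the contradiction assumption.
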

\begin{proof}
     If $r_{n-1}\cdots r_1w\in \Hid_{r_n}$, then $r_{n-1}\cdots r_1w$ and $r_{n-1}\cdots r_1ws_i$ lie on opposite sides of $H_{r_n}$. So the $s_i$ panel of $r_{n-1}\cdots r_1w$ and $r_{n-1}\cdots r_1ws_i$ lies in $H_{r_n}$. Therefore, the action of $r_n$ sends $r_{n-1}\cdots r_1ws_i$ to $r_{n-1}\cdots r_1w$. But we assumed that $r_{t}\cdots r_1ws_i\neq r_{t-1}\cdots r_1w$. So $r_{n-1}\cdots r_1w\in \Hid_{r_n}$. 
\end{proof}

\begin{corollary}\label{iinrn...r1wsi}

    Let $W$ be an affine Coxeter complex. Let $w\in W$ and $i\in D_R(w)$. Let $r_1,\hdots,r_n\in W$ be a sequence of adjacent parallel reflections such that \[\ell(r_t\cdots r_1ws_i)=\ell(r_{t-1}\cdots r_1 ws_i)+1,\] and $r_{t}\cdots r_1ws_i\neq r_{t-1}\cdots r_1w$, for all $1\leq t\leq n$. Then  $i\notin D_R({r_n\cdots r_1ws_i})$.
\end{corollary}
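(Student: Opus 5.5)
We need to show that $\ell(r_n\cdots r_1ws_i\, s_i) > \ell(r_n\cdots r_1 ws_i)$; that is, $i\notin D_R(r_n\cdots r_1ws_i)$. Set $y_t = r_t\cdots r_1 w s_i$ and $z_t = r_t\cdots r_1 w$. Since left multiplication commutes with right multiplication, $z_t = y_t s_i$, so $y_t$ and $z_t$ are adjacent alcoves sharing an $s_i$-panel. The claim is then equivalent to $\ell(z_n)>\ell(y_n)$. We will prove this by locating $z_n$ and $y_n$ relative to $H_{r_n}$, using the halfspace descriptions $\Hid=\{x\mid \ell(rx)>\ell(x)\}$ and $\Hinf=\{x\mid \ell(rx)<\ell(x)\}$ together with Proposition \ref{rn...r1winHid}.

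First, $\ell(y_n)=\ell(y_{n-1})+1$ by hypothesis, so $y_{n-1}\in \Hid_{r_n}$, and hence $y_n=r_ny_{n-1}\in\Hinf_{r_n}$. Second, Proposition \ref{rn...r1winHid} (whose hypotheses follow from ours) gives $z_{n-1}\in \Hid_{r_n}$, and therefore $z_n=r_nz_{n-1}\in\Hinf_{r_n}$. So $y_{n-1}$ and $z_{n-1}$ are adjacent alcoves on the identity side of $H_{r_n}$, and their images $y_n,z_n$ are adjacent alcoves on the infinity side.

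The main step is comparing lengths. We argue by induction on $t$ that $\ell(z_t)=\ell(y_t)+1$. The base case $t=0$ holds because $i\in D_R(w)$, so $\ell(w)=\ell(ws_i)+1$. For the inductive step, $y_t$ and $z_t=y_ts_i$ differ by exactly the one wall $H$ through their common $s_i$-panel, so $\ell(z_t)=\ell(y_t)\pm1$ according to which side of $H$ contains $1$. The wall separating $y_t$ from $z_t$ is $r_t$ applied to the wall separating $y_{t-1}$ from $z_{t-1}$. Write $H'$ for this earlier wall. Because $y_{t-1},z_{t-1}\in\Hid_{r_t}$, the wall $H'$ is not $H_{r_t}$, which is exactly the hypothesis $y_t\neq z_{t-1}$.

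Then $\ell(r_tx)-\ell(x)$ counts the walls separating $x$ from $r_tx$ with sign. The cleaner route is the standard fact: if $x\in\Hid_r$, $\ell(rx)=\ell(x)+1$, and $s$ is a simple reflection with $r\neq xsx^{-1}$, then $\ell(rxs)-\ell(xs)$ has the same sign as $\ell(rx)-\ell(x)$ exactly when $xs\in\Hid_r$. Since $\ell(y_t)=\ell(y_{t-1})+1$ and $r_t$ moves $y_{t-1}$ across a single panel step, the alcoves $y_{t-1}$ and $y_t$ share a panel in $H_{r_t}$. Then $z_{t-1}=y_{t-1}s_i$ and $z_t=y_ts_i$ are also mirror images across $H_{r_t}$. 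Both $y_{t-1}$ and $z_{t-1}$ lie in $\Hid_{r_t}$, so I would use a lifting or $Z$-property argument (Lemma \ref{Bruhatcosets}, or Deodhar's property) on the pair $z_{t-1}\le z_t$, $y_{t-1}\le y_t$ to conclude $\ell(z_t)=\ell(z_{t-1})+1$.

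Here is how that argument goes. The multiplicative map $x\mapsto r_tx$ increases length for every alcove in $\Hid_{r_t}$. We have $\ell(z_t)>\ell(z_{t-1})$, and the difference is odd. The total set of walls separating $1$ from $z_t$ compared to $y_t$ is the set for $z_{t-1}$ versus $y_{t-1}$, conjugated by $r_t$. So we get $\ell(z_t)-\ell(y_t)=\ell(z_{t-1})-\ell(y_{t-1})$ unless $H'$ crosses the identity side, which happens only if $1$ lies between $H'$ and $r_tH'$. That would force the panel to be adjacent to $H_{r_t}$ in a way contradicting the parallel adjacency hypothesis. This step, excluding the case where $1$ lies between $H'$ and $r_tH'$, is the expected main obstacle. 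Once it is done, $\ell(z_n)=\ell(y_n)+1$, so $i\notin D_R(y_n)$.
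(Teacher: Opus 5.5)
You have every ingredient the proof needs, but you never combine them, so the inductive step is left unproved. You have $z_t=y_ts_i$, the inductive hypothesis $\ell(z_{t-1})=\ell(y_{t-1})+1$, and the hypothesis $\ell(y_t)=\ell(y_{t-1})+1$. You also have $z_{t-1}\in\Hid_{r_t}$, from Proposition \ref{rn...r1winHid} applied to $r_1,\dots,r_t$, which means $\ell(z_t)>\ell(z_{t-1})$; you even write this inequality down. Instead of using it, you declare the inductive step an unresolved ``main obstacle'' and sketch a wall-conjugation argument. Its key exclusion, that $1$ cannot lie between $H'$ and $r_tH'$, is only asserted with a vague appeal to parallel adjacency. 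The lifting/$Z$-property argument you suggest is never carried out.

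That detour also rests on a false claim. From $\ell(r_ty_{t-1})=\ell(y_{t-1})+1$ it does not follow that $y_{t-1}$ and $y_t$ share a panel in $H_{r_t}$. In $\tilde A_2$, for example, $\ell(s_2s_1)=\ell(s_1)+1$, yet $s_2s_1\notin\{1,\,s_1s_0,\,s_1s_2\}$. This is why the paper treats $y_t=y_{t-1}s_j$ as a separate special case in Proposition \ref{nonemptyor}. The ``standard fact'' you quote is also a tautology, since $xs\in\Hid_r$ means precisely $\ell(rxs)>\ell(xs)$.

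None of this is needed, because the step closes in one line:
\[
\ell(z_t)>\ell(z_{t-1})=\ell(y_{t-1})+1=\ell(y_t).
\]
Since $z_t=y_ts_i$, this gives $\ell(z_t)=\ell(y_t)+1$, that is, $i\notin D_R(y_t)$. This is exactly the paper's proof: induction on $n$, with Proposition \ref{rn...r1winHid} supplying $\ell(z_n)>\ell(z_{n-1})$. Your observations that $y_n,z_n\in\Hinf_{r_n}$, and your analysis of separating walls, are superfluous.
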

\begin{proof}
    
    By induction on $n$. By assumption, $i\notin D_R({ws_i})$. 
    
    Now assume that $i\notin D_R({r_{n-1}\cdots r_1ws_i})$. By Proposition \ref{rn...r1winHid}, $r_{n-1}\cdots r_1w\in \Hid_{r_n}$. So therefore,
    \[\ell(r_{n}\cdots r_1w)>\ell(r_{n-1}\cdots r_1w)=\ell(r_{n-1}\cdots r_1ws_i)+1=\ell(r_{n}\cdots r_1ws_i).\]
    Hence, $i\notin D_R({r_n\cdots r_1ws_i})$.
\end{proof}

The next results establish that $\bigcap_{t<n} D_R(r_t\cdots r_1ws_i)$ is nonempty. We do this by first proving that, if $j\in D_R({r_n\cdots r_1ws_i})$, then $j\in D_R({r_t\cdots r_1ws_i})$ for all $1\leq t\leq n$, unless $r_{l}\cdots r_1ws_i= r_{l-1}\cdots r_1ws_is_j$ for some $l$. We then show that, under some conditions of hyperplanes not bounding the identity alcove, if $j\in D_R(x)$, then $j\in D_R(r_2r_1x)$. This establishes that, under these conditions, if $j\in D_R(x)\cap D_R(r_1x)$, then $j\in\bigcap_{t\leq n} D_R(r_t\cdots r_1ws_i)$. We then show that this condition must hold for $t<n$, and so $\bigcap_{t<n} D_R(r_t\cdots r_1ws_i)$ is nonempty.

\begin{proposition}\label{shareddecreasing}
    Let $W$ be an affine Coxeter complex. Let $w\in W$ and $i\in D_R(w)$. Let $r_1,\hdots,r_n\in W$ be a sequence of adjacent parallel reflections such that \[\ell(r_t\cdots r_1ws_i)=\ell(r_{t-1}\cdots r_1 ws_i)+1,\] for all $1\leq t\leq n$. Let $j\in D_R({r_n\cdots r_1ws_i})$. If $r_{t}\cdots r_1ws_i\neq r_{t-1}\cdots r_1ws_is_j$ for all $1\leq t\leq n$, then $j\in D_R({r_t\cdots r_1ws_i})$ for all $1\leq t\leq n$.
\end{proposition}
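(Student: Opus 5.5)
We prove the statement by downward induction on $t$, starting from $t=n$, where it is the hypothesis. Write $x_t=r_t\cdots r_1ws_i$, so $x_0=ws_i$ and $x_t=r_tx_{t-1}$. By assumption $\ell(x_t)=\ell(x_{t-1})+1$, so $x_{t-1}\in\Hid_{r_t}$ and $x_t\in\Hinf_{r_t}$. Hence $x_{t-1}$ and $x_t$ are separated by exactly one wall, $H_{r_t}$. The inductive step is: if $j\in D_R(x_t)$ and $x_t\neq x_{t-1}s_j$, then $j\in D_R(x_{t-1})$, for $1\le t\le n$. (We only need $t\geq 1$, since the statement concerns $x_1,\dots,x_n$; the condition at $t$ rules out the single bad configuration.)

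For the inductive step we translate descents into walls. The condition $j\in D_R(x_t)$ says $\ell(x_ts_j)<\ell(x_t)$. Let $H$ be the wall containing the $j$-panel of $x_t$, with reflection $q=x_ts_jx_t^{-1}$, so that $qx_t=x_ts_j$. By the Remark on half-spaces, $j\in D_R(x_t)$ is equivalent to $x_t\in\Hinf$ and $x_ts_j\in\Hid$. Now use the reflection $r_t$: the $j$-panel of $x_{t-1}=r_tx_t$ is $r_t$ applied to the $j$-panel of $x_t$, so it lies in the wall $r_tH$, with reflection $q'=r_tqr_t$. We need $x_{t-1}\in (r_tH)^\infty$.

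We split into cases according to the position of $H$ relative to $H_{r_t}$.

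\emph{Case 1: $H=H_{r_t}$.} Then $x_ts_j=r_tx_t=x_{t-1}$. Since $s_j^2=1$, this gives $x_t=x_{t-1}s_j$, which is exactly the excluded situation.

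\emph{Case 2: $H\neq H_{r_t}$.} We use the fact that the left action of $r_t$ preserves the gallery distance $d(\cdot,\cdot)$ between alcoves, and that $\ell(y)=d(1,y)$. Reflecting across $H_{r_t}$ swaps the two sides of $H_{r_t}$. Since $x_t$ and $x_ts_j$ share a panel lying in $H\neq H_{r_t}$, they lie on the same side of $H_{r_t}$, namely the infinity side $\Hinf_{r_t}$, because $x_t$ does. Their images $x_{t-1}$ and $x_{t-1}s_j$ then lie together in $\Hid_{r_t}$.

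Now compare lengths using galleries. A minimal gallery from $1$ to $x_t$ crosses $H_{r_t}$ exactly once (Proposition \ref{gallerycrossing}). The standard fact for a reflection $r$ and an alcove $y\in\Hinf_r$ is that the walls separating $1$ from $ry$ are obtained from those separating $1$ from $y$ by removing $H_r$ and applying $r$ to walls not separating $1$ from $r$-type data. Rather than develop this in full, we use the following sharper route. Consider the four alcoves $x_t$, $x_ts_j$, $x_{t-1}$, $x_{t-1}s_j$. We have $\ell(x_t)-\ell(x_{t-1})=1$. We also have $\ell(x_ts_j)-\ell(x_{t-1}s_j)\in\{\pm1,\pm3,\dots\}$, and this difference is positive because $x_ts_j\in\Hinf_{r_t}$. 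This uses the Remark, since $x_{t-1}s_j=r_t(x_ts_j)$.

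Parity now settles the step. Since $\ell(x_ts_j)=\ell(x_t)-1$, we get
\[\ell(x_{t-1}s_j)\le \ell(x_ts_j)-1=\ell(x_t)-2=\ell(x_{t-1})-1.\]
Hence $j\in D_R(x_{t-1})$. This argument uses only that $r_t$ increases length on both alcoves, which in turn follows from their lying on the same side of $H_{r_t}$. It does not need parallelism at all, which makes it more robust than it first appears.

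The main obstacle is justifying cleanly that $x_ts_j$ lies in $\Hinf_{r_t}$ when $H\ne H_{r_t}$. The argument is that adjacent alcoves lie on opposite sides only of the wall containing their common panel, and that wall is $H$. With that established, iterating from $t=n$ down to $t=1$ gives $j\in D_R(x_t)$ for all $t$.
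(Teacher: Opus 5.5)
Your proof is correct and follows essentially the same route as the paper. If the $j$-panel of $x_t$ lies on $H_{r_t}$, you get exactly the excluded configuration $x_t=x_{t-1}s_j$. Otherwise $x_{t-1}s_j$ lies on the identity side of $H_{r_t}$ together with $x_{t-1}$, which gives $\ell(x_{t-1}s_j)<\ell(x_ts_j)=\ell(x_t)-1=\ell(x_{t-1})$, and your explicit downward induction spells out the paper's ``immediately conclude''.

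Two minor remarks. Your aside that $x_{t-1}$ and $x_t$ are separated by exactly one wall does not follow from the length condition and is false in general: they are mirror images across $H_{r_t}$, not necessarily adjacent. You never use it, though. The parity discussion is also unnecessary, since the strict inequality above already gives the descent.
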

\begin{proof}
    We will show that $j\in D_R({r_{n-1}\cdots r_1ws_i})$. Then we will be able to immediately conclude that $j\in D_R({r_t\cdots r_1ws_i})$ for all $1\leq t\leq n$.

    By assumption, $r_{n-1}\cdots r_1ws_i\in \Hid_{r_n}$. Now also note that, if $r_{n-1}\cdots r_1ws_is_j\in \Hinf_{r_n}$, then the action of $r_n$ sends $r_{n-1}\cdots r_1ws_i$ to $r_{n-1}\cdots r_1ws_is_j$. But we assumed that this is not true, so $r_{n-1}\cdots r_1ws_is_j\in \Hid_{r_n}$. Now
    \[\ell(r_{n-1}\cdots r_1ws_i)=\ell(r_n\cdots r_1ws_i)-1=\ell(r_n\cdots r_1 ws_is_j)>\ell(r_{n-1}\cdots r_1 ws_is_j).\]
    So $j\in D_R({r_{n-1}\cdots r_1ws_i})$.  
\end{proof}
\begin{lemma}\label{r2r1H3parallel}
    Consider two parallel hyperplanes $H_1$, $H_2$ in Euclidean space, and let $r_1$ and $r_2$ be their associated reflections. Let $H_3$ be a third hyperplane. Then $r_2r_1H_3$ is parallel to $H_3$. 
\end{lemma}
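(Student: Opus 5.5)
The key observation is that, for parallel hyperplanes, the composite $r_2r_1$ is a translation. Once this is established, the lemma follows at once, because a translation maps any hyperplane to a parallel one. I would work in the Euclidean realisation from Section \ref{roots}. Since $H_1$ and $H_2$ are parallel, they share a common unit normal vector $\alpha$. We can therefore write
\[H_1=\{x\mid \langle x,\alpha\rangle=a\},\qquad H_2=\{x\mid \langle x,\alpha\rangle=b\}\]
for some real numbers $a,b$. The associated reflections are then
\[r_1(x)=x-2(\langle x,\alpha\rangle-a)\alpha,\qquad r_2(x)=x-2(\langle x,\alpha\rangle-b)\alpha.\]
These have the same form as the reflections $s_{\alpha;k}$ from that section, rescaled.

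The first step is to compute $r_2r_1$ directly, in the spirit of the calculation in Proposition \ref{threeparallel}. Apply $r_2$ to $y=r_1(x)$. We have $\langle y,\alpha\rangle=\langle x,\alpha\rangle-2(\langle x,\alpha\rangle-a)=2a-\langle x,\alpha\rangle$. Substituting,
\[r_2r_1(x)=x-2(\langle x,\alpha\rangle-a)\alpha-2\bigl(2a-\langle x,\alpha\rangle-b\bigr)\alpha=x+2(b-a)\alpha.\]
So $r_2r_1$ is the translation $T_v$ by the fixed vector $v=2(b-a)\alpha$.

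The second step is to apply this translation to $H_3$. Write $H_3=\{x\mid\langle x,\beta\rangle=c\}$. Then
\[T_v(H_3)=\{x+v\mid \langle x,\beta\rangle=c\}=\{z\mid \langle z,\beta\rangle=c+\langle v,\beta\rangle\},\]
which has the same normal vector $\beta$ as $H_3$ and is therefore parallel to it. If the paper's convention for ``parallel'' excludes equality, note that $T_v(H_3)=H_3$ exactly when $\langle v,\beta\rangle=0$; in that case the two are equal, which should be accepted as the degenerate case of parallel.

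There is no real obstacle here. The only care needed is to fix the normalisation of the reflection formula consistently; I use a unit normal rather than the $\alpha^\vee$ notation so the coefficients are transparent. The argument is otherwise a routine verification.
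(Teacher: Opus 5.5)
Your proof is correct and follows essentially the same route as the paper. The paper also computes $r_2r_1v=v+(n_2-n_1)\gamma^\vee$ directly (with coroots in place of your unit normal) and then observes that the image of $H_3$ has the same normal vector. Its only extra remark is that $(\gamma^\vee,\alpha)\in\mathbb{Z}$; this identifies $r_2r_1H_3$ as the wall $H_{\alpha,m+(n_2-n_1)(\gamma^\vee,\alpha)}$ of the complex, but parallelism does not need it.
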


\begin{proof}
    Let $H_1=H_{\gamma,n_1}$, $H_2=H_{\gamma,n_2}$ and $H_3=H_{\alpha,m}$, for some vectors $\gamma$ and $\alpha$. Let $v\in H_3$ be any vector lying on $H_3$. Let us calculate $r_2r_1v$.
    \begin{align*}
        r_2r_1v&=r_2(v-((v,\gamma)-n_1)\gamma^\vee)\\
        &=v-((v,\gamma)-n_1)\gamma^\vee-(v-((v,\gamma)-n_1)\gamma^\vee,\gamma)-n_2)\gamma^\vee\\
        &=v+(n_2-n_1)\gamma^\vee.
    \end{align*}
    Now 
    \[(r_2r_1v,\alpha)=(v+(n_2-n_1)\gamma^\vee,\alpha)=m+(n_2-n_1)(\gamma^\vee,\alpha).\]
    Now $k=(\gamma^\vee,\alpha)\in\mathbb{Z}$, by the axioms of a root system. So $r_2r_1v\in H_{\alpha,m+(n_2-n_1)k}$, for all $v\in H_3$. So $r_2r_1H_3=H_{\alpha,m+(n_2-n_1)k}$, which is parallel to $H_3$. 
\end{proof}
\begin{proposition}\label{jnotinJr2r1x}
    Let $x\in W$. Let $H_1,H_2$ be parallel hyperplanes, and let $r_1$ and $r_2$ be their associated reflections. Assume that $j\in D_R({x})$. Let $H_3=H_{xs_jx^{-1}}$ and $H_4=H_{r_2r_1xs_jx^{-1}r_1r_2}$ and assume that the identity alcove is not bounded between $H_3$ and $H_4$. Then $j\in D_R({r_2r_1x})$. 
\end{proposition}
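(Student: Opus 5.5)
The idea is to translate the descent condition into a statement about which side of a wall the alcove lies on, and then transport that statement along the translation $r_2r_1$.

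First, recall what $j\in D_R(x)$ means geometrically. The alcoves $x$ and $xs_j$ share their $j$-panel, which lies in the wall $H_3=H_{xs_jx^{-1}}$, and the reflection $xs_jx^{-1}$ sends $x$ to $xs_j$. By Proposition \ref{gallerycrossing} and the remark describing $\Hid$ and $\Hinf$, the condition $\ell(xs_j)<\ell(x)$ says that a minimal gallery from $1$ to $x$ crosses $H_3$. Equivalently, $x\in \Hinf_{H_3}$ and $xs_j\in \Hid_{H_3}$.

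Similarly, $r_2r_1x$ and $r_2r_1xs_j$ share their $j$-panel, namely $r_2r_1$ applied to the $j$-panel of $x$. This panel lies in $r_2r_1H_3$, the wall of the reflection $(r_2r_1)(xs_jx^{-1})(r_2r_1)^{-1}$. Since $r_1^{-1}=r_1$ and $r_2^{-1}=r_2$, this reflection is exactly the one defining $H_4$. Hence $j\in D_R(r_2r_1x)$ if and only if $r_2r_1x\in\Hinf_{H_4}$.

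Next, use the geometry of $\tau:=r_2r_1$. By the computation in Lemma \ref{r2r1H3parallel}, $\tau$ acts on $E$ as the translation $v\mapsto v+(n_2-n_1)\gamma^\vee$. So $H_4=\tau H_3$ is parallel to $H_3$. Moreover, $\tau$ maps the two open half-spaces of $H_3$ onto the two half-spaces of $H_4$ "in the same direction." Concretely, write $H_3=H_{\alpha,m}$ and $H_4=H_{\alpha,m'}$ with $m'=m+(n_2-n_1)(\gamma^\vee,\alpha)$. Then $\langle v,\alpha\rangle>m$ if and only if $\langle \tau v,\alpha\rangle>m'$, and likewise with the inequalities reversed. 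Now let $\epsilon\in\{\pm1\}$ be the sign with $x\in\{\epsilon(\langle v,\alpha\rangle-m)>0\}$, which is $\Hinf_{H_3}$. It follows that $\tau x$ lies in $\{\epsilon(\langle v,\alpha\rangle-m')>0\}$.

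It remains to show that this half-space of $H_4$ is $\Hinf_{H_4}$, i.e.\ that the identity alcove lies on the side $\epsilon(\langle v,\alpha\rangle-m')<0$. This is where the hypothesis enters, and I expect it to be the only real step. The identity alcove satisfies $\epsilon(\langle v,\alpha\rangle-m)<0$, since $1\in\Hid_{H_3}$. By hypothesis the identity alcove is not bounded between the parallel walls $H_3$ and $H_4$. So it lies on the same side of both, in the sense of the common normal $\alpha$: either $\langle v,\alpha\rangle<\min(m,m')$ on it, or $\langle v,\alpha\rangle>\max(m,m')$. In either case the sign of $\langle v,\alpha\rangle-m'$ agrees with the sign of $\langle v,\alpha\rangle-m$, which is $-\epsilon$. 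Hence $1\in\Hid_{H_4}$ lies on the opposite side to $\tau x$, so $r_2r_1x\in\Hinf_{H_4}$. By the first step, $j\in D_R(r_2r_1x)$.

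Two points need care. One must interpret "not bounded between" as saying that the identity alcove does not lie strictly in the strip between $H_3$ and $H_4$. If $H_3=H_4$, the statement is trivial, since then the two half-space conditions coincide. One should also note that alcoves never meet walls in their interior, so all the sign comparisons above are strict.
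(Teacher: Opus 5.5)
Your proof is correct and follows essentially the same route as the paper. Both arguments identify $H_4$ with the translate $r_2r_1H_3$, use that the translation $r_2r_1$ carries half-spaces of $H_3$ to the correspondingly oriented half-spaces of $H_4$, and use the hypothesis on the identity alcove to see that $x\in\Hinf_3$ forces $r_2r_1x\in\Hinf_4$. Your sign computation with the common normal $\alpha$ is a coordinate version of the paper's case analysis ($\Hinf_3\subseteq\Hinf_4$ or $\Hid_3\subseteq\Hid_4$), and it makes the paper's step $r_2r_1\cdot\Hid_3=\Hid_4$ more transparent.
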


\begin{proof}
    By Lemma \ref{r2r1H3parallel}, $H_3$ and $H_4$ are parallel. Then $\Hid_3\not\subseteq\Hinf_4$, as then $\Hid_3\cap\Hid_4=\varnothing$, which is a contradiction. Also $\Hinf_3\not\subseteq\Hid_4$, as then the identity alcove would be bounded between $H_3$ and $H_4$. So either $\Hinf_3\subseteq \Hinf_4$ or $\Hid_3\subseteq \Hid_4$. 

    Now $r_2r_1$ is the combination of two parallel reflections, so it is a translation. It sends $H_3$ to $H_4$. As either $\Hinf_3\subseteq \Hinf_4$ or $\Hid_3\subseteq \Hid_4$, 
    \[r_2r_1\cdot \Hid_3=\Hid_4\quad \text{and}\quad r_2r_1\cdot \Hinf_3=\Hinf_4.\] 
    Then, as $x\in \Hinf_3$, $r_2r_1x\in\Hinf_4$. Hence, $r_2r_1xs_j\in \Hid_4$, and so $j\in D_R({r_2r_1x})$.
\end{proof}

\begin{corollary}
    Let $W$ be an affine Coxeter complex. Let $x\in W$. Let $r_1,\hdots,r_n\in W$ be a sequence of adjacent parallel reflections such that \[\ell(r_t\cdots r_1x)=\ell(r_{t-1}\cdots r_1x)+1,\] for all $1\leq t\leq n$. Assume there exists a $j\in D_R({x})\cap D_R({r_1x})$. Let $H_t$ be the hyperplane separating $r_t\cdots r_1x$ and $r_t\cdots r_1xs_j$. Assume no two $H_t$ bound the origin. Then $j\in D_R({r_t\cdots r_1x})$ for all $1\leq t\leq n$.
\end{corollary}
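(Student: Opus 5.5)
We argue by strong induction on $t$, stepping two at a time, with Proposition \ref{jnotinJr2r1x} as the engine. The cases $t=1$ and, vacuously, $t=0$ hold by hypothesis, since $j\in D_R(x)\cap D_R(r_1x)$. So assume $t\geq 2$ and that $j\in D_R(r_{t-2}\cdots r_1x)$. The aim is to transport this descent to $r_t\cdots r_1x$ by applying Proposition \ref{jnotinJr2r1x} with the pair of parallel reflections $r_{t-1},r_t$ in place of $r_1,r_2$, and with $x$ replaced by $y=r_{t-2}\cdots r_1x$.

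The first step is to identify the hyperplanes appearing in Proposition \ref{jnotinJr2r1x} with those of the corollary. For the element $y$, the wall $H_{ys_jy^{-1}}$ is the wall containing the $j$-panel of $y$. This is exactly the hyperplane $H_{t-2}$ separating $y$ from $ys_j$. Similarly, $H_{r_tr_{t-1}ys_jy^{-1}r_{t-1}r_t}$ is the wall through the $j$-panel of $r_tr_{t-1}y=r_t\cdots r_1x$, which is $H_t$.

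By Lemma \ref{r2r1H3parallel} these two hyperplanes are parallel. The hypothesis that no two of the $H_s$ bound the origin says exactly that the identity alcove is not bounded between $H_{t-2}$ and $H_t$. Proposition \ref{jnotinJr2r1x} then gives $j\in D_R(r_t\cdots r_1x)$.

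Running this induction along the even indices, starting from $x$, and along the odd indices, starting from $r_1x$, covers all $1\leq t\leq n$. This is why both base descents $j\in D_R(x)$ and $j\in D_R(r_1x)$ are needed. We must also allow $H_0$, the wall through the $j$-panel of $x$, as one of the hyperplanes; the statement's family should be read as $t\geq 0$, and we adopt that reading in the base case.

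The main obstacle is making the identification of walls rigorous. Conjugating $ys_jy^{-1}$ by the translation $r_tr_{t-1}$ gives the reflection in the wall through the $j$-panel of $r_tr_{t-1}y$, since left multiplication is a complex automorphism and panel types are preserved. We also need to handle the degenerate situation where $H_{t-2}=H_t$, which can occur if the translation $r_tr_{t-1}$ is parallel to $H_{t-2}$. In that case nothing is bounded between them and the argument of Proposition \ref{jnotinJr2r1x} still goes through, since $r_tr_{t-1}$ maps each half-space of that wall to itself. The length hypotheses are not needed beyond what the earlier propositions require, so we do not expect further difficulty there.
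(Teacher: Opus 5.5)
Your proposal is correct. It is the argument the paper intends: the corollary is stated without proof as an immediate consequence of Proposition~\ref{jnotinJr2r1x}. That proposition is applied to the translations $r_tr_{t-1}$, along even indices starting from $x$ and along odd indices starting from $r_1x$, which is why both $j\in D_R(x)$ and $j\in D_R(r_1x)$ are assumed.

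Your side remarks are also accurate: the family of walls must include $H_0$, the length hypotheses are never used, and the case $H_{t-2}=H_t$ is already handled by that proposition's own proof. One small wording point: the case $t=0$ holds directly by hypothesis, not vacuously.
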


\begin{proposition}\label{nonemptyor}
   Let $W$ be a rank 2 affine Coxeter complex. Let $w\in W$ and $i\in D_R(w)$. Let $r_1,\hdots,r_n\in W$ be a sequence of adjacent parallel reflections such that \[\ell(r_t\cdots r_1ws_i)=\ell(r_{t-1}\cdots r_1 ws_i)+1,\] 
    and $r_{t}\cdots r_1ws_i\neq r_{t-1}\cdots r_1w$, for all $1\leq t\leq n$. Suppose there are no hyperplanes parallel to $H_{r_1}$ between $w$ and $H_{r_1}$. Either $\bigcap_t D_R(r_t\cdots r_1ws_i)$ is nonempty, or there exists $j\in D_R(r_n\cdots r_1ws_i)$ such that $r_n\cdots r_1ws_i=r_{n-1}\cdots r_1ws_is_j$. 
\end{proposition}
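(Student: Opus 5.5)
The plan is to pick a candidate $j$ from the right descent set of the endpoint and push it back along the chain, as the discussion before Proposition \ref{shareddecreasing} suggests. Write $x_t=r_t\cdots r_1ws_i$ for $0\le t\le n$, so $x_0=ws_i$ and $\ell(x_t)=\ell(x_0)+t$. In rank 2 each alcove has three panels of types $0,1,2$. By Corollary \ref{iinrn...r1wsi}, $i\notin D_R(x_t)$ for every $t$. Since $\ell(x_n)>\ell(x_0)\ge 0$, $x_n\ne 1$, so $D_R(x_n)$ is nonempty; fix $j\in D_R(x_n)$, and note that $j\ne i$.

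We split into two cases. If $x_n=x_{n-1}s_j$, we are in the second alternative of the statement and are done. Otherwise, first suppose that $x_t\ne x_{t-1}s_j$ for every $1\le t\le n$. Then Proposition \ref{shareddecreasing} gives $j\in D_R(x_t)$ for all $1\le t\le n$. It remains to get $j\in D_R(x_0)=D_R(ws_i)$, and we would need this for the intersection over all $t$ (including $t=0$) to be nonempty. For this we use that $x_1=r_1x_0$ with $\ell(x_1)=\ell(x_0)+1$, so $H_{r_1}$ is the wall through a panel of $x_0$ and $x_1$. The step $x_0\to x_1$ crosses $H_{r_1}$ through the panel of $x_0$ of some type $k$. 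We have $k\ne j$, since $x_1\ne x_0s_j$. Also $k\ne i$: otherwise $r_1ws_i=w$, which is excluded. So $k$ is the third type, and $x_0$ lies in $\Hid_{r_1}$. Now $x_0s_j\in\Hid_{r_1}$ as well, and reflecting gives $x_1s_j=r_1x_0s_j$. Using $\ell(x_1s_j)<\ell(x_1)$ together with $\ell(r_1y)>\ell(y)$ for $y=x_0s_j\in\Hid_{r_1}$, we compare $\ell(x_0s_j)$ with $\ell(x_0)$, exactly as in the proof of Proposition \ref{shareddecreasing}. This should yield $j\in D_R(x_0)$.

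The remaining case, and the main obstacle, is when $x_n\ne x_{n-1}s_j$ but $x_l=x_{l-1}s_j$ for some $l<n$. Then the wall $H_{r_l}$ contains the $j$-panel of $x_{l-1}$. Because the walls $H_{r_t}$ are adjacent and parallel, and by Proposition \ref{subsets} they move monotonically away from the identity, in rank 2 the types of the panels lying in successive walls $H_{r_t}$ follow a fixed periodic pattern. Each translation $r_{t+1}r_t$ preserves panel types up to the relabelling induced by the translation. The hypothesis that no wall parallel to $H_{r_1}$ lies between $w$ and $H_{r_1}$ pins down the starting position, so that $w$ and $ws_i$ are adjacent to $H_{r_1}$.

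I would then try to show that the obstruction at such an $l$ forces either the same obstruction at $t=n$ (giving the second alternative) or a different choice of $j'\in D_R(x_n)\setminus\{i,j\}$ for which no obstruction occurs, so that the first case applies to $j'$. To do this I would use Proposition \ref{jnotinJr2r1x} and its corollary with $x=x_0$: for this $x$, the walls $H_{x_ts_jx_t^{-1}}$ are all parallel translates of one another under the translations $r_{t+1}r_t$. The no-intermediate-hyperplane condition should prevent two of these walls from bounding the identity, which would carry $j$ (or $j'$) along every $t$. I expect this case to need explicit rank-2 geometry, possibly splitting by $\tilde A_2$, $\tilde B_2$ and $\tilde G_2$ according to how the panels of alcoves meet a strip between parallel walls.
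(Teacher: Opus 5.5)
Your first two cases are correct and follow the paper's route. You choose $j\in D_R(x_n)$. If $x_n=x_{n-1}s_j$ you have the second alternative. If $x_t\neq x_{t-1}s_j$ for all $t$, the argument of Proposition \ref{shareddecreasing} runs all the way down to $t=0$. One correction there: $x_1=r_1x_0$ is the mirror image of $x_0$ and is adjacent to it only if $x_0$ has a panel in $H_{r_1}$, so the ``crossing panel of type $k$'' does not exist in general. You do not need it, since $x_0\in\Hid_{r_1}$ already follows from $\ell(x_1)>\ell(x_0)$.

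The genuine gap is your third case, for which you give only a speculative plan (a second choice $j'$, Proposition \ref{jnotinJr2r1x}, a split over $\tilde A_2,\tilde B_2,\tilde G_2$). The missing idea is that this case is empty, and that is exactly the content of the paper's proof. Suppose $x_l=x_{l-1}s_j$. Then $r_l$ swaps $x_{l-1}$ and $x_{l-1}s_j$ and, being type-preserving, fixes their common $j$-panel $P$, so $P\subset H_{r_l}$ and $P$ is parallel to the family. For $t\ge l$ the $j$-panel of $x_t$ is $r_t\cdots r_{l+1}P$. Since $r_{l+1}H_{r_l}=H_{r_{l+2}}$, $r_{l+2}H_{r_{l+2}}=H_{r_{l+2}}$, and so on, this panel lies in $H_{r_t}$ when $t-l$ is even and in the next wall $H_{r_{t+1}}$ when $t-l$ is odd. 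Now look at $t=n$:
\begin{itemize}
\item In the even case, $r_n$ fixes the $j$-panel of $x_n$, so $x_{n-1}=r_nx_n=x_ns_j$, which is the second alternative.
\item In the odd case, $x_ns_j=r_{n+1}x_n$. Here $x_n$ lies between $H_{r_n}$ and $H_{r_{n+1}}$, while the identity lies on the $x_{n-1}$-side of $H_{r_n}$. Hence $x_n\in\Hid_{r_{n+1}}$ and $j\notin D_R(x_n)$, contradicting the choice of $j$.
\end{itemize}
So an obstruction at any step forces one at $t=n$, and your case split closes with no further geometry.

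This argument needs $x_t$ to lie between $H_{r_t}$ and $H_{r_{t+1}}$. That holds when $w$ lies between $H_{r_0}$ and $H_{r_1}$, on the side of $H_{r_1}$ away from $H_{r_2}$, as in $(\dagger)$. (If $ws_i$ lies beyond $H_{r_0}$, the length conditions force $n=1$, and then $x_1$ is not adjacent to $x_0$, so no obstruction occurs.) Your reading that $w$ and $ws_i$ are merely ``adjacent to $H_{r_1}$'' is not enough. In $\tilde A_2$ one can place $1$, $w$ and $ws_i$ all in the strip between $H_{r_1}$ and $H_{r_2}$ with $x_1=x_0s_j$. Then $x_2$ lies beyond $H_{r_3}$ with its $j$-panel on $H_{r_3}$, so $j\in D_R(x_2)$ even though $x_2\neq x_1s_j$ and $j\notin D_R(x_0)$.
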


\begin{proof}
    Suppose $\bigcap_t D_R(r_t\cdots r_1ws_i)$ is empty, and we cannot find $j\in D_R(r_n\cdots r_1ws_i)$ such that $r_n\cdots r_1ws_i=r_{n-1}\cdots r_1ws_is_j$. Let $k\in D_R(r_n\cdots r_1 ws_i)$. By Corollary  \ref{iinrn...r1wsi}, $i\notin D_R(r_n\cdots r_1ws_i)$, so $s_k\neq s_i$. Now, if at some point $r_k\cdots r_1 ws_i=r_{k-1}\cdots r_1 ws_is_k$, the panels of type $s_k$ in all $r_t\cdots r_1 ws_i$ must be parallel to the reflecting hyperplanes. So for $r_n\cdots r_1 ws_i$, its $s_k$ panel must either lie on $H_{r_n}$, which cannot occur as then $r_n\cdots r_1ws_i=r_{n-1}\cdots r_1ws_is_k$, or must lie on the next parallel line, but then $j\notin D_R(r_n\cdots r_1ws_i),$ which also cannot occur. So then, $r_k\cdots r_1 ws_i\neq r_{k-1}\cdots r_1 ws_is_k$ for any $t$, so by Proposition \ref{shareddecreasing}, $j\in D_R(r_t\cdots r_1ws_i)$ for all $t$.
\end{proof}

\begin{corollary}\label{nonemptyor2}
    Suppose the conditions of Proposition \ref{nonemptyor} hold on $w$, $s_i$ and $r_1,\hdots,r_n$. Then the intersection $\bigcap_{t<n} D_R(r_t\cdots r_1ws_i)$ is nonempty. 
\end{corollary}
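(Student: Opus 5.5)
Apply Proposition \ref{nonemptyor} to the given data. It leaves two cases. In the first, $\bigcap_{t\le n} D_R(r_t\cdots r_1ws_i)$ is nonempty. Since $\bigcap_{t\le n}\subseteq\bigcap_{t<n}$, the corollary follows at once. So all the work is in the second case. There we have some $j\in D_R(r_n\cdots r_1ws_i)$ with
\[r_n\cdots r_1ws_i=r_{n-1}\cdots r_1ws_is_j.\]
The plan is to show that $j$ itself lies in $D_R(r_t\cdots r_1ws_i)$ for every $t<n$.

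Set $y_t=r_t\cdots r_1ws_i$. The relation $y_n=y_{n-1}s_j$ says that the $s_j$-panel of $y_{n-1}$ lies on $H_{r_n}$. By hypothesis $\ell(y_n)=\ell(y_{n-1})+1$, so $\ell(y_{n-1}s_j)>\ell(y_{n-1})$. This is the opposite of what we want at $t=n-1$, so the argument must instead use the geometry of the parallel strips. In rank 2, consecutive alcoves $y_{t-1}$ and $y_t=r_ty_{t-1}$ are mirror images across $H_{r_t}$, and each $y_t$ sits in the strip between two adjacent parallel walls. Since $y_n=y_{n-1}s_j$, the $s_j$-panels of all the $y_t$ are parallel to the $H_{r_t}$, exactly as in the proof of Proposition \ref{nonemptyor}. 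Going backwards through the reflections $r_{n-1},\dots,r_1$, the $s_j$-panel of $y_{t}$ then lies alternately on $H_{r_{t+1}}$ and on $H_{r_t}$.

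From this I would extract the decisive point. The reflected copies force the $s_j$-panel of $y_{n-1}$ onto $H_{r_n}$. Its image under $r_{n-1}$ puts the $s_j$-panel of $y_{n-2}$ onto $r_{n-1}H_{r_n}=H_{r_{n-2}}$. In general, $y_t$ has its $s_j$-panel on $H_{r_t}$ for $t\equiv n \pmod 2$, and on $H_{r_{t+1}}$ otherwise. I would settle the correct placement using Proposition \ref{threeparallel}, which gives $r_{t+1}r_tr_{t-1}=r_t$. Once the panel of $y_t$ (for $t<n$) sits on the wall on the identity side, we have $y_ts_j\in\Hid$ of that wall and $y_t\in\Hinf$, so $j\in D_R(y_t)$. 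The inclusions $\Hid_{r_{t-1}}\subseteq\Hid_{r_t}$ from Proposition \ref{subsets} say which side is the identity side.

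The main obstacle is making this parity argument consistent. In the strip picture each alcove touches only one of its two bounding walls by an $s_j$-panel, and the reflections alternate which wall that is. So I expect $j$ to be a descent exactly for the $y_t$ whose $s_j$-panel is on the far wall. If that alternation breaks $j\in D_R(y_t)$ for some $t<n$, I would instead take $k\in D_R(y_{n-1})$ with $k\ne j$. For this $k$ one has $y_{t}\ne y_{t-1}s_k$ for all $t\le n-1$: in rank 2 only one panel type of each alcove is parallel to the family. Then apply Proposition \ref{shareddecreasing} to the truncated sequence $r_1,\dots,r_{n-1}$ to get $k\in D_R(y_t)$ for all $t<n$.

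Checking that $D_R(y_{n-1})\setminus\{j\}$ is nonempty also needs care. By Corollary \ref{iinrn...r1wsi} we have $i\notin D_R(y_{n-1})$, and $j\notin D_R(y_{n-1})$ by the length computation above. So a descent $k$ must come from the third generator. I would obtain it from the hypothesis that no hyperplane parallel to $H_{r_1}$ separates $w$ from $H_{r_1}$, using Proposition \ref{jnotinJr2r1x} to transport the descent $k$ along the translations $r_{t+1}r_t$.
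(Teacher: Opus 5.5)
Your first plan cannot work, and no placement argument will rescue it. You already showed $\ell(y_{n-1}s_j)=\ell(y_n)>\ell(y_{n-1})$, so $j\notin D_R(y_{n-1})$, and $t=n-1$ is one of the indices in $\bigcap_{t<n}$. The alternation you describe is actually correct, and it makes the failure explicit. When the $s_j$-panel of $y_t$ lies on $H_{r_t}$, one has $y_ts_j=r_ty_t=y_{t-1}$. So $j$ is a descent of $y_t$ exactly when that panel is on $H_{r_t}$ rather than $H_{r_{t+1}}$, i.e.\ for $t\equiv n\pmod 2$; this is not the ``far wall'' case you guessed. Hence $j$ fails for every $t\equiv n-1\pmod 2$. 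The ``if that alternation breaks'' branch is therefore always the one taken, and it is your actual proof.

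That branch is correct. The $s_j$-panel of $y_{n-1}$ lies on $H_{r_n}$, and each $r_t$ preserves panel types and maps the parallel class to itself. So every $y_t$ has its $s_j$-panel parallel to the $H_{r_t}$. Since no two sides of an alcove are parallel, no $s_k$-panel ($k\neq j$) of $y_{t-1}$ can lie on $H_{r_t}$. Thus $y_t\neq y_{t-1}s_k$ for all $t\le n-1$, and Proposition~\ref{shareddecreasing} applied to $r_1,\dots,r_{n-1}$ gives $k\in D_R(y_t)$ for all $t<n$.

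Your final paragraph, however, invents a difficulty. For $n\ge 2$ we have $\ell(y_{n-1})=\ell(ws_i)+n-1\ge 1$, so $D_R(y_{n-1})\neq\varnothing$. Any element of it differs from $j$ because $j\notin D_R(y_{n-1})$. You do not need the hypothesis about hyperplanes between $w$ and $H_{r_1}$, nor Proposition~\ref{jnotinJr2r1x}; as sketched, that part is not an argument anyway, so drop it.

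Compared with the paper: the paper argues by contradiction. It applies Proposition~\ref{nonemptyor} to both $r_1,\dots,r_n$ and $r_1,\dots,r_{n-1}$, obtaining $y_n=y_{n-1}s_j$ and $y_{n-1}=y_{n-2}s_k$. Then $y_{n-1}$ would have sides on the two distinct parallel walls $H_{r_{n-1}}$ and $H_{r_n}$, which is impossible for a triangle. Your fallback rests on the same geometric fact but feeds it directly into Proposition~\ref{shareddecreasing}, which is exactly what the proof of Proposition~\ref{nonemptyor} does internally. It is the same mechanism unrolled one level. It costs a few more lines but yields an explicit common descent: the generator other than $i$ and $j$, since $i,j\notin D_R(y_{n-1})$. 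The paper's version is shorter once Proposition~\ref{nonemptyor} is available.
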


\begin{proof}
    If it is empty, then $\bigcap_t D_R(r_t\cdots r_1ws_i)$ is empty. By Proposition \ref{nonemptyor}, there is a $j\in D_R(r_n\cdots r_1ws_i)$ such that $r_n\cdots r_1ws_i=r_{n-1}\cdots r_1ws_is_j$. But, also by Proposition \ref{nonemptyor}, there is a $k\in D_R(r_{n-1}\cdots r_1ws_i)$ such that $r_{n-1}\cdots r_1ws_i=r_{n-2}\cdots r_1ws_is_k$. Now the alcove $r_{n-1}\cdots r_1ws_i$ has two panels, $s_j$ and $s_k$, lying on adjacent parallel hyperplanes. But this is not possible, as $r_{n-1}\cdots r_1ws_i$ is a simplex. So $\bigcap_{t<n} D_R(r_t\cdots r_1ws_i)$ is nonempty. 
\end{proof}

We can now establish that either $\bigcap_{t\leq n} D_R(r_t\cdots r_1ws_i)$ is nonempty, or $r_{n-1}\cdots r_1ws_i\in \nPSh{w}$ implies that $r_n\cdots r_1ws_i\in\nPSh{w}$. This means that, when proving the main theorem, we can assume that all $r_t\cdots r_1ws_i$ share a common decreasing panel. 
\begin{lemma}
    Suppose the conditions of Proposition \ref{nonemptyor} hold on $w$, $s_i$ and $r_1,\hdots,r_n$. Suppose $H_{r_0}$ and $H_{r_1}$ bound the identity. If there exists $j\in D_R(r_n\cdots r_1ws_i)$ such that $r_n\cdots r_1ws_i=r_{n-1}\cdots r_1ws_is_j$, then $j\notin D_R(w)$.
\end{lemma}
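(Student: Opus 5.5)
The plan is to transport the hypothesis $r_n\cdots r_1ws_i=r_{n-1}\cdots r_1ws_is_j$ back along the chain of reflections to a statement about the $j$-panel of $ws_i$. From there I would locate the $j$-panel of $w$ relative to the strip bounded by $H_{r_0}$ and $H_{r_1}$, which contains the identity.

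Write $y_t=r_t\cdots r_1ws_i$, so $y_t=r_ty_{t-1}$, and recall that left multiplication carries the $j$-panel of an alcove to the $j$-panel of its image.
\begin{enumerate}[(i)]
\item The hypothesis says that $r_n$ interchanges $y_{n-1}$ and $y_{n-1}s_j$. Hence the $j$-panel of $y_{n-1}$ lies in $H_{r_n}$.
\item Since $y_{t-1}=r_ty_t$ and the three walls $H_{r_{t-1}},H_{r_t},H_{r_{t+1}}$ are adjacent and parallel, $r_t$ maps $H_{r_{t+1}}$ to $H_{r_{t-1}}$; this is the same computation as in Proposition \ref{threeparallel}. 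So if the $j$-panel of $y_t$ lies in $H_{r_{t+1}}$, then the $j$-panel of $y_{t-1}$ lies in $H_{r_{t-1}}$.
\item Iterating (ii) downward from (i), the $j$-panel of $y_0=ws_i$ lies in $H_{r_1}$ or in $H_{r_0}$, according to the parity of $n$.
\end{enumerate}
In either case the $s_j$-panel of $ws_i$ lies on one of the two walls bounding the strip containing the identity.

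Next I would place $w$ and $ws_i$ inside that strip. By the proposition preceding Proposition \ref{rn...r1winHid}, together with Proposition \ref{rn...r1winHid} for $n=1$, $w\in \Hid_{r_1}$. The hypothesis that no wall parallel to $H_{r_1}$ lies between $w$ and $H_{r_1}$ then puts $w$ in the strip between $H_{r_0}$ and $H_{r_1}$. The same holds for $ws_i\in\Hid_{r_1}$: if $ws_i$ were across $H_{r_0}$, the $i$-panel shared with $w$ would lie in $H_{r_0}$, so $ws_i$ would be in $\Hinf_{r_0}$; I expect this to be excluded by the length conditions, or to be handled separately.

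So $w$ and $ws_i$ are adjacent triangles in the identity strip, and the $j$-panel of $ws_i$ lies on a boundary wall of the strip. Since $i\neq j$, the $j$-panel of $w$ meets $ws_i$'s $j$-panel in the common vertex opposite... or more precisely, the two alcoves share the $i$-panel, and the $j$-panel of $w$ is the image of the $j$-panel of $ws_i$ under the reflection $ws_iw^{-1}$ in the wall through the shared $i$-panel. I would then use the rank 2 picture. The triangle $ws_i$ has one side, its $j$-side, on the strip boundary, and it shares its $i$-side with $w$. Hence the $j$-side of $w$ is a wall $H$ that crosses the strip transversally. I want to show that $w\in\Hid_H$, i.e.\ $\ell(ws_j)>\ell(w)$, by Proposition \ref{gallerycrossing}.

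The argument I would try is as follows. The identity alcove and $w$ both lie in the same strip. The wall $H$ meets the strip boundary at the vertex of $w$ that it shares with $ws_i$'s $j$-side, and $ws_j$ is the reflection of $w$ across $H$. So $ws_j$ lies on the side of $H$ pointing along the strip away from $ws_i$. I would then show that $ws_i\in\Hid_{r_1}$, combined with $\ell(ws_i)<\ell(w)$, forces the identity to lie on the same side of $H$ as $ws_i$, and hence as $w$. This would use $i\in D_R(w)$ and the fact that the wall through the shared $i$-panel has $ws_i$ on the identity side.

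This final orientation step is where I expect the main difficulty. It requires a careful case analysis of the rank 2 configurations (types $\tilde A_2$, $\tilde C_2$, $\tilde G_2$) and of the parity cases from step (iii). The cleanest route I would try first is to express everything in root coordinates as in Lemma \ref{r2r1H3parallel}. If $w$ were in $\Hinf_H$, then the identity alcove would be trapped between $H$ and the wall through the shared $i$-panel inside the strip. One would then check directly that this is incompatible with both $ws_i$ and $w$ lying in the strip, which would give the contradiction.
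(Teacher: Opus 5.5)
Your steps (i)--(iii) reach the paper's intermediate fact: the $s_j$-panel of $ws_i$ lies in $H_{r_0}$ or $H_{r_1}$. They actually justify what the paper only asserts. Three small repairs are needed.
\begin{enumerate}
\item Step (ii) does not chain by itself. After one application the $j$-panel of $y_{t-1}$ lies in $H_{r_{t-1}}$. You then need the extra observation that $r_{t-1}$ fixes $H_{r_{t-1}}$, so the $j$-panel of $y_{t-2}$ also lies in $H_{r_{t-1}}$, before (ii) applies again.
\item You should cite $i\neq j$ from Corollary \ref{iinrn...r1wsi}: there $i\notin D_R(y_n)$, while $j\in D_R(y_n)$.
\item That $ws_i$ lies in the strip is immediate, not something to handle separately. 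If the $i$-panel of $w$ lay on $H_{r_0}$ or $H_{r_1}$, then $ws_i$ would be on the side of that wall away from the identity, contradicting $i\in D_R(w)$.
\end{enumerate}

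The genuine gap is the conclusion itself. You never prove $j\notin D_R(w)$; you only outline an orientation argument that you expect to need a case analysis over $\tilde{A}_2$, $\tilde{C}_2$, $\tilde{G}_2$ and over parity. None of that is needed. The missing idea is a one-line parabolic-coset argument, which is exactly what the paper uses:
\begin{enumerate}
\item $ws_i$ and the identity lie in the same strip, and the $j$-panel of $ws_i$ lies on a boundary wall of that strip. So $ws_is_j$ is on the far side from the identity, giving $j\notin D_R(ws_i)$.
\item $i\notin D_R(ws_i)$, since $i\in D_R(w)$.
\item Hence $ws_i$ is the minimal element of the finite coset $ws_iW_{\{i,j\}}$, and lengths add on that coset.
\item Therefore $\ell(ws_j)=\ell(ws_i\cdot s_is_j)=\ell(ws_i)+2=\ell(w)+1$, because $s_is_j$ is reduced for $i\neq j$.
\end{enumerate}

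In your own geometric language, the point you were missing is concurrency. The shared $i$-panel, the $j$-panel of $ws_i$ and the $j$-panel of $w$ all contain the common $\{i,j\}$-vertex of $w$ and $ws_i$. So the three walls in your ``trapped'' scenario all pass through that vertex. The three closed half-planes involved are the strip side, the $ws_i$-side of the $i$-wall, and the far side of $H$ from $w$. They meet only in that vertex, for every value of $m_{ij}$. So the identity alcove cannot lie in $\Hinf_H$, and no type-by-type check is required.
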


\begin{proof}
    Note that the $s_j$ type panel of $ws_i$ must lie either on the $H_{r_0}$ hyperplane or the $H_{r_1}$ hyperplane. Therefore, as $H_{r_0}$ and $H_{r_1}$ bound the identity, $j\notin D_R(ws_i)$. So $i,j\notin D_R(ws_i)$, and hence $ws_i$ is minimal in the coset $ws_iW_{\{s_i,s_j\}}$. Therefore, $w$ is not maximal in this coset, and as $i\in D_R(w)$, we must have that $j\notin D_R(w)$. 
\end{proof}

\begin{corollary}\label{assume}
    Suppose the conditions of Proposition \ref{nonemptyor} hold on $w$, $s_i$ and $r_1,\hdots,r_n$. Suppose $H_{r_0}$ and $H_{r_1}$ bound the identity.
    If $r_{n-1}\cdots r_1ws_i\in \nPSh{w},$ and there exists $j\in D_R(r_n\cdots r_1ws_i)$ such that $r_n\cdots r_1ws_i=r_{n-1}\cdots r_1ws_is_j$, then $r_n\cdots r_1ws_i\in\nPSh{w}$. 
\end{corollary}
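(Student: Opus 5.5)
We must show $r_n\cdots r_1ws_i\in\nPSh{w}$, given that $r_{n-1}\cdots r_1ws_i\in\nPSh{w}$ and that $r_n\cdots r_1ws_i=r_{n-1}\cdots r_1ws_is_j$ with $j\in D_R(r_n\cdots r_1ws_i)$. The plan is to reduce the statement to Corollary \ref{corollary}: if $y\in\nPSh{w}$ and $\ell(ws_j)>\ell(w)$, then $ys_j\in\nPSh{w}$. We apply it with
\[y=r_{n-1}\cdots r_1ws_i,\]
which lies in $\nPSh{w}$ by hypothesis. Then $ys_j=r_{n-1}\cdots r_1ws_is_j=r_n\cdots r_1ws_i$, which is exactly the element we want to place in the annex.

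So everything reduces to verifying $j\notin D_R(w)$, that is, $\ell(ws_j)>\ell(w)$. This is the content of the preceding lemma. Its hypotheses are: the conditions of Proposition \ref{nonemptyor}, that $H_{r_0}$ and $H_{r_1}$ bound the identity, and the existence of $j\in D_R(r_n\cdots r_1ws_i)$ with $r_n\cdots r_1ws_i=r_{n-1}\cdots r_1ws_is_j$. All of these are among the hypotheses of the present corollary, so the lemma gives $j\notin D_R(w)$ directly. Corollary \ref{corollary} then yields $r_n\cdots r_1ws_i\in\nPSh{w}$.

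The only point needing care is that the lemma's conclusion concerns $D_R(w)$ itself, not $D_R(ws_i)$, which is precisely what Corollary \ref{corollary} requires. No step here is a genuine obstacle. The real work was done in the lemma, whose argument used minimality of $ws_i$ in the coset $ws_iW_{\{s_i,s_j\}}$.
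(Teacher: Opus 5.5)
Your proof is correct and is the argument the paper intends. The paper states this corollary without proof, immediately after the lemma giving $j\notin D_R(w)$; combining that lemma with Corollary~\ref{corollary} applied to $y=r_{n-1}\cdots r_1ws_i$ yields $ys_j=r_n\cdots r_1ws_i\in\nPSh{w}$, exactly as you wrote.
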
  

\begin{corollary}
    Suppose the conditions of Proposition \ref{nonemptyor} hold on $w$, $s_i$ and $r_1,\hdots,r_n$. Suppose $H_{r_0}$ and $H_{r_1}$ bound the identity.
    If $r_{n-1}\cdots r_1ws_i\in \nPSh{w},$ and $\bigcap_{t\leq n} D_R(r_t\cdots r_1ws_i)$ is empty, then $r_n\cdots r_1ws_i\in\nPSh{w}$. 
\end{corollary}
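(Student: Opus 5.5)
The plan is to deduce this corollary from Corollary \ref{nonemptyor2} together with Corollary \ref{assume}, by splitting according to whether the sequence falls into the exceptional case of Proposition \ref{nonemptyor}.

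\emph{Step 1.} Suppose $\bigcap_{t\leq n} D_R(r_t\cdots r_1ws_i)$ is empty. Proposition \ref{nonemptyor} applies, since the hypotheses of that proposition are assumed. Its first alternative is excluded, so it produces some $j\in D_R(r_n\cdots r_1ws_i)$ with
\[r_n\cdots r_1ws_i=r_{n-1}\cdots r_1ws_is_j.\]

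\emph{Step 2.} The hypotheses of Corollary \ref{assume} are now all in place:
\begin{itemize}
\item the conditions of Proposition \ref{nonemptyor} hold;
\item $H_{r_0}$ and $H_{r_1}$ bound the identity;
\item $r_{n-1}\cdots r_1ws_i\in\nPSh{w}$;
\item such a $j$ exists.
\end{itemize}
Corollary \ref{assume} therefore gives $r_n\cdots r_1ws_i\in\nPSh{w}$ directly.

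\emph{Step 3, if Corollary \ref{assume} is not to be used as a black box.} I would unwind it through the preceding lemma. That lemma gives $j\notin D_R(w)$, i.e.\ $\ell(ws_j)>\ell(w)$. Corollary \ref{corollary} then says that membership in $\nPSh{w}$ is preserved under right multiplication by $s_j$. Applying this to $y=r_{n-1}\cdots r_1ws_i\in\nPSh{w}$ gives
\[ys_j=r_n\cdots r_1ws_i\in\nPSh{w}.\]

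\emph{Main obstacle.} The only delicate point is confirming that Proposition \ref{nonemptyor} genuinely yields a $j$ of the required form once the intersection over all $t\leq n$ is empty. The proof of that proposition is somewhat loose, with index confusion between $k$, $j$ and $t$. If needed, I would re-derive it from Proposition \ref{shareddecreasing}. Take any $k\in D_R(r_n\cdots r_1ws_i)$. If no $t$ satisfies $r_t\cdots r_1ws_i=r_{t-1}\cdots r_1ws_is_k$, then $k$ lies in every descent set, which contradicts emptiness. So some $t$ satisfies that relation. The rank-2 parallel-panel argument then forces $t=n$, which is exactly the required $j=k$.
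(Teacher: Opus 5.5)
Your proposal is correct and follows the paper's argument: emptiness of $\bigcap_{t\leq n} D_R(r_t\cdots r_1ws_i)$ rules out the first alternative of Proposition \ref{nonemptyor}, giving $j\in D_R(r_n\cdots r_1ws_i)$ with $r_n\cdots r_1ws_i=r_{n-1}\cdots r_1ws_is_j$, and Corollary \ref{assume} finishes; your Step 3 (the preceding lemma plus Corollary \ref{corollary}) is exactly the proof the paper leaves unwritten for that corollary. One wording point in your backup sketch: the parallel-panel argument does not force the relation to hold \emph{only} at $t=n$, since it can also hold at smaller $t$ of the same parity. What it shows is that the $s_k$-panel of $r_n\cdots r_1ws_i$ must lie on $H_{r_n}$ rather than the next parallel wall, and that is precisely the relation at $t=n$ you need.
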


\begin{proof}
    Proposition \ref{nonemptyor} establishes that \[r_n\cdots r_1ws_i=r_{n-1}\cdots r_1ws_is_j\] for some $k\in D_R(r_{n-1}\cdots r_1ws_i)$. The result follows from Corollary \ref{assume}. 
\end{proof}

\subsection{Adjacent parallel reflections}

We now want to prove that when we have three adjacent parallel reflections $H_{r_0}$, $H_{r_1}$ and $H_{r_2}$, such that 
\[\ell(r_2r_1x)=\ell(r_1x)+1=\ell(x)+2,\]
and $H_{r_0}$ and $H_{r_1}$ bound $x$, then $\ell(r_0x)=\ell(x)\pm 1$. We prove this as Theorem \ref{pm1}. This theorem will then allow us to form an inductive argument, so that we can always assume the identity lies between $H_{r_0}$ and $H_{r_1}$. We first show that, if the reflection $r_0$ increases the length of $x$, then it can only increase the length by 1.

\begin{lemma}
    Suppose we have three adjacent parallel reflections $H_{r_0}$, $H_{r_1}$ and $H_{r_2}$, such that 
\[\ell(r_2r_1x)=\ell(r_1x)+1=\ell(x)+2,\]
and $\ell(r_0x)>\ell(x)$. Suppose $x$ is bounded between $H_{r_0}$ and $H_{r_1}$. Then $\ell(r_0x)=\ell(x)+1$.  
\end{lemma}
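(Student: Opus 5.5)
The plan is to work in the root-system realisation. There $\ell(y)$ is the number of hyperplanes separating the identity alcove $1$ from $y$, and we write $H_{r_k}=H_{\alpha,m+k}$. First I would pin down the geometry.
\begin{enumerate}[(i)]
\item Since $\ell(r_0x)>\ell(x)$ and $\ell(r_1x)>\ell(x)$, $x$ lies in $\Hid_{r_0}\cap \Hid_{r_1}$. As $x$ is in the strip between $H_{r_0}$ and $H_{r_1}$, the identity alcove lies in the same strip.
\item By Proposition~\ref{threeparallel}, $r_2r_1r_0=r_1$, so $r_0=r_1r_2r_1$ and hence $r_1(r_0x)=r_2r_1x$. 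Thus $r_0x$ and $r_2r_1x$ are mirror images in $H_{r_1}$, exactly as $x$ and $r_1x$ are.
\item By the computation in Lemma~\ref{r2r1H3parallel}, $t:=r_2r_1=r_1r_0$ is the translation by $\gamma^\vee$. It maps every hyperplane to a parallel one.
\end{enumerate}

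Second, I would write each length difference as a signed count over separating hyperplanes. For a reflection $r$ and an alcove $y$, every hyperplane $H\neq H_r$ separating $y$ from $ry$ is paired with $rH$. Such a pair contributes $0$ or $\pm 2$ to $\ell(ry)-\ell(y)$, according to whether $1$ lies in the wedge cut out by $H$ and $rH$. Consequently $\ell(ry)-\ell(y)=\pm1+2N_r(y)$, where $N_r(y)$ is a signed number of such wedges containing $1$; in particular the difference is always odd. I would apply this three times: to $(r_1,x)$, to $(r_2,r_1x)$, and to $(r_0,x)$. The hypotheses say the first two differences equal exactly $+1$, so no non-parallel hyperplane pair around $H_{r_1}$ (respectively $H_{r_2}$) contributes. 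Since $\ell(r_0x)-\ell(x)$ is odd and positive, it then suffices to show $\ell(r_0x)\le \ell(x)+1$.

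Third, for the upper bound I would compare the set of hyperplanes separating $x$ from $r_0x$ with the set separating $x$ from $r_1x$. A non-parallel hyperplane $H$ separates $x$ from $r_0x$ exactly when its intersection point with the orthogonal line through $x$ lies in a window of the same width as for $r_1$, shifted by $-\gamma^\vee$. Via the translation $t$ this gives a bijection between the pairs $\{H,r_0H\}$ and the pairs $\{H',r_1H'\}$, with $H'=tH$ or $t^{-1}H$. The identity alcove lies in the same strip as $x$ and $r_1x$ and $r_2r_1x$ increase length by exactly one, so the wedges for $r_1$ and $r_2$ contain no contributing configuration around $1$. I would then argue that a wedge of $\{H,r_0H\}$ containing $1$ with a $+2$ contribution would translate into such a configuration for $r_1$ or $r_2$, contradicting $\ell(r_1x)=\ell(x)+1$ or $\ell(r_2r_1x)=\ell(r_1x)+1$.

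I expect this transfer of wedges under $t$ to be the main obstacle. The difficulty is that $t$ moves $1$ out of its strip, so one must check carefully that the side of $1$ relative to the translated hyperplanes is preserved. In rank $2$ this is easiest, and I would first try a direct case check on the finitely many directions: the hyperplane $H$ is not parallel to $H_{r_0}$, so $H\cap H_{r_0}$ and $tH\cap H_{r_1}$ are points on two adjacent lines, and one compares the position of $1$ in the strip with these points. If the transfer fails in some case, the fallback is to use the mirror relation $r_1(r_0x)=r_2r_1x$. This gives $\ell(r_0x)=\ell(x)+2$ minus the signed count of hyperplanes separating $1$ from $r_1\cdot 1$ that lie between $r_0x$ and $r_2r_1x$, and one would then show that this count equals $1$, namely just $H_{r_1}$.
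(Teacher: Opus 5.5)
\textbf{Verdict: there is a genuine gap.} Your steps (i) and (ii) are correct and contain everything needed. However, the proposal never establishes the upper bound $\ell(r_0x)\le\ell(x)+1$, which is the whole content of the lemma.

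The wedge-transfer argument under the translation $t$ is only announced. You yourself note that $t$ moves $1$ out of its strip, so it is unclear that the transfer works. The fallback does not close the gap either. Since $r_1r_0x=r_2r_1x$, the signed count you describe is just
\[\ell(r_1r_0x)-\ell(r_0x)=\ell(x)+2-\ell(r_0x).\]
Asking that this count equal exactly $1$ is therefore a restatement of the lemma, and you give no reason why it should hold.

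A smaller point: deducing from a net change of exactly $+1$ that no pair contributes requires an unstated fact. For $y\in\Hid_r$, each pair $\{H,rH\}$ contributes $0$ or $+2$, never $-2$. Without this, contributions of opposite sign could cancel.

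\textbf{The missing observation.} You do not need the count to equal $1$, only to be positive, and positivity follows immediately from (i). The alcove $r_0x$ lies on the far side of $H_{r_0}$ from the strip containing $x$ and $1$. It is therefore on the same side of $H_{r_1}$ as the strip, i.e.\ $r_0x\in\Hid_{r_1}$. Hence
\[\ell(r_0x)<\ell(r_1r_0x)=\ell(r_2r_1x)=\ell(x)+2.\]
Together with $\ell(x)<\ell(r_0x)$, this forces $\ell(r_0x)=\ell(x)+1$.

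This is exactly the paper's argument. The parity decomposition and the wedge comparison can be dropped entirely.
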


\begin{proof}
    As $\ell(r_0x)>\ell(x)$ and $\ell(r_1x)>\ell(x)$, the identity alcove must lie between $H_{r_0}$ and $H_{r_1}$. So $r_0x\in H_{r_1}^1$. Now $r_2r_1=r_1r_0$, so \[\ell(r_0x)<\ell(r_1r_0x)=\ell(r_2r_1x)=\ell(x)+2,\] so $\ell(x)<\ell(r_0x)<\ell(x)+2$. Therefore, $\ell(r_0x)=\ell(x)+1$. 
\end{proof}
So the statement is true if $\ell(r_0x)>\ell(x)$. So now we will assume the hypotheses in the claim hold and that $\ell(r_0x)<\ell(x)$. We then show that, if $r_1x$ shares a panel with $H_{r_2}$, then the claim holds. 

\begin{lemma}
    Suppose we have three adjacent parallel reflections $H_{r_0}$, $H_{r_1}$ and $H_{r_2}$, such that 
\[\ell(r_2r_1x)=\ell(r_1x)+1=\ell(x)+2,\]
and $\ell(r_0x)<\ell(x)$. Suppose $x$ is bounded between $H_{r_0}$ and $H_{r_1}$. Suppose there is a generator $s_i$ such that $r_1xs_i=r_2r_1x$. Then $\ell(r_0x)=\ell(x)-1$. 
\end{lemma}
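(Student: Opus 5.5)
The plan is to turn the hypothesis $r_1xs_i=r_2r_1x$ into the statement that $r_0x$ is adjacent to $x$, namely $r_0x=xs_i$. The length claim then follows from the parity of lengths of adjacent alcoves.

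First, I would use Proposition \ref{threeparallel}, applied to the three adjacent parallel reflections $r_0,r_1,r_2$. It gives $r_2r_1r_0=r_1$. Multiplying on the right by $r_0$ (an involution) gives
\[r_2r_1=r_1r_0.\]
This identity was already used in the preceding lemma. Geometrically, both sides are the same translation by $\gamma^\vee$ perpendicular to the family of parallel hyperplanes.

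Second, I would substitute this into the hypothesis:
\[r_1xs_i=r_2r_1x=r_1r_0x.\]
Cancelling $r_1$ on the left gives $r_0x=xs_i$. Geometrically, the $s_i$-panel of $r_1x$ lies on $H_{r_2}$. Translating back by $r_1r_2$ (equivalently, reflecting in $H_{r_1}$) shows that the $s_i$-panel of $x$ lies on $H_{r_0}$. This is consistent with $x$ being bounded between $H_{r_0}$ and $H_{r_1}$.

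Finally, since $r_0x=xs_i$ with $s_i\in S$, we have $\ell(r_0x)=\ell(xs_i)=\ell(x)\pm1$. The hypothesis $\ell(r_0x)<\ell(x)$ excludes the plus sign, so $\ell(r_0x)=\ell(x)-1$.

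I expect no real obstacle. The only point needing care is getting the order of the factors right when passing from $r_2r_1r_0=r_1$ to $r_2r_1=r_1r_0$, so that the cancellation of $r_1$ happens on the correct side.
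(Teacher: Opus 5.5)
Your proof is correct and is exactly the paper's argument: you use $r_2r_1=r_1r_0$ (from Proposition~\ref{threeparallel}) to rewrite the hypothesis as $r_1xs_i=r_1r_0x$, cancel $r_1$ to get $r_0x=xs_i$, and conclude $\ell(r_0x)=\ell(x)\pm1$, with the sign forced by $\ell(r_0x)<\ell(x)$. One small slip is in your geometric aside, which the argument does not rely on: translation by $r_1r_2$ sends $r_1x$ to $r_1r_2r_1x=r_0x$, not to $x$, so it is not equivalent to reflecting in $H_{r_1}$; the reflection in $H_{r_1}$ is the map that carries the $s_i$-panel of $r_1x$ to the $s_i$-panel of $x$.
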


\begin{proof}
    We know that $r_2r_1=r_1r_0$, so $r_1xs_i=r_1r_0x$. Therefore, $xs_i=r_0x$, and so $\ell(r_0x)=\ell(x)\pm 1$. Hence, as $\ell(r_0x)<\ell(x)$, $\ell(r_0x)=\ell(x)-1$. 
\end{proof}

We next prove a few geometric statements that will help us conclude that $r_0x$, $x$, $r_1x$ and $r_2r_1x$ have some shared descent types. This will help us form our inductive argument.

\begin{lemma}\label{triangleangles}
    Let $\Delta ABC$ be a triangle such that each angle is at most $\pi/2$. Let $L$ be the line through $A$ and $B$, and let $B'$ be another point on $L$ lying on the same side of $A$ as $B$, and further away from $A$. Then the angle at $B'$ in the triangle $\Delta AB'C$ is still at most $\pi/2$. 

\begin{center}

\begin{tikzpicture}[scale=0.6]
\coordinate (A)  at (0,0);
\coordinate (B)  at (4,-1);
\coordinate (Bp) at (6,-1.5);
\coordinate (C) at (4,2);
\draw ($(A)!-0.3!(Bp)$) -- ($(A)!1.3!(Bp)$) node[right] {$L$};
\draw (A) -- (C);
\draw (C) -- (Bp);
\draw (C) -- (B);
\draw pic[
  draw,
  angle radius=0.5cm,
  "$\theta$"
] {angle = C--Bp--B};
\fill (A)  circle (1.5pt);
\fill (B)  circle (1.5pt);
\fill (Bp) circle (1.5pt);
\fill (C) circle (1.5pt);
\node[below]        at (A)  {$A$};
\node[below left]  at (B)  {$B$};
\node[below ] at (Bp) {$B'$};
\node[above]       at (C)  {$C$};
\end{tikzpicture}
\end{center}
    
\end{lemma}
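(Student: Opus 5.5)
The plan is to reduce the statement to the angle sum in a triangle together with the exterior angle theorem. Write $\alpha=\angle CAB$, $\beta=\angle ABC$ and $\gamma=\angle BCA$ for the angles of $\Delta ABC$, all at most $\pi/2$ by hypothesis. Because $B'$ lies on the ray from $A$ through $B$, beyond $B$, the ray $AB'$ coincides with the ray $AB$. Hence the angle of $\Delta AB'C$ at $A$ is again $\alpha$. Also, $B$ lies on the segment $AB'$ strictly between its endpoints.

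The key step is to compare the angles at $B$ and $B'$ inside the triangle $\Delta BB'C$. Since $B$ lies between $A$ and $B'$, the angle $\angle CBB'$ is supplementary to $\beta$, so $\angle CBB'=\pi-\beta$. The angle sum in $\Delta BB'C$ then gives
\[\theta=\angle CB'B=\pi-(\pi-\beta)-\angle BCB'=\beta-\angle BCB'.\]
In other words, the angle at $B$ is an exterior angle of $\Delta BB'C$ at $B$, so it exceeds the interior angle $\theta$ at $B'$. Since $\angle CB'A=\angle CB'B$, this shows $\theta<\beta\le\pi/2$, which is the required bound.

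The only real obstacle is that the argument needs $\Delta BB'C$ to be a genuine, non-degenerate triangle, so that $\angle BCB'>0$. This holds because $C\notin L$: if $C$ were on $L$, then $\Delta ABC$ would be degenerate, and we implicitly assume the triangles in the statement are non-degenerate. One could also avoid the exterior angle argument and use coordinates: place $L$ as the $x$-axis with $A$ at the origin and $B=(b,0)$, $b>0$, and $C=(c_1,c_2)$ with $c_2\neq 0$. Then $\beta\le\pi/2$ is equivalent to $c_1\le b$. For $B'=(b',0)$ with $b'>b$ this gives $c_1<b'$, which says the angle at $B'$ is acute. Either route is routine, and I would present the exterior angle version as the main proof.
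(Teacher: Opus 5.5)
Your proof is correct, but your main argument takes a different route from the paper's. The paper works with vectors. It sets $\gamma=\vec{AB}$, $\delta=\vec{AC}$ and $\vec{AB'}=k\gamma$ with $k>1$, and reads the hypothesis at $B$ as $(\gamma,\gamma-\delta)\geq 0$, i.e.\ $(\gamma,\gamma)\geq(\gamma,\delta)$. It then concludes from $(-k\gamma,\delta-k\gamma)=k^2(\gamma,\gamma)-k(\gamma,\delta)\geq 0$. This is exactly your coordinate remark, written without coordinates.

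Your exterior-angle argument is instead synthetic, and it buys a little more. It gives the identity $\theta=\beta-\angle BCB'$, hence the strict bound $\theta<\beta$. It also makes visible that only the hypothesis on the angle at $B$ is needed; the conditions at $A$ and $C$ play no role. The cost is that you must invoke the angle sum and check non-degeneracy ($C\notin L$) to get $\angle BCB'>0$.

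The paper's inner-product computation is a one-line algebraic check that needs nothing beyond $k>1$. It is also phrased in the same language of pairings that the paper uses immediately afterwards, where the angle bound becomes the condition $(\alpha,\beta)\geq 0$ on roots.
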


\begin{proof}
    Let $\gamma=\vec{AB}$, and let $\delta=\vec{AC}$. Now $(\gamma,\delta)\geq 0$, as the angle at $A$ is at most $\pi/2$. Similarly, $(\gamma,\gamma-\delta)\geq 0$ and $(\delta,\delta-\gamma)\geq 0$. Now $\vec{AB'}=k\gamma$ for $k>1$. Now, $(\gamma,\gamma-\delta)\geq 0$ implies that $(\gamma,\gamma)\geq (\gamma,\delta)$. Therefore, for $k>1$, $k^2(\gamma,\gamma)\geq k(\gamma,\delta).$ Now the vector between $B'$ and $A$ is $-k\gamma$, and the vector between $B'$ and $C$ is $\delta-k\gamma$. Now 
    \[(-k\gamma,\delta-k\gamma)=(k\gamma,k\gamma-\delta)=k^2(\gamma,\gamma)-k(\gamma,\delta)\geq 0.\]
    Hence, the angle at $B'$ must be at most $\pi/2$.
\end{proof}

\begin{proposition}
    Let an alcove $x$ lie between $H_{r_0}$ and $H_{r_1}$, with vertices $A,B,C$. Suppose $C$ is in the orbit of the origin and lies on $H_{r_1}$. Let $H=H_{xs_0x^{-1}}$. Then the angle of the sector defined by $H_{r_0}$ and $H$ containing $x$ is at most $\pi/2$. 
\end{proposition}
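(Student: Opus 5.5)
The plan is to turn the statement into elementary plane geometry and then apply Lemma \ref{triangleangles}. First I record the shape of alcoves in rank 2. In each of the three affine types $\tilde A_2$, $\tilde C_2$ and $\tilde G_2$, an alcove is a triangle with angles $(\pi/3,\pi/3,\pi/3)$, $(\pi/4,\pi/4,\pi/2)$ or $(\pi/6,\pi/3,\pi/2)$. So every angle of $\Delta ABC$ is at most $\pi/2$, which is the hypothesis of Lemma \ref{triangleangles}.

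Next I identify $H$. The vertex $C$ lies in the orbit of the origin, so it has the type of the origin, namely the vertex of the identity alcove not lying on its $s_0$-panel. Hence the $s_0$-panel of $x$ is the edge $AB$ opposite $C$, and $H=H_{xs_0x^{-1}}$ is the line $L$ through $A$ and $B$.

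Then I place the triangle in the strip. $H_{r_0}$ and $H_{r_1}$ are adjacent parallel walls and $x$ lies between them. Therefore every vertex of $x$ lies on one of these two walls or strictly between them, and since no wall parallel to them separates vertices of $x$, $x$ must touch both. I split into cases. If the edge $AB$ lies on $H_{r_0}$, then $H=H_{r_0}$, the sector is degenerate, and this case is dismissed or handled by convention. Otherwise exactly one of $A,B$ lies on $H_{r_0}$, say $A$. Then $H$ meets $H_{r_0}$ at $A$, and the sector in question is the angle at $A$ between the ray $AB$ and the ray of $H_{r_0}$ on the side of $H$ containing $C$.

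Now I extend $L$ beyond $B$ until it meets $H_{r_1}$ at a point $B'$; such a point exists because $B$ lies on or between the two walls. Lemma \ref{triangleangles}, applied to $\Delta ABC$ and $B'$, shows that the angle at $B'$ in $\Delta AB'C$ is at most $\pi/2$. The lines $H_{r_0}$ and $H_{r_1}$ are parallel and $L$ is a transversal through $A$ and $B'$. By alternate or co-interior angles, the sector angle at $A$ is determined by the angle $\angle AB'C$, and I will use this to conclude it is at most $\pi/2$. The degenerate case $B'=B$, where $B$ already lies on $H_{r_1}$, makes $AB$ a full transversal of the strip and is handled directly from the angle of $x$ at $B$.

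The main obstacle is the orientation bookkeeping in this last step. I must check which of the two angles formed at $A$ is "the sector containing $x$". I must also check whether the angle transferred from $B'$ is the alternate angle (equal to $\angle AB'C$) or the co-interior one (equal to $\pi-\angle AB'C$), and this depends on whether $C$ and the relevant ray point the same way along the strip. I would first settle this in the concrete $\tilde A_2$ picture, then verify that the same configuration occurs in $\tilde C_2$ and $\tilde G_2$. If the co-interior case arises, I would instead extend $L$ beyond $A$, treating $A$ as the far endpoint and working with $\Delta BAC$, and apply Lemma \ref{triangleangles} on that side.
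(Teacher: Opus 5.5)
Up to the application of Lemma~\ref{triangleangles}, your plan is the paper's proof, and you are more explicit than the paper about why $H$ is the line $AB$. The genuine gap is the step you postponed: transferring the angle from $B'$ to $A$. Put $u=B'-A$, and let $d$ be the direction of $H_{r_1}$ from $B'$ towards $C$. The sector at $A$ containing $x$ is spanned by $u$ and $d$. This is because $H_{r_0}\parallel H_{r_1}$ and that sector must open towards the side of $L$ containing $C$. By contrast, $\angle AB'C$ is spanned by $-u$ and $d$. So the two angles are co-interior, and the sector at $A$ has angle $\pi-\angle AB'C\geq\pi/2$, which is the opposite of what you need.

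Your fallback of extending $L$ past $A$ cannot repair this, because the bound for the sector at $H\cap H_{r_0}$ is false. In $\tilde{A}_2$, let $C$ be a vertex in the orbit of the origin, $H_{r_1}$ a wall through $C$, $H_{r_0}$ the adjacent parallel wall, and $x$ an alcove of the strip with an edge $BC$ on $H_{r_1}$. Then $H$ is the line $AB$ with $A\in H_{r_0}$ and $\angle ABC=\pi/3$. The sector at $A$ containing $x$ has angle $2\pi/3$.

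What the paper's argument actually establishes is $\angle AB'C\leq\pi/2$. That angle \emph{is} the angle of the sector containing $x$ cut out by $H$ and $H_{r_1}$, at $B'=H\cap H_{r_1}$. So the ``$H_{r_0}$'' in the statement, and in the last sentence of the paper's proof, must be read as $H_{r_1}$. This is also the form used immediately afterwards. The next corollary concludes $(\alpha,\beta)\geq 0$, with $\alpha$ normal to $H_{r_1}$ pointing towards $H_{r_2}$ and $\beta$ normal to $H$ pointing towards $C$ (as in the lemma that follows). That inequality says exactly that the sector at $H\cap H_{r_1}$ has angle at most $\pi/2$. So you should read the claim this way and stop once you have $\angle AB'C\leq\pi/2$; no transfer is needed.

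Separately, your normalisation ``say $A\in H_{r_0}$'' rests on the false assertion that $x$ must meet both walls. Realise $\tilde{C}_2$ with walls $x=k$, $y=k$, $x\pm y=k$ ($k\in\mathbb{Z}$). The alcove with vertices $(0,1),(1,1),(\tfrac12,\tfrac12)$ lies in the strip $0\leq y\leq 1$ and has $C=(1,1)$ on $H_{r_1}=\{y=1\}$, but it does not meet $H_{r_0}=\{y=0\}$. The paper avoids this by using only that $A,B$ lie in the closed strip, and taking $B$ to be whichever is nearer $H_{r_1}$ along $L$. That is all the corrected statement requires.
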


\begin{proof}
    Note that $A$ and $B$ lie between $H_{r_0}$ and $H_{r_1}$. Let $L$ be the line containing $A$ and $B$. So now $A$ and $B$ lie on the same side of $H_{r_1}$ on $L$. Assume without loss of generality that $B$ is closer than $A$ to $H_{r_1}$ along $L$. Now any interior angle of $x$ is at most $\pi/2$. So, by Lemma \ref{triangleangles}, if we shift $B$ along $L$ onto $H_{r_1}$, the angle at this point must still be at most $\pi/2$. This is exactly the angle of the sector defined by $H_{r_0}$ and $H$ containing $x$.
\end{proof}

\begin{corollary}
    Suppose $x$ is bounded between $H_{r_0}$ and $H_{r_1}$, and the vertex of $x$ in the orbit of the origin lies on $H_{r_1}$. Let $H_{r_1}=H_{\alpha,n}$, $H_{r_2}=H_{\alpha,n+1}$ such that $n\geq 0$, and let $H=H_{xs_0x^{-1}}=H_{\beta,m}$ such that $m\geq 0$. Then $(\alpha,\beta)\geq 0$.
\end{corollary}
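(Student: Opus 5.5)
The plan is to translate the preceding Proposition into a statement about the normal vectors $\alpha$ and $\beta$. The Proposition says that the sector bounded by $H_{r_0}$ and $H$ that contains $x$ has opening angle at most $\pi/2$. The first step is to relate that interior angle to the angle between the normals. Here $H_{r_0}=H_{\alpha,n-1}$, since it is the wall adjacent and parallel to $H_{r_1}=H_{\alpha,n}$ on the side of $x$, and $H=H_{\beta,m}$.

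First I pin down on which side of each wall the alcove $x$ lies. Since $x$ lies between $H_{\alpha,n-1}$ and $H_{\alpha,n}$, every point $p$ in the interior of $x$ satisfies $n-1<\langle p,\alpha\rangle<n$. So relative to $H_{r_0}$, the alcove $x$ lies in the half-space $\{\langle p,\alpha\rangle>n-1\}$, whose inward normal is $+\alpha$.

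For $H$, I use $m\ge 0$ together with the structure of the alcove. The wall $H=H_{xs_0x^{-1}}$ contains the panel of $x$ of type $0$. This is the panel opposite the special vertex $C$, which lies in the orbit of the origin. Because the origin sits in the closure of the fundamental alcove on the side $\langle\cdot,\beta\rangle\le m$ when $m\ge 0$, I expect the normalisation to force $x$ onto the side $\langle p,\beta\rangle<m$. The argument is that $C$ is a translate of the origin, and the type-$0$ wall of the fundamental alcove is $H_{\tilde\alpha,1}$ with the alcove below it. Transporting this by the element mapping the fundamental alcove to $x$ keeps $C$ and $x$ on the same side of $H$. I would need to check that $C$ lands on the side $\langle C,\beta\rangle<m$; this is where the sign convention $m\ge 0$ is used. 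The inward normal of $x$ across $H$ is then $-\beta$.

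Next comes the elementary fact about wedges. A wedge $\{\langle p,u\rangle>a,\ \langle p,v\rangle>b\}$ in the plane, with $u,v$ the inward normals, has opening angle $\pi-\angle(u,v)$. The opening angle is at most $\pi/2$ exactly when $\angle(u,v)\ge\pi/2$, that is, when $\langle u,v\rangle\le 0$. With $u=\alpha$ and $v=-\beta$ this gives $-\langle\alpha,\beta\rangle\le 0$, so $(\alpha,\beta)\ge 0$, which is the desired inequality.

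I expect the main obstacle to be the orientation bookkeeping: deciding on which side of $H=H_{\beta,m}$ the alcove $x$ lies, given only $m\ge 0$ and the position of $C$ on $H_{r_1}$. If the transport argument proves awkward, my first fallback is to read off the orientation directly from the vertex $C$. Since $C\in H_{r_1}\setminus H$, I would show that $\langle C,\beta\rangle-m$ has a definite sign, using $\langle C,\alpha\rangle=n\ge 0$ and the fact that $C$ is the vertex of $x$ opposite the type-$0$ panel, and then conclude that $x$ lies on the same side of $H$ as $C$.
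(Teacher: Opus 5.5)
Your overall strategy is what the paper intends: read the sign of $(\alpha,\beta)$ off a sector angle using inward normals. The paper gives no separate proof, and your wedge fact is correct. But the orientation step is a genuine gap, and it cannot be closed from the stated hypotheses.

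For $m>0$, the alcove $x$ lies on the origin side $\langle p,\beta\rangle<m$ of $H$ exactly when $0\notin D_R(x)$. Nothing in the hypotheses decides this, and for $m=0$ the normalisation does not even fix the sign of $\beta$. Your transport argument only shows that $C$ and $x$ lie on the same side of $H$, which is automatic. Likewise, $\langle C,\alpha\rangle=n$ says nothing about the sign of $\langle C,\beta\rangle-m$.

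Concretely, work in $\tilde A_2$ with $\langle\alpha_i,\alpha_i\rangle=2$, and record a point $v$ by $(\langle v,\alpha_1\rangle,\langle v,\alpha_2\rangle)$. The alcove $x=s_0$ has vertices $(1,0),(0,1),(1,1)$. With $H_{r_0}=H_{\alpha_1,0}$ and $H_{r_1}=H_{\alpha_1,1}$ it satisfies every hypothesis, yet it lies on the side $\langle p,\tilde\alpha\rangle>1$ of $H=H_{\tilde\alpha,1}$. This is not an edge case. Wherever the paper uses the corollary (Lemma~\ref{angle<pi/2}, Corollary~\ref{s0inxs0inr2r1x}) it has $0\in D_R(x)$, so $x$ sits on the far side $\langle p,\beta\rangle>m$, the opposite of what you assert.

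You still arrive at $(\alpha,\beta)\ge 0$ only because a second sign error cancels the first. The preceding Proposition is stated with $H_{r_0}$, but its proof slides $B$ along $H$ onto $H_{r_1}$. So what it actually bounds is the sector at $H\cap H_{r_1}$. Since $H_{r_0}\parallel H_{r_1}$, the sectors containing $x$ at $H\cap H_{r_0}$ and at $H\cap H_{r_1}$ are co-interior angles summing to $\pi$. The one you use is therefore at least $\pi/2$; in the example above it is $2\pi/3$, at the vertex $(0,1)$.

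The correct bookkeeping works at $H\cap H_{r_1}$. There the inward normal of $H_{r_1}$ is $-\alpha$, and, using $0\in D_R(x)$, the inward normal of $H$ is $+\beta$. Your wedge fact then gives $\langle -\alpha,\beta\rangle\le 0$, which is the claim.

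The hypothesis $0\in D_R(x)$ has to be added: without it the statement fails, even under the standing assumptions of Theorem~\ref{pm1}. Take $x$ with vertices $(2,2),(1,3),(2,3)$ and $H_{r_j}=H_{\alpha_1,j+1}$. Then $\ell(r_0x)=6$, $\ell(x)=7$, $\ell(r_1x)=8$ and $\ell(r_2r_1x)=9$. The special vertex $(2,2)$ lies on $H_{r_1}$, and $H=H_{\alpha_2,3}$. Hence $\alpha=\alpha_1$, $\beta=\alpha_2$ and $(\alpha,\beta)=-1$. Here $x$ is on the origin side of $H$, so your orientation claim holds and your argument would prove a false inequality. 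It breaks at the sector at $H\cap H_{r_0}=\{(1,3)\}$, which has angle $2\pi/3$.
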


We are now able to prove, in the next few statements, that if no simplex of $x$ lies on $H_{r_0}$ and $0\in D_R(x)$, then $s_0$ is also in the right descent sets of $r_0x$, $r_1x$ and $r_2r_1x$. 
\begin{lemma} \label{angle<pi/2}
    Let $r_0,r_1$ and $r_2$ be parallel, adjacent reflections. Suppose $x$ is bounded between $H_{r_0}$ and $H_{r_1}$. Suppose $H=H_{xs_ix^{-1}}$ and angle of the sector defined by $H_{r_0}$ and $H$ containing $x$ is at most $\pi/2$. Then $i\in D_R(r_2r_1x)$.
\end{lemma}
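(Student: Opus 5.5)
We must show $i\in D_R(r_2r_1x)$. The plan is to compare the wall $H$ through the $s_i$-panel of $x$ with its translate under $r_2r_1$, and then apply Proposition \ref{jnotinJr2r1x}. That proposition needs $i\in D_R(x)$, which is implicit in the setting where this lemma is used: $x$ has $s_i$ as a descent, so $x\in \Hinf$ for $H=H_{xs_ix^{-1}}$.

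First, write $H_{r_0}=H_{\alpha,m}$, $H_{r_1}=H_{\alpha,m+1}$, $H_{r_2}=H_{\alpha,m+2}$ and $H=H_{\beta,k}$. By the computation in Lemma \ref{r2r1H3parallel}, $r_2r_1$ is the translation by $\alpha^\vee$. It therefore sends $H$ to the parallel wall
\[H_4=H_{(r_2r_1x)s_i(r_2r_1x)^{-1}}=H_{\beta,k+(\alpha^\vee,\beta)},\]
which is exactly the wall containing the $s_i$-panel of $r_2r_1x$.

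Second, we use the angle hypothesis. The sector bounded by $H_{r_0}$ and $H$ that contains $x$ has angle at most $\pi/2$. Orient $\beta$ so that $x$ lies on the side $\langle\cdot,\beta\rangle>k$ or $<k$ as appropriate, and orient $\alpha$ so that $x$ lies on the side $\langle\cdot,\alpha\rangle>m$. The angle condition then says the translation direction $\alpha^\vee$ points weakly away from $H$ into the half containing $x$. In other words, $\Hinf$ is carried into itself by the translation: $r_2r_1\cdot\Hinf\subseteq\Hinf$, so $H_4$ lies on the same side of $H$ as $x$ (or $H_4=H$ when $(\alpha,\beta)=0$).

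Third, we verify the hypothesis of Proposition \ref{jnotinJr2r1x}: the identity alcove is not bounded between $H$ and $H_4$. The identity lies in $\Hid$, on the other side of $H$ from $x$. By the previous step, the whole strip between $H$ and $H_4$ lies inside $\Hinf$, so it cannot contain the identity. Proposition \ref{jnotinJr2r1x} then gives $i\in D_R(r_2r_1x)$.

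The main obstacle is the second step: turning "sector angle at most $\pi/2$" into the sign statement that translation by $\alpha^\vee$ moves $H$ away from the identity side rather than toward it. Sign conventions on $\alpha$ and $\beta$ have to be fixed carefully. The sector containing $x$ is bounded by the part of $H_{r_0}$ on the $r_1$ side, and its bisector points toward $H_{r_1}$ and $H_{r_2}$. An acute or right sector angle means the inward normal of $H$ makes a non-obtuse angle with the direction from $H_{r_0}$ toward $H_{r_1}$, which is $\alpha^\vee$. I would check this with a small vector computation in the spirit of Lemma \ref{triangleangles}, using the corollary giving $(\alpha,\beta)\ge 0$ with the normalisations stated there.
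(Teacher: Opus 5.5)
You have the right skeleton, and it is the paper's: $r_2r_1$ is translation by $\alpha^\vee$, it carries $H=H_{\beta,k}$ to $H_{\beta,k+(\alpha^\vee,\beta)}$, and everything rests on $(\alpha^\vee,\beta)\ge 0$ for the normal $\beta$ of $H$ pointing towards $x$. You are also right that $i\in D_R(x)$ must be assumed; the paper's proof uses it silently.

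\textbf{The gap.} The step you flag as the main obstacle fails: the sign you need does \emph{not} follow from the hypothesis as stated. Put the corner of the sector at the origin, with $H_{r_0}=\mathbb{R}(1,0)$, $\alpha$ a positive multiple of $(0,1)$ (pointing towards $H_{r_1}$), and $H=\mathbb{R}(\cos\phi,\sin\phi)$, where $\phi$ is the angle of the sector containing $x$. The normal of $H$ pointing into that sector is $(\sin\phi,-\cos\phi)$, and its inner product with $(0,1)$ is $-\cos\phi$. So an angle at most $\pi/2$ at $H_{r_0}$ gives $(\alpha^\vee,\beta)\le 0$, strictly unless $\phi=\pi/2$. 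The translation therefore pushes $H$ towards the identity, the strip between $H$ and $H_4$ lies on the identity side of $H$, and your third step cannot be carried out.

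In fact the lemma as literally stated is false. Work in $\tilde{A}_2$ with simple roots $\alpha_1,\alpha_2$, $(\alpha_j,\alpha_j)=2$, $(\alpha_1,\alpha_2)=-1$, and write $a_j=(v,\alpha_j)$, so the identity alcove is $\{a_1>0,\ a_2>0,\ a_1+a_2<1\}$. Take $H_{r_t}=H_{\alpha_1,t}$ for $t=0,1,2$, $x=\{a_1>0,\ a_2>1,\ a_1+a_2<2\}$, $H=H_{\alpha_2,1}$, and let $i$ be the type of the panel of $x$ on $H$. Then:
\begin{itemize}
\item $x$ lies between $H_{r_0}$ and $H_{r_1}$, and $i\in D_R(x)$;
\item the sector $\{a_1>0,\ a_2>1\}$ has angle $\pi/3$;
\item even $\ell(x)=2$, $\ell(r_1x)=3$, $\ell(r_2r_1x)=4$.
\end{itemize}
But $r_2r_1$ is translation by $\alpha_1$, i.e.\ $(a_1,a_2)\mapsto(a_1+2,a_2-1)$. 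So $r_2r_1x=\{a_1>2,\ a_2>0,\ a_1+a_2<3\}$ has its $i$-panel on $H_{\alpha_2,0}$ and lies on the same side of that wall as the identity. Hence $i\notin D_R(r_2r_1x)$. Here the identity alcove sits exactly between $H$ and $H_4$, which is precisely where Proposition~\ref{jnotinJr2r1x} breaks down.

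\textbf{The fix.} The lemma becomes true if the hypothesis is that the sector cut out by $H_{r_1}$ and $H$ containing $x$ has angle at most $\pi/2$. Equivalently, the $H_{r_0}$-sector has angle at least $\pi/2$, since the two are supplementary. This is what the paper's proof actually uses when it simply asserts $(\beta,\alpha^\vee)\ge 0$ in the normalisation of the preceding Corollary. It is also what the proposition behind that Corollary really proves: the angle it bounds sits at the shifted point $B'\in H_{r_1}$, between $H$ and the line $B'C=H_{r_1}$.

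Under that corrected hypothesis, plus $i\in D_R(x)$, your argument is correct and essentially the paper's. The only difference is the last step. The paper checks directly that the vertex $v'$ of $x$ not on $H$ satisfies $(\beta,v'+\alpha^\vee)>m+(\beta,\alpha^\vee)\ge m$. You route the same sign information through Proposition~\ref{jnotinJr2r1x}, which is an equivalent packaging.
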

\begin{proof}
    Let $H_{r_1}=H_{\alpha,n}$, $H_{r_2}=H_{\alpha,n+1}$ such that $n\geq 0$, and let $H=H_{xs_0x^{-1}}=H_{\beta,m}$ such that $m\geq 0$. Now, for any vertex $v$, $r_2r_1(v)=v+\alpha^\vee$. Now let $v$ be any vertex on $H$. Now, $(\beta,\alpha^\vee)\geq 0$. Then $r_2r_1(v)$ satisfies
    \[(\beta,r_2r_1(v))=(\beta, v+\alpha^\vee)= m+(\beta,\alpha^\vee),\]
    and so $r_2r_1H=H_{\beta,m+(\beta,\alpha^\vee)},$ with $m+(\beta,\alpha^\vee)>m.$
    
    Now let $v'$ be the vertex of $x$ in the orbit of the origin. Now $v'$ does not lie on $H$, and $0\in D_R(x)$, so $H$ must separate the origin and $v'$. As $m>0$, this means that $(\beta,v')>m$. Then,
    \[(\beta, r_2r_1(v'))=(\beta,v'+\alpha^\vee)>m+(\beta,\alpha^\vee). \]
    So $r_2r_1(v')\in r_2r_1H^\infty$. Therefore, $i\in D_R(r_2r_1x)$. 
\end{proof}

\begin{corollary}\label{s0inxs0inr2r1x}
    Let $r_0,r_1$ and $r_2$ be parallel, adjacent reflections. Suppose $x$ is bounded between $H_{r_0}$ and $H_{r_1}$, and the vertex of $x$ in the orbit of the origin lies on $H_{r_1}$. Then, if $0\in D_R(x)$, $0\in D_R(r_2r_1x)$. 
\end{corollary}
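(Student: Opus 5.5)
The natural route is to derive Corollary \ref{s0inxs0inr2r1x} from Lemma \ref{angle<pi/2} with $i=0$. That lemma gives $0\in D_R(r_2r_1x)$ provided that $x$ lies between $H_{r_0}$ and $H_{r_1}$ and that the sector cut out by $H_{r_0}$ and $H=H_{xs_0x^{-1}}$ containing $x$ has angle at most $\pi/2$. So the corollary reduces to checking this angle condition from the present hypothesis, namely that the vertex of $x$ in the orbit of the origin lies on $H_{r_1}$.

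For the angle condition I would invoke the preceding Proposition, stated for an alcove $x$ between $H_{r_0}$ and $H_{r_1}$ with vertices $A,B,C$, where $C$ is the origin-type vertex and lies on $H_{r_1}$. It asserts exactly that the sector defined by $H_{r_0}$ and $H_{xs_0x^{-1}}$ containing $x$ has angle at most $\pi/2$. I would first confirm that the wall $H_{xs_0x^{-1}}$ is the wall through the $s_0$-panel of $x$. That panel is the side $AB$ opposite the special vertex $C$, since type-$0$ panels are the ones not containing the origin-type vertex. This matches the line $L$ used in that Proposition. Equivalently, the subsequent Corollary gives $(\alpha,\beta)\ge 0$ with the normalisations $n,m\ge 0$, and this is the inequality the proof of Lemma \ref{angle<pi/2} actually uses.

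The argument then has three steps:
\begin{enumerate}
\item Note that the hypotheses place $x$ between $H_{r_0}$ and $H_{r_1}$, with its origin-type vertex on $H_{r_1}$.
\item Apply the Proposition and its Corollary to obtain the angle bound, equivalently $(\alpha,\beta)\ge0$.
\item Feed this, together with $0\in D_R(x)$, into Lemma \ref{angle<pi/2} with $i=0$ to conclude $0\in D_R(r_2r_1x)$.
\end{enumerate}
The descent hypothesis $0\in D_R(x)$ is needed inside that lemma. It ensures that $H$ separates the origin from the vertex $v'$ of $x$, giving $(\beta,v')>m$, which is then transported by the translation $r_2r_1(v)=v+\alpha^\vee$.

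The main obstacle is bookkeeping of signs and normalisations. The lemma's proof needs $n\ge 0$, $m\ge 0$ and $(\beta,\alpha^\vee)\ge 0$, so the orientations of $\alpha$ and $\beta$ must be chosen so that the identity lies on the nonnegative sides. I would check that the acute-sector statement really translates to $(\alpha,\beta)\ge0$ under these orientations rather than to the reverse inequality. The degenerate case where $H_{r_1}$ passes through the origin (so $n=0$) also needs care. In that case I would argue directly that the origin-type vertex of $x$ lying on $H_{r_1}$ still forces the same inequality, since $x$ sits on the $H_{r_0}$ side.
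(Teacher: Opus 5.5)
Your proposal is correct and follows the paper's route exactly. The paper obtains the corollary by feeding the preceding Proposition, in the form $(\alpha,\beta)\ge 0$ given by the Corollary after it, into Lemma \ref{angle<pi/2} with $i=0$, where $0\in D_R(x)$ supplies $(\beta,v')>m$ for the origin-type vertex $v'$.

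Your sign check comes out the right way: the angle the Proposition's proof actually bounds is the one at $H\cap H_{r_1}$ (not at $H\cap H_{r_0}$, as it is worded), and this is equivalent to $(\alpha,\beta)\ge 0$ when $\alpha$ points from $H_{r_0}$ towards $H_{r_1}$ and $\beta$ points into $x$. The only configuration that proof skips, $H_{xs_0x^{-1}}=H_{r_0}$, gives $\beta=\alpha$ and is immediate.
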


\begin{lemma}\label{angle>pi/2}
    Let $H_r=H_{\alpha,n}$ and $H_{\beta,m}$ be two nonparallel hyperplanes such that $n\geq0$ and $m\geq0$. Suppose that $(\alpha,\beta)<0$. If $v$ satisfies $(\alpha,v)>n$ and $(\beta ,v)>m$, then either the origin lies on all three hyperplanes $H_r$, $H_{\beta,m}$ and $rH_{\beta,m}$, or $rv$ is separated from the origin by $rH_{\beta,m}$. 
\end{lemma}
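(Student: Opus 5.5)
The plan is to work with the explicit formula for the affine reflection $r=s_{\alpha;n}$, namely $r(u)=u-((\alpha,u)-n)\alpha^\vee$, and to describe the hyperplane $rH_{\beta,m}$ as the zero set of an affine function. Since $r$ is an involution, a point $u$ lies on $rH_{\beta,m}$ if and only if $ru\in H_{\beta,m}$. So I will set
\[f(u)=(\beta,ru)-m=(\beta,u)-\big((\alpha,u)-n\big)(\beta,\alpha^\vee)-m.\]
Then $rH_{\beta,m}=\{u\mid f(u)=0\}$, and two points are strictly separated by $rH_{\beta,m}$ exactly when $f$ takes strictly opposite signs at them. The whole lemma then reduces to evaluating $f$ at the two points $rv$ and $0$.

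First, at $rv$: since $r^2=1$, we get $f(rv)=(\beta,v)-m$, which is strictly positive by the hypothesis $(\beta,v)>m$. Second, at the origin: $f(0)=n(\beta,\alpha^\vee)-m$. Here $(\beta,\alpha^\vee)=2(\alpha,\beta)/(\alpha,\alpha)<0$ because $(\alpha,\beta)<0$. Together with $n\geq 0$ and $m\geq 0$, this gives $f(0)\leq 0$.

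Now I split into cases according to whether this inequality is strict.
\begin{enumerate}[(i)]
\item If $f(0)<0$, then $0$ and $rv$ lie strictly on opposite sides of $rH_{\beta,m}$, which is the second alternative of the lemma.
\item If $f(0)=0$, then both nonpositive terms $n(\beta,\alpha^\vee)$ and $-m$ vanish. Since $(\beta,\alpha^\vee)\neq 0$, this forces $n=0$ and $m=0$. The origin then lies on $H_{\alpha,0}=H_r$ and on $H_{\beta,0}$. Because $r$ fixes every point of $H_r$, it fixes the origin, so the origin also lies on $rH_{\beta,0}$. This is the first alternative.
\end{enumerate}

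The hypothesis $(\alpha,v)>n$ is not needed for the sign computation; it only records where $v$ sits geometrically. I do not expect a genuine obstacle. The one point needing care is the equality case $f(0)=0$, where one must check that equality forces both $n=0$ and $m=0$, which uses $(\alpha,\beta)\neq 0$. One must also observe that $r$ fixes the origin in that case.
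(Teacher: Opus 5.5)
Your proof is correct and is essentially the paper's argument. Your affine function $f$ is exactly $u\mapsto(\beta-(\alpha^\vee,\beta)\alpha,u)-t$, where $t=m-(\alpha^\vee,\beta)n$ is the paper's constant, so your split $f(0)<0$ versus $f(0)=0$ is the paper's split $t>0$ versus $t=0$. The only difference is cosmetic: you use $r^2=1$ to get $f(rv)=(\beta,v)-m$ directly, whereas the paper expands $(\beta-(\alpha^\vee,\beta)\alpha,rv)$ by hand and finds $t$ from a point of $H_r\cap rH_{\beta,m}$.
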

\begin{proof}
    First note that $rH_{\beta,m}-H_{\beta-(\alpha^\vee,\beta)\alpha,t}$ for some $t$. Now consider the vertex $v'$ lying on both $H_r$ and $rH_{\beta,m}$. Now $(\alpha,v)=n$ and $(\beta,v)=m$. Then,
    \[t=(\beta-(\alpha^\vee,\beta)\alpha,v')=m-(\alpha^\vee,\beta)n.\]
    Note that $(\alpha^\vee,\beta)< 0$. So $t\geq 0$. First suppose that $t=0$. Then $m=0$ and  $n=0$. Then the origin lies on both $H_r$ and $H_{\beta,m}$, and so it must lie on all three lines.
    
    Now suppose $t>0$, and let $v$ satisfy $(\alpha,v)>n$ and $(\beta,v)>m$. Then,
    \begin{align*}  
    (\beta-(\alpha^\vee,\beta)\alpha),rv)&=(\beta-(\alpha^\vee,\beta)\alpha,v-((\alpha,v)-n)\alpha^\vee)\\&=(\beta,v)-(\alpha^\vee,\beta)(\alpha,v)-[(\alpha^\vee,v)-n][(\alpha^\vee,\beta)-(\alpha^\vee,\beta)(\alpha^\vee,\alpha)]\\&=(\beta,v)-(\alpha^\vee,\beta)(\alpha,v)+[(\alpha,v)-n](\alpha^\vee,\beta)\\&=(\beta,v)-(\alpha^\vee,\beta)n\\&>m-(\beta^\vee,\alpha)n=t.
    \end{align*}
    So, as $t>0$, $H_{\beta-(\alpha^\vee,\beta)\alpha,t}$ must separate $rv$ and the origin. 
\end{proof}

\begin{lemma}\label{sharedxr1x}
    Suppose $i\in D_R(x)$ is a shared decreasing panel type of $x$ and $r_1x$. Then $i\in D_R(r_0x)$. 
\end{lemma}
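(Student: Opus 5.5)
Throughout, I work in the setting of this subsection. $H_{r_0}$, $H_{r_1}$, $H_{r_2}$ are adjacent parallel walls, $x$ lies between $H_{r_0}$ and $H_{r_1}$, and $\ell(r_2r_1x)=\ell(r_1x)+1=\ell(x)+2$. In particular $x\in\Hid_{r_1}$, so $H_{r_1}$ does not separate $x$ from the identity. The plan is to translate descent statements into separation statements: $i\in D_R(y)$ if and only if the wall $H_{ys_iy^{-1}}$ through the $i$-panel of $y$ separates $y$ from the identity alcove, i.e. $y\in\Hinf_{ys_iy^{-1}}$.

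Let $H=H_{xs_ix^{-1}}$ and $H'=H_{r_1xs_ix^{-1}r_1}=r_1H$, the wall through the $i$-panel of $r_1x$. The hypotheses say $x\in\Hinf_{H}$ and $r_1x\in\Hinf_{H'}$, and the goal is $r_0x\in\Hinf_{r_0H}$. The key identity is Proposition \ref{threeparallel} in the form $r_1r_0=r_2r_1$, equivalently $r_0=r_1r_2r_1$, which is itself a reflection in a wall parallel to $H_{r_1}$. Applying Lemma \ref{r2r1H3parallel} to the translation $r_0r_1$, I get $r_0H=(r_0r_1)H'$. Moreover $r_0x=(r_0r_1)(r_1x)$. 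So $r_0H$ and $r_0x$ are obtained from $H'$ and $r_1x$ by the single translation $\tau=r_0r_1$, which shifts by one step orthogonally to $H_{r_1}$ in the direction from $H_{r_1}$ toward $H_{r_0}$.

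I then argue exactly as in Proposition \ref{jnotinJr2r1x}. Since $H'$ and $\tau H'$ are parallel, it suffices to show that the identity alcove is not bounded between $H'$ and $\tau H'=r_0H$. Once that holds, $\tau$ maps $\Hinf_{H'}$ onto $\Hinf_{r_0H}$. Since $r_1x\in\Hinf_{H'}$, this gives $r_0x\in\Hinf_{r_0H}$, i.e. $i\in D_R(r_0x)$.

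The main obstacle is excluding the case where the identity lies strictly between $H'$ and $r_0H$. For this I use both descent hypotheses. If $H$ is parallel to $H_{r_1}$ the situation is one-dimensional and checked directly. Otherwise, $H$ and $H'=r_1H$ cross on $H_{r_1}$ and are mirror images in $H_{r_1}$. The identity lies on the opposite side of $H$ from $x$ and on the opposite side of $H'$ from $r_1x$, and on the same side of $H_{r_1}$ as $x$. Hence the identity lies in the sector bounded by $H$ and $H'$ that is opposite to $x$ and $r_1x$.

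In the root coordinates used in Lemma \ref{angle<pi/2}, write $H'=H_{\beta,m}$ with $m\ge 0$ and $H_{r_1}=H_{\alpha,n}$. Then $\tau$ changes the level of $H'$ by $-(\beta,\alpha^\vee)$, and the identity satisfies $(\beta,0)=0\le m$. I would show that the sign of $(\beta,\alpha^\vee)$ forced by the identity lying in the opposite sector (compare Lemma \ref{angle>pi/2}) makes $r_0H$ lie at level at least $m$ on the far side. The identity is then not between the two walls. This sign check, done by cases on whether the sector at the intersection point on $H_{r_1}$ is acute or obtuse, is where I expect the real work to lie.
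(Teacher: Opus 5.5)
Your reduction is sound: $\tau=r_0r_1$ is a translation with $\tau(r_1x)=r_0x$ and $\tau H'=r_0H$, so it suffices that the identity alcove is not strictly between $H'$ and $r_0H$. But that is the whole content of the lemma, and the mechanism you propose for it is wrong. Orient $\alpha$ so that $H_{r_0}=H_{\alpha,n-1}$ and $H_{r_1}=H_{\alpha,n}$; this is the convention under which $\tau H_{\beta,m}=H_{\beta,m-k}$ with $k=(\beta,\alpha^\vee)$. Choose $\beta$ so that the identity lies in $\{(\beta,\cdot)<m\}$. Then the sign of $k$ is \emph{not} forced by the sector condition.

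Here is an example. Take $\tilde{A}_2$ with edge length $1$, $h=\sqrt3/2$, and walls $\eta\in h\mathbb{Z}$, $\xi\pm\eta/\sqrt3\in\mathbb{Z}$. Let $H_{r_0},H_{r_1},H_{r_2}$ be $\eta=0,h,2h$. Let $x$ have vertices $(0,0),(1,0),(\tfrac12,h)$, and let the identity alcove have vertices $(0,0),(-1,0),(-\tfrac12,-h)$. Then:
\begin{itemize}
\item $\ell(x)=3$, $\ell(r_1x)=4$, $\ell(r_2r_1x)=5$ and $\ell(r_0x)=2$;
\item the $i$-panel of $x$ lies on $H=\{\xi-\eta/\sqrt3=0\}$, and $i\in D_R(x)\cap D_R(r_1x)$;
\item $H'=\{\xi+\eta/\sqrt3=1\}$, while $r_0H=\{\xi+\eta/\sqrt3=0\}$.
\end{itemize}
So the translate is strictly \emph{closer} to the identity ($k>0$). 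No argument of the form ``$r_0H$ lies at level at least $m$'' can work here. The lemma still holds in this example, but for a different reason.

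That reason is a hypothesis you never use: $\ell(r_0x)<\ell(x)$, i.e.\ the identity lies on the far side of $H_{r_0}$ from $x$. This is the standing assumption in this part of the paper. The facts you list do not imply the slab condition: identity across $H$ from $x$, across $H'$ from $r_1x$, and on the $x$-side of $H_{r_1}$. Without the extra hypothesis the lemma is in fact false.

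To see this, keep $x$ and the walls above, but take the identity alcove to be $(0,0),(\pm\tfrac12,h)$. Then $x=s_i$ with $H_{s_i}=H$, $\ell(r_1x)=2$, $\ell(r_2r_1x)=3$, and $i\in D_R(x)\cap D_R(r_1x)$. All your listed conditions hold. Yet the identity lies strictly between $H'$ and $r_0H$, and $i\notin D_R(r_0x)$; here $\ell(r_0x)=2>\ell(x)$.

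The repair is short. Write $H=r_1H'=H_{\gamma,a}$ with $\gamma=s_\alpha\beta=\beta-k\alpha$ and $a=m-nk$, so that $x$ lies on the side $(\gamma,\cdot)>a$.
\begin{itemize}
\item If $k\le 0$, your translation argument works, since $m-k\ge m$.
\item If $k>0$, take $u$ in the open identity alcove. Then $i\in D_R(x)$ gives $(\gamma,u)<a$, and $\ell(r_0x)<\ell(x)$ gives $(\alpha,u)<n-1$. Hence $(\beta,u)=(\gamma,u)+k(\alpha,u)<a+k(n-1)=m-k$.
\end{itemize}
The second case is exactly the positivity computation of Lemma \ref{angle>pi/2}, and the first is the translation argument of Lemma \ref{angle<pi/2}. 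So, once repaired, your proof is the paper's case split on the sign of $(\alpha,\beta)$, phrased uniformly through $\tau$. The paper's $\beta$ is your $\gamma$, and $(\alpha,\gamma)\ge 0$ if and only if $k\le 0$. Working with interior points of the identity alcove rather than the origin also removes the degenerate subcase the paper treats separately.
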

\begin{proof}
    Suppose $H=H_{xs_ix^{-1}}=H_{\beta,m}$, with $m\geq 0$, and $H_{r_0}=H_{\alpha,n}$, with $n\geq 0$. If $(\alpha,\beta)\geq 0$, then $i\in D_R(r_0x)$, by Lemma \ref{angle<pi/2}. So suppose that $(\alpha,\beta)<0$. Consider the vertex $v$ of $x$ not lying on $H$. Then $(\alpha,v)>n$ and $(\beta,v)>m$. By Lemma \ref{angle>pi/2}, either the origin lies on the intersection of $H_{r_0}$ and $H$, or $r_0H$ separates $rv$ and the origin. Suppose the first case is true. Then $m=n=0$. Take another vertex $v'$ of the identity alcove. Then $(\alpha, v')\leq 0 $ and $(\beta, v')\leq 0$. Now $(\beta-(\alpha^\vee,\beta)\alpha, v')\leq 0$. If  $(\beta-(\alpha^\vee,\beta)\alpha, v')= 0$, then $(\alpha,v')=(\beta,v')=0$. But this is a contradiction, as the intersection of $H$ and $H_{r_0}$ is the origin. Therefore, $(\beta-(\alpha^\vee,\beta)\alpha, v')< 0$, and so $r_0H$ separates $v$ from $v'$. Hence, $r_0H$ separates $r_0x$ from the origin, and so $i\in D_R(r_0x)$.
\end{proof}
\begin{proposition}\label{s0descent}
    Suppose $x\cap H_{r_0}= \varnothing$. If $0\in D_R(x)$, then $0\in D_R(r_0x)\cap D_R(r_1x)\cap D_R(r_2r_1x)$.
\end{proposition}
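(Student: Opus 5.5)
The plan is to work under the standing hypotheses of this subsection: $H_{r_0},H_{r_1},H_{r_2}$ are adjacent and parallel, $x$ is bounded between $H_{r_0}$ and $H_{r_1}$, and $\ell(r_2r_1x)=\ell(r_1x)+1=\ell(x)+2$. Since $x\cap H_{r_0}=\varnothing$, and $x$ is a triangle squeezed between two adjacent parallel lines, two vertices of $x$ lie on $H_{r_1}$ or at least one does. The key observation is that the vertex of $x$ in the orbit of the origin (the vertex opposite the type-$0$ panel) must lie on $H_{r_1}$. Indeed, every vertex of $x$ lies in the closed strip, and none lies on $H_{r_0}$. In rank 2 the hyperplanes $H_{\alpha,k}$ for integer $k$ pass through the special vertices. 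One therefore argues that the special vertex of $x$ lies on one of the bounding lines of the strip, and hence on $H_{r_1}$. This is the geometric step I expect to be the main obstacle. I would try to verify it by checking the three rank-2 types $\tilde A_2,\tilde B_2,\tilde G_2$ directly, or by using that special vertices are exactly the points where hyperplanes from every parallel class meet.

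With that established, the case $r_2r_1x$ is immediate from Corollary~\ref{s0inxs0inr2r1x}: $x$ lies between $H_{r_0}$ and $H_{r_1}$, its special vertex lies on $H_{r_1}$, and $0\in D_R(x)$, so $0\in D_R(r_2r_1x)$.

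For $r_1x$, I would use the fact that the type-$0$ panel of $x$ is opposite the special vertex $C\in H_{r_1}$. The reflection $r_1$ fixes $C$, so it fixes the whole star of $C$ setwise. The wall $H=H_{xs_0x^{-1}}$ through that panel is sent to $r_1H$, and I would argue, exactly as in Lemma~\ref{angle<pi/2}, that $r_1$ keeps the opposite vertex $C$ on the far side of $r_1H$ from the origin. Concretely, the length increase $\ell(r_1x)>\ell(x)$ places the origin on the $x$-side of $H_{r_1}$. The sector computation (the Corollary giving $(\alpha,\beta)\ge 0$) then shows that $r_1H$ still separates $C$ from the origin. Hence $0\in D_R(r_1x)$.

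Finally, $0$ now lies in both $D_R(x)$ and $D_R(r_1x)$. It is therefore a shared decreasing panel type of $x$ and $r_1x$, and Lemma~\ref{sharedxr1x} gives $0\in D_R(r_0x)$. Combining the three cases yields $0\in D_R(r_0x)\cap D_R(r_1x)\cap D_R(r_2r_1x)$.
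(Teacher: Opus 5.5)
\textbf{There is a genuine gap in your step for $r_1x$.} Lemma \ref{angle<pi/2} works because $r_2r_1$ is a translation, which moves $C$ and $H$ together. A single reflection does not do this, and the sign supplied by the sector corollary works against you.

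Normalise as in that corollary: $H_{r_1}=H_{\alpha,n}$, $H=H_{xs_0x^{-1}}=H_{\beta,m}$, with $n,m\ge 0$. Then $r_1H=H_{s_\alpha\beta,\;m-n\langle\beta,\alpha^\vee\rangle}$. The fixed vertex $C$ satisfies $\langle C,s_\alpha\beta\rangle=\langle C,\beta\rangle-n\langle\beta,\alpha^\vee\rangle>m-n\langle\beta,\alpha^\vee\rangle$, while the origin gives $0$. So $r_1H$ separates $C$ from the origin only if $m>n\langle\beta,\alpha^\vee\rangle$. The condition $(\alpha,\beta)\ge 0$ makes this harder, not automatic.

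Here is a concrete instance in $\tilde B_2$. Take roots $\pm e_1,\pm e_2,\pm e_1\pm e_2$, the identity alcove with vertices $(0,0),(1,0),(\tfrac12,\tfrac12)$, and the orbit of the origin equal to $\{(a,b)\in\mathbb{Z}^2 : a+b \text{ even}\}$. Let $H_{r_0},H_{r_1},H_{r_2}$ be $\{a=1\},\{a=2\},\{a=3\}$, and let $x$ have vertices $(2,0),(2,1),(\tfrac32,\tfrac12)$.

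Everything your argument uses holds:
\begin{itemize}
\item $x$ misses $H_{r_0}$;
\item its special vertex $(2,0)$ lies on $H_{r_1}$;
\item its $s_0$-wall $\{a-b=1\}$ separates it from the identity, so $0\in D_R(x)$;
\item $\ell(x)=4<5=\ell(r_1x)$;
\item $(\alpha,\beta)=(e_1,e_1-e_2)=1\ge 0$.
\end{itemize}
Yet the $s_0$-wall of $r_1x$ is $\{a+b=3\}$, which does not separate $r_1x$ from the identity, so $0\notin D_R(r_1x)$. What fails here is exactly the hypothesis you list but never use in this step: $\ell(r_2r_1x)=10\neq \ell(r_1x)+1$.

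\textbf{The missing idea is to use that length hypothesis, as the paper does.} First obtain $0\in D_R(r_2r_1x)$ from Corollary \ref{s0inxs0inr2r1x}, and suppose $0\notin D_R(r_1x)$. If $r_1xs_0=r_2r_1x$, then $r_2r_1=r_1r_0$ gives $xs_0=r_0x$, contradicting $x\cap H_{r_0}=\varnothing$. Hence $r_1xs_0$ lies on the identity side of $H_{r_2}$, together with $r_1x$. Therefore $\ell(r_2r_1xs_0)>\ell(r_1xs_0)=\ell(r_1x)+1=\ell(r_2r_1x)$, which contradicts $0\in D_R(r_2r_1x)$.

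Once $0\in D_R(r_1x)$ is established this way, the rest of your proposal is fine. Invoking Lemma \ref{sharedxr1x} for $r_0x$ is a legitimate alternative to the paper's reversed application of Corollary \ref{s0inxs0inr2r1x}. Your first two steps agree with the paper. The special-vertex step is also easier than you feared: a line parallel to $H_{r_0}$ passes through the special vertex, and the only such lines meeting the closed strip are $H_{r_0}$ and $H_{r_1}$.
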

\begin{proof}
    If $x\cap H_{r_0}= \varnothing$, then the vertex $v$ of $x$ in the orbit of the origin must lie on $H_{r_1}$. So, by Corollary \ref{s0inxs0inr2r1x}, $s_0\in r_2r_1x$. Now suppose $0\notin D_R(r_1x)$. If $r_2r_1x=r_1xs_0$, then, as $r_2r_1=r_1r_0$, $r_0x=xs_0$. But this contradicts $x\cap H_{r_0}= \varnothing$. So $r_1x$ and $r_1xs_0$ must lie on the same side of $H_{r_1}$. So $r_1xs_0\in H_{r_2}^1$. Now,
    \[\ell(r_1xs_0)=\ell(r_1x)+1=\ell(r_2r_1x)=\ell(r_2r_1xs_0)+1.\]
    But this contradicts $r_1xs_0\in H_{r_2}^1$. So $0\in D_R(r_1x)$. 

    Now we want to apply Corollary \ref{s0inxs0inr2r1x}, starting in $r_1x$, and reflecting by $r_1$ and then $r_0$. Now $r_1x$ is bounded by $H_{r_2}$ and $H_{r_1}$, and the vertex of $r_1x$ in the orbit of the origin lies on $H_{r_1}$. Also, we have just established that $0\in D_R(r_1x)$. Therefore, by Corollary \ref{s0inxs0inr2r1x}, $0\in D_R(r_0r_1r_1x)=D_R(r_0x)$. 
\end{proof}

Here we prove some elementary results related to the normals of the hyperplanes we are considering. Note that, if $\alpha_1$ and $\alpha_2$ are two linearly independent vectors, with $\alpha'_i$ such that $(\alpha'_i,\alpha_i)=0$ and $(\alpha'_i,\alpha_j)\geq 0$, then $\beta$ lies in the non-reflex angle between $\alpha_1$ and $\alpha_2$ if and only if $(\alpha'_i,\beta)\geq 0$ for $i=1,2$. 

\begin{lemma}
    Let $\Phi$ be a rank 2 root system. Let $\alpha, \gamma$ be two roots forming a root basis. Let $\delta$ be another vector lying between $\alpha$ and $\gamma$. Then $\delta=t_1\alpha+t_2\gamma$ with $t_1,t_2\geq 0$.  
\end{lemma}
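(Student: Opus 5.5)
Since $\alpha$ and $\gamma$ form a root basis of the rank 2 root system $\Phi$, they are linearly independent and span the plane $E$. So any vector $\delta\in E$ can be written uniquely as $\delta=t_1\alpha+t_2\gamma$ with $t_1,t_2\in\mathbb{R}$. It remains to show that $t_1,t_2\geq 0$ whenever $\delta$ lies in the non-reflex angle between $\alpha$ and $\gamma$. For this we use the criterion recorded just before the statement.

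First we construct the auxiliary vectors $\alpha'_1,\alpha'_2$. Let $\alpha'_1$ be a nonzero vector orthogonal to $\alpha$. Since $\gamma$ is not parallel to $\alpha$, we have $(\alpha'_1,\gamma)\neq 0$, so replacing $\alpha'_1$ by $-\alpha'_1$ if necessary we may assume $(\alpha'_1,\gamma)>0$. Similarly, choose $\alpha'_2$ orthogonal to $\gamma$ with $(\alpha'_2,\alpha)>0$. By the remark preceding the lemma, $\delta$ lying in the non-reflex angle between $\alpha$ and $\gamma$ means exactly that $(\alpha'_1,\delta)\geq 0$ and $(\alpha'_2,\delta)\geq 0$.

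Now we pair the decomposition of $\delta$ with these vectors. We get
\[(\alpha'_1,\delta)=t_1(\alpha'_1,\alpha)+t_2(\alpha'_1,\gamma)=t_2(\alpha'_1,\gamma),\]
and
\[(\alpha'_2,\delta)=t_1(\alpha'_2,\alpha).\]
Since $(\alpha'_1,\gamma)>0$ and $(\alpha'_2,\alpha)>0$, the two inequalities give $t_2\geq 0$ and $t_1\geq 0$.

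There is no real obstacle here; the argument is pure linear algebra. The one point needing care is that the remark is stated with $(\alpha'_i,\alpha_j)\geq 0$, and we need this inner product to be strictly positive in order to divide by it. Strict positivity follows from linear independence: if $(\alpha'_1,\gamma)=0$, then $\gamma$ would be orthogonal to $\alpha'_1$ and hence parallel to $\alpha$, which is impossible for a basis.

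The root-basis hypothesis is used only to guarantee that $\alpha$ and $\gamma$ are linearly independent. The same argument therefore works for any two independent vectors.
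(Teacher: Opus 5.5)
Your proof is correct and follows the same route as the paper's: you pair the decomposition $\delta=t_1\alpha+t_2\gamma$ with a normal to $\alpha$ and a normal to $\gamma$, each oriented toward the other root, and read off that $t_2\geq 0$ and $t_1\geq 0$. Your check that $(\alpha'_1,\gamma)$ and $(\alpha'_2,\alpha)$ are strictly positive, which is needed to divide, is a point the paper's version skips, and you are right that the inequality $(\alpha,\gamma)\leq 0$ it invokes is never actually used.
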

\begin{proof}
    As $\alpha, \gamma$ form a root basis, $(\alpha,\gamma)\leq 0$. Let $\alpha'$ be the vector perpendicular to $\alpha$ such that $(\alpha',\gamma)\geq 0$. Then $(\alpha',\delta)\geq 0$. Now $\delta=t_1\alpha+t_2\gamma$, so
    \[0\leq (\delta,\alpha')=t_2(\gamma,\alpha'),\]
    and hence $t_2\geq 0$. Similarly, $0\leq (\delta,\beta')=t_1(\gamma,\beta')$, and so $t_1\geq 0$. 
\end{proof}

\begin{lemma}
    Let $x$ be an alcove, and suppose $x$ is a minimal length element in $xW_0=xW_{\{1,2\}}$. Let $H_{\alpha,n}$, $H_{xs_0x^{-1}}=H_{\beta,m}$ and $H_r=H_{\gamma,t}$ be the three bounding hyperplanes of $x$ with $n,m,t\geq 0$. Suppose $rH_{\beta,m}=H_{\delta,k}$. Then $\delta$ lies between $\alpha$ and $\gamma$. 
     \begin{center}
  \begin{tikzpicture}[scale=0.6, >=stealth]
\coordinate (L1a) at (-3,0.6);
\coordinate (L1b) at (3,-1);

\coordinate (L2a) at (2,-2);
\coordinate (L2b) at (2,4);

\coordinate (L3a) at (-3,-2);
\coordinate (L3b) at (2,3);
\draw[red!35, line width=2pt] (L1a)--(L1b);
\draw[red!35, line width=2pt] (L2a)--(L2b);
\draw[red!35, line width=2pt] (L3a)--(L3b);
\coordinate (A) at (intersection of L1a--L1b and L3a--L3b);
\coordinate (B) at (intersection of L1a--L1b and L2a--L2b);
\coordinate (C) at (intersection of L2a--L2b and L3a--L3b);

\draw[line width=1pt] (A)--(B)--(C)--cycle;

\draw[->]
  ($(A)!0.6!(B)$)
  -- ++(0.25,0.6)
  node[right] {$\beta$};
\draw[->]
  ($(B)!0.6!(C)$)
  -- ++(0.8,0)
  node[right] {$\alpha$};

\draw[->]
  ($(A)!0.6!(C)$)
  -- ++(-0.6,0.5)
  node[left] {$\gamma$};

\node[left]  at (-2.8,-1.6) {$H_{\gamma,t}$};
\node[right] at (3,-1) {$H_{\beta,m}$};
\node[below] at (2,-2) {$H_{\alpha,n}$};

\end{tikzpicture}

    \end{center}
\end{lemma}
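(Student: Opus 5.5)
The plan is to compute $\delta$ explicitly and then use the cone criterion recalled just before the previous lemma: $\delta$ lies between $\alpha$ and $\gamma$ if and only if $(\alpha',\delta)\geq 0$ and $(\gamma',\delta)\geq 0$. Here $\alpha'\perp\alpha$ and $\gamma'\perp\gamma$ are chosen with $(\alpha',\gamma)\geq 0$ and $(\gamma',\alpha)\geq 0$. Equivalently, by the previous lemma, it suffices to write $\delta=t_1\alpha+t_2\gamma$ with $t_1,t_2\geq 0$.

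The first step is the reflection formula already used in Lemma \ref{angle>pi/2}. Reflecting $H_{\beta,m}$ in $H_r=H_{\gamma,t}$ gives a hyperplane with normal $s_\gamma(\beta)=\beta-(\gamma^\vee,\beta)\gamma$, up to sign. So $\delta=\pm(\beta-(\beta,\gamma^\vee)\gamma)$, with the sign fixed by the normalisation $k\geq 0$.

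The second step is to pin down the geometry of $x$.
\begin{itemize}
\item Since $x$ is minimal in $xW_{\{1,2\}}$, we have $1,2\notin D_R(x)$. The only possible descent is therefore across the $s_0$ panel, which lies on $H_{\beta,m}$.
\item So $x$ lies on the identity side of $H_{\alpha,n}$ and of $H_{\gamma,t}$. Hence $(\alpha,v)\leq n$ and $(\gamma,v)\leq t$ for points $v$ of $x$.
\item If $x\neq 1$, then $0\in D_R(x)$ and $(\beta,v)\geq m$ on $x$.
\item The vertex of $x$ opposite the $s_0$ panel is in the orbit of the origin, and it is the intersection point of $H_{\alpha,n}$ and $H_{\gamma,t}$.
\end{itemize}
From this configuration, which is a triangle with inward/outward normals as in the picture, the three normals satisfy a linear relation $\beta=c_1\alpha+c_2\gamma$ with $c_1,c_2>0$ (this is the affine-root relation for the alcove walls up to scaling). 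The relation comes from the triangle's outward normals summing with positive coefficients to zero, combined with the sign conventions: $\beta$ points away from the identity, while $\alpha$ and $\gamma$ point away from $x$ in the stated orientation.

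The third step substitutes this relation. We get
\[
\beta-(\beta,\gamma^\vee)\gamma=c_1\alpha+\bigl(c_2-(\beta,\gamma^\vee)\bigr)\gamma ,
\]
and $(\beta,\gamma^\vee)=c_1(\alpha,\gamma^\vee)+2c_2$. The $\gamma$-coefficient is therefore $-c_2-c_1(\alpha,\gamma^\vee)$. Using $(\alpha,\gamma)\leq 0$, which holds because $\alpha,\gamma$ bound a chamber at the origin-orbit vertex and hence form a root basis up to translation, this coefficient has the opposite sign to $c_1$ exactly when $c_2< c_1|(\alpha,\gamma^\vee)|$. That is the wrong-sign case. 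So I expect to need the case split of Lemma \ref{angle>pi/2}, $(\gamma,\beta)<0$ versus $(\gamma,\beta)\geq 0$, together with the overall sign of $\delta$, to conclude that both coefficients of $\delta$ are nonnegative after normalisation.

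The main obstacle is this sign bookkeeping: correctly orienting $\alpha,\beta,\gamma$ relative to the identity (the conditions $n,m,t\geq 0$) and determining the sign of $\delta$ from $k\geq 0$. To settle it, I would first check the inequality directly in the three rank 2 types $\tilde A_2$, $\tilde C_2$ and $\tilde G_2$, where the possible ratios $(\alpha,\gamma^\vee)\in\{0,-1,-2,-3\}$ are finite. I would then abstract the argument using $t-(\gamma^\vee,\beta)$-type computations as in Lemma \ref{angle>pi/2} to get the sign of $k$.
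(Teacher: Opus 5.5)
There is a genuine gap. Your setup is sound. You read the picture correctly, with $\beta$ the inward normal and $\alpha,\gamma$ the outward normals, and $\beta=c_1\alpha+c_2\gamma$ with $c_1,c_2>0$. But the one inequality that carries the lemma is never established. Your own computation gives
\[
s_\gamma(\beta)=c_1\alpha+\bigl(c_1|(\alpha,\gamma^\vee)|-c_2\bigr)\gamma ,
\]
so the lemma is exactly the claim $c_2\le c_1|(\alpha,\gamma^\vee)|$.

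You state the bad case backwards: the $\gamma$-coefficient is negative when $c_2>c_1|(\alpha,\gamma^\vee)|$, not when $c_2<c_1|(\alpha,\gamma^\vee)|$. More importantly, nothing in the proposal bounds $c_2/c_1$. Positivity of $c_1,c_2$ together with $(\alpha,\gamma)\le 0$ is compatible with $\beta$ lying arbitrarily close to $\gamma$, and then $s_\gamma(\beta)$ swings past $\alpha$.

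The case split on the sign of $(\gamma,\beta)$ that you borrow from Lemma \ref{angle>pi/2} cannot help. The angle $\theta$ between $\beta$ and $\gamma$ is the interior angle of $x$ at $H_{\beta,m}\cap H_{\gamma,t}$, so $(\gamma,\beta)\ge 0$ always. Let $\varphi$ be the angle of $x$ at $v$. Then $s_\gamma(\beta)$ makes angle $\pi-\theta$ with $\gamma$, while $\alpha$ makes angle $\pi-\varphi$, both on the same side of $\mathbb{R}\gamma$. So what you need is the \emph{lower} bound $\theta\ge\varphi$. That lower bound is the heart of the paper's proof. The paper bounds the angle between $\beta$ and $\gamma$ from below using the fact that $H_{\beta,m}\cap H_{\gamma,t}$ is not in the orbit of the origin, and from above by $\pi/2$. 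It then observes that the mirror image of $\beta$ in $\gamma^\perp$ stays between $\gamma'$ and $\alpha$.

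In your algebraic language, the missing idea is that $c_1,c_2$ are rigid. An element of $W$ moving $v$ to the origin carries $x$ to $wA_0$, where $A_0$ is the fundamental alcove and $w\in W_0$. It carries $(\alpha,\gamma,\beta)$ to $(-w\alpha_i,-w\alpha_j,-w\tilde\alpha)$, with $\tilde\alpha$ the highest root. Then $s_\gamma(\beta)$ corresponds to $-w\,s_{\alpha_j}(\tilde\alpha)$. This has nonnegative coefficients because $s_{\alpha_j}$ permutes $\Phi^+\setminus\{\alpha_j\}$ and $\tilde\alpha\neq\alpha_j$. Equivalently, your type-by-type check does work once it is actually carried out: $(c_1,c_2,|(\alpha,\gamma^\vee)|)$ is one of $(1,1,1)$, $(1,2,2)$, $(2,1,1)$, $(2,3,3)$, $(3,2,1)$.

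Separately, do not fix the sign of $\delta$ by $k\ge 0$. Take $\delta=s_\gamma(\beta)$ itself, as the paper's proof does and as Proposition \ref{preminimal} uses it, where $k<0$ is treated as a separate case. With $\alpha$ outward, your normalisation can push $\delta$ out of the cone. In $\tilde A_2$ one has $\beta=\alpha+\gamma$, $s_\gamma(\beta)=\alpha$ and $rH_{\beta,m}=H_{\alpha,n-1}$. So when $n=0$, for example when $v$ lies on a wall through the origin, your convention yields $\delta=-\alpha$, which is not between $\alpha$ and $\gamma$.
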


\begin{proof}    
    Note that $\beta$ lies between $\alpha$ and $\gamma$. Let $\theta_1$ be the angle between $\alpha$ and $\beta$, and let $\theta_2$ be the angle between $\beta$ and $\gamma$. Now, as $H_{\beta,m}=H_{xs_0x^{-1}}$, the intersections of $H_{\beta,m}$ with $H_{\alpha,n}$ and of $H_{\beta,m}$ with $H_r=H_{\gamma,t}$ cannot be in the orbit of the origin. Therefore, $\pi/3\leq \theta_1\leq \pi/2$, and $\pi/3\leq \theta_2\leq \pi/2$, by observing the possible angles in rank 2 Coxeter complexes.  

    Now let $\gamma'$ be a vector perpendicular to $\gamma$ such that $(\gamma',\alpha)\geq 0$. Then $(\gamma',\beta)\geq 0$. Now, as $\theta_2\leq \pi/2$, $\beta$ must lie between $\gamma$ and $\gamma'$. As $\theta_2\geq \pi/3$, the angle $\theta_3$ between $\gamma'$ and $\beta$ is $0\leq \theta_3\leq \pi/6$. Now $\delta$ is the reflection of $\beta$ over the hyperplane containing $\gamma'$. So the angle between $\gamma'$ and $\delta$ is equal to $\theta_3$, and so is at most $\pi/6.$ So therefore, as the angle between $\gamma$ and $\gamma'$ is $\pi/2$, and the angle between $\gamma'$ and $\alpha$ is at least $\pi/6$, $\delta$ must lie between $\alpha$ and $\gamma$.
       \begin{center}
    \begin{tikzpicture}[scale=3,>=stealth]
\coordinate (O) at (0,0);

\def\alphaAngle{0}
\def\gammaAngle{150}        
\def\gammaPrimeAngle{60}    
\def\thetaa{20}              
\def\deltaAngle{40}        
\def\betaAngle{80}         
\draw[->] (O) -- ({1.2*cos(\alphaAngle)},{1.2*sin(\alphaAngle)}) node[right] {$\alpha$};
\draw[->] (O) -- ({1.0*cos(\deltaAngle)},{1.0*sin(\deltaAngle)}) node[right] {$\delta$};
\draw[->] (O) -- ({1.0*cos(\gammaPrimeAngle)},{1.0*sin(\gammaPrimeAngle)}) node[above right] {$\gamma'$};
\draw[->] (O) -- ({1.0*cos(\betaAngle)},{1.0*sin(\betaAngle)}) node[above] {$\beta$};
\draw[->] (O) -- ({1.0*cos(\gammaAngle)},{1.0*sin(\gammaAngle)}) node[left] {$\gamma$};
\draw ({0.28*cos(\deltaAngle)},{0.28*sin(\deltaAngle)})
      arc[start angle=\deltaAngle, end angle=\gammaPrimeAngle, radius=0.28];
\draw ({0.36*cos(\gammaPrimeAngle)},{0.36*sin(\gammaPrimeAngle)})
      arc[start angle=\gammaPrimeAngle, end angle=\betaAngle, radius=0.36];
\node at ({0.42*cos((\deltaAngle+\gammaPrimeAngle)/2)},
          {0.42*sin((\deltaAngle+\gammaPrimeAngle)/2)}) {$\theta$};
\node at ({0.52*cos((\gammaPrimeAngle+\betaAngle)/2)},
          {0.52*sin((\gammaPrimeAngle+\betaAngle)/2)}) {$\theta$};
\end{tikzpicture}
    \end{center}
\end{proof}

\begin{definition}
    An element $x\in W$ is \ix{preminimal} in $xW_0$ if it is adjacent to the minimal element in $xW_0$.
\end{definition} Using these facts, we can now show that, under some circumstances, a preminimal element must contain $s_0$ in its descent set. 

\begin{proposition} \label{preminimal}
    Suppose $x\cap H_{r_0}= \varnothing$, and suppose $x$ is preminimal in $xW_0$. Suppose further that the minimal element $y$ shares a panel with $H_{r_1}$. Then $0\in D_R(x)$. 
\end{proposition}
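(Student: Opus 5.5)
Let $y$ be the minimal element of $xW_0$, so $x=ys_j$ for some $j\in\{1,2\}$, with $\ell(x)=\ell(y)+1$. The goal is to show that the panel of $x$ of type $0$ lies on a hyperplane separating $x$ from the identity alcove, i.e.\ $\ell(xs_0)<\ell(x)$.

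The plan is geometric and uses the rank 2 picture together with the normal-vector lemmas just proved. Since $y$ is minimal in $yW_{\{1,2\}}$, its vertex in the orbit of the origin, call it $v$, is the vertex of the $W_0$-orbit "star" around $v$ closest to the identity. Thus $y$ is the alcove of the star at $v$ nearest the identity, and $x=ys_j$ is a neighbour of $y$ in that star. The type-$0$ panel of $x$ is the panel opposite $v$. I would write the three bounding hyperplanes of $y$ as $H_{\alpha,n}$, $H_{ys_0y^{-1}}=H_{\beta,m}$ and $H_{\gamma,t}$ with $n,m,t\geq 0$, as in the preceding lemma. By hypothesis, one of the panels of $y$ lies on $H_{r_1}$. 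Since $x\cap H_{r_0}=\varnothing$, the vertex $v$ of $x$ lies on $H_{r_1}$ (as in the proof of Proposition \ref{s0descent}). So $H_{r_1}$ passes through $v$ and is one of the walls of $y$ through $v$, namely one of $H_{\alpha,n}$ or $H_{\gamma,t}$.

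The key step is to identify the wall of $x$ opposite $v$. Write $x=ry$ with $r$ the reflection in the wall $y$ and $x$ share; this wall passes through $v$. Then $H_{xs_0x^{-1}}=rH_{\beta,m}=H_{\delta,k}$. By the preceding lemma, $\delta$ lies between $\alpha$ and $\gamma$, so by the lemma before it $\delta=t_1\alpha+t_2\gamma$ with $t_1,t_2\geq0$. I would then compare the pairing of $\delta$ with the origin and with the third vertex of $x$. The aim is to show that $k\geq 0$ can be arranged and that the interior of $x$ satisfies $(\delta,\cdot)>k$, so that $H_{\delta,k}$ separates $x$ from the identity, giving $0\in D_R(x)$. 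To get the inequality, evaluate on the vertex $v'$ of $x$ not on $H_{\delta,k}$, which is $v$ itself. Since $y$ lies in $H_{\alpha,n}^\infty\cap H_{\gamma,t}^\infty$ near $v$ (it is separated from the identity by the walls through $v$ other than those bounding it toward the origin), the positive combination should give $(\delta,v)>k$. Alternatively, one can argue as in Lemma \ref{angle>pi/2} via $(\delta, rv)$-type computations.

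The main obstacle is the sign bookkeeping: showing that $k\geq0$, and that the wall $r$ through $v$ (one of $H_{\alpha,n}$, $H_{\gamma,t}$) is oriented so that reflecting $H_{\beta,m}$ keeps it away from the identity. This is exactly where the hypotheses $x\cap H_{r_0}=\varnothing$ and ``$y$ shares a panel with $H_{r_1}$'' must be used, to rule out the degenerate case where the origin lies on the intersection of the walls, as in the first case of Lemma \ref{angle>pi/2}. I would handle that case separately, mimicking the proof of Lemma \ref{sharedxr1x}: pick a vertex of the identity alcove and show it pairs strictly below $k$ with $\delta$.
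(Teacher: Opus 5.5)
There is a genuine gap. You set up the same objects the paper uses: the three walls of $y$, $x=ry$ with $H_r=H_{\gamma,t}$ the common wall through $v$, $H_{xs_0x^{-1}}=rH_{\beta,m}=H_{\delta,k}$, and $\delta=t_1\alpha+t_2\gamma$ with $t_1,t_2\geq 0$. But you stop exactly where the proof's content lies. You call ``showing $k\geq 0$'' the main obstacle and never resolve it.

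The half you do aim for, that $x$ lies on the side $(\delta,\cdot)>k$, is the easy half. Since $v\in H_{\gamma,t}$ is fixed by $r$, $(\delta,v)-k=(\beta,v)-m>0$. The justification you sketch for it is also false. Because $y$ is minimal in $yW_0$, neither $s_1$ nor $s_2$ is a descent of $y$. So the two walls of $y$ through $v$ do \emph{not} separate $y$ from the identity, i.e.\ $y\in H_{\alpha,n}^{\mathbf{1}}\cap H_{\gamma,t}^{\mathbf{1}}$, not the infinity halves. Your alternative via $(\delta,rv)$ is vacuous here, since $rv=v$.

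The missing idea is to evaluate $\delta$ at the vertex $v'$ of $y$ on $H_{\beta,m}\cap H_{\gamma,t}$. It is fixed by $r$, so it is a vertex of $x$ lying on $H_{\delta,k}$. Hence $k=(\delta,v')=t_1(\alpha,v')+t_2(\gamma,v')$, with $(\gamma,v')=t\geq 0$, and $(\alpha,v')\geq 0$ because $v'$ lies in the strip between $H_{r_0}$ and $H_{r_1}$.

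This is where the hypothesis that $y$ shares a panel with $H_{r_1}$ is really used. It makes $H_{r_1}$ the non-reflecting wall of $y$ through $v$, so $\alpha$ is the common normal of $H_{r_0}$ and $H_{r_1}$. (The wall between $x$ and $y$ cannot be $H_{r_1}$, since both lie on the identity side of it.) Hence $k\geq 0$.

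This alone is not enough: for $k=0$ the origin lies on $H_{\delta,k}$, and the side of the identity alcove is undetermined. The paper does not handle this case via a vertex of the identity alcove, as you vaguely suggest; it rules it out. Since $v'$ is a vertex of $x$, the hypothesis $x\cap H_{r_0}=\varnothing$ gives $(\alpha,v')>0$, which forces $t_1=0$. Then $k=t_2(\gamma,v')=0$ forces the origin onto both $H_{\gamma,t}$ and $H_{\delta,k}$, whose only common point is $v'$, contradicting $(\alpha,v')>0$.

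Without this computation your proposal restates the goal rather than proving it.
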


\begin{proof}
     Let $H_{r_0}=H_{\alpha,n+1}$, $H_{ys_0y^{-1}}=H_{\beta,m}$ and $H_r=H_{\gamma,t}$ be the three bounding hyperplanes of $y$ with $n,m,t\geq 0$. Let $v$ be the vertex of $y$ not lying on $H_{\beta,m}$. Then $(\beta,v)>m$, $(\alpha,v)=n+1$, and $(\gamma,v)=t$. Now $H_{xs_0x^{-1}}=rH_{ys_0y^{-1}}=H_{\delta,k}$, with $\delta=\beta-(\gamma^\vee,\delta)\gamma$. Let $v'$ be the vertex lying on $H_{\beta,m}$ and $H_{\gamma,t}$. This also lies in $H_{\delta,k}$. Therefore,
     \[k=(\delta,v')=(\beta,v')-(\gamma^\vee,\delta)(\gamma,v')=m-(\gamma^\vee,\delta)t.\]
    Now if $m-(\gamma^\vee,\delta)t>0$, we are done as $(\delta,v)>m-(\gamma^\vee,\delta)t$. So suppose $m-(\gamma^\vee,\delta)t<0$. Now $\delta$ lies between $\alpha$ and $\gamma$, which form a root basis. So $\delta=k_1\alpha+k_2\gamma$, for $k_1,k_2\geq 0$. Then
    \[0>(\delta,v')=k_1(\alpha,v')+k_2(\gamma,v'),\]
    which is a contradiction as $(\alpha,v'),(\gamma,v')\geq 0$. 

    Lastly, suppose $m-(\gamma^\vee,\delta)t=0$. Then $k_1(\alpha,v')+k_2(\gamma,v')=0$. We have that \[k_1,k_2,(\alpha,v'),(\gamma,v')\geq 0.\] First note that $(\alpha,v')>0$, and so we must have that $k_1=0$. Now, if $k_2=0$, $v'$ is the origin. If $(\gamma,v')=0$, the origin must lie on both $H_{\gamma,t}$ and $H_{\delta,k}$. But the intersection of these two lines is $v'$, and so $v'$ is the origin. In either case, this contradicts the fact that $(\alpha,v')>0$.
\end{proof}

\begin{corollary}\label{xs0nonempty}
    If $x$ does not share a panel with $H_{r_1}$ and $0\in D_R(x)$, then $xs_0\cap H_{r_0}\neq \varnothing$. 
\end{corollary}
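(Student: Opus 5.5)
Throughout we keep the standing conventions of this subsection: $x$ lies between $H_{r_0}$ and $H_{r_1}$, and $x\cap H_{r_0}=\varnothing$ has already been handled or is assumed as in the surrounding results.

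The plan is to reduce Corollary \ref{xs0nonempty} to Proposition \ref{preminimal}, applied to the neighbouring alcove $xs_0$. Since $0\in D_R(x)$, we have $\ell(xs_0)<\ell(x)$. Consider the coset $xW_0=xW_{\{1,2\}}$, which also contains $xs_0\cdot$ nothing; more usefully, consider $xs_0$ and its coset $xs_0W_0$. The idea is to argue by contradiction. Suppose $xs_0\cap H_{r_0}=\varnothing$. Then we want to show that $xs_0$ must be preminimal in its coset, with the minimal element sharing a panel with $H_{r_1}$. Proposition \ref{preminimal} would then force $0\in D_R(xs_0)$, which contradicts $\ell(xs_0s_0)=\ell(x)>\ell(xs_0)$.

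The key steps, in order, are as follows.

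First, we locate $xs_0$. The alcove $xs_0$ is the reflection of $x$ across the $0$-panel of $x$, which lies on $H_{xs_0x^{-1}}$. Since $x$ is bounded between $H_{r_0}$ and $H_{r_1}$ and does not meet $H_{r_1}$ in a panel, the vertex of $x$ in the orbit of the origin lies on $H_{r_1}$ (as in the proof of Proposition \ref{s0descent}). The $0$-panel of $x$ is the panel opposite that vertex, so it lies strictly between the two walls. Hence $xs_0$ still lies between $H_{r_0}$ and $H_{r_1}$.

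Second, we identify the coset $xs_0W_0$ as the set of alcoves around the special vertex $v$ of $xs_0$. We need to show that the minimal element $y$ of this coset shares a panel with $H_{r_1}$, and that $xs_0$ is adjacent to $y$. For the minimal element, we use that $y$ is the alcove at $v$ closest to the identity. The identity lies in $\Hid_{r_1}$, on the $H_{r_0}$ side, because $\ell(r_1x)>\ell(x)$ in the running hypotheses. We then use the rank 2 local picture of the alcoves around $v$, whose walls through $v$ include $H_{r_1}$ or $H_{r_0}$.

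Third, we apply Proposition \ref{preminimal} to $xs_0$ to get $0\in D_R(xs_0)$, which is the contradiction.

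The main obstacle is the second step. Here we must check geometrically, from the angle constraints in rank 2 (as in Lemma \ref{triangleangles} and the lemma on $\delta$ lying between $\alpha$ and $\gamma$), that $xs_0$ is genuinely preminimal rather than further from $y$ in the coset. We also must check that the roles of $H_{r_0}$ and $H_{r_1}$ match those in Proposition \ref{preminimal}, where $H_{r_0}=H_{\alpha,n+1}$ bounds $y$. If preminimality fails, the fallback is a direct computation: write $H_{xs_0x^{-1}}=H_{\beta,m}$ and show, using $(\alpha,\beta)$ sign considerations as in Lemmas \ref{angle<pi/2} and \ref{angle>pi/2}, that a $0$-panel of $xs_0$ disjoint from $H_{r_0}$ would place the special vertex on the wrong side of $H_{\beta,m}$, contradicting $0\in D_R(x)$.
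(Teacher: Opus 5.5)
Your proposal has a genuine gap. Everything rests on Step 2: that $xs_0$ is preminimal in $xs_0W_0$ and that the minimal element of that coset shares a panel with $H_{r_1}$. You do not prove this; you only flag it as the main obstacle.

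Step 2 also cannot be established independently. Under your contradiction hypothesis $xs_0\cap H_{r_0}=\varnothing$, the origin-orbit vertex $v'$ of $xs_0$ would be forced onto $H_{r_1}$, and that is already impossible by itself. So any proof of Step 2 must first reach this direct contradiction, at which point Proposition \ref{preminimal} plays no role. That proposition's labelling of which of $H_{r_0},H_{r_1}$ bounds $y$ is also inconsistent, as you noticed. The fallback sketch does not identify an actual contradiction either. The $0$-panel of $xs_0$ is the $0$-panel of $x$, so it is automatically disjoint from $H_{r_0}$. And $0\in D_R(x)$ only says that $xs_0$ lies on the identity side of $H_{xs_0x^{-1}}$, which is perfectly consistent.

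The missing idea is a short direct geometric argument, which is what the paper does. Let $v\in H_{r_1}$ be the origin-orbit vertex of $x$, and let $v'$ be its reflection in $H_{xs_0x^{-1}}$.
\begin{itemize}
\item $v'$ is the origin-orbit vertex of $xs_0$, so some wall parallel to $H_{r_0}$ and $H_{r_1}$ passes through $v'$.
\item The $0$-panel lies on neither $H_{r_0}$ nor $H_{r_1}$. Hence $xs_0$, and so $v'$, stays in the closed strip between $H_{r_0}$ and $H_{r_1}$. Since these walls are adjacent, $v'\in H_{r_0}$ or $v'\in H_{r_1}$.
\item $v'\in H_{r_1}$ is excluded, because then $x$ would share a panel with $H_{r_1}$. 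Indeed, $vv'$ is perpendicular to the $0$-panel. In $\tilde{C}_2$ and $\tilde{G}_2$ the line through $v$ perpendicular to the $0$-panel is the other leg of the right-angled alcove. In $\tilde{A}_2$ that line is not a wall at all.
\end{itemize}
Hence $v'\in xs_0\cap H_{r_0}$. This argument never uses $0\in D_R(x)$, so the length and descent considerations driving your approach are unnecessary here.
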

\begin{proof}
    Let $v$ be the vertex of $x$ in the orbit of the origin. Then $v$ lies on $H_{r_1}$. The reflection over $H_{xs_0x^{-1}}$ must move $v$ off of $H_{r_1}$, otherwise $x$ would share a panel with $H_{r_1}$. But then $v$ is in the orbit of the origin, so there is a hyperplane parallel to $H_{r_0}$ and $H_{r_1}$ passing through $v$. Furthermore, as the $s_0$ panel of $x$ is fixed under reflection over $H_{xs_0x^{-1}}$, and the $s_0$ panel of $x$ does not lie on $H_{r_0}$ or $H_{r_1}$, $v$ must still lie between $H_{r_0}$ and $H_{r_1}$. So, as $H_{r_0}$ and $H_{r_1}$ are adjacent, $v$ must lie on $H_{r_0}$.
\end{proof}

\begin{lemma}\label{nopanel}
    If $x$ does not share a panel with $H_{r_0}$, then $D_R(r_0x)\cap D_R(x)\cap D_R(r_1x)\cap D_R(r_2r_1x)$ is nonempty. 
\end{lemma}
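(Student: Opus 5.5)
Throughout, I take the standing hypotheses of this subsection: $H_{r_0},H_{r_1},H_{r_2}$ are adjacent parallel walls, $x$ lies between $H_{r_0}$ and $H_{r_1}$, and $\ell(r_2r_1x)=\ell(r_1x)+1=\ell(x)+2$ with $\ell(r_0x)<\ell(x)$. The lemma then asks for a single type $j$ lying in all four descent sets. The plan is to show that $0$ is such a type, splitting according to whether $0\in D_R(x)$ or not. The mechanism doing the work is Proposition \ref{s0descent}, which gives $0\in D_R(r_0x)\cap D_R(r_1x)\cap D_R(r_2r_1x)$ whenever $x\cap H_{r_0}=\varnothing$ and $0\in D_R(x)$.

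\emph{Step 1: reduce to the case $x\cap H_{r_0}=\varnothing$.} Suppose $x$ has no panel on $H_{r_0}$ but still meets $H_{r_0}$ in a vertex. In rank $2$ an alcove is a triangle, and it lies in the strip between the adjacent walls $H_{r_0}$ and $H_{r_1}$. So it has either a panel on one wall and the opposite vertex on the other, or a vertex on one wall and a panel on the other. If $x$ has no panel on $H_{r_0}$, it must therefore have a panel on $H_{r_1}$ and only a vertex on $H_{r_0}$. I would treat this configuration separately, or argue it away using the vertex of $x$ in the orbit of the origin. That vertex lies on one of the two walls, and the proof of Proposition \ref{s0descent} only uses that it lies on $H_{r_1}$. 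Once it is on $H_{r_1}$, Corollary \ref{s0inxs0inr2r1x} applies directly.

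\emph{Step 2: the case $0\in D_R(x)$.} Apply Proposition \ref{s0descent} to get $0\in D_R(r_0x)\cap D_R(r_1x)\cap D_R(r_2r_1x)$. Together with $0\in D_R(x)$ this gives $0$ in the four-fold intersection.

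\emph{Step 3: the case $0\notin D_R(x)$.} Then $x$ is not maximal in its coset $xW_0$, so some $s\in W_0$ has $s\in D_R(x)$; I would try to choose this $j\neq 0$ as the common element. One route is to show $j$ also descends for $r_1x$. The two panels of $x$ other than its $s_0$-panel meet at the vertex on $H_{r_1}$. Reflecting in $r_1$ fixes that vertex and maps the wall $H_{xs_jx^{-1}}$ to $H_{r_1xs_jx^{-1}r_1}$. Combined with $\ell(r_1x)=\ell(x)+1$ and the fact that $x$ has no panel on $H_{r_1}$ crossing to $r_1x$ (which holds since $r_1x\neq xs_j$), this should give $j\in D_R(r_1x)$. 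Once $j$ is a shared decreasing panel type of $x$ and $r_1x$, Lemma \ref{sharedxr1x} gives $j\in D_R(r_0x)$. For $r_2r_1x$, I would use Lemma \ref{angle<pi/2} when the sector angle is at most $\pi/2$, and Lemma \ref{angle>pi/2} applied to $r_2$ acting on $r_1x$ otherwise.

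As a fallback for Step 3, I would use the preminimal structure instead. Proposition \ref{preminimal} and Corollary \ref{xs0nonempty} describe exactly when $0\in D_R$ is forced. Here I would show that $0\notin D_R(x)$, together with $\ell(r_0x)<\ell(x)$ and no panel on $H_{r_0}$, would force $x$ to be minimal or preminimal in $xW_0$ in a way that contradicts Proposition \ref{preminimal}. That would reduce Step 3 back to Step 2.

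The main obstacle is Step 3, specifically controlling $D_R(r_2r_1x)$ for a panel type other than $0$, where the origin-orbit vertex argument is not available. I would attack it first through the contradiction route via Proposition \ref{preminimal}. I would resort to explicit angle computations with Lemmas \ref{angle<pi/2} and \ref{angle>pi/2} only if that route fails.
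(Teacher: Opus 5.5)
You have a genuine gap, and the organizing idea of your plan, that $0$ is the common descent type, is false in general. Work in $\tilde{A}_2$ with coordinates $(a,b)=(\langle v,\alpha_1\rangle,\langle v,\alpha_2\rangle)$. The walls are $a\in\mathbb{Z}$, $b\in\mathbb{Z}$, $a+b\in\mathbb{Z}$, and the identity alcove is $\{a>0,\,b>0,\,a+b<1\}$. Length is the number of separating walls. The vertices are the integer points, and those in the orbit of the origin are the ones with $a\equiv b \pmod 3$. Let $H_{r_0},H_{r_1},H_{r_2}$ be $a=1,2,3$, and let $x$ have vertices $(2,2),(2,3),(1,3)$.

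Then $\ell(r_0x)=6$, $\ell(x)=7$, $\ell(r_1x)=8$, $\ell(r_2r_1x)=9$, and $x$ has no panel on $H_{r_0}$. The $s_0$-panel of $x$ lies on $b=3$ and is an ascent, so $0\notin D_R(x)$. The common descent of all four elements is instead the type of the panel of $x$ on $a+b=4$.

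This has three consequences for your steps. \emph{Step 3.} Your fallback, deriving a contradiction from $0\notin D_R(x)$, cannot succeed. What remains of Step 3 is the bare assertion that some $j\in D_R(x)$ also lies in $D_R(r_1x)$ and $D_R(r_2r_1x)$, with no criterion for choosing $j$ and no proof. \emph{Step 2.} Proposition~\ref{s0descent} requires $x\cap H_{r_0}=\varnothing$. In $\tilde{A}_2$, an alcove in the strip with no panel on $H_{r_0}$ always has a vertex on it, as in this example, so Step 2 never applies there as stated. \emph{Step 1.} It only sketches the needed extension and leaves open the case where the origin-orbit vertex lies on $H_{r_0}$. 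Its case split also fails in $\tilde{C}_2$ and $\tilde{G}_2$, where an alcove can touch both walls of its strip only at vertices, or miss one of them.

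The missing idea, which is the paper's proof, is to work downward from $r_2r_1x$ rather than upward from $x$. Take any $j\in D_R(r_2r_1x)$. The argument of Proposition~\ref{shareddecreasing}, packaged as the dichotomy of Proposition~\ref{nonemptyor}, pushes $j$ into $D_R(r_1x)$ and then $D_R(x)$. It uses only that each reflection raises length by exactly one, and it works unless some step of the chain is itself an $s_j$-move.

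The top step $r_2r_1x=r_1xs_j$ is exactly what the hypothesis forbids. By Proposition~\ref{threeparallel}, $r_2r_1=r_1r_0$, so this step would give $r_0x=xs_j$, a panel of $x$ on $H_{r_0}$. The lower step $r_1x=xs_j$ is harmless: it would put the $j$-panel of $r_2r_1x$ on $r_2H_{r_1}$, beyond $H_{r_2}$ from the identity, so $j$ would be an ascent of $r_2r_1x$.

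Thus $j\in D_R(x)\cap D_R(r_1x)\cap D_R(r_2r_1x)$, and Lemma~\ref{sharedxr1x} then gives $j\in D_R(r_0x)$. This route needs no type-$0$ analysis, vertex bookkeeping or angle estimates, and it uses the hypothesis exactly once, to remove the single obstruction. In the example above it selects precisely the type of the $a+b=4$ panel.
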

\begin{proof}
         By Proposition \ref{nonemptyor}, either $x$, $r_1x$ and $r_2r_1x$ have a shared decreasing panel type, or there is a generator $s_j$ such that $r_1xs_j=r_2r_1x$. Suppose they do not have a shared decreasing panel type. But then, as $r_2r_1=r_1r_0$, we have $r_1xs_j=r_1r_0x$. Therefore, $xs_j=r_0x$, and so $x$ shares a panel with $H_{r_0}$. This is a contradiction. Hence, $x$, $r_1x$ and $r_2r_1x$ have a shared decreasing panel type, say $s_i$. Now, by Lemma \ref{sharedxr1x}, $i\in D_R(r_0x)$.
\end{proof} 

We next prove that we can find a sequence of generators that allows us to assume that either $x$ shares a panel with $H_{r_1}$, or $x$ shares a simplex with $H_{r_0}$.

\begin{proposition}\label{sequence}
    There is a sequence of generators $s_{i_1},\hdots, s_{i_n}$ such that
    \[i_k\in D_R(r_0xs_{i_1}\cdots s_{i_{k-1}})\cap D_R(xs_{i_1}\cdots s_{i_{k-1}})\cap D_R(r_1xs_{i_1}\cdots s_{i_{k-1}})\cap D_R(r_2r_1xs_{i_1}\cdots s_{i_{k-1}}),\]
    and $r_1xs_{i_1}\cdots s_{i_{n}}\cap H_{r_0}\neq \varnothing$.
    \end{proposition}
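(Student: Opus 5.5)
We argue by induction on $\ell(x)$, building the sequence one generator at a time. The standing hypotheses are those of this subsection: $H_{r_0},H_{r_1},H_{r_2}$ are adjacent and parallel, $\ell(r_2r_1x)=\ell(r_1x)+1=\ell(x)+2$, $\ell(r_0x)<\ell(x)$, and $x$ lies between $H_{r_0}$ and $H_{r_1}$. Right multiplication by a generator $s_j$ with $j$ in all four descent sets moves each of $x$, $r_0x$, $r_1x$, $r_2r_1x$ to an adjacent alcove and lowers each length by exactly one. Since left multiplication commutes with right multiplication, $x'=xs_j$ again satisfies $r_0x'=r_0xs_j$, $r_1x'=r_1xs_j$ and $r_2r_1x'=r_2r_1xs_j$. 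Thus the length relations $\ell(r_2r_1x')=\ell(r_1x')+1=\ell(x')+2$ and $\ell(r_0x')<\ell(x')$ persist. Because $r_0,r_1,r_2$ are unchanged, the same hyperplanes are involved.

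The inductive step has two cases. If $r_1x\cap H_{r_0}\neq\varnothing$ already holds, we take the empty sequence. Otherwise, first suppose $x$ does not share a panel with $H_{r_0}$. Then Lemma \ref{nopanel} gives an index $i_1$ in $D_R(r_0x)\cap D_R(x)\cap D_R(r_1x)\cap D_R(r_2r_1x)$, and we replace $x$ by $xs_{i_1}$. Next suppose $x$ does share a panel with $H_{r_0}$, so that $r_0x=xs_j$ for some $j$. Then $x$ lies in $H_{r_0}^\infty$, since $\ell(r_0x)<\ell(x)$. Applying $r_1$ gives $r_1xs_j=r_1r_0x=r_2r_1x$, so $r_1x$ shares a panel with $H_{r_2}$. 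This is the case treated by the lemma on the generator $s_i$ with $r_1xs_i=r_2r_1x$. In that configuration we must argue directly, using Corollary \ref{xs0nonempty} and Proposition \ref{s0descent}, that the required intersection with $H_{r_0}$ is reached. I expect the intended reading of the conclusion to be that some alcove of the translated configuration touches $H_{r_0}$ in a simplex, that is, a vertex. We should check that $r_1x$ has a vertex on $H_{r_0}$ whenever $x$ has a vertex on $H_{r_1}$ and on $H_{r_0}$ simultaneously, which follows because $r_1$ fixes $H_{r_1}$ pointwise.

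Termination is immediate because $\ell(x)$ strictly decreases at each step and lengths are nonnegative. The process cannot continue forever, so it must stop in the case where Lemma \ref{nopanel} fails to apply or where the intersection condition is met. The point to verify is that the only way to stop is with $r_1xs_{i_1}\cdots s_{i_n}\cap H_{r_0}\neq\varnothing$. Near the identity we have $\ell(r_0x)<\ell(x)$ and $\ell(r_1x)>\ell(x)$, which force the identity alcove to lie between $H_{r_0}$ and $H_{r_1}$, on the far side of $H_{r_0}$ from $x$. Hence every alcove in the strip between $H_{r_0}$ and $H_{r_1}$ is at positive distance from the identity. Moreover, a strictly length-decreasing walk that keeps $x$ inside the strip must eventually approach $H_{r_0}$, since the descent direction points toward the identity.

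The main obstacle is keeping $x'$ between $H_{r_0}$ and $H_{r_1}$ throughout the walk, so that Lemma \ref{nopanel} and Lemma \ref{sharedxr1x} continue to apply. Crossing $H_{r_1}$ is ruled out: that would require $x's_j=r_1x'$, and then $j$ would not lie in $D_R(r_1x')$, because $r_1x'$ is the longer alcove. Crossing $H_{r_0}$ through a panel means $x$ shared that panel with $H_{r_0}$. Passing into a position where $x$ touches $H_{r_0}$ only at a vertex is exactly the stopping condition, interpreted through $r_1$ as above. I would first try to make this rigorous by tracking the vertex of $x$ in the orbit of the origin, which lies on $H_{r_1}$ or $H_{r_0}$ by the argument in Corollary \ref{xs0nonempty}.
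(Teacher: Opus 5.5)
The gap is at the endpoint. You keep the literal target $r_1xs_{i_1}\cdots s_{i_n}\cap H_{r_0}\neq\varnothing$, but it can never hold along your walk. Each $xs_{i_1}\cdots s_{i_k}$ stays in the closed strip between $H_{r_0}$ and $H_{r_1}$. The reflection $r_1$ fixes $H_{r_1}$ and swaps $H_{r_0}$ with $H_{r_2}$, so it carries this strip onto the closed strip between $H_{r_1}$ and $H_{r_2}$, which is disjoint from $H_{r_0}$. Your bridging claim, that $r_1x$ has a vertex on $H_{r_0}$ whenever $x$ has vertices on $H_{r_0}$ and $H_{r_1}$, is false for the same reason: a vertex on $H_{r_0}$ is sent to $H_{r_2}$. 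As a result your stopping condition is unattainable and the empty-sequence case never occurs. The branch where $x$ shares a panel with $H_{r_0}$ is also left with only a promise to argue directly. The $r_1$ in the statement is a misprint. The paper's proof opens with ``if $x\cap H_{r_0}\neq\varnothing$, then we are done''. The proof of Theorem \ref{pm1} then feeds the output into Proposition \ref{Hr0nonempty}, which needs $xs_{i_1}\cdots s_{i_n}\cap H_{r_0}\neq\varnothing$.

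With that corrected target your argument closes, and more simply than the paper's. If the current alcove misses $H_{r_0}$, it shares no panel with it, so Lemma \ref{nopanel} gives a common descent $j$. The length relations and $\ell(r_0x)<\ell(x)$ persist, and $xs_j$ stays in the strip. It cannot cross $H_{r_0}$. Crossing $H_{r_1}$ would mean $xs_j=r_1x$ is longer than $x$, contradicting $j\in D_R(x)$. Your stated reason, $j\notin D_R(r_1x)$, is wrong, since then $r_1xs_j=x$ is shorter and $j\in D_R(r_1x)$. Since $\ell$ strictly drops, the walk must stop at an alcove meeting $H_{r_0}$. The panel-sharing branch is then vacuous, and no heuristic about the descent direction is needed. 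Incidentally, $\ell(r_0x)<\ell(x)<\ell(r_1x)$ puts the identity beyond $H_{r_0}$, outside the strip, not between $H_{r_0}$ and $H_{r_1}$.

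The paper instead inducts on $\ell(x)-\ell(x')$, where $x'$ is minimal in $xW_0$, the star of the special vertex $v\in H_{r_1}$. An $s_0$-step leaves that coset, so the paper must treat $s_0$-descents separately; that is what Proposition \ref{preminimal}, Proposition \ref{s0descent} and Corollary \ref{xs0nonempty} are for. Your induction on $\ell(x)$ accepts any common descent from Lemma \ref{nopanel}, which the paper's inductive step uses anyway. This makes those three results unnecessary here. In exchange, the paper's route keeps tighter control over which generators appear in the sequence.
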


\begin{proof}
    If $x\cap H_{r_0}\neq \varnothing$, then we are done. So assume that $x\cap H_{r_0}= \varnothing$. Then let $v$ be the vertex of $x$ that is in the orbit of the origin. Now $v$ must lie on $H_{r_1}$. Now let $x'\in W_0^v$ be the minimal element. We proceed by induction on $\ell(x)-\ell(x')$. 

    First consider when $x'$ shares a panel with $H_{r_0}$. Here, our base case is when $\ell(x)-\ell(x')=1$, so $x$ is preminimal. Now, by Lemma \ref{preminimal}, $0\in D_R(x)$. So, by Lemma \ref{s0descent}, $0\in D_R(r_0x)\cap D_R(r_1x)\cap D_R(r_2r_1x)$. Now $x$ does not share a panel with $H_{r_0}$, so $xs_0\cap H_{r_0}\neq \varnothing$. So we choose the sequence of $s_0$, and we are done. Now consider when $x'$ does not share a panel with $H_
    {r_0}$. Here, our base case is $\ell(x)-\ell(x')=0$, and so $x=x'$. Then $0\in D_R(x)$, and the same argument follows to give the sequence of $s_0$. 

    Now let us assume the claim is true for $\ell(x)-\ell(x')<n$, $n>1$. Let $x$ be such that $\ell(x)-\ell(x')=n$. Now there is a shared common decreasing panel type, say $s_i$. Now, if $s_i=s_0$, we are done by Corollary \ref{xs0nonempty}. Otherwise, $xs_i\in xW_0$ with $\ell(xs_i)-\ell(x')=n-1$, and so by induction there is a sequence $s_{i_1},\hdots, s_{i_n}$ such that
    \[i_k\in D_R(r_0xs_is_{i_1}\cdots s_{i_{k-1}})\cap  D_R(xs_is_{i_1}\cdots s_{i_{k-1}})\cap D_R(r_1xs_is_{i_1}\cdots s_{i_{k-1}})\cap D_R(r_2r_1xs_is_{i_1}\cdots s_{i_{k-1}}),\]
    and $r_1xs_is_{i_1}\cdots s_{i_{n}}\cap H_{r_0}\neq \varnothing$.
    So we just take the sequence $s_i,s_{i_1},\hdots, s_{i_n}$. 

    Now the last case to consider is when $x$ shares a panel with $H_{r_1}$ and $x=x'$. Now $x$, $r_1x$, and $r_2r_1x$ must share the decreasing panel type $s_0$. Now if $xs_0\cap H_{r_0}\neq \varnothing$, then we are done. Otherwise, $xs_0$ is not minimal in $xs_0W_0$ as $0\notin D_R(xs_0)$. So, as shown above, there is a sequence $s_{i_1},\hdots, s_{i_n}$ such that
    \[i_k\in D_R(r_0xs_0s_{i_1}\cdots s_{i_{k-1}})\cap  D_R(xs_0s_{i_1}\cdots s_{i_{k-1}})\cap D_R(r_1xs_0s_{i_1}\cdots s_{i_{k-1}})\cap D_R(r_2r_1xs_0s_{i_1}\cdots s_{i_{k-1}}),\]
    and $r_1xs_0s_{i_1}\cdots s_{i_{n}}\cap H_{r_0}\neq \varnothing$.
    So we just take the sequence $s_0,s_{i_1},\hdots, s_{i_n}$. 
    \end{proof}
We are now left to show that, if $x\cap H_{r_0}\neq \varnothing$, then $\ell(r_0x)=\ell(x)-1$. Then, by induction, we will be able to conclude that the statement holds also when $x\cap H_{r_0}= \varnothing$.
\begin{lemma}\label{minimalHinf}
    If $x$ is minimal in $xW_0\cap H_{r_0}^\infty$, then $x$ shares a panel with $H_{r_0}$. \end{lemma}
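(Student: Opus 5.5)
Write $W_0$ for the finite parabolic subgroup $W_{\{1,2\}}$ (as in the earlier lemma), so that $xW_0$ is the set of alcoves sharing the vertex $v$ of $x$ in the orbit of the origin. The set $xW_0\cap H_{r_0}^\infty$ consists of the alcoves around $v$ on the far side of $H_{r_0}$ from the identity. The plan is to argue by contradiction, using the exchange property and Proposition \ref{gallerycrossing}.

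Suppose $x$ is minimal in $xW_0\cap H_{r_0}^\infty$ but has no panel on $H_{r_0}$. Since $x\neq 1$ (as $1\in H_{r_0}^1$), some $i\in D_R(x)$ exists. Any minimal gallery from $1$ to $x$ crosses $H_{r_0}$ exactly once, by Proposition \ref{gallerycrossing}. We may choose this gallery to end with the panel of type $i$, so its last step crosses $H_{xs_ix^{-1}}$. Because the $s_i$ panel of $x$ does not lie on $H_{r_0}$, this last wall is not $H_{r_0}$. Hence $xs_i$ lies on the same side of $H_{r_0}$ as $x$, that is, $xs_i\in H_{r_0}^\infty$, and $\ell(xs_i)<\ell(x)$.

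The case split is on whether $i\neq 0$ or $i=0$.

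\emph{Case $i\neq 0$.} Then $xs_i\in xW_0$, since right multiplication by $s_i$ with $i\in\{1,2\}$ keeps the vertex $v$. So $xs_i\in xW_0\cap H_{r_0}^\infty$ has smaller length than $x$, contradicting minimality.

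\emph{Case $i=0$.} Here every descent of $x$ is $0$, so $D_R(x)=\{0\}$ and $x$ is the minimal element of the whole coset $xW_0$. This case is the main obstacle. Minimality of $x$ in $xW_0$ means every wall through $v$ separates $x$ from nothing on the identity side. More precisely, for every $s\in W_0$ the element $xs$ is longer, so $x$ lies in the sector at $v$ facing the origin. Now $v$ lies on $H_{r_0}$ or on the side away from the identity, since $x\in H_{r_0}^\infty$ and $v$ is a vertex of $x$. The idea is to split further:
\begin{enumerate}[(a)]
\item If $v\in H_{r_0}$, then $H_{r_0}$ is one of the walls through $v$. The alcove of $xW_0$ closest to the origin then has its panel on $H_{r_0}$ as soon as it is in $H_{r_0}^\infty$. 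I would make this precise via Proposition \ref{gallerycrossing}: a minimal gallery from $1$ to $v$'s star crosses all walls through $v$ separating from origin, and the wall crossed last into $x$ must be $H_{r_0}$.
\item If $v\notin H_{r_0}$, then $v$ lies strictly in $H_{r_0}^\infty$. In this case I would show, using the sector description, that some alcove of $xW_0$ closer to the origin would still lie in $H_{r_0}^\infty$. I would try the root-system computation from Lemma \ref{angle>pi/2} and Proposition \ref{preminimal}, writing $H_{r_0}=H_{\alpha,n}$ with $n\ge 0$ and comparing $(\alpha,v)$ with $n$.
\end{enumerate}
I expect the cleanest route to be to recast the statement in terms of $v$. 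First show that minimality in $xW_0\cap H_{r_0}^\infty$ forces $x$ to be adjacent across $H_{r_0}$ to an alcove $y\in xW_0\cap H_{r_0}^1$. Such a $y$ exists whenever $v\in H_{r_0}$, because $H_{r_0}$ then cuts the star of $v$. Then check that, if $v\notin H_{r_0}$, the whole star lies in $H_{r_0}^\infty$ and a descent $i\neq0$ exists unless $x$ has a panel on $H_{r_0}$.
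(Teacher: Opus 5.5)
Your reduction is the paper's proof. If $x$ has no panel on $H_{r_0}$, a descent $i\in\{1,2\}$ would give $xs_i\in xW_0\cap H_{r_0}^\infty$ of smaller length. Hence $D_R(x)\subseteq\{0\}$, and $x$ is the minimal element of the whole coset $xW_0$.

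The paper then stops with ``contradicting the fact that $x\in H_{r_0}^\infty$''. That clause is valid exactly in your case (a). If $v\in H_{r_0}$, then $r_0$ fixes $v$, so $x^{-1}r_0x$ lies in the stabiliser $W_0$ of the special vertex of the identity alcove. Hence $r_0x=x(x^{-1}r_0x)\in xW_0$ with $\ell(r_0x)<\ell(x)$, contradicting minimality of $x$ in $xW_0$. So in case (a) the situation $D_R(x)=\{0\}$ is simply impossible. You should replace your gallery argument there by this one line. As written it is confused: when $D_R(x)=\{0\}$, the last wall crossed into $x$ is the wall of its $0$-panel, which never contains $v$ and so is never $H_{r_0}$.

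Case (b) is a genuine gap, and no argument will close it. Once $D_R(x)=\{0\}$, there is no shorter alcove in $xW_0$ and no descent $i\neq 0$. Both routes you propose therefore contradict what you have already shown.

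In fact the conclusion is false in case (b). Suppose $v$ is on the far side of $H_{r_0}$ from the identity, and some wall parallel to $H_{r_0}$ strictly separates $v$ from $H_{r_0}$. Then every alcove containing $v$ lies in $H_{r_0}^\infty$ and beyond that wall. So the minimal alcove $x$ of the star of $v$ is minimal in $xW_0\cap H_{r_0}^\infty=xW_0$, yet its closure does not even meet $H_{r_0}$.

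Restricting to alcoves in the strip between $H_{r_0}$ and $H_{r_1}$ does not rescue it either. Take $v\in H_{r_1}$ with the identity beyond $H_{r_0}$. In $\tilde{A}_2$ the minimal alcove of the star of $v$ can then have a panel on $H_{r_1}$ and only a vertex on $H_{r_0}$.

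So what is missing is a hypothesis, not an argument: the lemma, and the paper's one-line finish, need $v\in H_{r_0}$. With that assumption stated explicitly, your case $i\neq 0$ together with the corrected case (a) is exactly the paper's proof, and case (b) should be discarded.
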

\begin{proof}
    If $x$ does not share a panel with $H_{r_0}$ then both panels of $x$ not of type $s_0$ must be increasing, as otherwise it is not minimal in $xW_0\cap H_{r_0}^\infty$. But then $x$ would be minimal in $xW_0$, contradicting the fact that $x\in H_{r_0}^\infty$. 
\end{proof}

\begin{proposition}\label{Hr0nonempty}
    If $x\cap H_{r_0}$ is nonempty and $\ell(r_0x)<\ell(x)$, then $\ell(r_0x)=\ell(x)-1$.  
\end{proposition}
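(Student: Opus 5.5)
We work in the setting of this subsection: $x$ lies between $H_{r_0}$ and $H_{r_1}$, with $\ell(r_2r_1x)=\ell(r_1x)+1=\ell(x)+2$. We now also assume that $x\cap H_{r_0}\neq\varnothing$ and $\ell(r_0x)<\ell(x)$, so that $x\in H_{r_0}^\infty$. The intersection $x\cap H_{r_0}$ is either a panel or a single vertex, and we treat these two cases separately.

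\emph{Panel case.} Suppose $x$ has a panel of type $s_j$ lying in $H_{r_0}$. Then $r_0x=xs_j$, because $r_0$ is the unique reflection fixing that panel and interchanging the two alcoves on either side of it (see the proof of the bijection between reflections and walls). Hence $\ell(r_0x)=\ell(x)\pm1$, and since $\ell(r_0x)<\ell(x)$ we get $\ell(r_0x)=\ell(x)-1$. This is the same argument as in the lemma treating the case $r_1xs_i=r_2r_1x$.

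\emph{Vertex case.} Suppose $x\cap H_{r_0}$ is a single vertex $v$. Let $W_v$ be the finite parabolic (conjugate) subgroup stabilising $v$. It is generated by the two reflections in the walls of $x$ through $v$, and it acts simply transitively on the alcoves around $v$; one of its reflections is $r_0$. I would reduce to the panel case by moving within this star. Let $y$ be the minimal element of $xW_0^v\cap H_{r_0}^\infty$, where $W_0^v$ is the rank-2 standard parabolic $W_{\{j,k\}}$ whose panels of $x$ meet at $v$. By Lemma \ref{minimalHinf}, or its analogue for the parabolic $W_{\{j,k\}}$, the alcove $y$ shares a panel with $H_{r_0}$. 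Write $x=yu$ with $u\in W_{\{j,k\}}$. Then $r_0x=(r_0y)u$, and $r_0y=ys_m$ for the appropriate $m\in\{j,k\}$, since the panel of $y$ lies in $H_{r_0}$.

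The key step is to compare lengths inside the coset $yW_{\{j,k\}}$. This coset is the star of $v$, a dihedral configuration, and $H_{r_0}$ cuts it in half. Let $z$ be the minimal element of the coset. Lengths in the coset are given by $\ell(z)+\ell_{\mathrm{dihedral}}$, where $\ell_{\mathrm{dihedral}}$ is the dihedral distance from $z$. The reflection $r_0$ acts on the star as a reflection of the $2m_{jk}$-gon, so comparing $\ell(x)$ with $\ell(r_0x)$ amounts to comparing the dihedral distances from $z$ of an alcove and of its mirror image across a line through $v$.

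This comparison is the main obstacle. In general the two dihedral distances can differ by more than $1$. So the geometric hypotheses must be used to force $x$ to be adjacent to its mirror image, or at least to force their dihedral distances to differ by exactly one. The relevant hypotheses are that $x$ lies between adjacent parallel walls, that $\ell(r_1x)=\ell(x)+1$ and $\ell(r_2r_1x)=\ell(r_1x)+1$, and that $x\in H_{r_0}^\infty$. The plan is to use the $\pm1$ behaviour of $r_1$ and $r_2$, together with $r_2r_1=r_1r_0$ and Proposition \ref{threeparallel}, to show that $v$ cannot be the vertex in the orbit of the origin. If $v$ is not in that orbit, the local angle at $v$ in rank 2 is small, with $m_{jk}\le3$. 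Among the alcoves at $v$ lying in $H_{r_0}^\infty$, the dihedral distance from $z$ then increases monotonically away from $H_{r_0}$. The inequality $\ell(r_0x)<\ell(x)$ puts $x$ on the far side, and by the small angle $x$ is then within one step of its mirror image. I would first try to verify this by direct case analysis over the rank-2 affine types $\tilde A_2$, $\tilde C_2$ and $\tilde G_2$, using the descent-set results (Proposition \ref{s0descent} and Lemma \ref{nopanel}) to pin down where $z$ sits relative to $H_{r_0}$.
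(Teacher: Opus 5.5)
Your panel case is correct and coincides with the paper's base case. The gap is the vertex case, and that case carries all the content. You set up the dihedral comparison in the star of $v$ correctly, and you rightly note that the two dihedral distances can differ by more than one. But you never carry the comparison out, and the route you sketch rests on three claims that are false.

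First, $v$ \emph{can} lie in the orbit of the origin. In $\tilde A_2$ take $H_{r_t}=H_{\alpha_1,t+1}$ for $t=0,1,2$. Let $x$ be the alcove whose vertices have $(\langle\cdot,\alpha_1\rangle,\langle\cdot,\alpha_2\rangle)$ equal to $(2,3)$, $(2,4)$, $(1,4)$. Then $\ell(x)=9$, $\ell(r_1x)=10$, $\ell(r_2r_1x)=11$ and $\ell(r_0x)=8$, and $x\cap H_{r_0}$ is the single vertex $2\alpha_1^\vee+3\alpha_2^\vee\in Q^\vee$.

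Second, vertices outside that orbit need not have $m_{jk}\le 3$. In $\tilde C_2$ (the paper's $\tilde B_2$), the vertices stabilised by conjugates of $W_{\{0,1\}}$ have $m_{01}=4$ and are not in the orbit of the origin.

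Third, in the vertex case $x$ is never adjacent to $r_0x$, since they would then share a panel on $H_{r_0}$. For $m_{jk}=3$ they are opposite in the hexagon around $v$. Then $\ell(x)-\ell(r_0x)$ is $1$ or $3$ according as the minimal element $z$ of the star does or does not have a panel on $H_{r_0}$. No angle consideration decides this; only the hypothesis on $r_2$ does. With the same $r_0,r_1,r_2$, let $x$ be the longest element of the star of $\omega_1^\vee$. It meets $H_{r_0}$ only in $\omega_1^\vee$ and has $\ell(r_1x)=\ell(x)+1$, yet $r_0x=1$ while $\ell(x)=3$. The only failing hypothesis is $\ell(r_2r_1x)=\ell(r_1x)+1$; here $\ell(r_2r_1x)=\ell(r_1x)+3$. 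Your sketch never puts that hypothesis to concrete use.

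The missing idea is the paper's: instead of analysing the star of $v$, descend in lockstep. Suppose $x$ has no panel on $H_{r_0}$. Then Lemma \ref{nopanel} gives a common right descent $i\in D_R(r_0x)\cap D_R(x)\cap D_R(r_1x)\cap D_R(r_2r_1x)$. This lemma is where the hypotheses on $r_1$ and $r_2$ enter, via Proposition \ref{nonemptyor} and Lemma \ref{sharedxr1x}. Right multiplication by $s_i$ commutes with the left action of the reflections. Hence $xs_i$ again satisfies $\ell(r_2r_1xs_i)=\ell(r_1xs_i)+1=\ell(xs_i)+2$, and $\ell(r_0x)-\ell(x)=\ell(r_0xs_i)-\ell(xs_i)$. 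Induct on the distance from $x$ to the minimal element of $xW_0\cap H_{r_0}^\infty$, which has a panel on $H_{r_0}$ by Lemma \ref{minimalHinf}; this brings you down to your panel case. In your notation, this is the assertion that $x$ and $r_0x$ can be shortened simultaneously, one letter at a time, until you reach an alcove with a panel on $H_{r_0}$. That is exactly what your dihedral comparison required and did not supply.
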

\begin{proof}
    Let $x'$ be minimal in $xW_0\cap H_{r_1}^\infty$. Proceed by induction on $\ell(x)-\ell(x')$. If $x=x'$, then, by Lemma \ref{minimalHinf}, $x$ shares a panel with $H_{r_1}$, so clearly $\ell(r_0x)=\ell(x)-1$.

    Now suppose it is true for $x$ such that $\ell(x)-\ell(x')<n$ for $n>0$. Let $x$ be such that $\ell(x)-\ell(x')=n$. Now if $x$ shares a panel with $H_{r_0}$, the statement is obvious. If not, $r_0x$, $x$, $r_1x$ and $r_2r_1x$ share a common decreasing panel type $s_i\neq s_0$, by Lemma \ref{nopanel}. Now, $\ell(r_2r_1xs_i)=\ell(r_2r_1x)-1=\ell(r_1x)=\ell(r_1xs_i)+1$, and $\ell(r_1xs_i)=\ell(r_1x)-1=\ell(x)=\ell(xs_i)+1$, so by induction $\ell(r_0xs_i)=(\ell(xs_i)-1$. Now, $i\in D_R(r_0x)$, so $\ell(r_0x)=\ell(x)-1$. 
\end{proof}

We are now able to prove Theorem \ref{pm1}.

\begin{proof}
    If $\ell(r_0x)>\ell(x)$, or $x\cap H_{r_0}\neq \varnothing$, then we are done. So now suppose $\ell(r_0x)<\ell(x)$ and $x\cap H_{r_0}=\varnothing$. Now, by Lemma \ref{sequence} there is a sequence of generators $s_{i_1},\hdots, s_{i_n}$ such that
    \[i_k\in D_R(r_0xs_{i_1}\cdots i_{k-1})\cap  D_R(xs_{i_1}\cdots s_{i_{k-1}})\cap D_R(r_1xs_{i_1}\cdots s_{i_{k-1}})\cap D_R(r_2r_1xs_{i_1}\cdots s_{i_{k-1}}),\]
    and $r_1xs_{i_1}\cdots s_{i_{n}}\cap H_{r_0}\neq \varnothing$. Now $\ell(r_2r_1xs_{i_1}\cdots s_{i_{n}})=\ell(r_2r_1x)+n=\ell(r_1x)+n+1=\ell(r_1xs_{i_1}\cdots s_{i_{n}})+1$, and $\ell(r_1xs_{i_1}\cdots s_{i_{n}})=\ell(r_1x)+n=\ell(x)+n+1=\ell(xs_{i_1}\cdots s_{i_{n}})+1$, so, by Lemma \ref{Hr0nonempty}, $\ell(r_0xs_{i_1}\cdots s_{i_{n}})=\ell(xs_{i_1}\cdots s_{i_{n}})-1$. Therefore, $\ell(r_0x)=\ell(x)-1$. 
\end{proof}

\subsection{Proof of Theorem \ref{theorem}}

We can now use these results to prove Theorem \ref{theorem} inductively. We start by defining the Property $(\dagger)$ on a tuple $((r_1,\hdots,r_n),w,s_i)$. 

\begin{definition}
    A tuple $((r_1,\hdots,r_n),w,s_i)$  satisfies \ix{Property} $\boldsymbol{(\dagger)}$ if
    \begin{enumerate}
        \item $r_1,\hdots, r_n$ are a set of $n\geq 2$ parallel reflections such that $H_{r_i}$ is adjacent to $H_{r_{i-1}}$ and $H_{r_{i+1}}$;
        \item $w\in W$ lies between $H_{r_0}$ and $H_{r_1}$;
        \item $i\in D_R(w)$,
    \[\ell(r_t\cdots r_1ws_i)=\ell(r_{t-1}\cdots r_1 ws_i)+1,\] 
    and $r_{t}\cdots r_1ws_i\neq r_{t-1}\cdots r_1w$, for all $1\leq t\leq n$.
    \end{enumerate}
\end{definition}

\begin{remark}
    We will show that, if $(\dagger)$ holds on $((r_1,\hdots,r_n),w,s_i)$, then $r_n\cdots r_1ws_i\in \nPSh{w}$. Note that, if $n=1$, our statement trivially holds as $\ell(r_1ws_i)=\ell(w)$, and $w\neq r_1ws_i$. 
\end{remark}

We now prove the inductive steps in every case such that the identity is bounded by $H_{r_0}$ and $H_{r_1}$. We restrict to this condition, as when the identity is not bounded by $H_{r_0}$ and $H_{r_1}$, Theorem \ref{pm1} tells us that $\ell(r_0ws_i)=\ell(ws_i)-1$. In this case, we form an inductive argument with the tuple $((r_0,r_1,\hdots,r_n),r_0w,s_i)$. 

\begin{lemma}\label{daggerholds}
    Suppose the identity is bounded by $H_{r_0}$ and $H_{r_1}$. If $(\dagger)$ holds for $((r_1,\hdots,r_n), w,s_i)$ and $k\in D_R(r_t\cdots r_1 ws_i)$ for all $0\leq t\leq n$, then $(\dagger)$ holds for $((r_1,\cdots,r_n),ws_i,s_k).$ 
\end{lemma}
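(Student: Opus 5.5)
We verify the three clauses of $(\dagger)$ for the new tuple $((r_1,\hdots,r_n),ws_i,s_k)$ in turn. The new base element is $w'=ws_i$, and the new generator is $s_k$. The elements whose lengths are controlled become $r_t\cdots r_1w's_k=r_t\cdots r_1ws_is_k$. Clause 1 is unchanged, since the reflections are the same. For clause 3, note that $k\in D_R(ws_i)$ is the case $t=0$ of the hypothesis, so $k\in D_R(w')$.

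\emph{Clause 2.} We must show that $ws_i$ still lies between $H_{r_0}$ and $H_{r_1}$. Since $w$ lies in that strip and $ws_i$ is adjacent to $w$, the only way to leave the strip is across the $s_i$-panel of $w$. That would require the $s_i$-panel to lie on $H_{r_0}$ or on $H_{r_1}$.
\begin{itemize}
\item If it lay on $H_{r_1}$, then $r_1ws_i=w$, which is excluded by $(\dagger)$.
\item If it lay on $H_{r_0}$, then $ws_i\in H_{r_0}^\infty$, because $w$ is on the identity side of $H_{r_0}$: the identity is bounded by $H_{r_0}$ and $H_{r_1}$, and $w$ lies between them. But $i\in D_R(w)$ means $\ell(ws_i)<\ell(w)$. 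A minimal gallery from $1$ to $w$ then cannot pass through $ws_i$ on the far side of $H_{r_0}$ without crossing $H_{r_0}$ twice, contradicting Proposition \ref{gallerycrossing}.
\end{itemize}

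\emph{Clause 3, the length condition.} Set $y_t=r_t\cdots r_1ws_i$. By hypothesis $k\in D_R(y_t)$, so $\ell(y_ts_k)=\ell(y_t)-1$ for all $0\le t\le n$. Combining this with $\ell(y_t)=\ell(y_{t-1})+1$ gives
\[\ell(y_ts_k)=\ell(y_t)-1=\ell(y_{t-1})=\ell(y_{t-1}s_k)+1,\]
which is exactly the required length condition for the new tuple.

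\emph{Clause 3, the non-equality condition.} We must show $y_ts_k\neq y_{t-1}$ for $1\le t\le n$. Suppose instead that $y_ts_k=y_{t-1}$. Then $y_t=y_{t-1}s_k$ is adjacent to $y_{t-1}$, and since $y_t=r_ty_{t-1}$ is the reflection of $y_{t-1}$ across $H_{r_t}$, the common $s_k$-panel lies on $H_{r_t}$. Because $k\in D_R(y_{t-1})$, we get
\[\ell(y_t)=\ell(y_{t-1}s_k)<\ell(y_{t-1}),\]
contradicting $\ell(y_t)=\ell(y_{t-1})+1$. This settles all clauses.

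\emph{Main obstacle.} The delicate step is clause 2, specifically excluding the case that the $s_i$-panel of $w$ lies on $H_{r_0}$. This is the only place where the hypothesis that the identity lies between $H_{r_0}$ and $H_{r_1}$ is used. I would argue it via the half-space description $H^{\mathbf 1}=\{x:\ell(rx)>\ell(x)\}$ with $r=r_0$: if $ws_i=r_0w$, then $w\in H_{r_0}^{\mathbf 1}$ forces $\ell(ws_i)>\ell(w)$, contradicting $i\in D_R(w)$.
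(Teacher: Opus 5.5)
Your proof is correct and follows the paper's argument. You show that $ws_i$ stays in the strip because otherwise $i\notin D_R(w)$, you get the length condition from $k\in D_R(y_t)\cap D_R(y_{t-1})$ together with $\ell(y_t)=\ell(y_{t-1})+1$, and you rule out $y_ts_k=y_{t-1}$ because $k$ cannot be a descent of both $y_{t-1}$ and $y_{t-1}s_k$. The only cosmetic difference is that you handle the $H_{r_1}$ case of clause 2 using the exclusion $r_1ws_i\neq w$ built into $(\dagger)$, whereas the paper treats both walls with the same identity-side length argument.
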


\begin{proof}
    First note that $ws_i$ must also be bounded by $H_{r_0}$ and $H_{r_1}$, otherwise $i\notin D_R(w)$. Then $k\in D_R(ws_i)$, and 
    \begin{align*}
        \ell(r_t\cdots r_1ws_is_k)&=\ell(r_t\cdots r_1ws_i)-1\\&=\ell(r_{t-1}\cdots r_1ws_i)\\&=\ell(r_{t-1}\cdots r_1ws_is_k)+1.
    \end{align*}
    Also note that $r_t\cdots r_1ws_is_k\neq r_{t-1}\cdots r_1 ws_i$ for all $1\leq t\leq n$, otherwise $k\notin D_R(r_{t-1}\cdots r_1 ws_i)\cap D_R(r_t\cdots r_1ws_i)$. So $(\dagger)$ holds for $((r_1,\hdots,r_n),ws_i,s_k)$
\end{proof}

First we consider the inductive step of the proof for the case that $|D_R(w)|=1$. 
\begin{proposition}
    Suppose that, for any tuple $((r_1,\hdots,r_n),w,s_i)$ with $\ell(w)<m$ and such that $(\dagger)$ holds, then $r_n\cdots r_1ws_i\in \nPSh{w}$. Suppose that $H_{r_0}$ and $H_{r_1}$ bound the identity, $\ell(w)=m$, $|D_R(w)|=1$ and $(\dagger)$ holds on $((r_1,\hdots,r_n),w,s_i)$. Then $r_n\cdots r_1ws_i\in \nPSh{w}$.
\end{proposition}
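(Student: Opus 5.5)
Since $|D_R(w)|=1$ and $i\in D_R(w)$, we have $D_R(w)=\{i\}$. Write $w=ws_i\cdot s_i$ with $ws_i$ of length $m-1$. The plan is to use Proposition \ref{induction} in the form
\[
\nPSh{w}=\nPSh{(ws_i)s_i}=\nPSh{ws_i}\cdot W_{I-\{i\}},
\]
which holds because $i\notin D_R(ws_i)$ and $D_R(w)=\{i\}$; the degenerate case $ws_i=e$ can be handled directly as in the base case of Theorem \ref{finite}. It therefore suffices to write $r_n\cdots r_1ws_i=z\,u$ with $z\in\nPSh{ws_i}$ and $u\in W_{I-\{i\}}$.

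To produce $z$, I would use Corollary \ref{nonemptyor2} together with the two corollaries following Corollary \ref{assume}. These either settle the claim outright, by an induction on $n$ that reduces $r_n\cdots r_1ws_i$ to $r_{n-1}\cdots r_1ws_i$, or produce a common descent $k\in\bigcap_{t} D_R(r_t\cdots r_1ws_i)$. By Corollary \ref{iinrn...r1wsi} we have $k\neq i$, so $s_k\in W_{I-\{i\}}$.

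In the second case, Lemma \ref{daggerholds} (using that the identity lies between $H_{r_0}$ and $H_{r_1}$) shows that $(\dagger)$ holds for $((r_1,\dots,r_n),ws_i,s_k)$. Since $\ell(ws_i)=m-1<m$, the induction hypothesis gives
\[
r_n\cdots r_1ws_is_k\in\nPSh{ws_i}.
\]
Then
\[
r_n\cdots r_1ws_i=(r_n\cdots r_1ws_is_k)\,s_k\in\nPSh{ws_i}\cdot W_{I-\{i\}}=\nPSh{w},
\]
which is the required conclusion.

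The main obstacle is producing a common descent $k$ valid for all $0\le t\le n$, as Lemma \ref{daggerholds} requires. The earlier results only give the intersection over $t<n$, or else the alternative $r_n\cdots r_1ws_i=r_{n-1}\cdots r_1ws_is_j$. I would handle this by an auxiliary induction on $n$. In the alternative case, $r_n\cdots r_1ws_i$ differs from $r_{n-1}\cdots r_1ws_i$ by a right multiplication by $s_j$ with $j\neq i$. By the inner induction, $r_{n-1}\cdots r_1ws_i\in\nPSh{w}$, and Corollary \ref{corollary} applies because $j\notin D_R(w)$ (as $D_R(w)=\{i\}$). That yields membership in $\nPSh{w}$ directly, which is exactly the content of Corollary \ref{assume}. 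Checking that $t=0$, that is $k\in D_R(ws_i)$, is included should follow from Proposition \ref{shareddecreasing}.
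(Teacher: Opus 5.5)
Your proof is correct. In the main case it is the paper's argument in different packaging; the real difference is in the alternative case.

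\textbf{Common-descent case.} Here some $k$ lies in every $D_R(r_t\cdots r_1ws_i)$, $0\le t\le n$. Both you and the paper use Lemma \ref{daggerholds} and the outer induction to get $r_n\cdots r_1ws_is_k\in\nPSh{ws_i}$. You then transfer this to $\nPSh{w}$ using that $k$ is an ascent of $w$. The paper does the transfer directly with the lifting property, Lemma \ref{Bruhatcosets}: from $w\le r_n\cdots r_1ws_i$ it derives $ws_i\le w\le r_n\cdots r_1ws_is_k$. You route it through Proposition \ref{induction}. But you only use the inclusion $\nPSh{ws_i}\cdot W_{I-\{i\}}\subseteq\nPSh{w}$, and only for the single factor $s_k$. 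That amounts to $\nPSh{ws_i}\subseteq\nPSh{w}$ followed by Corollary \ref{corollary}, so it is the same lifting argument.

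You invoke this only when $k\in D_R(ws_i)$, so the case $ws_i=e$ never arises and your remark about it can be dropped. The remark is also not right as stated. The identity $\nPSh{w}=\nPSh{ws_i}\cdot W_{I-\{i\}}$ is false when $ws_i=e$, because $\nPSh{e}=\varnothing$. And the base case of Theorem \ref{finite} only gives $\nPSh{s_i}\subseteq W_{I-\{i\}}$, which is the wrong direction.

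\textbf{Alternative case} $r_n\cdots r_1ws_i=r_{n-1}\cdots r_1ws_is_j$. The paper proceeds in three steps:
\begin{enumerate}
\item it invokes Corollary \ref{nonemptyor2} to get a common descent for $0\le t\le n-1$;
\item it applies the outer induction to $((r_1,\dots,r_{n-1}),ws_i,s_k)$;
\item it finishes with Corollary \ref{assume}, whose justification goes through a geometric lemma showing $j\notin D_R(w)$.
\end{enumerate}
Your inner induction on $n$ gets $r_{n-1}\cdots r_1ws_i\in\nPSh{w}$ directly. This is valid because truncating to $(r_1,\dots,r_{n-1})$ preserves $(\dagger)$ and the other hypotheses (same $w$, $r_0$, $r_1$). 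The base case $n=1$ is the Remark following the definition of $(\dagger)$; you should state it. The fact $j\notin D_R(w)$ is then immediate: $j\ne i$ by $(\dagger)$, and $D_R(w)=\{i\}$.

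What this buys you: you need only the dichotomy of Proposition \ref{nonemptyor} at each level. The $t=0$ descent comes from the descending step in the proof of Proposition \ref{shareddecreasing}, which the paper also uses implicitly. You avoid both Corollary \ref{nonemptyor2} and the geometry behind Corollary \ref{assume}.
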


\begin{proof}
    First suppose that $\{r_t\cdots r_1ws_i\mid 0\leq t\leq n\}$ all share a decreasing panel type, say $s_k\neq s_i$. Now , by Lemma \ref{daggerholds}, $(\dagger)$ holds for $((r_1,\hdots,r_n),ws_i,s_k)$, and $\ell(ws_i)<\ell(w)$. So $ws_i\not\leq r_n\cdots r_1ws_is_k.$
    Now suppose $w\leq r_n\cdots r_1ws_i$. Then, as $k\in D_R(r_n\cdots r_1 ws_i)$, $k\notin D_R(w)$, $ws_i\leq w\leq r_n\cdots r_1ws_is_k$. But this is a contradiction, so $r_n\cdots r_1ws_i\in \nPSh{w}$. 

    Now suppose that $\{r_t\cdots r_1ws_i\mid 0\leq t\leq n\}$ do not all share a common decreasing panel type. Then, by Lemma \ref{nonemptyor2}, $\{r_t\cdots r_1ws_i\mid 0\leq t\leq n-1\}$ share a common decreasing panel type, say $s_k\neq s_i$, and there is an $s_j$ such that $r_{n-1}\cdots r_1ws_is_j=r_n\cdots r_1 ws_i$. Now, by Lemma \ref{daggerholds}, $(\dagger)$ holds on $((r_1,\hdots,r_{n-1}),ws_i,s_k)$, and so 
    \[ws_i\not\leq r_{n-1}\cdots r_1ws_is_k.\]
    Suppose  $w\leq r_n\cdots r_1ws_i$. By Corollary \ref{assume}, $w\leq r_{n-1}\cdots r_1ws_i$. But then $k\in D_R(r_{n-1}\cdots r_1 ws_i)$, $k\notin D_R(w)$, so $ws_i\leq w\leq r_{n-1}\cdots r_1ws_is_k$. But this is a contradiction, so $r_n\cdots r_1ws_i\in \nPSh{w}$. 
 \end{proof}

 We next show that, if $|D_R(w)|=2$, the $D_R(w)=\{0,i\}$ for some $i\in \{1,2\}$. 

 \begin{lemma}\label{notmaximal}
     If $w\in W$ and the identity alcove are both bounded by two parallel adjacent hyperplanes $H_{r_0}$ and $H_{r_1}$, then $w$ cannot be maximal in $wW_0$. 
 \end{lemma}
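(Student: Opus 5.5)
Throughout, $W_0=W_{\{1,2\}}$ is the finite parabolic subgroup. The plan is to argue by contradiction using the geometry of the coset $wW_0$.

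The coset $wW_0$ is the set of alcoves sharing the vertex $v$ of $w$ of type $0$, i.e.\ the vertex of $w$ in the orbit of the origin. Its minimal element is the alcove of the star of $v$ closest to the identity. Its maximal element $w_{\max}$ is the alcove of the star opposite to the minimal one, i.e.\ the image of the minimal alcove under the longest element of the stabiliser of $v$, which acts as $-1$ through $v$ in rank $2$. Suppose $w$ were maximal in $wW_0$. Then every wall through $v$ separates $w$ from the identity, since $\ell(w s)<\ell(w)$ for $s\in\{s_1,s_2\}$ and hence both walls of $w$ through $v$ separate $w$ from $1$.

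The key geometric step uses that $v$ lies in the orbit of the origin. Let $\alpha$ be the common direction of $H_{r_0}$ and $H_{r_1}$. Then some hyperplane $H_{\alpha,k}$ parallel to $H_{r_0}$ and $H_{r_1}$ passes through $v$. Since $w$ lies in the closed strip between the adjacent walls $H_{r_0}$ and $H_{r_1}$, and no parallel wall lies strictly inside this strip, $v$ must lie on $H_{r_0}$ or on $H_{r_1}$. This is the same reasoning as in Corollary~\ref{xs0nonempty}. Say $v\in H_{r_1}$ (the other case is symmetric).

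Then $H_{r_1}$ is a wall through $v$, so by the previous paragraph it separates $w$ from the identity. This contradicts the hypothesis that $w$ and the identity both lie between $H_{r_0}$ and $H_{r_1}$, i.e.\ on the same side of $H_{r_1}$. Equivalently, in the language of the paper: maximality gives $\ell(r_1 w)<\ell(w)$, so $w\in H_{r_1}^\infty$, whereas $w$ and $1$ lying in the same strip gives $w\in H_{r_1}^{\mathbf 1}$.

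The main obstacle is justifying rigorously that at the maximal element \emph{every} wall through $v$, not just the two walls bounding $w$, separates $w$ from the identity. I would derive this from the fact that $w_{\max}=w_{\min}w_0$ with $w_0$ the longest element of $W_0$. By Proposition~\ref{gallerycrossing}, a minimal gallery from $1$ to $w_{\max}$ can be chosen to pass through $w_{\min}$ and then through all alcoves around $v$, crossing every wall through $v$ exactly once. Hence each such wall $H_r$ satisfies $\ell(rw_{\max})<\ell(w_{\max})$.

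A secondary point to check is the degenerate situation where $v$ lies on $H_{r_0}$ or $H_{r_1}$ but the wall through $v$ parallel to them is not a wall of the star. This cannot happen, because walls through a special vertex include one in each parallel class. That covers the affine rank-$2$ types $\tilde A_2$, $\tilde C_2$, $\tilde G_2$ and $\tilde A_1\times\tilde A_1$.
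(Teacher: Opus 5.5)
Your proof is correct and uses the same key fact as the paper's: the vertex $v$ of $w$ in the orbit of the origin must lie on $H_{r_0}$ or $H_{r_1}$, and that wall through $v$ does not separate $w$ from the identity. The paper argues directly instead (since $r_i$ fixes $v$, $r_iw\in wW_0$ and $\ell(r_iw)>\ell(w)$), and the same observation settles your ``main obstacle'' without the gallery count, although your gallery count is also valid: any reflection $r$ fixing $v$ gives $rw_{\max}\in w_{\max}W_0$, hence $\ell(rw_{\max})<\ell(w_{\max})$; separately, your aside that the longest element of the stabiliser acts as $-1$ is false for $\tilde{A}_2$ (there it is a reflection), but nothing in your argument depends on it.
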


 \begin{proof}
     Let $v$ be the vertex of $w$ in the orbit of the origin. Then $v$ must lie on $H_{r_0}$ or $H_{r_1}$. Say it lies on $H_{r_i}$. Then $\ell(r_iw)>\ell(w)$, as $H_{r_i}$ separates $r_iw$ from the identity. But now $r_iv=v$, so $r_iwW_0=wW_0$. Therefore, $r_iw\in wW_0$, and so $w$ is not maximal in $wW_0$. 
 \end{proof}

 \begin{corollary}
     If $w\in W$ and the identity alcove are both bounded by two parallel adjacent hyperplanes $H_{r_0}$ and $H_{r_1}$, and $|D_R(w)|=2$, then $0\in D_R(w)$. 
 \end{corollary}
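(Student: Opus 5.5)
The corollary follows directly from Lemma \ref{notmaximal} combined with the structure of rank 2. Here $W_0=W_{\{1,2\}}$ is finite and dihedral. The right descent set of $w$ is a subset of $I=\{0,1,2\}$, so if $|D_R(w)|=2$ then $D_R(w)$ is one of $\{0,1\}$, $\{0,2\}$ or $\{1,2\}$. It therefore suffices to rule out $D_R(w)=\{1,2\}$.

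Assume for contradiction that $D_R(w)=\{1,2\}$, so that $\ell(ws_1)<\ell(w)$ and $\ell(ws_2)<\ell(w)$. I would invoke the standard fact about cosets of a finite standard parabolic subgroup: each coset $wW_J$ has a unique maximal-length element. This element is characterised as the unique member $u$ of the coset with $J\subseteq D_R(u)$, just as the minimal element is characterised by $J\cap D_R(u)=\varnothing$. This applies because $W_0=W_{\{1,2\}}$ is finite by Proposition \ref{subgroupsfinite}. Taking $J=\{1,2\}$, the assumption $\{1,2\}\subseteq D_R(w)$ forces $w$ to be the maximal element of $wW_0$.

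This contradicts Lemma \ref{notmaximal}, whose hypotheses hold exactly: $w$ and the identity alcove both lie between the adjacent parallel hyperplanes $H_{r_0}$ and $H_{r_1}$. Hence $D_R(w)\neq\{1,2\}$, and so $0\in D_R(w)$.

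The only step needing any care is the characterisation of the maximal coset element by its descents. If one prefers not to cite it, it follows from the usual decomposition $w=w^Jw_J$ with $\ell(w)=\ell(w^J)+\ell(w_J)$ and $w_J\in W_J$. If $s_1$ and $s_2$ are both right descents of $w$, they are right descents of $w_J$. In a finite dihedral group the only element with both generators as right descents is the longest element $w_{0,J}$, so $w=w^Jw_{0,J}$, which is the maximal element of the coset.
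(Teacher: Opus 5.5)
Your proposal is correct and follows the paper's argument: the only way to have $|D_R(w)|=2$ with $0\notin D_R(w)$ is $D_R(w)=\{1,2\}$, which makes $w$ maximal in $wW_0$ and contradicts Lemma~\ref{notmaximal}. Your extra justification via $w=w^Jw_J$ (that $\{1,2\}\subseteq D_R(w)$ forces $w$ to be the maximal element of $wW_0$) correctly fills in a step the paper leaves implicit.
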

 \begin{proof}
     If $|D_R(w)|=2$, but $0\notin D_R(w)$, then $D_R(w)=\{1,2\}$, and so is maximal in $wW_0$, contradicting Lemma \ref{notmaximal}. 
 \end{proof}

We now can show that, in these conditions, the only time $ws_i$ is not minimal in $wW_0$ is when the element $s_0s_i$ has order two. We can further show that $s_0s_i$ never has order bigger than three. 

\begin{proposition}\label{minimal}
    Suppose $w\in W$ and the identity alcove are both bounded by two parallel adjacent hyperplanes $H_{r_0}$ and $H_{r_1}$, and suppose that $D_R(w)=\{0,i\}$. If $(s_0s_i)^2\neq 1$, then $ws_i$ is minimal in $wW_0$. 
\end{proposition}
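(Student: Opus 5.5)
The plan is to reduce the claim to a statement about the dihedral coset $wW_0$, and then rule out the bad case geometrically.

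\textbf{Reduction.} Write $\{1,2\}=\{i,j\}$. Since $W_0=W_{\{1,2\}}$ is a finite dihedral group, the coset $wW_0$ is the set of alcoves around the vertex $v$ of $w$ in the orbit of the origin. Let $u$ be its minimal element. In this dihedral coset, the elements whose descent set meets $\{1,2\}$ exactly in $\{i\}$ form a single chain $u s_i,\ u s_i s_j s_i$ (or, depending on the parity of the reduced word, $u s_j s_i, \dots$), namely those $u y$ with every reduced word of $y\in W_0$ ending in $s_i$ and $y$ not the longest element. Because $D_R(w)\cap\{1,2\}=\{i\}$, we have $ws_i=uy'$ with $\ell(y')=\ell(y)-1$. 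Also $i\notin D_R(ws_i)$ automatically. So $ws_i$ is minimal in $wW_0$ exactly when $y=s_i$, that is, when $w$ is preminimal with $w=us_i$. Otherwise $j\in D_R(ws_i)$. The proposition is therefore equivalent to: \emph{if $(s_0s_i)^2\neq1$, then $ws_i$ has no $j$-descent}.

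\textbf{Geometric set-up.} By the proof of Lemma \ref{notmaximal}, $v$ lies on one of $H_{r_0},H_{r_1}$, say on $H=H_{r_\epsilon}$. Since the identity lies in the strip, $u$ and the whole half-star of $v$ on the identity side of $H$ contain $w$. Now consider the vertex $v'$ of $w$ of type $j$, which is the common vertex of the $0$-panel and the $i$-panel of $w$. Since $0,i\in D_R(w)$, both walls $H_{ws_0w^{-1}}$ and $H_{ws_iw^{-1}}$ through $v'$ separate $w$ from the identity. Hence $w$ is the maximal element of its $\{0,i\}$-residue at $v'$, and the identity lies in the opposite sector at $v'$ cut out by these two walls. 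When $m_{0i}\ge 3$, the angle of this sector is $\pi/m_{0i}\le\pi/3$, and at least one further wall through $v'$ lies strictly between the two walls' directions on the far side.

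\textbf{Contradiction step.} Suppose, for contradiction, that $j\in D_R(ws_i)$. Then the wall through $v$ containing the $j$-panel of $ws_i$ also separates $ws_i$ (hence $w$) from the identity. I would show that this wall, together with the two walls at $v'$, forces the identity out of the strip between $H_{r_0}$ and $H_{r_1}$. The key fact is that one of the walls through $v'$ (the middle one, which exists exactly because $m_{0i}\ge3$) is parallel to $H$. That wall is then $H_{r_{1-\epsilon}}$ or a wall beyond it, since $v'\notin H$ and the hyperplanes are adjacent. The separation conditions then put the identity on the far side of it. To make this rigorous I would use the root-system coordinates of Section \ref{roots}, as in Lemmas \ref{angle<pi/2} and \ref{angle>pi/2}: write $H=H_{\alpha,n}$ and the walls at $v'$ as $H_{\beta,m}$, $H_{\beta',m'}$ with $\beta+\beta'$ (or a suitable positive combination) proportional to $\alpha$, and compare pairings with the origin.

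\textbf{Expected obstacle.} The main difficulty is this last step. Identifying which wall through $v'$ is parallel to $H$ depends on the type of the rank 2 complex. If the uniform argument fails, I would fall back on a case check over $\tilde A_2$, $\tilde C_2$ and $\tilde G_2$, using the explicit angles $\pi/3$, $\pi/4$, $\pi/6$ at vertices. I would also verify that when $m_{0i}=2$ the argument genuinely breaks down (there is no middle wall), which confirms that the hypothesis $(s_0s_i)^2\neq1$ is used exactly there.
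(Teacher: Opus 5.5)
Your reduction is correct: since $i\in D_R(w)$, we have $i\notin D_R(ws_i)$, so $ws_i$ fails to be minimal in $wW_0$ exactly when $j\in D_R(ws_i)$. The gap is the ``key fact'' in the contradiction step.

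\textbf{Why the mechanism cannot work.} As you set it up, the contradiction never uses $j\in D_R(ws_i)$. The identity lies in the open cone at $v'$ opposite to $w$. Any third wall $K$ through $v'$ puts $w$ and that whole cone on opposite sides. So if $K\parallel H$, then $K=H_{r_{1-\epsilon}}$ and the identity leaves the strip. Your argument would therefore show that the hypotheses $D_R(w)=\{0,i\}$, $m_{0i}\ge 3$, and ``$w$ and $1$ lie in a common strip'' are inconsistent. That is what happens in $\tilde A_2$, and for $m_{0i}=4$ in $\tilde C_2$, because there $v'$ carries every wall direction. It fails in $\tilde G_2$. There the vertex with angle $\pi/3$ carries only the three long-root walls, while $H$ can be a short-root wall.

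\textbf{A counterexample in $\tilde G_2$.} Take $\alpha_1=(1,0)$ (short) and $\alpha_2=(-\tfrac32,\tfrac{\sqrt{3}}{2})$ (long).
\begin{itemize}
\item The fundamental alcove $\{\langle x,\alpha_1\rangle>0,\ \langle x,\alpha_2\rangle>0,\ \langle x,3\alpha_1+2\alpha_2\rangle<1\}$ has vertices $(0,0)$, $(0,\tfrac{1}{\sqrt{3}})$, $(\tfrac13,\tfrac{1}{\sqrt{3}})$, and $m_{02}=3$.
\item Let $w$ be the alcove with vertices $v=(1,\tfrac{1}{\sqrt{3}})$, $(\tfrac12,\tfrac{\sqrt{3}}{2})$ and $v'=(\tfrac13,\tfrac{1}{\sqrt{3}})$. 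It lies with the identity in the strip $0<\langle x,\alpha_1\rangle<1$, and its vertex $v$ in the orbit of the origin lies on $H=H_{\alpha_1,1}$.
\item Its $0$-panel lies on $H_{\alpha_2,0}$ and its $2$-panel on $H_{3\alpha_1+2\alpha_2,1}$; both walls separate $w$ from $1$. Its $1$-panel wall $H_{2\alpha_1+\alpha_2,1}$ does not. So $D_R(w)=\{0,2\}$.
\item The walls through $v'$ are $H_{\alpha_2,0}$, $H_{3\alpha_1+\alpha_2,1}$ and $H_{3\alpha_1+2\alpha_2,1}$. None is parallel to $H$.
\end{itemize}
The proposition itself holds here, since $D_R(ws_2)=\{0\}$. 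So the hypotheses are genuinely satisfiable, and any correct proof must use $j\in D_R(ws_i)$. Your fallback case check would need a new idea exactly in $\tilde G_2$.

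\textbf{The missing idea.} The paper gets the contradiction combinatorially.
\begin{itemize}
\item Since $0,i\in D_R(w)$, $w$ is the top of $wW_{\{0,i\}}$, so $0\in D_R(ws_i)$.
\item If also $j\in D_R(ws_i)$, then $ws_i$ is the top of $ws_iW_{\{0,j\}}$, which gives $j\in D_R(ws_is_0)$.
\item The hypothesis $m_{0i}\ge 3$ gives $i\in D_R(ws_is_0)$; this is exactly where it is used.
\item Hence $ws_is_0$ is the longest element of its $W_0$-coset.
\item Descending steps never cross $H_{r_0}$ or $H_{r_1}$, so $ws_is_0$ still shares the strip with the identity. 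This contradicts Lemma \ref{notmaximal}.
\end{itemize}
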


\begin{proof}
     Note that $0\in D_R(w)$, so as $ws_i\in wW_0$ and $ws_i\leq w$, $0\in D_R(ws_i)$. Suppose $ws_i$ is not minimal in $wW_0$. Then $D_R(ws_i)=\{0,j\}$, $j\notin \{0,i\}$. Now $w$ is maximal in $wW_{\{s_0,s_i\}}$, and as $(s_0s_i)^2\neq 1$, we have that $i\in D_R(ws_is_0)$. Now $ws_i$ is maximal in $ws_iW_{\{s_0,s_j\}}$, so $j\in D_R(ws_is_0)$. Then $i,j\in D_R(ws_is_0)$, so $ws_is_0$ is maximal in $ws_is_0W_0$. But $w$ and the identity are bounded by $H_{r_0}$ and $H_{r_1}$, and $i\in D_R(w)$, $0\in D_R(ws_i)$, so $ws_is_0$ is bounded by $H_{r_0}$ and $H_{r_1}$. Hence, by Lemma \ref{notmaximal}, it cannot be maximal in its $W_0$ coset, and hence we have a contradiction. Therefore, $ws_i$ must be minimal in $ws_iW_0$. 
    \end{proof}

\begin{lemma}\label{notorder4}
    Suppose $w\in W$ and the identity alcove are both bounded by two parallel adjacent hyperplanes $H_{r_0}$ and $H_{r_1}$, and suppose that $D_R(w)=\{0,i\}$. Then the element $s_0s_i$ does not have order 4. 
\end{lemma}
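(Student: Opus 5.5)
We argue by contradiction, mirroring the proof of Proposition \ref{minimal}. Suppose $s_0s_i$ has order 4, so $W_{\{0,i\}}$ is dihedral of order 8. Since $D_R(w)=\{0,i\}$, $w$ is the maximal element of its coset $wW_{\{0,i\}}$. Write $u$ for the minimal element of this coset, so that $w=u\,s_0s_is_0s_i=u\,s_is_0s_is_0$. The residue $wW_{\{0,i\}}$ is the set of eight alcoves around a vertex $p$ of type $\{0,i\}$. The plan is to show that some alcove of this residue which is forced to have both $1$ and $2$ in its descent set is bounded by $H_{r_0}$ and $H_{r_1}$. That contradicts Lemma \ref{notmaximal}, since this residue element together with the identity alcove is then bounded by $H_{r_0}$ and $H_{r_1}$.

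\textbf{Locating $p$ and $u$.} Because $w$ lies between $H_{r_0}$ and $H_{r_1}$ and has both its $s_0$ and $s_i$ panels descending, the walls through $p$ separating $w$ from $u$ all separate $w$ from the identity. The first step is to show that the walking-down alcoves $ws_0$, $ws_0s_i$, $ws_is_0$, and so on, remain between $H_{r_0}$ and $H_{r_1}$ for as long as they do not cross a wall parallel to $H_{r_0}$.

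Around a vertex with order-4 stabiliser there are exactly four walls, at angles $\pi/4$ apart. One of them, call it $H_p$, is parallel to $H_{r_0}$ when $p$ lies on $H_{r_0}$ or $H_{r_1}$, and these are the only parallel walls near $w$. I would first treat the case that $p$ does not lie on $H_{r_0}\cup H_{r_1}$. Then no wall through $p$ is parallel to the $r$'s, and the whole residue lies between $H_{r_0}$ and $H_{r_1}$.

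In the other case $p$ lies, say, on $H_{r_1}$. Then $H_{r_1}$ passes through $p$, and since both $w$ and the identity are on the same side, $u$ also lies on that side. Exactly half of the residue, namely four alcoves including $w$ and $u$, lies between $H_{r_0}$ and $H_{r_1}$.

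\textbf{Finding the forced element.} Next, consider $w s_0$ and $ws_i$, which are both between the hyperplanes. As in Proposition \ref{minimal}, $ws_0s_i$ and $ws_is_0$ still have length $\ell(w)-2$ and lie in the residue. Take $j$ to be the remaining generator, so $\{0,i,j\}=\{0,1,2\}$. If $i\in\{1,2\}$ and order 4 occurs, the type $\{0,i\}$ vertex is a special vertex, since in $\tilde C_2$ the order-4 pairs involve special vertices. Thus $p$ lies in the orbit of the origin and $W_0$ is the stabiliser type $\{1,2\}$ or the conjugate situation.

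Here I would use the angle lemmas (Lemma \ref{triangleangles} and the ensuing corollary) to show that one of the four alcoves of the residue between the hyperplanes has both its $s_1$ and $s_2$ panels descending. This alcove is the element of the residue farthest from the identity among those adjacent to $H_{r_0}$ or $H_{r_1}$, and it is maximal in its $W_0$-coset. Being bounded by $H_{r_0}$ and $H_{r_1}$, it contradicts Lemma \ref{notmaximal}.

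\textbf{Main obstacle.} The main difficulty is the case analysis of the geometry of $\tilde C_2$, where order 4 actually occurs. One must check which vertex types carry order-4 stabilisers and verify that the $\pi/4$ sector containing $w$ forces the descent claim. I would carry this out concretely in coordinates, with $\alpha=e_1$ and $\beta=e_2-e_1$, and with $H_{r_0},H_{r_1}$ one of the two parallel classes, checking the two possible directions of parallelism separately.
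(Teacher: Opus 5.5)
There is a genuine gap. The proposal never actually reaches a contradiction, and its case analysis rests on false geometric claims.

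\emph{The first case is vacuous.} You correctly note that the four walls through $p$ are $\pi/4$ apart. But in $\tilde C_2$ (the only rank-two affine type where $s_0s_i$ can have order 4) there are exactly four parallel classes of walls, so these four walls realise every direction. In particular, one of them is parallel to $H_{r_0}$. Since $p$ is a vertex of $w$, and $w$ lies in the strip between the adjacent walls $H_{r_0}$ and $H_{r_1}$, that wall must be $H_{r_0}$ or $H_{r_1}$. So the case $p\notin H_{r_0}\cup H_{r_1}$ cannot occur, and your assertion there that no wall through $p$ is parallel to the $r$'s is false.

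\emph{The second case is misdescribed.} You claim that $w$ and $u$ lie on the same side of $H_{r_1}$, with four residue alcoves including both of them in the strip. This is wrong. The minimal and maximal elements of the residue $wW_{\{0,i\}}$ are opposite, so they are separated by every wall through $p$. Combined with your own remark that walls separating $w$ from $u$ separate $w$ from the identity, this already contradicts the hypothesis once $p\in H_{r_1}$.

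\emph{The ``forced element'' step is only a promise.} No alcove is identified and no descent computation is given. The claim that $p$ lies in the orbit of the origin is also incorrect: $W$ acts type-preservingly, $p$ has type $\{0,i\}$, and the origin has type $\{1,2\}$. It is therefore unclear how Lemma \ref{triangleangles} and its corollary, which concern a vertex of the origin's type, would apply.

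The missing idea is that the contradiction is immediate once you know $p\in H_{r_t}$ for some $t\in\{0,1\}$. This is the paper's whole proof: the argument of Lemma \ref{notmaximal}, applied to the residue $wW_{\{0,i\}}$ instead of $wW_0$. The steps are as follows.
\begin{itemize}
\item Since $w$ and the identity lie on the same side of $H_{r_t}$, we have $\ell(r_tw)>\ell(w)$.
\item Since $r_t$ fixes $p$, it lies in the stabiliser $wW_{\{0,i\}}w^{-1}$, so $r_tw\in wW_{\{0,i\}}$.
\item Hence $w$ is not maximal in $wW_{\{0,i\}}$, contradicting $\{0,i\}\subseteq D_R(w)$.
\end{itemize}
No search for an element with $\{1,2\}$ in its descent set, and no coordinate case analysis, is needed.
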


\begin{proof}
    Suppose $s_0s_i$ has order 4. Then we must have a $\tilde{B}_2$ Coxeter complex. Hence, $s_1s_2$ must also have order 4. Therefore, any $\{s_0,s_i\}$ vertex has all directions of hyperplanes passing through it. So the $\{s_0,s_i\}$ vertex $v$ of $w$ must lie on $H_{r_0}$ or $H_{r_1}$. Say it lies on $H_{r_t}$. Then $\ell(r_tw)>\ell(w)$, but $r_t$ fixes $v$. So $r_tw\in wW_{\{s_0,s_i\}}$. Therefore, $w$ cannot be maximal in $wW_{\{s_0,s_i\}}$, and hence $D_R(w)\neq \{0,i\}$. 
\end{proof}

We are now ready to prove Theorem \ref{theorem}, by an induction argument on the length of $w$. Here, we restate the theorem in terms of the Property $(\dagger)$. 
\begin{theorem}\label{daggermain}
        If $(\dagger)$ holds on $((r_1,\hdots,r_n),w,s_i)$, then $r_n\cdots r_1ws_i\in \nPSh{w}$. 
\end{theorem}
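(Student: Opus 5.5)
We argue by strong induction on $\ell(w)$. The inductive hypothesis is that for every tuple $((r_1,\hdots,r_n),w',s_{i'})$ satisfying $(\dagger)$ with $\ell(w')<\ell(w)$, we have $r_n\cdots r_1w's_{i'}\in\nPSh{w'}$. The base case is trivial: when $\ell(w)$ is minimal subject to $i\in D_R(w)$, we have $\ell(w)=1$ and $w=s_i$. Since $r_n\cdots r_1ws_i=r_n\cdots r_1$ and by Corollary \ref{iinrn...r1wsi}, $i\notin D_R(r_n\cdots r_1)$. A short check, using that every reduced word for an element of the annex of $s_i$ avoids $s_i$, reduces this to showing that the element lies in $W_{I-\{i\}}$. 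Alternatively, one can just observe that the small cases can be handled directly via $(\dagger)$ and the remark for $n=1$.

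First we reduce to the case where the identity alcove lies between $H_{r_0}$ and $H_{r_1}$. If it does not, Theorem \ref{pm1} applied to $x=ws_i$ gives $\ell(r_0ws_i)=\ell(ws_i)\pm1$. Since the identity lies beyond $H_{r_0}$, we must have $\ell(r_0ws_i)=\ell(ws_i)-1$. We then pass to the tuple $((r_0,r_1,\hdots,r_n),r_0w,s_i)$, relabelled, after checking that it still satisfies $(\dagger)$ (in particular that $i\in D_R(r_0w)$ and $r_1r_0ws_i\neq r_0w$). Using the identity $r_2r_1=r_1r_0$ from Proposition \ref{threeparallel}, we translate membership in $\nPSh{r_0w}$ back to $\nPSh{w}$ via Corollary \ref{rxinxcor}. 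This is an induction on the number of parallel hyperplanes between $w$ and the identity, nested inside the length induction.

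Now assume the identity is bounded by $H_{r_0}$ and $H_{r_1}$, and split by $|D_R(w)|$, which is $1$ or $2$ in rank $2$ by Lemma \ref{notmaximal}. If $|D_R(w)|=1$, the preceding proposition handles it directly from the inductive hypothesis. If $|D_R(w)|=2$, the corollary to Lemma \ref{notmaximal} gives $D_R(w)=\{0,i\}$ for some $i$ (after renaming, possibly $i$ is the given descent or $0$ is). By Lemma \ref{notorder4}, $s_0s_i$ has order $2$ or $3$.
\begin{itemize}
\item In the order-$3$ case, Proposition \ref{minimal} shows that $ws_i$ is minimal in $wW_0$. We then argue as in the $|D_R(w)|=1$ case. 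Corollary \ref{nonemptyor2} and Lemma \ref{daggerholds} produce a common descent $s_k$ of the chain. Then $(\dagger)$ holds for the shorter tuple with base $ws_i$. The inductive hypothesis gives $ws_i\not\leq r_n\cdots r_1ws_is_k$ (or the $(n-1)$ version, combined with Corollary \ref{assume}). The lifting property, Lemma \ref{Bruhatcosets}, then shows that $w\leq r_n\cdots r_1ws_i$ would force $ws_i\leq r_n\cdots r_1ws_is_k$.
\item The difficulty is that now $k$ might lie in $D_R(w)$, namely $k=0$, so the lifting argument needs the minimality of $ws_i$ to rule this out. Concretely, we would show that $0$ cannot be a common descent of the chain starting at $ws_i$ unless it descends the whole coset.
\item In the order-$2$ case, $s_0$ and $s_i$ commute, so $w=ws_0s_is_0\cdot$ (a commuting square). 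We would apply the inductive hypothesis to $ws_0$, for which $i$ is still a descent. We then use Lemma \ref{xinw} together with $ws_0\leq w$ to transfer the conclusion, checking that the reflection chain is the same because the $s_0$- and $s_i$-panels are perpendicular.
\end{itemize}

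The main obstacle is the $|D_R(w)|=2$ case. There, the common descent $k$ supplied by Corollary \ref{nonemptyor2} may coincide with the second descent of $w$, which breaks the lifting-property step. This is exactly where Proposition \ref{minimal} and Lemma \ref{notorder4} must be used. We would first try to show geometrically, using that $ws_i$ is minimal in $wW_0$ and lies between $H_{r_0}$ and $H_{r_1}$, that $0\notin D_R(ws_i)$ fails only in a configuration contradicting $r_1ws_i\neq w$.
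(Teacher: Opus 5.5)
Your skeleton is the paper's: induction on $\ell(w)$, the reduction via Theorem~\ref{pm1} to the case where $H_{r_0}$ and $H_{r_1}$ bound the identity, the split on $|D_R(w)|$, then Lemma~\ref{notorder4} and Proposition~\ref{minimal}. However, the case $D_R(w)=\{0,i\}$, which is the real content of the proof, is not handled.

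In the order-$3$ case the obstruction you flag is not an exceptional configuration that can be ruled out; it is forced. Since $0,i\in D_R(w)$, $w$ is the top of its $W_{\{0,i\}}$-coset, so $0\in D_R(ws_i)$ always. Minimality of $ws_i$ in $wW_0$ then gives $D_R(ws_i)=\{0\}$. Hence the common descent of the chain through $ws_i$ is necessarily $s_0\in D_R(w)$. The lifting step from the $|D_R(w)|=1$ case therefore never applies, and your plan to show $0\notin D_R(ws_i)$ cannot succeed. When the prescribed descent is $s_0$ instead, the analogous bad case $k=i$ is equally untreated.

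The missing idea is to descend one more step in the dihedral coset. In the paper (prescribed descent $s_0$, common descent $s_i$) one looks at the chain through $ws_0s_i$.
If its common descent is the third generator $s_k$, induct on $((r_1,\dots,r_n),ws_0s_i,s_k)$ and use $k\notin D_R(ws_i)$ together with $ws_0s_i\le ws_i$.
If it is $s_0$, the braid relation $(s_0s_i)^3=1$ makes each $r_t\cdots r_1w$ the top of its $W_{\{0,i\}}$-coset, so $(\dagger)$ holds for $((r_1,\dots,r_n),ws_is_0,s_i)$. A putative $w\le r_n\cdots r_1ws_0$ is then pushed down through the shared descents $s_i$ and then $s_0$ (Lemma~\ref{Bruhatcosets}) to $ws_is_0\le r_n\cdots r_1ws_is_0s_i$, contradicting the inductive hypothesis.
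Corollary~\ref{assume} handles the case where only the first $n-1$ terms share a descent.

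Your order-$2$ argument transfers in the wrong direction. Lemma~\ref{xinw} with $ws_0\le w$ gives $\nPSh{ws_0}\subseteq\nPSh{w}$, so the inductive output only yields $r_n\cdots r_1ws_0s_i=r_n\cdots r_1ws_is_0\in\nPSh{w}$. Annexes are closed downward in Bruhat order, and $s_0$ is a descent of $r_n\cdots r_1ws_i$, so nothing follows for the larger element $r_n\cdots r_1ws_i$.

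What works is a contradiction argument. Suppose $w\le r_n\cdots r_1ws_i$. Since $s_0$ is a descent of both, Lemma~\ref{Bruhatcosets} gives $ws_0\le r_n\cdots r_1ws_is_0=r_n\cdots r_1ws_0s_i$. This contradicts the inductive hypothesis for $((r_1,\dots,r_n),ws_0,s_i)$. Checking $(\dagger)$ for that tuple needs $s_0$ to be a descent along the whole chain, as in Lemma~\ref{daggerholds}; a perpendicularity remark is not enough.

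Finally, your base case is not established. The remark for $n=1$ is irrelevant, since $(\dagger)$ requires $n\ge 2$. Instead, show as the paper does that $(\dagger)$ is vacuous when $\ell(w)=1$. Then $r_1=s_j$ with $j\ne i$. If $\ell(r_2s_j)=2$, then $H_{r_2}$ would be one of the two walls crossed by a length-$2$ gallery from $1$, and each of these passes through a vertex of the identity alcove. But $H_{r_2}$ lies beyond $H_{r_1}=H_{s_j}$, so it does not.
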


\begin{proof}
    We proceed by induction on $\ell(w)$. 

   If $\ell(w)=0$, then $w=1$, and $(\dagger)$ can never hold as $D_R(w)=\varnothing$. If $\ell(w)=1$, $w=s_i$ for some $s_i$, and $D_R(w)=\{i\}$. So assume that $(\dagger)$ holds for $((r_1,\hdots,r_n),w,s_i)$. Now $\ell(r_1ws_i)=\ell(ws_i)+1=1$. So $r_1ws_i=s_j$. Note that $j\neq i$. Now if $n>1$, $H_{r_1}$ and $H_{r_2}$ do not bound $w$ or $ws_i=1$. So, as $H_{r_0}$ and $H_{r_1}$ bound $w$ and $ws_i=1$, no vertex of the identity lies on $H_{r_2}$. So the reflection $r_2$ does not fix any vertex of the origin. Therefore, $r_2r_1ws_i=r_2s_j$ cannot lie in any maximal parabolic subgroup. But any length two element must lie in a maximal parabolic subgroup. Therefore, if $(\dagger)$ holds for $((r_1,\hdots,r_n),w,s_i)$, $n=1$, which is a contradiction.  

    Now let us assume that, if $(\dagger)$ holds on $((r_1,\hdots,r_n),w,s_i)$ for $\ell(w)<m$, then $r_n\cdots r_1ws_i\in \nPSh{w}$. 

    Now assume that $\ell(w)=m$. Note that, by Corollary \ref{pm1}, $\ell(r_0w)=\ell(w)\pm 1$. First suppose that $\ell(r_0w)=\ell(w)- 1$. Then it is easy to see that $(\dagger)$ holds on $((r_0,r_1,\hdots,r_n),r_0w,s_i)$. So, by induction, $r_n\cdots r_1r_0r_0ws_i\in\nPSh{r_0w}\subseteq \nPSh{w}$. So we can assume that $\ell(r_0w)>\ell(w)$, and hence $H_{r_0}$ and $H_{r_1}$ bound the identity alcove. 

    Now we first assume that $D_R(w)=\{i\}$. Then, as $s_i$ is a decreasing panel of $w$ and $H_{r_0}$ and $H_{r_1}$ bound the identity alcove, $H_{r_0}$ and $H_{r_1}$ must bound $ws_i$. Now first suppose that $ws_i, r_1ws_i,\hdots, r_{n}\cdots r_1ws_i$ share a common decreasing panel $s_k\neq s_i$. So $k\notin D_R(w)$. Now if $w\leq r_n\cdots r_1ws_i$, then $ws_i\leq w\leq r_n\cdots r_1ws_is_k$. But let us consider  $((r_1,\hdots,r_n),ws_i,s_k)$. Now
    \[\ell(r_t\cdots r_1ws_is_k)=\ell(r_t\cdots r_1ws_i)-1=\ell(r_{t-1}\cdots r_1ws_i)=\ell(r_{t-1}\cdots r_1ws_is_k)+1,\]
    and $r_t\cdots r_1ws_is_k\neq r_{t-1}\cdots r_1ws_i$ for any $0\leq t\leq n$, otherwise $ws_is_k$ would not be bounded by $H_{r_0}$ and $H_{r_1}$, and then $s_k$ would not be a decreasing panel of $ws_i$. So $(\dagger)$ holds for $((r_1,\hdots,r_n),ws_i,s_k)$. So, by induction, $ws_i\not\leq r_n\cdots r_1ws_is_k$, and so if $w\leq r_n\cdots r_1ws_i$ we get a contradiction. Hence, $r_n\cdots r_1ws_i\in \nPSh{w}$.

     Now assume that $ws_i, r_1ws_i,\hdots, r_{n-1}\cdots r_1ws_i$ share a common decreasing panel $s_k\neq s_i$, and that there is a panel type $s_j$ such that $r_{n-1}\cdots r_1ws_is_j=r_n\cdots r_1ws_i$. If $w\leq r_n\cdots r_1 ws_i$, by Corollary \ref{assume}, $w\leq r_{n-1}\cdots r_1ws_i$. Then $k\notin D_R(w)$, $k\in D_R(r_{n-1}\cdots r_1 ws_i$, so $ws_i\leq w\leq r_{n-1}\cdots r_1ws_is_k$. Now $(\dagger)$ holds for $((r_1,\dots,r_{n-1}),ws_i,s_k)$, so $ws_i\not\leq r_{n-1}\cdots r_1ws_is_k$, which is a contradiction. So $r_n\cdots r_1ws_i\in \nPSh{w}$.

     Now assume that $|D_R(w)|=2$. Now $w$ is not maximal in $wW_0$, so $0\in D_R(w)$. So $D_R(w)=\{0,i\}$ for some $i\in \{1,2\}$. Assume that $(\dagger)$ holds for $((r_1,\hdots,r_n),w,s_0)$. First assume that $(s_0s_i)^2=1$. Now $w$ is maximal in $wW_{\{s_0,s_i\}}$. By the same argument above, we only need to consider the case that all $ws_0, r_1ws_0,\hdots, r_{n}\cdots r_1ws_0$ share a common decreasing panel type $s_k\neq s_0$. If $s_k\neq s_i$, we are done by Lemma \ref{daggerholds}. So suppose they all share $s_i$ as a decreasing panel type. Now $0\in D_R(r_t\cdots r_1 w)$ and $i\in D_R(r_t\cdots r_1ws_0)$, so, as $(s_0s_i)^2=1$,  $r_t\cdots r_1w$ is maximal in its $W_{\{s_0,s_i\}}$ coset. Therefore, $i\in D_R(r_t\cdots r_1w)$. Therefore, $(\dagger)$ holds for $((r_1,\hdots,r_n),ws_i,s_0)$. So by induction $ws_i\not\leq r_n\cdots r_1ws_is_0$. Now if $w\leq r_n\cdots r_1 ws_0$, then $ws_i\leq r_n\cdots r_1ws_0s_i=r_n\cdots r_1ws_is_0$, which is a contradiction. The same argument holds if  $(\dagger)$ holds for $((r_1,\hdots,r_n),w,s_i)$.

     Now suppose that $(s_0s_i)^3=1$. By Lemmas \ref{minimal} and \ref{preminimal}, $w$ must be preminimal, i.e.\ $ws_i$ is minimal in $wW_0=ws_iW_0$. First suppose that  $(\dagger)$ holds for $((r_1,\hdots,r_n),w,s_0)$. We can assume that  all $ws_0, r_1ws_0,\hdots, r_{n}\cdots r_1ws_0$ share a common decreasing panel type $s_k\neq s_0$. If $s_k\neq s_i$, we are done by Lemma \ref{daggerholds}. So assume that $s_k=s_i$. Now, by Corollary \ref{nonemptyor2}, either all $ws_0s_i, r_1ws_0s_i,\hdots, r_{n}\cdots r_1ws_0s_i$ share a common decreasing panel type, or all but $r_n\cdots r_1ws_0s_i$ share a decreasing panel type. 
     
     Case 1: All $ws_0s_i, r_1ws_0s_i,\hdots, r_{n-1}\cdots r_1ws_0s_i$ share a common decreasing panel type. This type cannot be $s_i$. 
    
     Case 1a: First suppose it is $s_k\neq s_0$. Then $(\dagger)$ holds for $((r_1,\hdots,r_n),ws_0s_i,s_k)$. So, by induction, $ws_0s_i\not\leq r_n\cdots r_1ws_0s_is_k$. But now, if $w\leq r_n\cdots r_1ws_0$, $ws_i\leq r_n\cdots r_1ws_0s_i$. Now $ws_i$ is minimal, so $k\notin D_R(ws_i)$. So $ws_0s_i\leq ws_i\leq r_n\cdots r_1 ws_0s_is_k$, which is a contradiction. 
    
    Case 1b: Now suppose they all share $s_0$ as a common decreasing panel type. Now, $0\in D_R(r_t \cdots r_1 w)$, $i \in D_R(r_t \cdots r_1 ws_0)$, and $0\in D_R(r_t \cdots r_1 ws_0s_i).$ So, as $(s_0s_i)^3=1$, $w$ must be maximal in $wW_{\{s_0,s_i\}}$. Then $i\in D_R(r_t \cdots r_1 w)$, $0\in D_R(r_t \cdots r_1 ws_i)$, and $i\in D_R(r_t \cdots r_1 ws_is_0)$. So  $(\dagger)$ holds for $((r_1,\hdots,r_n),ws_is_0,s_i)$. Therefore, $ws_is_0\not\leq r_n\cdots r_1ws_is_0s_i$. But if $w\leq r_n\cdots r_1ws_0$, then $ws_is_0\leq r_n\cdots r_1 ws_0s_is_0=r_n\cdots r_1 ws_is_0s_i$, which is a contradiction. 

    Case 2: All $ws_0s_i, r_1ws_0s_i,\hdots, r_{n-1}\cdots r_1ws_0s_i$ share a common decreasing panel type, and there is an $s_j$ such that $r_{n-1
    }\cdots r_1 ws_is_j=r_n\cdots r_1ws_i$. By applying the same argument above to $((r_1,\hdots,r_{n-1}),w,s_0)$, we can conclude that $w\not\leq r_{n-1}\cdots r_1ws_0$. But then, by Corollary \ref{assume}, $w\not\leq r_n\cdots r_1ws_0$.

    A very similar argument holds if $(\dagger)$ holds for $((r_1,\hdots,r_{n}),w,s_i)$.

    Now, by Lemma \ref{notorder4}, $s_0s_i$ cannot have order 4 if $D_R(w)=\{0,i\}$. So these cases are exhaustive, and our induction is complete.  
\end{proof}

Now the next corollary follows easily, by combining Theorem \ref{daggermain} with Proposition \ref{rn...r1winHid}.

\begin{corollary}\label{cor}
    If $(\dagger)$ holds on $((r_1,\hdots,r_n),w,s_i)$, then $r_n\cdots r_1ws_i$ is on the boundary of $\nPSh{w}$. 
\end{corollary}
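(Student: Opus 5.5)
To show that $y:=r_n\cdots r_1ws_i$ is on the boundary of $\nPSh{w}$, we need two things: that $y$ itself lies in $\nPSh{w}$, and that some alcove adjacent to $y$ lies in $\PSh{w}$. The first is exactly Theorem \ref{daggermain}. So the work is to produce a neighbour of $y$ in the preshadow, and the natural candidate is $r_n\cdots r_1w$. The panel shared by $r_{n-1}\cdots r_1ws_i$ and $r_{n-1}\cdots r_1w$ has type $i$, and applying $r_n$ shows that $y$ and $r_n\cdots r_1w$ are adjacent across an $i$-panel. Note that $y\neq r_n\cdots r_1w$, since these differ by right multiplication by $s_i$.

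We then show $w\leq r_n\cdots r_1w$ by induction on $t$, with $w\leq r_t\cdots r_1w$ as the inductive claim. The case $t=0$ is trivial. For the step, assume $r_{t-1}\cdots r_1w\in\PSh{w}$. Proposition \ref{rn...r1winHid}, applied to the truncated sequence $r_1,\hdots,r_t$, gives $r_{t-1}\cdots r_1w\in\Hid_{r_t}$, that is,
\[\ell(r_t\cdots r_1w)>\ell(r_{t-1}\cdots r_1w).\]
Corollary \ref{rxinxcor}, with $x=r_{t-1}\cdots r_1w$ and $r=r_t$, then yields $r_t\cdots r_1w\in\PSh{w}$. The hypotheses of Proposition \ref{rn...r1winHid} do restrict correctly to the truncation, because $(\dagger)$ imposes its length and non-equality conditions for every $1\leq t\leq n$.

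Putting the pieces together, $y\in\nPSh{w}$ and its $s_i$-neighbour $ys_i=r_n\cdots r_1w$ lies in $\PSh{w}$. Hence the $i$-panel of $y$ is a boundary panel of $\nPSh{w}$, and $y$ is on the boundary. This also fits Proposition \ref{boundarypanels}, which allows boundary panels of type $i$ only when $i\in D_R(w)$, and indeed $i\in D_R(w)$.

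The main obstacle is already absorbed into Theorem \ref{daggermain}. The only delicate point in this corollary is making sure Proposition \ref{rn...r1winHid} applies to every prefix of the sequence, and that the strict length increase it provides is exactly the hypothesis $\ell(rx)>\ell(x)$ needed in Corollary \ref{rxinxcor}.
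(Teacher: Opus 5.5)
Your proposal is correct and follows the paper's own argument. Theorem \ref{daggermain} places $r_n\cdots r_1ws_i$ in $\nPSh{w}$, and Proposition \ref{rn...r1winHid}, applied to each truncation $r_1,\hdots,r_t$, gives $\ell(r_{t-1}\cdots r_1w)<\ell(r_t\cdots r_1w)$; iterating Corollary \ref{rxinxcor} from $w\in\PSh{w}$ then puts the $s_i$-neighbour $r_n\cdots r_1w$ in $\PSh{w}$, which is exactly the induction the paper leaves implicit when it says the corollary follows by combining those two results.
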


\begin{example}
    Here we see an example of the application this theorem. We are considering the annex of the alcove $x=s_0s_1s_2s_0s_1s_0s_2$ from Figure \ref{alcovepic}.
    \begin{figure}[htbp!]
        \centering
        \includegraphics[width=0.3\linewidth]{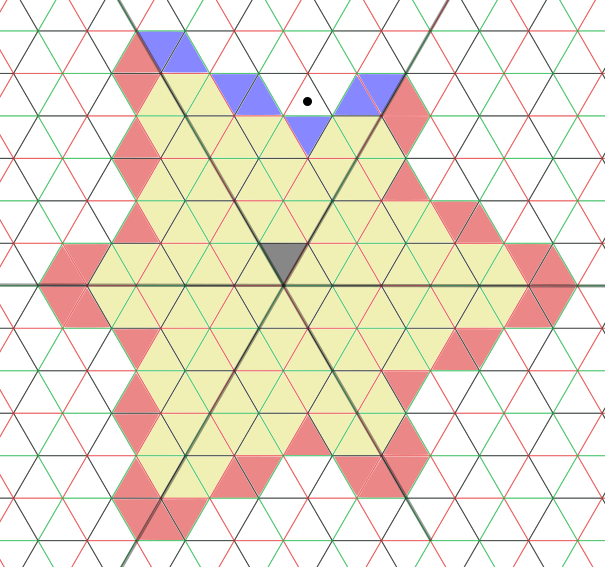}
        \caption{ The purple alcoves are in the annex, by Theorem \ref{theorem}. Then, by Corollary \ref{cor}, we know these elements are on the boundary of the annex. Then, by Proposition \ref{symmetry2}, as our element is in the fundamental chamber, we know by symmetry that the red alcoves must also be on the boundary of the annex. This completely defines the annex. }
        \label{fig:placeholder}
    \end{figure}
    
\end{example}

\bibliographystyle{acm}

\bibliography{references}

\end{document}